\pgfplotsset{compat=newest} 
\pgfplotsset{plot coordinates/math parser=false}
\def\varep{\varepsilon}
\newcommand{\bqq}{\begin{equation}}
\newcommand{\eqq}{\end{equation}}
\newcommand{\bqs}{\begin{equation*}}
\newcommand{\eqs}{\end{equation*}}
\newcommand{\R}{\mathbb{R}} 
\newcommand{\N}{\mathbb{N}} 
\newcommand{\I}{\mathcal{I}}
\renewcommand{\H}{\mathcal{H}}
\newcommand{\F}{\mathcal{F}}
\renewcommand{\S}{\mathcal{S}}
\renewcommand{\SS}{\mathbb{S}}
\newcommand{\K}{\mathcal{K}}
\newcommand{\X}{\mathcal{X}}
\newcommand{\T}{\mathcal{T}}
\newcommand{\md}{\mathrm{d}}
\newtheorem{lem}{Lemma}[section]
\newtheorem{thm}{Theorem}[section]
\newtheorem{prop}{Proposition}[section]
\newtheorem{rmk}[lem]{Remark}
\newtheorem{defi}[lem]{Definition}
\newenvironment{Hypothesis}[1]%
  {\begin{trivlist}\item[]{\bf Hypothesis #1 }\em}{\end{trivlist}}
\numberwithin{equation}{section}
\title{Spreading properties in Kermack - McKendrick models with nonlocal spatial interactions\\ -- A new look --}
\author[]{Gr\'egory Faye, Jean-Michel Roquejoffre\footnote{Corresponding author: \texttt{jean-michel.roquejoffre@math.univ-toulouse.fr}}~ \& Mingmin Zhang}
\affil[]{\small CNRS, UMR 5219, Institut de Math\'ematiques de Toulouse, 31062 Toulouse Cedex, France}
\date{{\it To the memory of H. Brezis, with our deepest respect.}}
\begin{document}
\maketitle

\begin{abstract}
In this paper, we revisit the famous Kermack - McKendrick model with nonlocal spatial interactions by shedding new light on associated spreading properties and we also prove the existence and uniqueness of traveling fronts. Unlike previous studies that have focused on integrated versions of the variable representing the susceptible population, we analyze the long time dynamics of the underlying age-structured model for the cumulative density of infected individuals and derive precise asymptotic estimates for the infected population. Our approach consists in studying the long time dynamics of an associated  transport equation with nonlocal spatial interactions whose spreading properties are close to those of classical Fisher-KPP reaction-diffusion equations. Our study is self-contained and relies on comparison  arguments.  
\end{abstract}

\noindent {\small {\bf Keywords:} Kermack - McKendrick models; nonlocal interactions; spreading properties; traveling waves.}
\bigskip

\section{Introduction}

We consider a classical Kermack - McKendrick model \cite{KMcK,Diekmann1,AR1976} of the form
\bqq
\left\{
\begin{split}
\partial_t \S(t,x) &= - \left( \int_0^\infty \tau(i) \K*I(t,i,x)\md i \right) \S(t,x), \quad t>0, \quad x\in\R^d,\\
\partial_t I(t,i,x) + \partial_iI(t,i,x) &= - \gamma(i) I(t,i,x) , \quad t>0,  \quad i>0,  \quad x\in\R^d, \\
I(t,0,x)&= \left( \int_0^\infty \tau(i) \K*I(t,i,x)\md i \right) \S(t,x), \quad t>0,  \quad x\in\R^d,
\end{split}
\right.
\label{EDP}
\eqq 
where $\S(t,x)$ represents the susceptible population at time $t>0$ structured by a spatial variable $x \in \R^d$ for some $d\geq1$, and $I(t,i,x)$ represents the population of infected individuals at time $t>0$ with age of infection $i>0$ also structured in space. For future reference, we let $\I(t,x)$ be the total population of infected individuals such that we have
\bqs
\I(t,x)=\int_0^\infty I(t,i,x)\md i.
\eqs
Here we have set $\K*I(t,i,x):=\int_{\R^d} \K(x-y)I(t,i,y)\md y$. The second equation of the model can be understood as follows. The number of infected individuals at time $t+h$, spatial position $x$, with an age of infection $i$, is given by the difference of the incoming flux of infected individuals at time $t$, spatial position $x$, with an age of infection $i-h$, and the outgoing flux of infected individuals at time $t$, spatial position $x$, with an age of infection $i$, which are removed at rate $h \gamma(i)$. Namely, we have assumed the balance equation
\bqs
I(t+h,i,x)=I(t,i-h,x)-h\gamma(i) I(t,i,x)+o(h),
\eqs
from which we deduce the second equation of \eqref{EDP} by substracting $I(t,i,x)$ on both side and letting $h\rightarrow0$. The boundary condition at $i=0$ is also very natural and corresponds to the hypothesis that newly infected individuals are proportional  to the fraction of the susceptible individuals that have been in contact with infectious individuals.  It is important to note that in the above model represented by \eqref{EDP}, we have made several simplifying assumptions that we now list and comment. First, we have ignored all demographic effects (natural birth/death of the population) which can be neglected at first approximation. We have set the model on the full infinite space $\R^d$ since we are interested in long range spatial spreading properties of the epidemic, and we have mainly the cases $d=1$ and $d=2$ in mind although our results will hold for all $d\geq1$. We have also supposed that the rate of infection between the susceptible and infected populations can be composed as $\tau(i)\K(x-y)$ where $\tau(i)$ represents the infection rate within a homogeneous population with an age of infection $i\ge 0$ while $\K(x-y)$ describes the probability density of individuals exerting a force of infection from position $y$ to position $x$. And throughout, we will always assume that $\int_{\R^d}\K(x)\md x=1$ and $\K>0$ in $\R^d$. We assume that the habitat is isotropic (invariant under rotation), therefore the connectivity kernel $\K$ is radially symmetric and thus $\K(x-y)$ only depends on the distance $\|x-y\|$, where $\|\cdot\|$ stands for the Euclidean norm on $\R^d$. Let us finally note that a more general rate of infection could be written. For example, instead of $\tau(i)\K(x-y)$, one could introduce rates of infection of the form $\K(i,x-y)$ where the decoupling between the spatial variable and the age of infection is no longer possible. Our analysis would carry naturally over to such general cases, but here we rather prefer to stick with the decoupled case to gain in readability.

We supplement the model represented by \eqref{EDP} by a set of initial conditions which takes the form
\bqs
\left\{
\begin{split}
\S(t=0,x) &= \S_0, \quad x\in\R^d,\\
I(t=0,i,x) & = I_0(i,x),  \quad x\in\R^d, \quad i>0, 
\end{split}
\right.
\eqs
with $I_0$ bounded, nonnegative and compactly supported. Let us remark that assuming a homogeneous distribution $\S_0$ across the susceptible population is debatable from a biological point of view as in practical situations this distribution is most likely to be heterogeneous. Here, following previous works \cite{BRR21,BF21}, we adopt this formalism since it will allow us to carry out a fairly complete mathematical analysis with somehow closed-form formulas which are relatively simple to interpret. 

Next, we introduce the cumulative number of infected individuals with elapsed time since infection $i>0$ and at position $x\in\R^d$ by setting
\bqs
\rho(t,i,x):=\int_0^t I(s,i,x)\md s.
\eqs
We note that the first equation in \eqref{EDP} can be integrated and expressed more simply as
\bqs
\frac{\S(t,x)}{\S_0}=\exp\left(-\int_0^\infty \tau(i) \K*\rho(t,i,x)\md i \right),
\eqs
such that one can reduce system \eqref{EDP} to a single equation on the evolution of $\rho$ given by
\bqq \label{edpI}\left\{
\begin{split}
\partial_t \rho(t,i,x) + \partial_i\rho(t,i,x) &=I_0(i,x) - \gamma(i) \rho(t,i,x) ,\quad t>0, \quad i>0,  \quad x\in\R^d,  \\
\rho(t,0,x) &=\S_0 \left(1-\exp\left(-\int_0^\infty \tau(i) \K*\rho(t,i,x)\md i \right)\right), \quad  t> 0,  \quad x\in\R^d,\\
\end{split}\right.
\eqq
eventually complemented with the initial condition $\rho(0,i,x)=0$ for all $i\ge 0$ and $x\in\R^d$.

At this stage of the presentation, we claim that \eqref{edpI} shares lots of  features with traditional Fisher-KPP equations encountered in the reaction-diffusion community \cite{KPP37,fisher,AW2}. To make our point clearer, let us introduce the homogenous counterpart of \eqref{edpI}, that is, we set $I_0(i,x)=0$ and consider 
\begin{equation}
\label{kppI}\left\{
		\begin{split}
\partial_t \rho(t,i,x) + \partial_i\rho(t,i,x)& = - \gamma(i) \rho(t,i,x),\quad t>0,\quad i>0,  \quad x\in\R^d, \\
\rho(t,0,x)&=\S_0 \left(1-\exp\left(-\int_0^\infty \tau(i) \K*\rho(t,i,x)\md i \right)\right), \quad t> 0,  \quad x\in\R^d,\\
\end{split}\right.
\end{equation}
where we allow for general initial condition $\rho(0,i,x)=\rho_0(i,x)$ compactly supported in $[0,+\infty)\times\R^d.$

The link to the usual Fisher-KPP equations becomes evident when one assumes that both $\tau(i)=\tau>0$ and $\gamma(i)=\gamma$ are independent of the infection age $i$. In that case, the total cumulative density of infected individuals defined as $\mathcal{C}(t,x)=\int_0^\infty \rho(t,i,x)\md i=\int_0^t \I(s,x)\md s$
satisfies the following nonlocal reaction-diffusion equation
\bqs
\partial_t \mathcal{C}(t,x)=\S_0 \left(1-\exp\left(- \tau \K*\mathcal{C}(t,x) \right)\right)-\gamma \mathcal{C}(t,x), \quad t>0, \quad x\in\R^d.
\eqs
Note that the above equation can be slightly reformulated as
\begin{align*}
\partial_t \mathcal{C}(t,x)=\underbrace{\S_0\tau(-\mathcal{C}(t,x)+\K*\mathcal{C}(t,x))}_{\text{nonlocal diffusion}}&+\underbrace{(\S_0\tau-\gamma)\mathcal{C}(t,x)}_{\text{linear reaction}}\\
& -\underbrace{\S_0 \left(\exp\left(- \tau \K*\mathcal{C}(t,x) \right)-1+ \tau \K*\mathcal{C}(t,x)\right)}_{\text{nonlinear nonlocal reaction}}.
\end{align*}
Such models are very close to the spatially extended SIR models studied recently in the literature, see for example \cite{kallen,DG14,CGH17,BRR21,BF21,ducasse,BNR,peng} and references therein.

Let us finally remark that our transport model with nonlocal interactions is rather different from the setting considered for example in \cite{DM09,DM11,DMR}. Indeed, in the models presented and studied in \cite{DM09,DM11,DMR} and subsequent works, the interactions between individuals are purely local and only driven by diffusion in space which could be interpreted as spatial migration from a population dynamics perspective. Here we focus on the case where the interactions are nonlocal in space induced by some spatial connectivity kernel $\K$ which encodes the spatial range of interactions of infected individuals.  

\subsection{Assumptions and main results}

We first present the main assumptions on the parameters that shall stand throughout the paper, for which we shall be guided by the
biological interpretation and by the desire to let the analysis proceed along standard
lines, covering as much  generality (possibilities) as possible. Our first set of assumptions are on the recovery rate function $\gamma$ of infected individuals and the transmission rate function $\tau$.
\begin{Hypothesis}{(H1) - Recovery and transmission rates.}
We assume the following:
\begin{itemize}
\item[(i)] $\gamma:[0,i_\dagger)\mapsto\R_+$,  with some $i_\dagger\in(0,+\infty]$, is nonnegative such that $\gamma(i)\in L^1_{loc}([0,i_\dagger))$.  In particular, when $i_\dagger\in(0,+\infty)$, we assume that 
\bqq\label{pi}
\pi(i):=e^{-\int_0^i\gamma(s)\md s}\longrightarrow 0, \text{ as }  i\to i_\dagger.
\eqq
\item[(ii)] $\tau:[0,i_\dagger)\mapsto\R_+$ is nonnegative, bounded and absolutely continuous on $[0,i_\dagger)$, with $0\leq\tau(i)\leq \tau_\infty$ for $i\in[0,i_\dagger)$ and some $\tau_\infty>0$. 
\item[(iii)] $\omega(i):=\tau(i)\pi(i) \in L^1([0,i_\dagger))$.
\item[(iv)] When $i_\dagger<+\infty$, the functions $\tau$ and $\omega$ are both extended by $0$ outside the interval $[0,i_\dagger)$.
\end{itemize}
\end{Hypothesis}
The quantity $\pi(i)$ is the probability for an individual to stay in the class of infected individuals after time $i\ge 0$, as a consequence, our assumption \eqref{pi} simply reflects the fact after the maximal age of infection $i_\dagger$ there should be no more infected individuals. Moreover, \eqref{pi} implies that necessarily  $\gamma(i)\to +\infty$ as $i\to i_\dagger$ when $i_\dagger<+\infty$.  We remark that our assumptions on $\gamma$ and $\tau$ above are rather generic which encompass many biologically relevant situations. We refer to \cite{Thieme3,MMW2010,Richardetal,Fouteletal,IM17} for concrete examples. Now, regarding the interaction kernel $\K$, we make the following assumptions.

\begin{Hypothesis}{(H2) - Interaction kernel.}
We assume that $\K\in W^{1,1}(\R^d)$ is positive everywhere, radially symmetric and normalized such that $\int_{\R^d}\K(x)\md x=1$.
\end{Hypothesis}
The above assumptions on $\K$ are very natural. The fact that we impose $\K$ to be radially symmetric indicates the fact that we consider an environment which is spatially isotropic. We refer to \cite{Weinberger,VdBMD} for results in the anisotropic case. The regularity assumption that $\K\in W^{1,1}(\R^d)$, which implies that each $\partial_{x_p} \K \in L^1(\R^d)$ ($p=1,\cdots,d$), is a technical assumption that allows us to gain some regularity in space for the solution in the case where $I_0\equiv0$. Finally, the assumption that $\K(x)>0$ for all $x\in\R^d$ may be seen as a strong assumption since it implies an all to all coupling among the infected population of individuals. Nevertheless, since $\K\in L^1(\R^d)$ we necessarily have $\K(x)\rightarrow0$ as $\|x\|\rightarrow+\infty$, and thus the probability of interactions between separated individuals decreases to $0$ as a function of their relative distance. Actually, we will require stronger localization assumptions on the kernel when dealing with the asymptotic properties of the solutions to \eqref{edp} (see Hypothesis {\bf(H2$\mu$)} below) which will precisely quantify the decay rate of the interactions as $\|x\|\rightarrow+\infty$. Relaxing the positivity condition of $\K$ in order for example to take into account the case of compactly supported kernels would necessarily be at the expense of having stronger assumptions on the initial density of infected individuals $I_0$ but also on the recovery and transmission rates. However, one expects to observe similar spreading properties as the ones presented here when compactly supported kernels are considered. Finally, using the radial symmetry of $\K$, we note that there exists $\K_0\in W^{1,1}(\R)$ with $\K_0(z)=\K_0(-z)>0$ for $z\in\R$, such that the following equality holds
 \bqq\label{1.4'}
 \K_0(z):=\int_{\R^{d-1}}\K(z,x_2,\cdots,x_d)\md x_2\cdots \md x_d, \quad z\in\R.
 \eqq

The main goal of this paper is to investigate the long time behavior of \eqref{edpI} starting from certain nonnegative initial condition $\rho_0$, namely,
\bqq \label{edp}\left\{
\begin{split}
\partial_t \rho(t,i,x) + \partial_i\rho(t,i,x) &\overset{(i)}{=}I_0(i,x) - \gamma(i) \rho(t,i,x) ,\quad t>0, \quad i\in(0,i_\dagger),  \quad x\in\R^d,  \\
\rho(t,0,x) &\overset{(ii)}{=}\S_0 \left(1-\exp\left(-\int_0^\infty \tau(i) \K*\rho(t,i,x)\md i \right)\right), \quad  t> 0,  \quad x\in\R^d,\\
\rho(0,i,x)&\overset{(iii)}{=}\rho_0(i,x),   \quad     i\in [0,i_\dagger), \quad x\in\R^d.
\end{split}\right.
\eqq
It is worth  mentioning that  we allow ourselves to consider more general initial conditions than the zero initial condition which naturally arises from our change of unknown $\rho(t,i,x)=\int_0^t I(s,i,x)\md s$ leading to $\rho(0,i,x)=0$ at initial time.  Furthermore, in the case when the maximal age of infection is finite (i.e. when $i_\dagger<\infty$), the integral appearing in \eqref{edp}(ii) has to be understood as
\bqs
\int_0^\infty \tau(i) \K*\rho(t,i,x)\md i = \int_0^{i_\dagger} \tau(i) \K*\rho(t,i,x)\md i.
\eqs
Throughout this paper, we shall work with the following notion of solutions for the initial boundary value problem~\eqref{edp}.
\begin{defi}\label{def:classical} 	We say that a function $\rho: \R_+\times[0,i_\dagger)\times\R^d\mapsto \R$  is a global
	 classical solution of  \eqref{edp} in $\R_+\times[0,i_\dagger)\times\R^d$ with initial condition $\rho_0$ and source term $I_0$ defined on $[0,i_\dagger)\times\R^d$, if $\rho$ is continuous on $\left\{(t,i,x)\in\R_+\times[0,i_\dagger)\times\R^d~|~t\neq i\right\}$, if $\rho(0,i,x)=\rho_0(i,x)$ for all  $(i,x)\in[0,i_\dagger)\times\R^d$, if $\partial_t \rho$ and $\partial_i\rho$ exist a.e. in  $(0,+\infty)\times(0,i_\dagger)\times\R^d$ and $\rho$ satisfies \eqref{edp}(i) a.e. in $(0,+\infty)\times(0,i_\dagger)\times\R^d$, and if $\rho$ satisfies \eqref{edp}(ii) for each $t>0$ and $x\in\R^d$.
\end{defi}

With assumptions {\bf (H1)-(H2)}, we can prove the following well-posedness result for \eqref{edp}.

\begin{prop}[Well-posedness]\label{prop-edpI-well-posedness}
Assume Hypotheses {\bf (H1)-(H2)}, and that $I_0$ is nonnegative, bounded, continuous and compactly supported on $[0,i_\dagger)\times\R^d$, and that $\rho_0$ is nonnegative such that $\rho_0/\pi$ is bounded and absolutely continuous on $[0,i_\dagger)\times \R^d$, then problem \eqref{edp} admits a unique nonnegative classical solution $\rho$  in $\R_+\times[0,i_\dagger)\times\R^d$ with initial condition $\rho_0$, in the sense of Definition~\ref{def:classical}, which satisfies $\rho/\pi\in L^\infty(\R_+\times[0,i_\dagger)\times \R^d)$. Furthermore, if $I_0\equiv0$, then $\partial_{x_p}\rho$ ($p=1,\cdots,d$) exist a.e. on $(0,+\infty)\times(0,i_\dagger)\times\R^d$.
\end{prop}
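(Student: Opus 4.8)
The plan is to reduce \eqref{edp} to a scalar nonlinear Volterra integral equation for the boundary trace $g(t,x):=\rho(t,0,x)$ by integrating along characteristics, and then to solve that equation by a contraction argument in a suitably weighted space. First I would introduce the relative density $v:=\rho/\pi$, which turns \eqref{edp}(i) into the pure transport equation $\partial_t v+\partial_i v=I_0/\pi$; note that $I_0/\pi$ is bounded since $I_0$ is continuous and compactly supported in $[0,i_\dagger)\times\R^d$ while $\pi$ is continuous and positive on $[0,i_1]$ for any $i_1<i_\dagger$. Integrating along the lines $\{i-t=\mathrm{const}\}$ and using $v(t-i,0,x)=\rho(t-i,0,x)$ (because $\pi(0)=1$) yields the representation
\begin{equation*}
\rho(t,i,x)=\begin{cases}
\displaystyle \pi(i)\left(\frac{\rho_0(i-t,x)}{\pi(i-t)}+\int_0^t\frac{I_0(i-t+s,x)}{\pi(i-t+s)}\md s\right), & t<i,\\[2mm]
\displaystyle \pi(i)\left(g(t-i,x)+\int_0^i\frac{I_0(s,x)}{\pi(s)}\md s\right), & t>i.
\end{cases}
\end{equation*}
Inserting this into \eqref{edp}(ii), splitting the $i$-integral at $i=t$ and recalling $\omega=\tau\pi\in L^1$, one arrives at $g=\Phi[g]$ with $\Phi[g](t,x):=\S_0\big(1-\exp(-A[g](t,x))\big)$ and
\[
A[g](t,x)=\int_0^t\omega(i)\,\K*g(t-i,\cdot)(x)\,\md i+F(t,x),
\]
where $F$ collects the terms depending only on $\rho_0$ and $I_0$; under {\bf (H1)-(H2)} and the stated assumptions on $\rho_0,I_0$, the function $F$ is nonnegative, bounded and continuous on $\R_+\times\R^d$, each of its pieces being a convolution in $x$ (with $\K\in L^1$) of a bounded continuous function, integrated in $i$ against $\omega\in L^1$, together with the tail $\int_t^\infty\omega(i)\,\K*(\rho_0/\pi)(i-t,\cdot)(x)\,\md i$.

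Next I would solve $g=\Phi[g]$ in $C_b([0,\mathcal T]\times\R^d)$ equipped, for a parameter $\lambda>0$, with the weighted norm $\|g\|_\lambda:=\sup_{0\le t\le\mathcal T}e^{-\lambda t}\|g(t,\cdot)\|_{L^\infty(\R^d)}$. Since $z\mapsto1-e^{-z}$ is $1$-Lipschitz and $\|\K*h\|_{L^\infty}\le\|\K\|_{L^1}\|h\|_{L^\infty}=\|h\|_{L^\infty}$, one gets $\|\Phi[g_1]-\Phi[g_2]\|_\lambda\le\S_0\big(\int_0^\infty\omega(i)e^{-\lambda i}\md i\big)\|g_1-g_2\|_\lambda$, and $\int_0^\infty\omega(i)e^{-\lambda i}\md i\to0$ as $\lambda\to\infty$ by dominated convergence; so for $\lambda$ large $\Phi$ is a contraction and admits a unique fixed point $g$ on $[0,\mathcal T]\times\R^d$ for every $\mathcal T>0$, hence a global one. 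Running the iteration from $g\equiv0$ and noting that $\Phi$ preserves nonnegativity (as $F\ge0$ and $\rho_0,I_0\ge0$) gives $g\ge0$, whence $\rho\ge0$ from the representation formula; the same formula together with $0\le g\le\S_0$ and the boundedness of $\rho_0/\pi$ and of $I_0/\pi$ on $\mathrm{supp}\,I_0$ gives $\rho/\pi\in L^\infty(\R_+\times[0,i_\dagger)\times\R^d)$. One then checks that this $\rho$ is a classical solution in the sense of Definition~\ref{def:classical}: continuity off the diagonal $t=i$ and the a.e. validity of \eqref{edp}(i) follow by differentiating the representation along characteristics, using that $s\mapsto\rho_0(s,x)/\pi(s)$ is absolutely continuous and $s\mapsto\gamma(s)$ is in $L^1_{loc}$.

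For uniqueness, any classical solution has trace $g=\rho(\cdot,0,\cdot)$, and by integrating \eqref{edp}(i) along a.e. characteristic line (along which $v=\rho/\pi$ is absolutely continuous) one sees that $\rho$ must be given by the representation formula, hence $g$ solves the same Volterra equation; uniqueness of the fixed point then forces $g$, and so $\rho$, to be unique. Finally, when $I_0\equiv0$ one has $\rho(t,i,x)=\pi(i)g(t-i,x)$ for $t>i$ and $\rho(t,i,x)=\pi(i)\rho_0(i-t,x)/\pi(i-t)$ for $t<i$, and the extra regularity $\K\in W^{1,1}(\R^d)$ enters: from $\|\K(\cdot+z)-\K\|_{L^1}\le|z|\,\|\nabla\K\|_{L^1}$ and the Volterra equation for $g$ one deduces that $g(t,\cdot)$ is Lipschitz in $x$ uniformly for $t\le\mathcal T$ (with constant $\lesssim\S_0\|\nabla\K\|_{L^1}(\S_0+\|\rho_0/\pi\|_{L^\infty})$), so $\partial_{x_p}g$ exists a.e.; combined with the a.e. differentiability in $x$ of $\rho_0/\pi$, this yields $\partial_{x_p}\rho$ a.e. on $(0,\infty)\times(0,i_\dagger)\times\R^d$.

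I expect the main difficulty to be bookkeeping rather than conceptual: carefully justifying the equivalence between the PDE formulation (with only a.e. derivatives and $\gamma$ merely $L^1_{loc}$) and the integral representation on both ends — for existence (that the constructed $\rho$ genuinely satisfies \eqref{edp}(i) a.e. and is continuous off $t=i$) and for uniqueness (that an arbitrary classical solution is given by the representation formula) — while keeping track of the integrability of $1/\pi$ near $i_\dagger$, which is exactly what the compact support of $I_0$ and the boundedness of $\rho_0/\pi$ are there to control.
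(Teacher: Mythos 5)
Your proof is correct and arrives at the same reduction as the paper — the change of variables $\varrho=\rho/\pi$, integration along characteristics to a semi-explicit formula, and a nonlinear Volterra equation for the boundary trace — but you solve the Volterra equation by a different fixed-point argument. The paper constructs the solution by monotone Picard iteration starting from $\Phi^0\equiv 0$, establishes convergence via the factorial bound $\Phi^{k+1}-\Phi^k\le (\S_0\tau_\infty T)^k/k!$, and proves uniqueness separately with Gr\"onwall's lemma; the monotone iteration is the vehicle that directly delivers nonnegativity and the a priori bound $0\le\Phi\le\S_0$, and the same structure is reused in the proof of the comparison principle. You instead run a single Banach contraction in the exponentially weighted norm $\|g\|_\lambda=\sup_t e^{-\lambda t}\|g(t,\cdot)\|_\infty$, where the contraction constant $\S_0\int_0^\infty\omega(i)e^{-\lambda i}\md i$ is made $<1$ by dominated convergence using only $\omega\in L^1$; this gives existence and uniqueness in one stroke, and you then recover nonnegativity by noting that the Picard iterates from $g\equiv 0$ stay nonnegative. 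The weighted-norm trick is in fact precisely the mechanism the paper uses later for the comparison principle (Lemma~\ref{lem_cp}), so your approach effectively merges two of the paper's lemmas into one argument; the tradeoff is that your contraction constant is independent of $T$ (no Gr\"onwall needed), while the paper's monotone scheme more transparently encodes order-preservation. Your treatment of the $I_0\equiv0$ regularity via the translation estimate $\|\K(\cdot+z)-\K\|_{L^1}\le|z|\,\|\nabla\K\|_{L^1}$ gives Lipschitz-in-$x$ for $g$ and hence a.e.\ differentiability, which is a legitimate alternative to the paper's direct differentiation of the fixed-point equation using $\partial_{x_p}\K\in L^1$. Both reach the stated conclusion; the remaining bookkeeping you flag (equivalence of the a.e.\ PDE formulation with the integral representation, handling $1/\pi$ near $i_\dagger$) is handled in the paper exactly as you anticipate, via compact support of $I_0$ and boundedness of $\rho_0/\pi$.
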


As it will be shown in Section~\ref{prem-results},   the proof of Proposition \ref{prop-edpI-well-posedness} shows that if one further assumes that the initial condition  $\rho_0>0$ on $[0,i_\dagger)\times \R^d$, then the  solution $\rho$ to problem \eqref{edp} is positive in $\R_+\times[0,i_\dagger)\times\R^d$. However, the situation of $\rho_0\geq0$ on $[0,i_\dagger)\times \R^d$, typically when $\rho_0$ is compactly supported or when $\rho_0\equiv0$, is less clear.  In the sequel, we show that it is possible to obtain the eventual positivity of the solution $\rho$ to problem \eqref{edp} when either $I_0$ or $\rho_0$ is nontrivial, by imposing further conditions on their supports. For convenience, when $I_0\not\equiv0$ in $[0,i_\dagger)\times \R^d$, let us denote the support of $I_0$ in $i$ variable by
\begin{equation}
	\label{I_0 support}
	\mathcal{D}_{I_0}:=\overline{\left\{ i\geq 0 ~|~ I_0(i,x)\neq0 \text{ for some }x\in\R^d\right\}}\subset[0,i_\dagger).
\end{equation}

\begin{prop}[Positivity]\label{prop-positivity}
Under the assumptions of Proposition~\ref{prop-edpI-well-posedness}, let $\rho$ be the solution  of  problem \eqref{edp} given by Proposition~\ref{prop-edpI-well-posedness}. We have:
\begin{itemize}
\item[(i)] If $I_0\not\equiv0$, we assume that $\mathrm{Int}(\mathrm{supp}(\tau))\cap \mathrm{Int}(\mathcal{D}_{I_0})\neq \emptyset$, and define
\bqq\label{i_star}
i_\star:=\min\left(\overline{\mathrm{Int}(\mathrm{supp}(\tau))\cap \mathrm{Int}(\mathcal{D}_{I_0})}\right)\in[0,i_\dagger),
\eqq
then $\rho(t,i,x)>0$ for $(t,i,x)\in(0,+\infty)\times[0,i_\dagger)\times \R^d$ with $t>i+i_\star$. 
 
\item [(ii)] If $\rho_0\not\equiv0$, we assume that there exist $0<\varpi\in\mathrm{Int}\big(\mathrm{supp}(\tau)\big)$ and $x_0\in\R^d$  such that
\begin{equation}
\label{H3}
[0,\varpi]\times\left\{x_0\right\} \subset \mathrm{supp}(\rho_0),
\end{equation} 
then $\rho(t,i,x)>0$ for $(t,i,x)\in(0,+\infty)\times[0,i_\dagger)\times \R^d$ with $t>i$.
\end{itemize}
\end{prop}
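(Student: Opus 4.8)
The plan is to run the argument off the explicit representation of $\rho$ along the characteristics of $\partial_t+\partial_i$, which is produced in the course of proving Proposition~\ref{prop-edpI-well-posedness}. Writing $b(t,x):=\rho(t,0,x)$ for the trace of $\rho$ on $\{i=0\}$, one has for $0<t<i$
\[ \rho(t,i,x)=\frac{\pi(i)}{\pi(i-t)}\,\rho_0(i-t,x)+\int_{i-t}^{i}\frac{\pi(i)}{\pi(u)}\,I_0(u,x)\,\md u, \]
and for $t\ge i$
\[ \rho(t,i,x)=\pi(i)\,b(t-i,x)+\pi(i)\int_0^{i}\frac{I_0(u,x)}{\pi(u)}\,\md u. \]
Since Proposition~\ref{prop-edpI-well-posedness} already gives $\rho\ge0$, hence $b\ge0$, and since $\pi>0$ on $[0,i_\dagger)$, each right-hand side is bounded below by its $I_0$-- (resp.\ $\rho_0$--) term. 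The driving mechanism is then the boundary relation \eqref{edp}(ii): because $\K>0$ on $\R^d$, we have $\K*\rho(t,i,\cdot)(x)>0$ for \emph{every} $x\in\R^d$ as soon as $\rho(t,i,\cdot)\not\equiv0$; and because $\tau\ge0$ is continuous, $\{\tau>0\}$ is open and dense in $\mathrm{Int}(\mathrm{supp}(\tau))$, hence of positive Lebesgue measure in every subinterval of it. Consequently, whenever $\rho(t,\cdot,\cdot)>0$ on a set $V\times\mathcal O$ of positive measure with $V\subset\mathrm{Int}(\mathrm{supp}(\tau))$ and $\mathcal O\subset\R^d$, one gets $\int_0^{\infty}\tau(i)\,\K*\rho(t,i,\cdot)(x)\,\md i>0$ for every $x$, and therefore (recall that $\S_0>0$) $b(t,x)>0$ for every $x$.

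\emph{Part (i).} Since $\mathcal{D}_{I_0}$ is the closure of $\{i:I_0(i,\cdot)\not\equiv0\}$, the latter set is dense in $\mathrm{Int}(\mathcal{D}_{I_0})$, so one may choose $i_1\in(0,i_\dagger)$ lying in $\mathrm{Int}(\mathrm{supp}(\tau))\cap\mathrm{Int}(\mathcal{D}_{I_0})$ together with $x_1\in\R^d$ such that $I_0(i_1,x_1)>0$; by continuity of $I_0$ there is $\eta\in(0,i_1)$ with $I_0>0$ on $(i_1-\eta,i_1+\eta)\times B(x_1,\eta)$ and $(i_1-\eta,i_1+\eta)\subset\mathrm{Int}(\mathrm{supp}(\tau))$. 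The key point is that $\rho(t,j,x)>0$ for \emph{all} $t>0$, all $j\in(i_1-\eta,i_1+\eta)$ and all $x\in B(x_1,\eta)$: if $t<j$ this follows from the lower bound $\int_{j-t}^{j}\frac{\pi(j)}{\pi(u)}I_0(u,x)\,\md u$, since $(j-t,j)$ meets $(i_1-\eta,i_1+\eta)$; if $t\ge j$ it follows from the \emph{$t$--independent} lower bound $\pi(j)\int_0^{j}\frac{I_0(u,x)}{\pi(u)}\,\md u$, which is positive because $i_1-\eta>0$, so that $(0,j)$ meets $(i_1-\eta,i_1+\eta)$ as well. By the previous paragraph this forces $b(t,x)>0$ for all $t>0$ and all $x\in\R^d$; feeding this back into the representation for $t\ge i$ yields $\rho(t,i,x)\ge\pi(i)\,b(t-i,x)>0$ whenever $t>i$, which covers in particular the claimed region $\{t>i+i_\star\}$.

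\emph{Part (ii).} The same mechanism applies, but the source that switches $b$ on now comes from $\rho_0$, and one must propagate the positivity of $b$ forward in time by a bootstrap. First, for $0<t<\varpi$ and $j$ in a small right-neighbourhood of $\varpi$ (so $j\in\mathrm{Int}(\mathrm{supp}(\tau))$ and $j>t$), the bound $\rho(t,j,x)\ge\frac{\pi(j)}{\pi(j-t)}\rho_0(j-t,x)$ together with $[0,\varpi]\times\{x_0\}\subset\mathrm{supp}(\rho_0)$ and the regularity of $\rho_0$ shows that $\rho(t,\cdot,\cdot)>0$ on a nonempty open subset of $\mathrm{Int}(\mathrm{supp}(\tau))\times\R^d$; hence $b(t,\cdot)>0$ on $\R^d$, and adjusting the range of $j$ extends this to $b>0$ on $(0,s_0)\times\R^d$ for some $s_0>\varpi$. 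Now let $S^\ast:=\sup\{s>0:b>0\text{ on }(0,s)\times\R^d\}\ge s_0>\varpi$ and suppose $S^\ast<+\infty$. Using that $(t,x)\mapsto\int_0^{\infty}\tau(i)\K*\rho(t,i,\cdot)(x)\,\md i$ is continuous (dominated convergence, $\tau\pi\in L^1$) and hence so is $b$, and that $\rho(S^\ast,j,x)\ge\pi(j)b(S^\ast-j,x)>0$ for $j\in(0,S^\ast)$ — in particular for $j$ in a neighbourhood $V\subset\mathrm{Int}(\mathrm{supp}(\tau))$ of $\varpi$ — one first obtains $b(S^\ast,\cdot)>0$ on $\R^d$, and then, using only the continuity (not any uniform-in-$x$ lower bound) of $b$, $\pi$ and of $\rho$ off the diagonal, $b>0$ on $(0,S^\ast+\varepsilon)\times\R^d$ for some $\varepsilon>0$, contradicting the maximality of $S^\ast$. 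Hence $S^\ast=+\infty$, and $\rho(t,i,x)\ge\pi(i)b(t-i,x)>0$ for every $t>i$.

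The step I expect to be the main obstacle is making rigorous the transition from ``$\rho(t,\cdot,\cdot)$ is positive on a set meeting $\mathrm{Int}(\mathrm{supp}(\tau))$'' to ``$\rho(t,\cdot,\cdot)$ is positive on a set of \emph{positive} measure on which moreover $\tau>0$'': a priori $\{\tau=0\}\cap\mathrm{Int}(\mathrm{supp}(\tau))$ need not be Lebesgue--negligible, so the available a.e.\ positivity of $\rho$ must be upgraded to positivity on an \emph{open} set before the density of $\{\tau>0\}$ can be used. This is exactly where the continuity of $\rho$ away from the diagonal $\{t=i\}$ (from Proposition~\ref{prop-edpI-well-posedness}) and the continuity of $I_0$, respectively the regularity of $\rho_0$, enter; the bootstrap in part (ii) additionally requires the mild care, already indicated, that positivity is transported along the non-compact variable $x\in\R^d$ only through strict inequalities and the continuity of $b$.
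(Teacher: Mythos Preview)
Your argument is correct, and in fact your treatment of part~(i) delivers more than the proposition asks: you obtain $b(t,\cdot)>0$ for \emph{every} $t>0$, hence $\rho(t,i,x)>0$ whenever $t>i$, not only when $t>i+i_\star$. The paper takes a different route. For part~(i) it works with the Volterra equation satisfied by $\Phi=b$ (equation~\eqref{renewal}) and observes that the source term $\Gamma_2(I_0)(t,x)=\int_0^t\omega(i)\,\K*\!\bigl(\int_0^i I_0/\pi\bigr)\md i+\cdots$ is strictly positive once $t>i_\star$; this yields $\Phi>0$ only on $(i_\star,\infty)\times\R^d$ and hence the weaker conclusion $t>i+i_\star$. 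Your direct use of \emph{both} branches of the characteristic formula for $\rho$ (in particular the branch $t<j$, which the paper's $\Gamma_2$-argument does not exploit) is what buys the improvement.

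For part~(ii) the two proofs are genuinely dual. The paper argues by contradiction and \emph{backward} iteration: assuming $b(t_0,x_0)=0$, the boundary condition together with $\K>0$ forces $\varrho(t_0,\cdot,\cdot)\equiv0$ on $\mathrm{supp}(\tau)\times\R^d$; tracing back along characteristics either produces $\rho_0\equiv0$ on an open set meeting $[0,\varpi]\times\{x_0\}$ (contradicting~\eqref{H3}), or yields $b(t_1,\cdot)\equiv0$ at a strictly smaller time $t_1$, and one iterates down to $0$. Your \emph{forward} bootstrap is equally valid and perhaps more transparent; it also makes explicit the one delicate point, namely that the support condition~\eqref{H3} must be upgraded, via continuity of $\rho_0/\pi$ and of $\rho$ off the diagonal, to strict positivity of $\rho(t,\cdot,\cdot)$ on an \emph{open} subset of $\mathrm{Int}(\mathrm{supp}(\tau))\times\R^d$ before the density of $\{\tau>0\}$ in $\mathrm{Int}(\mathrm{supp}(\tau))$ can be invoked. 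The paper's backward argument sidesteps this by landing on $\rho_0\equiv0$ on an open set of the form $U\times\R^d$, which directly contradicts~\eqref{H3}. Both approaches rely on the same two facts---$\K>0$ everywhere and $\{\tau>0\}$ open and dense in $\mathrm{Int}(\mathrm{supp}(\tau))$---and neither requires anything beyond the regularity already provided by Proposition~\ref{prop-edpI-well-posedness}.
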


\begin{rmk}
	Under compactly supported kernels and nontrivial $\rho_0$, the condition \eqref{H3} ensuring positivity of the solution for $t>i$ will be replaced by the existence of $\tau_0>0$ such that
	\bqq\label{eqrmkpos}
	[0,\tau_0]\subset \mathrm{supp}(\tau)\cap \left\{ i\in[0,i_\dagger)~|~ \rho_0(i,x)>0 \text{ for some } x\in\R^d\right\}.
	\eqq
	In the sequel, we focus on kernels satisfying  {\bf (H2)} and present the associated results, which also apply to the case of compactly supported kernels.  To avoid repetition, we do not provide a separate treatment of the latter case, and instead indicate the necessary modifications in the proofs.
\end{rmk}

The above proposition indicates that we always obtain the positivity of the solutions of \eqref{edp} for $t>i+i_\star$ when $I_0\not\equiv0$, upon assuming that the supports of $I_0$ and of $\tau$ have a common intersection. As it will be seen in the proof, the condition that $\mathrm{Int}(\mathrm{supp}(\tau))\cap \mathrm{Int}(\mathcal{D}_{I_0})\neq \emptyset$ is optimal in the case when $\rho_0\equiv0$. Indeed, if $\rho_0\equiv0$ and $\mathrm{Int}(\mathrm{supp}(\tau))\cap \mathrm{Int}(\mathcal{D}_{I_0})= \emptyset$, then the dynamics is trivial and the solution is simply given by
\bqq \label{1.6'}
\rho(t,i,x)=
\left\{
\begin{split}
\left(\int_{i-t}^i \frac{I_0(\xi,x)}{\pi(\xi)} \md \xi\right)\pi(i),& \quad i\ge t,\\
\left(\int_0^i \frac{I_0(\xi,x)}{\pi(\xi)} \md \xi\right)\pi(i),& \quad i< t.
\end{split}
\right.\eqq
In the case where $I_0\equiv0$ and $\rho_0\not\equiv0$ is compactly supported in $[0,i_\dagger)\times\R^d$, it is possible to extend the region of positivity to those values of $t>i$ at the expense of imposing the above extra condition \eqref{H3} on the support of the initial condition $\rho_0$. This is somehow an optimal result, since  the solution of  problem~\eqref{edp} for $t\leq i$ is essentially the initial condition transported along the characteristics of \eqref{edp}, and thus no uniform positivity result can be obtained.

Next, we introduce the definition of super- and subsolutions and prove a comparison principle for problem \eqref{edp}.
\begin{defi}\label{def_super sub} We say that a nonnegative, bounded and continuous function $\overline \rho :\R_+\times[0,i_\dagger)\times\R^d\to \R$  is a
	supersolution of problem \eqref{edp} in $\R_+\times[0,i_\dagger)\times\R^d$, with an initial condition $\overline\rho_0$ and source term $\overline I_0$ defined on $[0,i_\dagger)\times\R^d$, if $\overline\rho(0,x,i)=\overline\rho_0(i,x)$ for all  $(i,x)\in[0,i_\dagger)\times\R^d$, if $\partial_t \overline\rho$ and $\partial_i\overline\rho$ exist a.e. in  $(0,+\infty)\times(0,i_\dagger)\times\R^d$ and $\overline\rho$ satisfies \eqref{edp}(i) a.e. in $(0,+\infty)\times(0,i_\dagger)\times\R^d$ with the ``='' replaced by ``$\ge$'', and if moreover $\overline\rho$ satisfies \eqref{edp}(ii) for each $t>0$ and $x\in\R^d$ with the ``='' replaced by ``$\ge$''. A subsolution $\underline \rho$ can be defined in a similar way with both the inequality signs above being reversed.
\end{defi}

\begin{prop}[Comparison principle]
	\label{prop-cp}
Assume Hypotheses {\bf (H1)-(H2)}. Let $\overline \rho$ and $\underline \rho$ be respectively a super- and a  subsolution of \eqref{edp} in $\R_+\times[0,i_\dagger)\times\R^d$ in the sense of Definition \ref{def_super sub} associated with nonnegative initial data $\overline \rho_0$ and $\underline \rho_0$ defined on $[0,i_\dagger)\times\R^d$ satisfying $\overline \rho_0/\pi,\underline \rho_0/\pi\in L^\infty([0,i_\dagger)\times\R^d)$ and with nonnegative, bounded, continuous and compactly supported source terms $\overline I_0$ and $\underline I_0$ defined on $[0,i_\dagger)\times\R^d$. Assume that $\overline \rho_0\ge \underline \rho_0$ and $\overline I_0\ge \underline I_0$ in $[0,i_\dagger)\times\R^d$, then $\overline \rho\ge \underline \rho$ in $\R_+\times[0,i_\dagger)\times\R^d$. 
Furthermore,
\begin{itemize}
	\item[(i)]  if $\overline I_0\neq \underline I_0$ in $[0,i_\dagger)\times\R^d$, by further assuming that $\mathrm{Int}(\mathrm{supp}(\tau))\cap \mathrm{Int}(\mathcal{D}_{\overline I_0-\underline I_0})\neq \emptyset$, we have $\overline \rho(t,i,x)> \underline \rho(t,i,x)$ for  $(t,i,x)\in(0,+\infty)\times[0,i_\dagger)\times\R^d$ with $t>i+i_\star$, where $\mathcal{D}_{\overline I_0-\underline I_0}$ and $i_\star$ are respectively given by \eqref{I_0 support} and \eqref{i_star}  with $I_0$ repalced by  $\overline I_0-\underline I_0$;
	\item[(ii)] if  $\overline \rho_0\neq\underline \rho_0$, by further assuming that  \eqref{H3} is satisfied with $\rho_0$ replaced by  $\overline \rho_0-\underline \rho_0$, we have $\overline \rho(t,i,x)> \underline \rho(t,i,x)$ for  $(t,i,x)\in(0,+\infty)\times[0,i_\dagger)\times\R^d$ with $t>i$.
\end{itemize}

\end{prop}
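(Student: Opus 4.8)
The plan is to first establish the non-strict comparison $\overline\rho\ge\underline\rho$ by a Volterra--Gronwall estimate carried along the characteristics, and then to obtain the strict inequalities by rerunning the positivity argument of Proposition~\ref{prop-positivity} on the difference $\zeta:=\overline\rho-\underline\rho$. Since $\pi'=-\gamma\pi$, dividing \eqref{edp}(i) by $\pi$ makes its left-hand side the derivative of $\rho/\pi$ along the characteristics $i-t=\mathrm{const}$. Integrating the corresponding (in)equalities for $\overline\rho$ and $\underline\rho$ along characteristics — which is legitimate by the regularity required of sub- and supersolutions, and which also shows that a bounded subsolution with $\underline\rho_0/\pi\in L^\infty$ satisfies $\underline\rho/\pi\in L^\infty$ as in the proof of Proposition~\ref{prop-edpI-well-posedness} — and using $\overline\rho_0\ge\underline\rho_0$ and $\overline I_0\ge\underline I_0$, one gets, writing $\zeta^-:=\max(0,-\zeta)$,
\bqs
\zeta^-(t,i,x)=0\ \ \text{for } i\ge t,\qquad \zeta^-(t,i,x)\le\pi(i)\,\zeta^-(t-i,0,x)\ \ \text{for } 0<i<t.
\eqs

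Next, set $A(t,x):=\int_0^\infty\tau(i)\,\K*\underline\rho(t,i,x)\,\md i$ and $B(t,x):=\int_0^\infty\tau(i)\,\K*\overline\rho(t,i,x)\,\md i$, both $\ge0$, with $A$ finite by the previous step. Subtracting the boundary relations \eqref{edp}(ii) of the sub- and supersolution and using the elementary inequality $e^{-A}-e^{-B}\ge\min(0,B-A)$ (valid for $A,B\ge 0$), we get $\zeta(t,0,x)\ge\S_0\min(0,B-A)$; and $B-A=\int_0^\infty\tau(i)\,\K*\zeta(t,i,x)\,\md i\ge-\int_0^\infty\tau(i)\,\K*\zeta^-(t,i,x)\,\md i$ because $-\zeta\le\zeta^-$ pointwise and $\tau,\K\ge0$. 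Combining this with the characteristic bounds, $\K\ge0$ and $\int_{\R^d}\K=1$, and writing $\psi(t):=\|\zeta^-(t,0,\cdot)\|_{L^\infty(\R^d)}$ and $\omega=\tau\pi$, we arrive at
\bqs
\psi(t)\le\S_0\int_0^{\min(t,i_\dagger)}\omega(i)\,\psi(t-i)\,\md i\quad\text{for } t>0,\qquad \psi(0)=0,
\eqs
with $\psi$ bounded since $\overline\rho,\underline\rho$ are bounded. Multiplying by $e^{-\lambda t}$ and bounding the convolution gives $\sup_{t\ge0}e^{-\lambda t}\psi(t)\le\S_0\big(\int_0^\infty\omega(i)e^{-\lambda i}\,\md i\big)\sup_{t\ge0}e^{-\lambda t}\psi(t)$; since $\omega\in L^1$ the prefactor vanishes as $\lambda\to+\infty$, so for $\lambda$ large it is $<1$, forcing $\psi\equiv0$. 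Thus $\zeta(t,0,x)\ge0$ for all $t,x$, and feeding this back into the characteristic bounds yields $\zeta^-\equiv0$, i.e. $\overline\rho\ge\underline\rho$ in $\R_+\times[0,i_\dagger)\times\R^d$.

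For the strict inequalities, with $\zeta\ge0$ now available, integrating \eqref{edp}(i) for $\zeta$ along characteristics while keeping the nonnegative source and trace terms gives $\zeta(t,i,x)\ge\pi(i)\int_0^i\frac{(\overline I_0-\underline I_0)(\xi,x)}{\pi(\xi)}\,\md\xi+\pi(i)\,\zeta(t-i,0,x)$ for $i<t$ and $\zeta(t,i,x)\ge\pi(i)(\overline\rho_0-\underline\rho_0)(i-t,x)/\pi(i-t)$ for $i\ge t$. Hence, in case (i), the assumption $\mathrm{Int}(\mathrm{supp}(\tau))\cap\mathrm{Int}(\mathcal{D}_{\overline I_0-\underline I_0})\neq\emptyset$ (resp., in case (ii), hypothesis \eqref{H3} for $\overline\rho_0-\underline\rho_0$) forces $\zeta$ to be positive, at some time, on a set of positive $i$-measure inside $\mathrm{Int}(\mathrm{supp}(\tau))$ and on an open set in $x$. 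Since $\zeta\ge0$ we also have the sharper boundary bound $\zeta(t,0,x)\ge\S_0 e^{-A(t,x)}\big(1-\exp(-\int_0^\infty\tau(i)\,\K*\zeta(t,i,x)\,\md i)\big)$, and because $A$ is bounded while $\K>0$ everywhere, such a positive bulk of $\zeta$ forces $\zeta(t,0,\cdot)>0$ on all of $\R^d$. Propagating this positivity forward along characteristics and iterating — exactly the mechanism used to prove Proposition~\ref{prop-positivity}, now applied to $\zeta$ — yields $\zeta>0$ for $t>i+i_\star$ in case (i) and for $t>i$ in case (ii).

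The real difficulty is the apparent circularity of the boundary relation: $\zeta(t,0,x)$ is bounded below only through the nonlocal functional $\int_0^\infty\tau(i)\,\K*\zeta(t,i,x)\,\md i$, which again sees $\zeta$ at time $t$. This is resolved by the causal structure of the transport equation — $\zeta(t,i,\cdot)$ is controlled by the ordered data for $i\ge t$ and by the earlier trace $\zeta(\cdot,0,\cdot)$ for $i<t$ — which collapses everything to the scalar Volterra inequality for $\psi$, closed by the exponential-weight trick; carefully justifying the integrations along characteristics for sub- and supersolutions (in particular the bound $\underline\rho/\pi\in L^\infty$) is the other technical point that needs attention.
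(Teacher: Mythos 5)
Your proof is correct and follows essentially the same route as the paper: integrate along characteristics to reduce everything to the boundary trace at $i=0$, derive a Volterra inequality for the (negative part of the) trace difference, close it with an exponential weight $e^{-\lambda t}$ letting $\lambda\to+\infty$, and then obtain strict positivity by rerunning the mechanism of Proposition~\ref{prop-positivity} on the difference. The only cosmetic difference is that you work directly with $\zeta^-=(\underline\rho-\overline\rho)^+$ and the boundary lower bound $\zeta(t,0,x)\ge \S_0 e^{-A}\bigl(1-\exp(-\int_0^\infty\tau(i)\,\K*\zeta\,\md i)\bigr)$, whereas the paper routes the same estimates through the abstract operator $\F$ of Lemma~\ref{lem_cp} and, for the strict inequality, inserts the exact solutions $\overline v,\underline v$ between $\overline\rho$ and $\underline\rho$; both versions carry the same content.
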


The above comparison principle immediately extends to generalized super- and subsolutions, given by the minimum of supersolutions and maximum of subsolutions respectively.

 Since we are concerned with the ``nontrivial'' long time behavior of the solution $\rho$ to \eqref{edp} associated with nonnegative initial data $\rho_0$ (including the case that $\rho_0\equiv0$), we impose the following technical condition on $I_0$, for which Proposition \ref{prop-positivity}(i) shows that $\rho$ is eventually positive for large times.

\begin{Hypothesis}{(H3) - On the initial distribution $I_0$.}
We assume that $I_0\not\equiv0$ is nonnegative, bounded, continuous and compactly supported in $[0,i_\dagger)\times\R^d$ such that $\mathrm{Int}(\mathrm{supp}(\tau))\cap \mathrm{Int}(\mathcal{D}_{I_0})\neq \emptyset$ and 
\begin{equation}
\label{H2}
\mathcal{D}_{I_0}\subset\mathrm{Int}\big(\mathrm{supp}(\gamma)\big).
\end{equation} 
\end{Hypothesis}

To investigate the spreading property of \eqref{edp}, let us first look at its stationary problem:
\bqq\label{edpI-stationary}
\left\{
\begin{split}
\partial_i\rho(i,x) &=I_0(i,x) - \gamma(i) \rho(i,x) ,~  \quad i \in(0,i_\dagger), \quad x\in\R^d, \\
\rho(0,x)&=\S_0 \left(1-\exp\left(-\int_0^\infty \tau(i) \K*\rho(i,x)\md i \right)\right), \quad  x\in\R^d.
\end{split}\right.
\eqq
Due to the presence of $I_0$, problem \eqref{edpI-stationary} has no constant  solutions anymore. However, we can still manage to prove that problem \eqref{edpI-stationary} has a unique positive bounded solution. In order to characterize more precisely the asymptotic behavior (as $\|x\|\rightarrow+\infty)$ of the solution to  \eqref{edpI-stationary}, we introduce the following quantity 
\bqq
\label{basicR0}
\mathscr{R}_0:=\S_0 \int_0^\infty \omega(i)\md i \in(0,+\infty).
\eqq
Here, $\mathscr{R}_0$  stands for the basic reproduction number associated with problem \eqref{edp} \cite{Diekmann1,Diekmann2,Diekmann3,Thieme1}. Our second main result reads as follows.
\begin{thm}
	\label{thm_edpI-Liouville}
Assume {\bf(H1)-(H2)} and  that $I_0\not\equiv0$ satisfies {\bf(H3)}. The stationary problem \eqref{edpI-stationary} admits a unique positive bounded solution $U$ in $[0,i_\dagger)\times \R^d$. Moreover, $U$ satisfies
	\begin{align}
		\label{eqn-limit property}
		\lim\limits_{\|x\|\to+\infty}U(i,x)=\begin{cases}
			0,&\text{if}~\mathscr{R}_0\le 1,\\
			\rho^s(i), &\text{if}~\mathscr{R}_0>1,
		\end{cases}
		~~	\text{locally uniformly in}~i\in[0,i_\dagger),
	\end{align}
where $\rho^s(i):=\S_0 \rho^*\pi(i)$ for $i \in[0,i_\dagger)$ and $\rho^*\in(0,1)$ is the unique positive constant solution of equation $v=1-e^{-\mathscr{R}_0 v}$ when $\mathscr{R}_0>1$.
\end{thm}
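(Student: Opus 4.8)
The plan is to collapse the stationary system to a single scalar nonlocal fixed--point equation on $\R^d$ and to analyze it by comparison. For each fixed $x$ the first line of \eqref{edpI-stationary} is a linear ODE in $i$; dividing by the survival probability $\pi$ of \eqref{pi} gives $\partial_i(\rho/\pi)=I_0/\pi$, so any bounded solution $U$ is determined by its trace $b(x):=U(0,x)$ through $U(i,x)=\pi(i)\big(b(x)+\int_0^i I_0(\xi,x)\pi(\xi)^{-1}\md\xi\big)$. Substituting this into the boundary relation of \eqref{edpI-stationary} and writing $b=\S_0 v$, one finds that $v$ must solve
\bqq\label{fixedpt}
v(x)=1-\exp\!\Big(-\mathscr{R}_0\,\K*v(x)-g(x)\Big),\qquad x\in\R^d,
\eqq
with $\mathscr{R}_0$ as in \eqref{basicR0} and $g:=\K*h$, $h(x):=\int_0^\infty\omega(i)\int_0^i I_0(\xi,x)\pi(\xi)^{-1}\md\xi\,\md i$. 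By Hypothesis {\bf(H3)} the function $h$ is nonnegative, bounded, continuous, compactly supported and not identically zero, hence $g$ is bounded, continuous, \emph{strictly positive everywhere} (because $\K>0$) and $g(x)\to0$ as $\|x\|\to+\infty$; and since $\K\in W^{1,1}(\R^d)$ every bounded solution of \eqref{fixedpt} is Lipschitz. Thus Theorem~\ref{thm_edpI-Liouville} reduces to showing that \eqref{fixedpt} has a unique bounded positive solution $v$, with $v(x)\to0$ if $\mathscr{R}_0\le1$ and $v(x)\to\rho^*$ if $\mathscr{R}_0>1$ as $\|x\|\to+\infty$: one then sets $U(i,x):=\pi(i)\big(\S_0 v(x)+\int_0^i I_0(\xi,x)\pi(\xi)^{-1}\md\xi\big)$, and the source term there vanishes for $\|x\|$ large uniformly for $i$ in compact subsets of $[0,i_\dagger)$ (where $\pi$ is bounded), which gives the local uniformity in $i$ in \eqref{eqn-limit property}.

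For existence I would iterate the order--preserving map $F[v]:=1-\exp(-\mathscr{R}_0\K*v-g)$, which sends $[0,1]$ into itself with $F[0]=1-e^{-g}>0$ and $F[1]<1$. Iterating $F$ from $0$ (resp.\ from $1$) yields a nondecreasing (resp.\ nonincreasing) sequence converging, by dominated convergence inside the convolution, to a minimal solution $\underline v$ and a maximal solution $\bar v$ with $0<\underline v\le\bar v<1$, every solution of \eqref{fixedpt} lying between them; in particular $\underline v\ge1-e^{-g}>0$. To control $\bar v$ at infinity, take $x_n$ with $\|x_n\|\to+\infty$ and $\bar v(x_n)\to M:=\limsup_{\|x\|\to+\infty}\bar v(x)$; the reverse Fatou lemma applied to $\K*\bar v(x_n)=\int\K(z)\bar v(x_n-z)\,\md z$ (valid since $\bar v\le1$, $\K\in L^1$) gives $\limsup_n\K*\bar v(x_n)\le M$, so passing to the $\limsup$ in \eqref{fixedpt} and using $g(x_n)\to0$ yields $M\le1-e^{-\mathscr{R}_0 M}$. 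As $s\mapsto1-e^{-\mathscr{R}_0 s}-s$ is concave, vanishes at $0$, and (when $\mathscr{R}_0>1$) again at $\rho^*$ and is negative beyond, this forces $M=0$ if $\mathscr{R}_0\le1$ and $M\le\rho^*$ if $\mathscr{R}_0>1$. In the case $\mathscr{R}_0\le1$ this already gives $\bar v\to0$, hence $\underline v\to0$.

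When $\mathscr{R}_0>1$ it remains to prove the matching lower bound $\underline v\ge\rho^*$; this is the main obstacle, and it rests on a Liouville--type rigidity for the homogeneous equation $w=1-e^{-\mathscr{R}_0\K*w}$, namely that its only bounded nonnegative nontrivial solution is $w\equiv\rho^*$. Granting this, I would choose $R$ so large that the principal eigenvalue $\lambda_R$ of $\phi\mapsto\int_{B_R}\K(\cdot-y)\phi(y)\,\md y$ on $C(\overline{B_R})$ satisfies $\mathscr{R}_0\lambda_R>1$ (possible because, using the radial symmetry of $\K$, $\lambda_R$ increases to $1$ as $R\to+\infty$), let $\psi_R>0$ be the corresponding eigenfunction extended by $0$ outside $B_R$, and pick $\delta>0$ small enough (and $\le1-e^{-\min_{\overline{B_R}}g}$, so that $\delta\psi_R\le1-e^{-g}\le\underline v$) that the bump $\delta\psi_R$ is a subsolution of the homogeneous equation. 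Iterating the homogeneous map from $\delta\psi_R$ while staying below the homogeneous supersolution $\underline v$ (note $\underline v\ge1-e^{-\mathscr{R}_0\K*\underline v}$) produces a bounded nonnegative nontrivial homogeneous solution $w_\star\le\underline v$, whence $\underline v\ge w_\star\equiv\rho^*$ by the rigidity statement. That rigidity is where the Fisher--KPP mechanism is essential: I would derive it from the spreading dynamics of the homogeneous time--dependent problem \eqref{kppI}, for which a solution issued from such a compactly supported bump invades all of $\R^d$ and converges to $\rho^s$, so that a steady state dominating the bump must be $\ge\rho^*$. Combining with the previous paragraph, $\rho^*\le\underline v\le\bar v$ with $\limsup_{\|x\|\to+\infty}\bar v\le\rho^*$, so $\underline v$ and $\bar v$ both tend to $\rho^*$ at infinity.

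Finally, uniqueness follows by comparing the minimal and maximal solutions. Set $w:=\bar v-\underline v\ge0$, which tends to $0$ at infinity by the previous steps; a direct computation gives $w=e^{-\mathscr{R}_0\K*\underline v-g}\big(1-e^{-\mathscr{R}_0\K*w}\big)\le e^{-\mathscr{R}_0\K*\underline v-g}\,\mathscr{R}_0\,\K*w$. Since $w\to0$ at infinity its supremum is attained at some $x_\star$, and evaluating there with $\K*w(x_\star)\le\sup w$ yields $\sup w\le\theta\sup w$ where $\theta:=\mathscr{R}_0\,e^{-\mathscr{R}_0\K*\underline v(x_\star)-g(x_\star)}$. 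If $\mathscr{R}_0\le1$ then $\theta\le\mathscr{R}_0 e^{-g(x_\star)}<1$ because $g(x_\star)>0$; if $\mathscr{R}_0>1$ then, using $\K*\underline v\ge\rho^*$ from the previous step and the fact that $s\mapsto1-e^{-\mathscr{R}_0 s}$ crosses the diagonal transversally at $\rho^*$ (so $\mathscr{R}_0 e^{-\mathscr{R}_0\rho^*}=\mathscr{R}_0(1-\rho^*)<1$), one gets $\theta\le\mathscr{R}_0(1-\rho^*)e^{-g(x_\star)}<1$. In either case $\sup w=0$, so $\underline v=\bar v$, and as every solution of \eqref{fixedpt} is squeezed between them, uniqueness follows. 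I expect the genuinely hard ingredient to be the Liouville rigidity for the homogeneous equation---equivalently the lower bound $\underline v\ge\rho^*$ when $\mathscr{R}_0>1$---which is precisely the point where one must import the Fisher--KPP--type spreading behaviour of the nonlocal transport dynamics.
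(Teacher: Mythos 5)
Your argument is correct, but it takes a genuinely different route from the paper's on each of the three sub-steps. Both proofs open with the same reduction: integrate the ODE in $i$ to express $U(i,x)$ through its trace $U(0,x)$, arriving at the scalar fixed-point equation on $\R^d$ — your \eqref{fixedpt} is exactly the paper's equation $\widehat\varphi = 1-\mathcal{A}(x)e^{-\mathscr{R}_0\K*\widehat\varphi}$ with $g=-\ln\mathcal{A}>0$ everywhere. From there you diverge. For existence you iterate the order-preserving map $F$ directly from the constants $0$ and $1$ to get the minimal and maximal solutions, whereas the paper evolves a compactly supported subsolution and the supersolution $M\pi$ under the \emph{time-dependent} flow and passes to the monotone large-time limit. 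For the asymptotics your reverse-Fatou limsup argument, pushing $M=\limsup_{\|x\|\to\infty}\bar v$ against the graph of $s\mapsto 1-e^{-\mathscr{R}_0 s}$, replaces the paper's translation-compactness argument; both then invoke the homogeneous Liouville theorem (Theorem~\ref{thm-liouville}) for the lower bound when $\mathscr{R}_0>1$, you via the compactly supported subsolution and monotone iteration, the paper via $\widehat\varphi\ge\varphi$ and the rigidity $\varphi\equiv\rho^*$. The sharpest divergence is in uniqueness: the paper proves it directly and independently of the asymptotics, from the concavity and strict sublinearity of $\widehat\varphi\mapsto\mathcal{N}(\widehat\varphi,\cdot)$ (using exactly that $\mathcal{A}<1$, i.e.\ $g>0$), whereas you deduce it \emph{from} the decay at infinity by locating a maximum of $w=\bar v-\underline v$ and exploiting either $g(x_\star)>0$ (when $\mathscr{R}_0\le 1$) or the transversality $\mathscr{R}_0(1-\rho^*)<1$ (when $\mathscr{R}_0>1$). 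Your route is more elementary in avoiding a cone fixed-point argument, but logically more delicate since the asymptotics must come first, and the contraction factor $\theta<1$ is genuinely quantitative while the paper's argument is qualitative. One soft spot in your write-up: the subsolution built from the principal eigenfunction $\psi_R$ of the truncated kernel operator does work (note that $\psi_R$ does not vanish on $\partial B_R$ — it is bounded away from zero on the closed ball, so the quadratic error term in $1-e^{-s}\ge s-Cs^2$ is absorbed uniformly — though its extension by $0$ is discontinuous, which is harmless as one iteration of the homogeneous map restores continuity); the paper instead constructs the explicit cosine bump $\delta\psi$ in the proof of Theorem~\ref{thm-long time behavior}, a choice that is also reused for the spreading-speed results, so it is the more economical option within the paper's architecture.
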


As previously emphasized, model \eqref{edp} can be interpreted as a kind of nonlocal reaction-diffusion equation of Fisher-
KPP type with a heterogeneous forcing term $I_0(i,x)$. In this spirit, it is very closely related to the so-called field-road reaction-diffusion models studied in the past few years \cite{BRR16,BRR21}. In our model structured with the age since infection, and similarly as in the continuous and discrete cases \cite{BRR21,BF21}, the unique stationary solution $U$ given in the previous theorem is actually  a global attractor for the dynamics of \eqref{edp} starting from a nonnegative bounded compactly supported initial condition. To state our next main result, let us define the class of initial conditions $\rho_0$ that we shall be working with. 

\begin{Hypothesis}{(H4) - On the initial condition $\rho_0$.}
We assume that $\rho_0$ is nonnegative, absolutely continuous and compactly supported  in $[0,i_\dagger)\times\R^d$, such that $\rho_0/\pi\in L^\infty([0,i_\dagger)\times\R^d)$.  In particular, $\rho_0$ can be identically 0 in $[0,i_\dagger)\times\R^d$.
\end{Hypothesis}

\begin{thm}
	\label{thm-edpI_long time behavior}
Assume {\bf(H1)-(H2)} and suppose that $I_0\not\equiv0$ satisfies {\bf(H3)}. Let $\rho$ be the solution of \eqref{edp} associated with an initial condition $\rho_0$ satisfying {\bf(H4)}. Then,
	\bqs
	\rho(t,i,x) \rightarrow U(i,x)~~~~ \text{ as } t\rightarrow +\infty,
	\eqs
	locally uniformly in $(i,x)\in[0,i_\dagger)\times\R^d$, where $U$ is the unique positive stationary solution to \eqref{edp} given in Theorem \ref{thm_edpI-Liouville}.
\end{thm}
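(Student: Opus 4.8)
The plan is a squeezing argument built on the comparison principle (Proposition~\ref{prop-cp}) and the uniqueness statement of Theorem~\ref{thm_edpI-Liouville}: I will trap $\rho(t,\cdot,\cdot)$ between a nonincreasing family $\overline\rho(t,\cdot,\cdot)$ and a nondecreasing family $\underline\rho(t,\cdot,\cdot)$, both converging to $U$. To this end I first construct an $x$-independent stationary supersolution. Since $I_0$ is continuous, nonnegative and compactly supported in $[0,i_\dagger)\times\R^d$, the function $h(i):=\int_0^i\sup_{x\in\R^d}\frac{I_0(\xi,x)}{\pi(\xi)}\,\md\xi$ is well defined, of class $C^1$ and bounded by some $H<\infty$, and for $C>0$ I set $g(i):=(C+h(i))\pi(i)$. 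Because $\partial_i(g/\pi)=h'(i)\ge I_0(i,x)/\pi(i)$ and $g(0)=C\ge\S_0$, the function $g$ (viewed as constant in $t$) is a supersolution of \eqref{edp} in the sense of Definition~\ref{def_super sub}. Moreover, the characteristic formula $U(i,x)=\pi(i)\big(U(0,x)+\int_0^iI_0(\xi,x)/\pi(\xi)\,\md\xi\big)$ together with $U(0,x)\le\S_0$ yields $U/\pi\le\S_0+H$; hence, choosing $C\ge\S_0+H+\|\rho_0/\pi\|_{L^\infty}$, we get $g\ge U$ and $g\ge\rho_0$ on $[0,i_\dagger)\times\R^d$.

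Next, let $\overline\rho$ and $\underline\rho$ be the solutions of \eqref{edp} with source $I_0$ and initial data $g$ and $0$ respectively (they exist by Proposition~\ref{prop-edpI-well-posedness}, since $g/\pi=C+h$ is bounded and absolutely continuous). Applying the comparison principle to the time-translates $\overline\rho(\cdot+s,\cdot,\cdot)$ and $\underline\rho(\cdot+s,\cdot,\cdot)$ — legitimate because the source $I_0$ does not depend on $t$ — and using $\overline\rho(s,\cdot,\cdot)\le g$ (comparison of the solution $\overline\rho$ with the supersolution $g$) and $\underline\rho(s,\cdot,\cdot)\ge0$, one finds that $t\mapsto\overline\rho(t,i,x)$ is nonincreasing and $t\mapsto\underline\rho(t,i,x)$ is nondecreasing; since $0\le\underline\rho\le\overline\rho\le g$, both converge pointwise, to limits $\overline\rho_\infty$ and $\underline\rho_\infty$. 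Comparing $\rho$ with $\overline\rho$ and $\underline\rho$ at $t=0$ gives $\underline\rho(t)\le\rho(t)\le\overline\rho(t)$ for all $t\ge0$, while comparing $\overline\rho$ with the time-independent solution $U$ (using $g\ge U$) gives $\overline\rho(t)\ge U$, so $\overline\rho_\infty\ge U>0$; and Proposition~\ref{prop-positivity}(i), applicable because $I_0$ satisfies {\bf(H3)}, yields $\underline\rho(t,i,x)>0$ for $t>i+i_\star$, hence $\underline\rho_\infty>0$ by monotonicity.

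It then remains to identify the two limits with $U$ and to upgrade the convergence. For $t>i$ the characteristic representation reads $\overline\rho(t,i,x)=\pi(i)\big(\overline\rho(t-i,0,x)+\int_0^iI_0(\xi,x)/\pi(\xi)\,\md\xi\big)$, and likewise for $\underline\rho$; letting $t\to+\infty$ recovers \eqref{edpI-stationary}(i) for $\overline\rho_\infty$ and $\underline\rho_\infty$. For the boundary relation one passes to the limit in $\rho(t,0,x)=\S_0\big(1-\exp(-\int_0^\infty\tau(i)\K*\rho(t,i,x)\,\md i)\big)$ using the uniform domination $\tau(i)\K*\overline\rho(t,i,x)\le(C+H)\,\omega(i)$, with $\omega\in L^1([0,i_\dagger))$ by {\bf(H1)}(iii): dominated convergence (monotone convergence for $\underline\rho$) in the $y$- and $i$-integrals shows that $\overline\rho_\infty$ and $\underline\rho_\infty$ are nonnegative bounded solutions of \eqref{edpI-stationary}, and being positive by the previous step, Theorem~\ref{thm_edpI-Liouville} forces $\overline\rho_\infty=\underline\rho_\infty=U$. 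Finally, fixing a compact $K\subset[0,i_\dagger)\times\R^d$ and $i_1<i_\dagger$ with $K\subset[0,i_1]\times\R^d$, for $t>i_1$ the functions $\overline\rho(t,\cdot,\cdot)$, $\underline\rho(t,\cdot,\cdot)$ and $U$ are continuous on $K$, so Dini's theorem turns the monotone convergences $\overline\rho(t,\cdot,\cdot)\to U$ and $\underline\rho(t,\cdot,\cdot)\to U$ into uniform ones on $K$, and the sandwich $\underline\rho(t)\le\rho(t)\le\overline\rho(t)$ gives $\rho(t,\cdot,\cdot)\to U$ uniformly on $K$. The main obstacle lies in this last step: one must build, already in the first step, an ordered stationary supersolution $g$ dominating both $\rho_0$ and $U$, and then exploit the transport/characteristic structure together with the integrable bound $\omega\in L^1$ to prove that the monotone limits are genuine \emph{positive} stationary solutions — which is precisely what licenses the use of the Liouville-type uniqueness of Theorem~\ref{thm_edpI-Liouville}.
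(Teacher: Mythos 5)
Your argument follows the same strategy the paper uses: sandwich $\rho$ between a nonincreasing family $\overline\rho$ (launched from a stationary supersolution) and a nondecreasing family $\underline\rho$ (launched from a stationary subsolution), pass to the monotone limits, identify them with $U$ through the Liouville result of Theorem~\ref{thm_edpI-Liouville}, and promote pointwise convergence to locally uniform convergence by Dini. Your choice of barriers differs from the paper's, though, and in both places your version is the cleaner one. For the upper barrier the paper takes $\overline\rho_0=\max\big(M,\|\rho_0/\pi\|_{L^\infty}\big)\pi$ and asserts this $i\mapsto M'\pi(i)$ is a stationary supersolution of \eqref{edpI-stationary}; but plugging it into \eqref{edp}(i) with ``$\ge$'' yields $\partial_i(M'\pi)+\gamma M'\pi=0\ge I_0$, which fails on the support of $I_0$. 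Your $g(i)=(C+h(i))\pi(i)$, with $h'(i)=\sup_{x} I_0(i,x)/\pi(i)$, does satisfy $\partial_i(g/\pi)\ge I_0/\pi$, keeps $g/\pi$ bounded and absolutely continuous (so Proposition~\ref{prop-edpI-well-posedness} applies), and dominates both $\rho_0$ and $U$ thanks to \eqref{4.1'} --- exactly what the nonincreasing comparison requires. For the lower barrier the paper starts from $\delta\psi$, the compactly supported subsolution built in Section~\ref{secKPP} under the hypothesis $\mathscr{R}_0>1$, which Theorem~\ref{thm-edpI_long time behavior} does not assume; your choice $\underline\rho_0\equiv0$ is a trivial stationary subsolution, valid for any $\mathscr{R}_0>0$, and you recover the strict positivity $\underline\rho_\infty>0$ needed to invoke the ``unique positive'' part of the Liouville theorem from Proposition~\ref{prop-positivity}(i), available because $I_0\not\equiv0$ satisfies {\bf(H3)}. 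Your explicit dominated/monotone convergence argument, with the integrable majorant $(C+H)\,\omega\in L^1$, also makes rigorous the step the paper leaves implicit when claiming the monotone limits solve \eqref{edpI-stationary}.
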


 Let us point out that even in the case of the zero initial condition $\rho_0\equiv0$ on $[0,i_\dagger)\times\R^d$, the long time 
dynamics of \eqref{edp} is nontrivial due to the presence of the source term $I_0(i,x)$. When $\mathscr{R}_0>1$, we can precisely characterize at which speed the epidemic spreads into the spatial domain, by imposing a  further assumption on the interaction kernel $\K$. That is,

\begin{Hypothesis}{(H2$\mu$) - Exponential localization.}
	Assume that $\K$ satisfies Hypothesis {\bf (H2)}.  Furthermore, there exists $\mu_0>0$, such that for any direction $\mathbf{e}\in\SS^{d-1}$,  one has $\int_{\R^d} \K(x)e^{\mu x\cdot \mathbf{e}} \md x<+\infty$ for $|\mu|<\mu_0$.   
	
\end{Hypothesis}
The condition that $\int_{\R^d} \K(x)e^{\mu x\cdot \mathbf{e}} \md x<+\infty$ for $|\mu|<\mu_0$ simply says that the kernel $\K$ decays at least at exponential rate along any direction $\mathbf{e}\in\SS^{d-1}$ as $\|x\|\rightarrow+\infty$. 
For future reference, we denote  
\bqq
\widetilde{\K}(\mu):=\int_{\R^d} \K(x)e^{\mu x\cdot \mathbf{e}} \md x=\int_\R\K_0(z)e^{\mu z}\md z
\label{eqtildeKmu}
\eqq
thanks to Hypothesis {\bf (H2)} and \eqref{1.4'}, which does not depend on $\mathbf{e}\in\SS^{d-1}$. 

 The spreading property of problem \eqref{edp} is the following.

\begin{thm}
	\label{thm-edpI-spreading property}
Assume {\bf(H1)-(H2$\mu$)} and $\mathscr{R}_0>1$, and suppose that $I_0\not\equiv0$ satisfies {\bf(H3)}. Then, there  exists some $c_*>0$, which is called the asymptotic spreading speed, such that the solution $\rho$ of \eqref{edp} starting from an initial condition $\rho_0$ satisfying {\bf(H4)}  satisfies:
	\begin{itemize}
		\item[(i)] for any $0<c<c_*$ and all $j\in(0,i_\dagger)$,
		\bqs
		\lim_{t\to+\infty}\sup_{\|x\|\le ct,~0\le i\le j}\big|\rho(t,i,x)-U(i,x)\big|= 0;
		\eqs
		
		\item[(ii)] for any $c>c_*$ and all $j\in(0,i_\dagger)$,
		\bqs
		\underset{t\rightarrow+\infty}{\lim } \underset{\|x\|\geq ct,~ 0 \leq i \leq j}{\sup} \rho(t,i,x) =0.
		\eqs
	\end{itemize}
\end{thm}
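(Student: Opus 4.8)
The plan is to reduce problem \eqref{edp} to a scalar nonlocal renewal equation for the boundary trace $n(t,x):=\rho(t,0,x)$, to extract the spreading speed $c_*$ from the linearization of that equation at the leading edge, and then to prove the two one-sided statements (ii) and (i) essentially by the comparison principle, Proposition~\ref{prop-cp}. Integrating \eqref{edp}(i) along the characteristics $\{t-i=\mathrm{const}\}$ gives, for $t>i$,
\[
\rho(t,i,x)=\pi(i)\Big[\,n(t-i,x)+\int_0^i\frac{I_0(\xi,x)}{\pi(\xi)}\,\md\xi\,\Big]
\]
(with the analogous formula in terms of $\rho_0$ for $t\le i$), so that \eqref{edp}(ii) closes into
\[
n(t,x)=\S_0\Big(1-\exp\big(-L[n](t,x)-g(x)\big)\Big),\qquad
L[n](t,x):=\int_0^{i_\dagger}\omega(i)\,(\K*n)(t-i,x)\,\md i,
\]
where $g\ge0$ depends only on $I_0$ and $g(x)\to0$ as $\|x\|\to+\infty$. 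This is of Fisher--KPP type: $s\mapsto\S_0(1-e^{-s})$ is concave, satisfies $\S_0(1-e^{-s})\le\S_0 s$, and $\S_0(1-e^{-s})\ge(1-\eta)\S_0 s$ for $0\le s\le\theta$ with $\eta=\eta(\theta)\to0$ as $\theta\to0$. Inserting $e^{\lambda t-\mu\, x\cdot\mathbf{e}}$ into the linearized equation $n=\S_0 L[n]$ gives the dispersion relation $\S_0\,\widetilde\K(\mu)\,\widehat\omega(\lambda)=1$, with $\widehat\omega(\lambda):=\int_0^{i_\dagger}\omega(i)e^{-\lambda i}\,\md i$ and $\widetilde\K$ as in \eqref{eqtildeKmu}. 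Since $\mathscr{R}_0>1$ (see \eqref{basicR0}) and $\widetilde\K\ge1$ by radial symmetry and Jensen's inequality, this has, for each $\mu$ in the interior of $\{\widetilde\K<+\infty\}$, a unique root $\lambda(\mu)>0$; one sets $c_*:=\inf_{\mu>0}\lambda(\mu)/\mu$ and checks that $\lambda(\mu)/\mu\to+\infty$ both as $\mu\to0^+$ (because $\lambda(0^+)>0$) and as $\mu$ reaches the edge of that interval -- which is where Hypothesis~{\bf(H2$\mu$)}, in particular \eqref{K-infty} when $\mu_0=+\infty$, enters -- so that $c_*\in(0,+\infty)$ and the infimum is a minimum, attained at some $\mu_*>0$.

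For the upper estimate (ii), fix $c>c_*$, pick $c_*<c''<c'<c$, and for each unit vector $\mathbf{e}$ consider $\overline\rho_{\mathbf{e}}(t,i,x):=A\,\pi(i)\,e^{-\mu_* c'' i}\,e^{-\mu_*(x\cdot\mathbf{e}-c't)}$, suitably truncated so as to meet the boundedness requirements of Proposition~\ref{prop-cp} (e.g. replaced by $\min\{V,\overline\rho_{\mathbf{e}}\}$, with $V$ the stationary supersolution of the next step). Using $\pi'=-\gamma\pi$ one finds $\partial_t\overline\rho_{\mathbf{e}}+\partial_i\overline\rho_{\mathbf{e}}+\gamma\,\overline\rho_{\mathbf{e}}=\mu_*(c'-c'')\,\overline\rho_{\mathbf{e}}$, which is $\ge I_0$ on the (compact) support of $I_0$ once $A$ is large; because $\K*\big(e^{-\mu_* y\cdot\mathbf{e}}\big)(x)=\widetilde\K(\mu_*)\,e^{-\mu_* x\cdot\mathbf{e}}$, the sublinear bound $\S_0(1-e^{-s})\le\S_0 s$ reduces \eqref{edp}(ii) with ``$\ge$'' to $1\ge\S_0\widetilde\K(\mu_*)\widehat\omega(\mu_* c'')$, which holds since $\mu_* c''>\mu_* c_*=\lambda(\mu_*)$ and $\widehat\omega$ is decreasing; and $\overline\rho_{\mathbf{e}}(0,\cdot,\cdot)\ge\rho_0$ for $A$ large, as $\rho_0$ is compactly supported with $\rho_0/\pi$ bounded. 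Hence $\overline\rho_{\mathbf{e}}$ is a supersolution of \eqref{edp} and Proposition~\ref{prop-cp} yields $\rho\le\overline\rho_{\mathbf{e}}$; taking $\mathbf{e}=x/\|x\|$ when $\|x\|\ge ct$ gives $\rho(t,i,x)\le A\,\pi(i)\,e^{-\mu_*(c-c')t}\to0$ uniformly in $0\le i\le j$.

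For statement (i), two one-sided bounds are combined. \emph{From above:} the time-independent $V(i,x):=\pi(i)\big[V_0+\int_0^i I_0(\xi,x)/\pi(\xi)\,\md\xi\big]$ with $V_0:=\max(\S_0,\|\rho_0/\pi\|_\infty)$ is a stationary supersolution of \eqref{edp} lying above both $\rho_0$ and $U$, so the solution $\overline\rho$ of \eqref{edp} started from $V$ is non-increasing in $t$, and by the uniqueness of $U$ (Theorem~\ref{thm_edpI-Liouville}) it decreases to $U$; a barrier argument comparing $\overline\rho$ with the spatially homogeneous dynamics of \eqref{kppI} makes this convergence uniform in $x$, so that $\rho\le\overline\rho\le U+\varepsilon$ for large $t$, uniformly in $x$ and in $0\le i\le j$. \emph{From below:} fix $c<\tilde c<c_*$; by Proposition~\ref{prop-positivity}(i), $\rho(T_0,\cdot,\cdot)$ is continuous and positive, hence $\ge\kappa>0$ on any ball, and starting from such a bump one builds a subsolution of the $(1-\eta)$-linearized equation -- made of the profiles $i\mapsto\pi(i)e^{-\mu i}$ times a front truncation, with $\eta$ so small that the corresponding linear speed exceeds $\tilde c$ -- that is compactly supported in the moving frame $x\cdot\mathbf{e}-\tilde c\,t$, stays below $\theta$, and is therefore a genuine subsolution of \eqref{edp}. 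A hair-trigger and translation argument then produces $\delta_0>0$ with $\liminf_{t\to+\infty}\inf_{\|x\|\le\tilde c\,t,\,0\le i\le j}\rho(t,i,x)\ge\delta_0$. Finally, on a fixed ball containing $\mathrm{supp}(I_0)$ Theorem~\ref{thm-edpI_long time behavior} already gives $\rho\to U$, while outside it one bounds $\rho$ from below by solutions of the homogeneous problem \eqref{kppI} issued from small bumps placed inside $\{\rho\ge\delta_0\}$: these spread and converge locally to $\rho^s(i)\le U(i,x)$, and $U(i,x)$ is within $\varepsilon$ of $\rho^s(i)$ once the ball is large. Put together, this gives $\rho(t,i,x)\ge U(i,x)-\varepsilon$ on $\|x\|\le c\,t$ for large $t$, which together with the bound from above proves (i).

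The step I expect to be the main obstacle is the quantitative lower bound just used: constructing, for this nonlocal-in-time age-structured operator, a subsolution that simultaneously (a) propagates at any prescribed speed $\tilde c<c_*$, (b) remains below the threshold $\theta$ so that the KPP lower bound $\S_0(1-e^{-s})\ge(1-\eta)\S_0 s$ applies and it is a genuine subsolution of \eqref{edp}, and (c) can be translated to sweep out the entire expanding ball $\{\|x\|\le\tilde c\,t\}$. The remainder is bookkeeping with the comparison principle (Proposition~\ref{prop-cp}) and with Theorems~\ref{thm_edpI-Liouville} and~\ref{thm-edpI_long time behavior}.
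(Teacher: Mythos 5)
Your overall strategy matches the paper's: reduce to a scalar renewal equation for the boundary trace, extract $c_*$ from the dispersion relation, and sandwich $\rho$ between explicit super- and subsolutions via Proposition~\ref{prop-cp}. Your construction for statement (ii), with the deliberately detuned speeds $c_*<c''<c'<c$ so that $\partial_t\overline\rho_{\mathbf e}+\partial_i\overline\rho_{\mathbf e}+\gamma\overline\rho_{\mathbf e}=\mu_*(c'-c'')\overline\rho_{\mathbf e}>0$ dominates $I_0$ after enlarging $A$, and the truncation by $\min\{V,\overline\rho_{\mathbf e}\}$ with $V(i,x)=\pi(i)\bigl[V_0+\int_0^i I_0(\xi,x)/\pi(\xi)\,\md\xi\bigr]$, is correct; it is in fact a cleaner variant of the paper's supersolution $C\min(\max(\S_0,A),e^{-\alpha_*(x\cdot\mathbf e-c_*t)-\alpha_*c_*i})\pi(i)$, whose constant piece by itself does not absorb the source $I_0$ without the accumulated-source correction you build in.

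For statement (i), however, there is an inefficiency bordering on a gap. You flag the construction of a compactly supported, expanding subsolution below the KPP threshold as ``the main obstacle'' and propose to build it from scratch. But this is precisely the content of Lemma~\ref{lemma_existence of compactly subsol} and Theorem~\ref{thm-spreading property}, both established for the homogeneous problem \eqref{fkpp} \emph{before} this theorem. The paper's actual proof of (i) sidesteps the construction entirely: since $I_0\ge 0$ and the initial data are ordered, $\rho$ is a supersolution of \eqref{fkpp}, so Theorem~\ref{thm-spreading property}(ii) applied to the homogeneous flow sitting below it gives $\liminf_n\sup_i\big(\rho(t_n,i,x_n)-\rho^s(i)\big)\ge 0$ along any $\|x_n\|\le ct_n$; combining this with the asymptotics $U(i,x)\to\rho^s(i)$ from Theorem~\ref{thm_edpI-Liouville} yields \eqref{4.4} at once. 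You should cite these results as black boxes rather than re-derive them. Similarly, the ``barrier argument comparing $\overline\rho$ with the spatially homogeneous dynamics'' you invoke for the upper bound is too vague to check; the paper supplies a concrete stationary supersolution, $\rho^s(i)+A\,(U(0,x)-\rho^s(0))\,\pi(i)$, whose correction term vanishes as $\|x\|\to+\infty$ by Theorem~\ref{thm_edpI-Liouville}, and the concavity of $v\mapsto 1-e^{-\mathscr R_0\K*v}$ verifies the boundary inequality. Incorporating these two ingredients would turn your outline into a complete proof and remove the step you yourself identified as unresolved.
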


We will prove that the asymptotic  spreading speed $c_*$ of the epidemic wave actually  coincides with the asymptotic  spreading speed for the homogeneous  problem \eqref{edp} with $I_0\equiv0$, which is a result of independent interest. This feature is very similar to previously obtained results on spreading speeds for spatially extended epidemic models \cite{DG14,BRR21,BF21}. Although we are not making use of the following formula, it is possible to express the asymptotic speed of spreading $c_*$ through
\bqs
c_*=\underset{\alpha>0}{\min}~ \frac{1}{\alpha} \mathcal{L}[\omega]^{-1}\left(\frac{1}{\S_0\widetilde{\K}(\alpha)}\right),
\eqs
where we have defined $\widetilde{\K}(\alpha)$ in \eqref{eqtildeKmu} and $\mathcal{L}[\omega]^{-1}$ is the reciprocal function of the Laplace transform of $\omega$ defined as $\mathcal{L}[\omega](x):=\int_0^\infty\omega(i) e^{-x i}\md i$ for $x\geq0$. In our proof below, we will show that, when $\mathscr{R}_0>1$, the minimum is achieved at a unique positive real value $\alpha_*>0$.

Moreover, the asymptotic spreading speed $c_*$ of the solutions to the initial boundary value problem \eqref{edp} also turns out to be the threshold for the existence of traveling wave solutions associated with the homogeneous problem \eqref{kppI}. Let us first give the definition of traveling wave solutions. A traveling wave solution of \eqref{kppI} along any direction $\mathbf{e}\in\SS^{d-1}$ with speed $c\in\R$ is a solution of the form $\rho(t,i,x)=w(i,x\cdot \mathbf{e}-ct)$ satisfying
	\begin{equation}
	\label{TW}
	\left\{
	\begin{split}
		-c\partial_z w(i,z) &+ \partial_i  w(i,z) = - \gamma(i) w(i,z),  \quad i\in(0,i_\dagger), \quad z\in\R, \\
		w(0,z)&=\S_0 \left(1-\exp\left(-\int_0^\infty \tau(i) \K_0*w(i,z)\md i \right)\right),\quad  z\in\R,
	\end{split}
	\right.
\end{equation}
where we have set $z=x\cdot \mathbf{e}-ct$, and 
	\begin{equation}
		\label{TW-limit cdn}\left\{
		\begin{split}
		w(i,-\infty)&=\rho^s(i)~\text{and}~w(i,+\infty)=0~~\text{for each}~i\in[0,i_\dagger),\\ 0&\,<\, w(i,z) \, <\, \rho^s(i)~~~\text{for}~(i,z)\in[0,i_\dagger)\times\R,
		\end{split}\right.
	\end{equation}
 where $\rho^s(i)=\S_0\rho^*\pi(i)$ is the unique positive stationary solution of \eqref{kppI}, obtained in Theorem \ref{thm-liouville} below, when $\mathscr{R}_0>1$.  Then, substituting $w(i,x\cdot \mathbf{e}-ct)$ into \eqref{kppI}, we derive that

\begin{thm}\label{thm-TW-KPP model}
Assume {\bf(H1)-(H2$\mu$)} and $\mathscr{R}_0>1$. For any direction $\mathbf{e}\in\SS^{d-1}$, problem \eqref{kppI} admits a decreasing $($in $z$$)$ traveling front $w_c(i,z)$ satisfying \eqref{TW}--\eqref{TW-limit cdn} with speed $c$ if and only if $c\ge c_*$, with $c_*$ given in Theorem \ref{thm-edpI-spreading property}. Moreover, for $c\geq c_*$ the profile $w_c(i,z)$ is unique (modulo translation in $z$) and can be written as
\bqs
w_c(i,z)=\S_0\chi_c(z+ci)\pi(i), \quad (i,z)\in[0,i_\dagger)\times\R,
\eqs
with $0<\chi_c<\rho^*$ and $\chi_c'<0$ in $\R$ together with $\chi_c(-\infty)=\rho^*$ and $\chi_c(+\infty)=0$. Furthermore, there exist a unique $\alpha_c$ associated with $c>c_*$ and a unique $\alpha_*$ associated with $c_*$ satisfying $0<\alpha_c<\alpha_*$ such that $($up to normalization$)$
\begin{align*}
	\frac{\chi_c(\xi)}{ e^{-\alpha_c \xi}}\longrightarrow1 ~~~(\text{for}~c>c_*),~~~~\frac{\chi_{c_*}(\xi)}{\xi e^{-\alpha_* \xi}}\longrightarrow1\quad \text{ as } \xi\rightarrow+\infty.
\end{align*}
\end{thm}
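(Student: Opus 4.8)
The plan is to reduce the system \eqref{TW}--\eqref{TW-limit cdn} to a \emph{scalar} nonlocal traveling-wave problem of Fisher--KPP type. Along the characteristics $z=\xi-ci$ of the transport operator in \eqref{TW}, the first equation integrates to $w(i,\xi-ci)=w(0,\xi)\,\pi(i)$, so setting $\S_0\chi(\xi):=w(0,\xi)$ forces $w(i,z)=\S_0\chi(z+ci)\pi(i)$ (this is not an ansatz), and the boundary relation in \eqref{TW} at $i=0$ becomes, after the change of variables $\xi=z+ci$ inside the convolution,
\[
\chi(\xi)=1-\exp\!\Big(-\S_0\int_0^{i_\dagger}\omega(i)\,(\K_0*\chi)(\xi+ci)\,\md i\Big)=:\mathcal N_c[\chi](\xi),\qquad \xi\in\R,
\]
together with $\chi$ nonincreasing, $0<\chi<\rho^*$, $\chi(-\infty)=\rho^*$, $\chi(+\infty)=0$. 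Here $\mathcal N_c$ is order preserving, commutes with translations, maps nonincreasing profiles to nonincreasing profiles, and fixes the constant $\rho^*$ (because $\int_\R\K_0=1$ and $\mathscr R_0=\S_0\int_0^{i_\dagger}\omega$). Moreover $\chi_c$ is automatically $C^1$ since $\K_0\in W^{1,1}(\R)$. Thus the whole statement is equivalent to existence, uniqueness (modulo translation) and tail asymptotics of such fixed points of $\mathcal N_c$, and $w_c(i,z)=\S_0\chi_c(z+ci)\pi(i)$ is decreasing in $z$ iff $\chi_c'<0$.

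Next I would analyze the dispersion equation. Plugging $\chi(\xi)=e^{-\alpha\xi}$ into the linearization of $\mathcal N_c$ at $0$ (using $1-e^{-x}\sim x$) gives $\S_0\widetilde\K(\alpha)\,\mathcal L[\omega](\alpha c)=1$. For fixed $c$, $\alpha\mapsto\S_0\widetilde\K(\alpha)\mathcal L[\omega](\alpha c)$ is log-convex on its domain, equals $\mathscr R_0>1$ at $\alpha=0^+$ with negative derivative there (since $\widetilde\K'(0)=0$ by radial symmetry), and blows up as $\alpha\uparrow\mu_0$; when $\mu_0=+\infty$ the extra bound \eqref{K-infty} in Hypothesis \textbf{(H2$\mu$)}, combined with the crude estimate $\mathcal L[\omega](x)\ge C e^{-(i_0+\varep)x}$, forces the same blow-up as $\alpha\to+\infty$. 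Hence $c_*=\min_{\alpha>0}\frac1\alpha\mathcal L[\omega]^{-1}\!\big(\tfrac1{\S_0\widetilde\K(\alpha)}\big)$ is a positive real number attained at a unique $\alpha_*>0$, and the dispersion equation has no positive root for $c<c_*$, the unique double root $\alpha_*$ at $c=c_*$, and exactly two positive roots $0<\alpha_c<\alpha_c^+$ for $c>c_*$ (this $c_*$ is the one of Theorem~\ref{thm-edpI-spreading property}).

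For existence when $c\ge c_*$ I would run a monotone iteration of $\mathcal N_c$ between ordered sub/supersolutions. For $c>c_*$ take $\overline\chi=\min(\rho^*,e^{-\alpha_c\xi})$: where $\overline\chi=\rho^*$ one has $\mathcal N_c[\overline\chi]\le\mathcal N_c[\rho^*]=\rho^*$, and elsewhere $\mathcal N_c[\overline\chi]\le\S_0\int_0^{i_\dagger}\omega(i)(\K_0*\overline\chi)(\xi+ci)\md i\le e^{-\alpha_c\xi}$ by $1-e^{-x}\le x$ and the dispersion identity; take $\underline\chi=\max\!\big(0,e^{-\alpha_c\xi}-Me^{-\eta\xi}\big)$ with $\eta\in(\alpha_c,\min(2\alpha_c,\alpha_c^+))$ and $M$ large, verifying $\mathcal N_c[\underline\chi]\ge\underline\chi$ where $\underline\chi>0$ via $1-e^{-x}\ge x-\tfrac12 x^2$. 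Iterating $\mathcal N_c$ downward from $\overline\chi$ yields a nonincreasing fixed point $\chi_c$ with $\underline\chi\le\chi_c\le\overline\chi$; thus $\chi_c(+\infty)=0$, while $\chi_c(-\infty)$ is a fixed point of $v\mapsto1-e^{-\mathscr R_0 v}$ which is $\ge\max\underline\chi>0$, hence equals $\rho^*$. Strict monotonicity and $0<\chi_c<\rho^*$ then come from a strong maximum principle argument exploiting $\K_0>0$ and $\int_0^{i_\dagger}\omega>0$: an interval on which $\chi_c$ is constant (or equals $0$ or $\rho^*$) propagates through the positive kernel and contradicts the two limits. At $c=c_*$ the exponential subsolution degenerates; one either replaces it by a subsolution with a linear-in-$\xi$ prefactor, or, more cleanly, lets $c_n\downarrow c_*$, normalizes $\chi_{c_n}(0)=\rho^*/2$, extracts a locally uniform limit by Helly's selection theorem for monotone functions, and passes to the limit in the equation by dominated convergence ($\omega\in L^1$), the normalization giving nontriviality and the supersolution bound (with $\alpha_{c_n}\to\alpha_*$) giving $\chi_{c_*}(+\infty)=0$.

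It remains to obtain the tail asymptotics, non-existence for $c<c_*$, and uniqueness; these are also where the main difficulty concentrates. The asymptotics follow from a Tauberian (Wiener--Ikehara) argument applied to $s\mapsto\int_\R\chi_c(\xi)e^{s\xi}\md\xi$, whose abscissa of convergence is $\alpha_c$ and whose singularity there is a simple pole for $c>c_*$ and a double pole for $c=c_*$ (because $\alpha_*$ is a \emph{double} root), yielding $\chi_c(\xi)e^{\alpha_c\xi}\to1$ and $\chi_{c_*}(\xi)\xi^{-1}e^{\alpha_*\xi}\to1$ after normalization. Non-existence for $c<c_*$: a positive nonincreasing profile tending to $0$ would, after linearizing $\mathcal N_c$ near $+\infty$, be squeezed below and above by exponential barriers and thus force the dispersion equation to have a positive root, impossible since $\min_{\alpha>0}\S_0\widetilde\K(\alpha)\mathcal L[\omega](\alpha c)>1$ when $c<c_*$ — this step crucially uses $\mathscr R_0>1$; alternatively it is immediate from Theorem~\ref{thm-edpI-spreading property}(ii) applied to the homogeneous problem. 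Uniqueness modulo translation is proved by the sliding method: using the matched exponential (respectively $\xi e^{-\alpha_*\xi}$) behaviour at $+\infty$ and the common limit $\rho^*$ at $-\infty$, a shifted copy of one front lies above the other; decreasing the shift to its critical value and invoking the strong maximum principle coming from $\K_0>0$ at a contact point forces the two profiles to coincide. I expect the genuinely delicate part of the whole argument to be the critical speed $c=c_*$: building the sharp subsolution (equivalently justifying the limiting procedure), extracting the precise $\xi e^{-\alpha_*\xi}$ tail via a double-pole Tauberian analysis, and running the sliding argument at criticality; a secondary but essential technical point is that $0<c_*<+\infty$ when $\mu_0=+\infty$, which is exactly the purpose of condition \eqref{K-infty}.
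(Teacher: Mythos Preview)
Your reduction to the scalar fixed-point equation for $\chi$, the dispersion analysis, the monotone iteration for $c>c_*$, the non-existence via the spreading theorem, and the Ikehara argument for the tails are all correct and essentially identical to the paper. Two points diverge from the paper and deserve comment.

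\textbf{Existence at $c=c_*$.} You offer both options; the paper deliberately avoids the limiting procedure $c_n\downarrow c_*$ (which goes back to Atkinson--Reuter) and instead constructs an explicit sub/supersolution pair with the linear prefactor,
\[
\overline\chi(z)=\rho^*\min(1,Aze^{-\alpha_*z}),\qquad \underline\chi(z)=\rho^*\max\!\big(0,\,Aze^{-\alpha_*z}-Be^{-\alpha_*z}+e^{-(\alpha_*+\delta)z}\big),
\]
verifying $\T(\overline\chi)\le\overline\chi$ via $\varphi_{c_*}(\alpha_*)=1$ \emph{and} $\partial_\alpha\varphi_{c_*}(\alpha_*)=0$, and $\T(\underline\chi)\ge\underline\chi$ via a Cauchy--Schwarz estimate on the convolution. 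The payoff is that the asymptotic $\chi_{c_*}(\xi)\sim\xi e^{-\alpha_*\xi}$ is read off directly from the barriers, whereas your limiting route still needs the Ikehara step afterwards. Both work; the paper's is more self-contained.

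\textbf{Uniqueness.} Here your sketch has a genuine gap. The sliding method requires that a sufficiently shifted copy of one front lies above the other \emph{globally}; you invoke the matched behaviour at $+\infty$ and the ``common limit $\rho^*$ at $-\infty$'', but a common limit is not enough: as $\xi\to-\infty$ both $\chi_1(\xi)$ and $\chi_2(\xi-h)$ tend to $\rho^*$, and the inequality $\chi_1(\xi)\le\chi_2(\xi-h)$ depends on the \emph{rates} $\rho^*-\chi_j(\xi)$, which you have not analyzed. In nonlocal settings this second tail is often the sticking point of the sliding method. The paper sidesteps this entirely by following Carr--Chmaj: after establishing the $+\infty$ asymptotics for any solution via Ikehara, it considers the weighted difference
\[
\varsigma_\epsilon(z)=\frac{\chi_1(z)-\chi_2(z)}{(\epsilon|z|+1)e^{-\alpha_*z}},
\]
which vanishes at both ends, and shows that a nontrivial extremum leads to a contradiction by a delicate splitting of the integral inequality into two pieces $\mathscr I^1_\epsilon$, $\mathscr I^2_\epsilon$ and an $\epsilon$-dependent choice of cutoff. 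This argument uses only the $+\infty$ tail and the positivity of $\K_0$, and is the part of the proof that is genuinely new in the paper for the critical case $c=c_*$ (the supercritical case is referred to Diekmann--Kaper). If you want to salvage the sliding approach you would first need to establish the exponential rate of convergence to $\rho^*$ at $-\infty$, which is an additional nontrivial step.
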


The strategy of proof is to derive a nonlinear integral equation for the profiles $\chi_c$. As explained in the corresponding section below, there exists an astute change of variable which allows one to recover the traveling wave integral equation originally derived and studied by Diekmann in \cite{Diekmann1}. Our result is actually more precise in the sense that we get a full characterization of all possible nonincreasing $($in $z$$)$ traveling fronts satisfying \eqref{TW}--\eqref{TW-limit cdn}. In \cite{Diekmann1}, only the existence of super-critical fronts with wave speed $c>c_*$ was performed, and the critical case was later obtained through a limiting argument procedure in \cite{AR1976}. Here, we directly prove the existence of critical fronts with wave speed $c=c_*$ by a constructive procedure, which automatically gives the precise asymptotic behavior as $\xi\rightarrow+\infty$. Regarding the uniqueness part, the case of super-critical fronts with wave speed $c>c_*$ can be handled by using the results of \cite{Diekmann4}. Here, we also show the uniqueness of the critical fronts with speed $c=c_*$ which, to the best of our knowledge, was not present in the existing literature. Our approach is to use the strategy developped in  \cite{carrchmaj} where the uniqueness of critical fronts for nonlocal Fisher-KPP equations with compactly supported kernels was proved.

\subsection{What is new? and what is not?}

Our results provide a different, but complementary, perspective to the pioneering works of Aronson \cite{Aronson}, Diekmann \cite{Diekmann1,Diekmann2} and Thieme \cite{Thieme1,Thieme2,Thieme4} where asymptotic  speed of propagation for spatially extended epidemic models with nonlocal interactions were already proved. In that sense, our main results Theorems~\ref{thm_edpI-Liouville}--\ref{thm-TW-KPP model} are not surprising, but they offer a different perspective on the problem. More precisely, all previous studies \cite{Aronson,Diekmann1,Diekmann2,Thieme1,Thieme2,Thieme4} have worked on a fully integrated version of the model which has led to the development of new techniques to handle abstract functional nonlocal equations \cite{Diekmann1,Diekmann2,Diekmann4,S-1,S-2,Thieme1,Thieme2,Thieme4}, to name a few. Using the notation of the present paper, the initial starting point of the aforementioned works is to directly focus on the susceptible population $\S(t,x)$ by integrating \eqref{EDP} along the characteristics. Doing so, one first derives that
\bqs
\partial_t\S(t,x) = \S(t,x)\left( \int_0^t \omega(i) \frac{\K*\partial_t \S(t-i,x)}{\pi(t-i)}\md i -\int_t^{\infty} \omega(i) \frac{\K*I_0(t-i,x)}{\pi(t-i)}\md i\right), \quad t\geq 0, \quad x\in \R^d,
\eqs
which can then be integrated giving
\bqq
\mathcal{U}(t,x) = \S_0 \int_0^t \frac{\omega(i)}{\pi(t-i)}\left(1-\K*\exp(-\mathcal{U}(t-i,x))\right)\md i -\int_0^t \left(\int_s^{\infty} \omega(i) \frac{\K*I_0(t-i,x)}{\pi(t-i)}\md i\right) \md s, 
\label{eqcalU}
\eqq
for each $t\geq0$ and $x\in\R^d$, where $\mathcal{U}(t,x)$ is defined as
\bqs
\mathcal{U}(t,x):=-\ln\left(\frac{\S(t,x)}{\S_0}\right).
\eqs
The nonlinear nonlocal equation \eqref{eqcalU} is precisely the equation derived by Diekmann in \cite{Diekmann1} which has then led to the subsequent studies \cite{Diekmann2,Thieme1,Thieme2}.  Our point of view here is to directly work on the integrated version of the nonlocal transport equation \eqref{EDP}, and instead of working on the susceptible population, we rather focus on the cumulative density of infected individuals. This has the advantage to better use the intrinsic transport structure of the model, which is somehow hidden in the fully integrated nonlinear equation \eqref{eqcalU}. This alternate point of view will typically allow us to prove the strict positivity of the solutions of our nonlocal transport problem \eqref{edp} for the cumulative density of infected individuals, then yielding strong comparison principles.  We also argue that we obtain a better description of the epidemic dynamic by having a precise asymptotic behavior of  the cumulative density $\rho(t,i,x)$ of infected individuals. It also seems that our approach has the advantage to better understand the role of each parameters entering into the system. We finally refer to our subsequent work \cite{FRZ25} where we prove sharp asymptotics of the level sets of solutions, up to logarithmic corrections, for the homogeneous problem \eqref{kppI} with compactly supported initial conditions. 

From a purely mathematical perspective, we find it illuminating to compare the homogeneous model \eqref{kppI}, obtained by letting $I_0\equiv0$ into the original formulation \eqref{edp}, to reaction-diffusion equations with nonlocal spatial interactions and Fisher-KPP type nonlinearity, and  more specifically, to the so-called field-road reaction-diffusion models studied in the past few years \cite{BRR13,BRR13bis,BRR16,BRR21}. The analogy is only at the mathematical level, not in terms of modeling, since in our case the ``field" would be $(0,i_\dagger)\times\R^d$ and the ``road" would be $\left\{0\right\}\times\R^d$. In contrast with the cases studied in \cite{BRR13,BRR13bis,BRR16} where the dynamics in the field is modeled by a parabolic equation (typically a diffusion equation with possible reaction terms \cite{BRR13,BRR13bis}), our equation in the field is a transport equation at constant speed one with an inhomogeneous recovery rate $\gamma(i)$, which can be somehow comparable to the hostile parabolic field model proposed in \cite{BRR16}. Other key differences are in the equation on the road itself and the way that the model takes into account exchanges between the field and the road. More precisely, in the aforementioned works \cite{BRR13,BRR13bis,BRR16,BRR21}, the equation on the road is a pure diffusion equation and exchanges between the field and the road come from a Robin-like boundary condition. In our case, the dynamics on the road and the exchange terms are combined into a single equation where interactions in space are fully nonlocal and the contribution at the boundary of the domain, that is at $i=0$, is obtained by integrating the solution along the full field $(0,i_\dagger)\times\R^d$ (see \eqref{edp}(ii)). This is actually closer in spirit to the model presented by Pauthier \cite{pauthier} with nonlocal exchange terms in the standard  field-road reaction-diffusion models. Remarkably, despite these apparent differences, both models present the same rich asymptotic behavior with spreading. As the model with a hostile field investigated in \cite{BRR16}, we also show the existence of a sharp threshold, here characterized by the basic reproduction number $\mathscr{R}_0$ being below or above one, for the spreading dynamic to happen. 

Coming back to the epidemic modeling point of view, the transport nonlocal model has direct practical applications at inferring epidemic dynamics as it has been evident in the past few years \cite{Fouteletal,Richardetal}. It is also the building block for more advanced and relevant models which could include for example the age of infected and susceptible populations \cite{reyne} (and thus augment the model with additional transport equations) or consider several different strains (or variants) of a disease in a population \cite{ducasse2}, or even study the impact of vaccination strategies on epidemics \cite{gandon}. In all the possible extensions just mentioned above, it does not seem so obvious that one can formulate (and study), if possible, a fully integrated version of the model comparable to \eqref{eqcalU}, this is why we believe that directly tackling the nonlocal transport equation has valuable merits for future investigations.

\paragraph{Outline.}

In Section~\ref{prem-results}, we study the well-posedness of problem \eqref{edp} under some fairly general assumptions on the initial condition $\rho_0$ and source term $I_0$, and then use the transport structure of the model to derive somehow sharp positivity properties of the solutions under stronger assumptions when either $\rho_0$ or $I_0$ is nontrivial, which then lead to  strong comparison principles. In the following Section~\ref{secKPP}, we study the homogeneous problem, that is we study the long time behavior of the solutions to \eqref{edp} by letting $I_0\equiv0$. Finally, in Section~\ref{secMain} and Section~\ref{secTF}, we present the proofs of our main results. Along the way, we also provide in Section~\ref{secMain} a further asymptotic  property of the positive stationary solution $U$ to \eqref{edp}.

\section{Preliminary results: well-posedness, positivity and comparison principles}\label{prem-results}

In this section, we aim to show the existence and uniqueness of the solution to \eqref{edp}, i.e. Proposition~\ref{prop-edpI-well-posedness}, as well as the positivity of the solution stated in Proposition~\ref{prop-positivity} that arises as a consequence of the KPP structure of the boundary condition and the properties of the transport equation. Moreover, we will prove the comparison principle Proposition~\ref{prop-cp} for \eqref{edp}, which will be the main tool for the investigation of the long time dynamics of \eqref{edp}. Throughout this section, we assume that {\bf(H1)-(H2)} are satisfied. 

As a preliminary step, we first perform in \eqref{edp} the following change of  unknowns $\varrho(t,i,x):=\frac{\rho(t,i,x)}{\pi(i)}$ with $\varrho_0(i,x):= \frac{\rho_0(i,x)}{\pi(i)}$, then the initial boundary value problem satisfied by $\varrho$ is simply
\begin{equation}
\label{cauchy}
\left\{\begin{split}
\partial_t \varrho(t,i,x) &+ \partial_i\varrho(t,i,x) =\frac{I_0(i,x)}{\pi(i)} ,\quad t>0, \quad i\in(0,i_\dagger), \quad x\in\R^d \\
\varrho(t,0,x)&=\S_0 \left(1-\exp\left(-\int_0^\infty \omega(i) \K*\varrho(t,i,x)\md i \right)\right), \quad t> 0,  \quad x\in\R^d,\\
\varrho(0,i,x)&= \varrho_0(i,x),\quad  i\in[0,i_\dagger), \quad x\in\R^d.
\end{split}\right.
\end{equation}
We recall from {\bf(H1)}(iv) that in the case of $i_\dagger<\infty$, we have extended  the function $\omega$ by $0$ such that the above integral is well-defined. Integrating along the characteristics, we derive the following semi-explicit formula for the solution $\varrho$ of \eqref{cauchy} 
\bqq\label{integral eqn-edpI}
\varrho(t,i,x)=
\left\{
\begin{split}
\varrho_0(i-t,x)+\int_{i-t}^i \frac{I_0(\xi,x)}{\pi(\xi)} \md \xi,& \quad i\ge t,\\
\Phi(t-i,x) +\int_0^i \frac{I_0(\xi,x)}{\pi(\xi)} \md \xi,& \quad i< t,
\end{split}
\right.\eqq
for each $x\in\R^d$. The function $\Phi$ in \eqref{integral eqn-edpI} satisfies the following Volterra integral equation
\begin{align}
\Phi(t,x)
&= \S_0\left(1 - \exp\left(-\int_0^\infty\omega(i)\K*\varrho(t,x,i)\md i\right)\right)\label{renewal-1}\\
&=\S_0\left(1 - \exp\left(-\int_0^t\omega(i)\K*\Phi(t-i,x)\md i-\Gamma_1(\varrho_0)(t,x)-\Gamma_2(I_0)(t,x)\right)\right) \label{renewal}
\end{align}
for each $t\geq0$ and $x\in\R^d$, where we have set
\begin{align*}
\Gamma_1(\varrho_0)(t,x)&:=\int_0^\infty \omega(i+t)\K*\varrho_0(i,x)\md i,\\
\Gamma_2(I_0)(t,x)&:=\int_0^t\omega(i) \K*\left(\int_0^i \frac{I_0(\xi,x)}{\pi(\xi)} \md \xi\right)\md i +\int_t^{\infty}\omega(i)\K*\left(\int_{i-t}^i \frac{I_0(\xi,x)}{\pi(\xi)}\md\xi\right)\md i.
\end{align*}
Based on {\bf  (H1)}(iv), the above two terms have to be understood as follows in the case of $i_\dagger<\infty$
\bqs
\Gamma_1(\varrho_0)(t,x)=\left\{\begin{split}
&\int_0^{i_\dagger-t} \omega(i+t)\K*\varrho_0(i,x)\md i , &\quad t< i_\dagger,\\
&0, &\quad t\geq i_\dagger,\end{split}\right.
\eqs
\bqs
\Gamma_2(I_0)(t,x)=\left\{\begin{split}
&\int_0^t\omega(i)\K*\left(\int_0^i \frac{I_0(\xi,x)}{\pi(\xi)} \md \xi\right) \md i +\int_t^{i_\dagger}\omega(i)\K*\left(\int_{i-t}^i \frac{I_0(\xi,x)}{\pi(\xi)}\md\xi\right)\md i, &\quad t< i_\dagger,\\
&\int_0^{i_\dagger}\omega(i)\K*\left(\int_0^i \frac{I_0(\xi,x)}{\pi(\xi)} \md \xi\right)\md i , &\quad t\geq i_\dagger.\end{split}\right.
\eqs
We shall prove the well-posedness of problem \eqref{cauchy} as well as the comparison principle by studying  the Volterra integral equation \eqref{renewal}.  For later use and for the sake of convenience, let us define the right-hand side of \eqref{renewal} by the mapping $\F$ as
\begin{equation}
	\F(\Phi;\varrho_0,I_0)(t,x):= \S_0\Big(1 - \H(\Phi;\varrho_0,I_0)(t,x)\Big)~~~t> 0,~x\in\R^d,
\end{equation}
with 
\begin{equation}
\label{function h}
\H(\Phi;\varrho_0,I_0)(t,x):= \exp\left(
			-\int_0^t\omega(i)\K*\Phi(t-i,x)\md i		
			-\Gamma_1(\varrho_0)(t,x)-\Gamma_2(I_0)(t,x)\right).
\end{equation}
Then, problem \eqref{renewal} can be written abstractly as
\begin{equation}
	\label{eqn-F}
	\Phi=\F(\Phi;\varrho_0,I_0).
\end{equation}

\subsection{Some results on the Volterra equation \eqref{eqn-F}}

We first introduce the notion of super- and subsolutions for the Volterra equation \eqref{eqn-F}.

\begin{defi}\label{def_super+sub_renewal eqn}
	We say that a function $\Phi\in \mathscr{C}(\R_+,\mathscr{C}_b(\R^d))$ is a supersolution (resp. subsolution) of \eqref{eqn-F} on $\R_+\times\R^d$ associated with some nonnegative bounded functions $\varrho_0$ and $I_0$ defined on $[0,i_\dagger)\times\R^d$ such that $\Gamma_1(\varrho_0)$ and $\Gamma_2(I_0)$ are well-defined on $\R_+\times\R^d$, if 
	\begin{equation*}
		\Phi\ge \F(\Phi;\varrho_0,I_0),~~~(\text{resp.}~\Phi\le \F(\Phi;\varrho_0,I_0))~~~\text{on}~\R_+\times\R^d.
	\end{equation*}
\end{defi}

We have the following comparison principle.

\begin{lem}
	\label{lem_cp}
	Assume that $\overline\Phi\in \mathscr{C}(\R_+,\mathscr{C}_b(\R^d))$ is a supersolution to \eqref{eqn-F} associated with some nonnegative bounded functions $\overline \varrho_0$ and $\overline I_0$ defined on $[0,i_\dagger)\times\R^d$ such that $\Gamma_1(\overline\varrho_0)$ and $\Gamma_2(\overline I_0)$ are well-defined on $\R_+\times\R^d$, and that $\underline\Phi\in\mathscr{C}(\R_+,\mathscr{C}_b(\R^d))$ is a subsolution to \eqref{eqn-F} associated with some nonnegative bounded functions $\underline \varrho_0$ and $\underline I_0$ defined on $[0,i_\dagger)\times\R^d$ such that $\Gamma_1(\underline \varrho_0)$ and $\Gamma_2(\underline I_0)$ are well-defined on $\R_+\times\R^d$, in the sense of Definition \ref{def_super+sub_renewal eqn}. If $\overline \varrho_0\ge \underline{\varrho}_0$ and $\overline I_0\ge \underline I_0$ in $[0,i_\dagger)\times\R^d$, then $\overline \Phi\ge\underline \Phi$ on $\R_+\times\R^d$.
\end{lem}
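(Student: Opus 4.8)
The plan is to prove the comparison principle Lemma~\ref{lem_cp} by a contradiction/continuity argument combined with the monotonicity of the nonlinearity $\H$ in its arguments. The key structural observation is that $\F(\Phi;\varrho_0,I_0)$ is monotone nondecreasing in $\Phi$, $\varrho_0$ and $I_0$ (at least for nonnegative arguments): indeed $\H$ in \eqref{function h} is the exponential of the negative of a sum of terms, each of which is nondecreasing in $\Phi$ (since $\omega\ge0$ and $\K>0$), in $\varrho_0$ (via $\Gamma_1$, since $\omega\ge0$ and $\K>0$), and in $I_0$ (via $\Gamma_2$, since $\omega,\pi>0$ and $\K>0$). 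Hence $\H$ is nonincreasing in all three arguments, so $\F=\S_0(1-\H)$ is nondecreasing in all three.

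First I would set up the proof on a bounded time interval $[0,T]$ for arbitrary $T>0$, and introduce $w(t):=\sup_{x\in\R^d}\big(\underline\Phi(t,x)-\overline\Phi(t,x)\big)_+$, which is finite since both functions lie in $\mathscr{C}_b(\R^d)$ for each $t$, and continuous in $t$. Clearly $w(0)=0$ because at $t=0$ both $\Gamma_1,\Gamma_2$ reduce to fixed data and the convolution-in-$\Phi$ term vanishes, so $\overline\Phi(0,\cdot)=\F(\overline\Phi;\overline\varrho_0,\overline I_0)(0,\cdot)\ge \S_0(1-e^{-\Gamma_1(\overline\varrho_0)(0,\cdot)-\Gamma_2(\overline I_0)(0,\cdot)})\ge \S_0(1-e^{-\Gamma_1(\underline\varrho_0)(0,\cdot)-\Gamma_2(\underline I_0)(0,\cdot)})\ge \F(\underline\Phi;\underline\varrho_0,\underline I_0)(0,\cdot)\ge\underline\Phi(0,\cdot)$, using the monotonicity in the data and the supersolution/subsolution inequalities. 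Next, at any time $t\in(0,T]$ and any $x$, I would estimate
\[
\underline\Phi(t,x)-\overline\Phi(t,x)\le \F(\underline\Phi;\underline\varrho_0,\underline I_0)(t,x)-\F(\overline\Phi;\overline\varrho_0,\overline I_0)(t,x)\le \F(\underline\Phi;\overline\varrho_0,\overline I_0)(t,x)-\F(\overline\Phi;\overline\varrho_0,\overline I_0)(t,x),
\]
where the first inequality uses that $\overline\Phi$ is a supersolution and $\underline\Phi$ a subsolution, and the second uses monotonicity of $\F$ in the data together with $\overline\varrho_0\ge\underline\varrho_0$, $\overline I_0\ge\underline I_0$. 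Then I would bound the right-hand side using that $\H$ is globally Lipschitz in the $\Phi$-slot on bounded sets: since $|e^{-a}-e^{-b}|\le|a-b|$ for $a,b\ge0$, and the $\Phi$-dependent term is $\int_0^t\omega(i)\K*\Phi(t-i,x)\,\md i$, we get
\[
\underline\Phi(t,x)-\overline\Phi(t,x)\le \S_0\int_0^t\omega(i)\,\big(\K*(\underline\Phi-\overline\Phi)(t-i,\cdot)\big)(x)\,\md i\le \S_0\int_0^t\omega(i)\,w(t-i)\,\md i\le \S_0\|\omega\|_{L^1}\int_0^t w(s)\,\md s,
\]
using $\K>0$, $\int\K=1$, and $(\K*g)(x)\le\sup g$. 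Taking the positive part and supremum over $x$ yields $w(t)\le \S_0\|\omega\|_{L^1}\int_0^t w(s)\,\md s$ for all $t\in[0,T]$, and Gr\"onwall's inequality (with $w(0)=0$) forces $w\equiv0$ on $[0,T]$; since $T$ is arbitrary, $\overline\Phi\ge\underline\Phi$ on $\R_+\times\R^d$.

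The main obstacle, and the point requiring care rather than difficulty, is the handling of the data-monotonicity step and the finiteness of all quantities: one must check that $\Gamma_1(\varrho_0)$ and $\Gamma_2(I_0)$ are genuinely monotone and finite under the stated hypotheses (they are, by $\omega\in L^1$, $\K\in L^1\cap L^\infty$, and boundedness of $\varrho_0,I_0$), and that the exponential Lipschitz bound $|e^{-a}-e^{-b}|\le|a-b|$ is applied only where both exponents are nonnegative --- which holds here because $\omega\ge0$, $\K>0$, $\Phi\ge0$ is \emph{not} assumed, so one should instead write $|e^{-a}-e^{-b}|\le e^{\max(-a,-b)}|a-b|\le|a-b|$ only when $a,b\ge0$, or more robustly bound $|e^{-a}-e^{-b}|\le|a-b|$ using that $e^{-s}$ is $1$-Lipschitz on all of $\R$ for the decreasing direction --- in fact $|e^{-a}-e^{-b}|\le e^{\,|a|\vee|b|}|a-b|$ in general, so one needs an a priori $L^\infty$ bound on $\Phi$ on $[0,T]$, which is available since $\Phi\in\mathscr{C}(\R_+,\mathscr{C}_b(\R^d))$, giving a $T$-dependent Lipschitz constant that is harmless for the Gr\"onwall argument. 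With these bookkeeping points addressed, the proof is complete; the strict-inequality refinements analogous to Proposition~\ref{prop-cp}(i)-(ii) are not claimed in this lemma and need not be treated here.
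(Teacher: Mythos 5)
Your proof is correct in substance and takes a genuinely different route in the closing step, though there is a small notational slip to fix. Both your argument and the paper's begin identically: use the monotonicity of $\F$ in the data to reduce to a difference purely in the $\Phi$-slot, then exploit the Lipschitz property of $s\mapsto e^{-s}$ to obtain
\[
\big(\underline\Phi-\overline\Phi\big)^+(t,x)\le C_T\,\S_0\int_0^t\omega(i)\,\K*\big(\underline\Phi-\overline\Phi\big)^+(t-i,x)\,\md i.
\]
The paper then introduces the exponentially weighted quantity $w_k(t):=e^{-kt}\sup_x(\underline\Phi-\overline\Phi)^+(t,x)$, takes the supremum $W_k$ over $[0,T]$, and observes that $\int_0^T\omega(i)e^{-ki}\md i\to 0$ as $k\to\infty$ by dominated convergence (using only $\omega\in L^1$); for $k$ large enough this forces $W_k=0$. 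You instead run a direct integral Gr\"onwall argument on $w(t):=\sup_x(\underline\Phi-\overline\Phi)^+(t,x)$. That route works, but the inequality $\int_0^t\omega(i)w(t-i)\md i\le\|\omega\|_{L^1}\int_0^t w(s)\,\md s$ that you invoke is not valid in general --- you should instead use $\omega\le\tau_\infty$ from Hypothesis {\bf(H1)}(ii) to get $\int_0^t\omega(i)w(t-i)\md i\le\tau_\infty\int_0^t w(s)\,\md s$. With that fix, Gr\"onwall closes the argument. The trade-off is that your method relies on the boundedness of $\omega$ (which indeed holds here), whereas the paper's exponential-weight device only needs $\omega\in L^1$ and would survive a weakening of {\bf(H1)}(ii). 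On the other hand, your remark that the Lipschitz constant for $e^{-s}$ may be taken $T$-dependent and harmless is actually a useful clarification: the paper asserts $\H(\overline\Phi;\cdot,\cdot)\le 1$ directly, which tacitly uses $\overline\Phi\ge 0$ --- a condition that is not part of Definition \ref{def_super+sub_renewal eqn} --- while your approach bypasses this by using the a priori bound $\overline\Phi\in\mathscr{C}([0,T],\mathscr{C}_b(\R^d))$.
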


\begin{proof}[Proof of Lemma \ref{lem_cp}] 
	Let $\overline \Phi, \underline \Phi\in\mathscr{C}(\R_+,\mathscr{C}_b(\R^d))$ be respectively a super- and a subsolution to \eqref{eqn-F} associated with nonnegative bounded functions $(\overline \varrho_0,\overline I_0)$ and $(\underline \varrho_0,\underline I_0)$ defined on $[0,i_\dagger)\times\R^d$ satisfying $\overline \varrho_0\ge \underline \varrho_0\ge 0$ and $\overline I_0\ge \underline I_0\geq0$ in $[0,i_\dagger)\times\R^d$. For $t\geq0$ and for $k>0$, set
	\begin{align*}
		w_k(t):=e^{-kt}\sup_{x\in\R^d}\left(\underline\Phi-\overline\Phi \right)^+(t,x),
	\end{align*}
	where we use the convention that $a^+=\max(0,a)$. Fix now any $T>0$, we then set
	\begin{equation*}
		W_k:=\sup_{t\in[0,T]}w_k(t).
	\end{equation*}
	By a straightforward computation, we have for $t\in[0,T]$ and $x\in\R^d$,
	\begin{align*}
		\left(\underline \Phi-\overline\Phi\right)(t,x)&\le \F(\underline \Phi;\underline \varrho_0, \underline I_0)(t,x) -  \F(\overline \Phi;\overline \varrho_0,\overline I_0)(t,x) \\
		&=\S_0 \H(\underline\Phi;\overline \varrho_0,\overline I_0)(t,x)\left(1-\exp\left(-\left(\Gamma_1(\underline \varrho_0)(t,x)-\Gamma_1(\overline \varrho_0)(t,x)\right)\right)\right)\\
		&~~~+\S_0 \H(\underline\Phi;\underline \varrho_0,\overline I_0)(t,x)\left(1-\exp\left(-\left(\Gamma_2(\underline I_0)(t,x)-\Gamma_2(\overline I_0)(t,x)\right)\right)\right)\\
		&~~~+\S_0\H(\overline\Phi;\overline \varrho_0,\overline I_0)(t,x)\left(1-\exp\left(-\int_0^t\omega(i)\K*\left(\underline\Phi(t-i,x)-\overline\Phi(t-i,x) \right)\md i\right)\right),
	\end{align*}
	where $\H$ is given in \eqref{function h}. Since $\overline \varrho_0\ge \underline \varrho_0\ge 0$ in $[0,i_\dagger)\times\R^d$, we get that $\Gamma_1(\overline \varrho_0)\ge \Gamma_1(\underline \varrho_0)\ge 0$. Similarly, since $\overline I_0\ge \underline I_0\ge 0$ in $[0,i_\dagger)\times\R^d$, we also have that $\Gamma_2(\overline I_0)\ge \Gamma_2(\underline I_0)\ge 0$. This implies that the first two terms  of the right-hand side in the above formula are nonpositive on $[0,T]\times\R^d$. This, combined with the fact that 
	$\H(\overline\Phi;\overline \varrho_0,\overline I_0)(t,x)\le 1$ for $(t,x)\in[0,T]\times\R^d$, leads to
	\begin{equation*}
		\left(\underline \Phi-\overline\Phi\right)(t,x)\le  \S_0 \int_0^t\omega(i)\K*\left(\underline\Phi(t-i,x)-\overline\Phi(t-i,x) \right)^+\md i,
	\end{equation*}
	which further implies that
	\begin{align*}
		w_k(t)\le \S_0  \int_0^t\omega(i)w_k(t-i) e^{-k i}\md i~~~\text{for}~t\in[0,T].
	\end{align*}
	Therefore, by taking the supremum over $[0,T]$ in the above inequality, it is further deduced that
	\begin{equation*}
		W_k	\le \S_0 W_k \int_0^{T}\omega(i) e^{-k i}\md i,
	\end{equation*}
	where the right-hand side converges to zero as $k\to+\infty$ by applying the Lebesgue's  dorminated convergence theorem, thanks to the Hypothesis {\bf(H1)}(iii)-(iv) that $\omega\in L^1([0,i_\dagger))$ and that it is extended by $0$ when $i_\dagger<\infty$. Consequently, $W_k\le 0$ for sufficiently large $k$, whence $\underline\Phi\le \overline \Phi$ in $[0,T]\times\R^d$. Since $T>0$ was chosen arbitrarily, it follows that  $\underline\Phi\le \overline \Phi$ in $\R_+\times\R^d$. This completes  the proof.
\end{proof}

Now we present a proof of the existence and uniqueness of solutions for \eqref{eqn-F} for the sake of completeness, which follows rather standard lines. It will also pave the way towards the proof of the positivity of the solutions. We refer to  \cite{IM17} for an exhaustive treatment in the spatially homogeneous case. 


\begin{lem}\label{lemma_well-posedness_renewal eqn} For any nonnegative bounded initial condition $\varrho_0$ on $[0,i_\dagger)\times\R^d$ and for any nonnegative bounded $I_0$ on $[0,i_\dagger)\times\R^d$ such that $\I_0(i,x):=\int_0^i\frac{I_0(\xi,x)}{\pi(\xi)}\md\xi$ is well-defined  and bounded for $(i,x)\in [0,i_\dagger)\times\R^d$, problem \eqref{eqn-F} admits a unique  nonnegative bounded solution $\Phi\in\mathscr{C}(\R_+,\mathscr{C}_b(\R^d))$.
\end{lem}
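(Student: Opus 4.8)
The plan is to solve the Volterra equation \eqref{eqn-F} as a fixed point problem via the Banach contraction principle, on a complete metric space built from a time-weighted supremum norm so that one obtains a contraction on all of $\R_+$ in a single step — the very device already used in the proof of Lemma \ref{lem_cp}. Concretely, I would work in the set
\[
E:=\left\{\Phi\in\mathscr{C}(\R_+,\mathscr{C}_b(\R^d))~:~0\le \Phi\le \S_0~\text{ on }~\R_+\times\R^d\right\},
\]
equipped with $d_k(\Phi_1,\Phi_2):=\sup_{t\ge0}e^{-kt}\|\Phi_1(t,\cdot)-\Phi_2(t,\cdot)\|_{L^\infty(\R^d)}$ for a parameter $k>0$ to be fixed later; $(E,d_k)$ is complete because a $d_k$-Cauchy sequence is uniformly Cauchy on every compact time interval, hence converges to a continuous limit that still lies in $E$.

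First I would check that $\F(\cdot;\varrho_0,I_0)$ maps $E$ into itself. Using $\K\in L^1(\R^d)$, $\omega\in L^1([0,i_\dagger))$ (extended by $0$ when $i_\dagger<\infty$, cf.\ {\bf(H1)}(iii)-(iv)) and the assumed boundedness of $\varrho_0$ and of $\I_0$, one verifies that $\Gamma_1(\varrho_0)$ and $\Gamma_2(I_0)$ are nonnegative, bounded and continuous on $\R_+\times\R^d$: boundedness follows from $\|\K*g\|_{L^\infty}\le\|g\|_{L^\infty}$ and $\|\omega\|_{L^1}<\infty$, and continuity from continuity of translations in $L^1$ (in $x$ through $\K$, in the age variable through $\omega$). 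Since for $\Phi\ge0$ the exponent in $\H$ (see \eqref{function h}) is a sum of nonnegative terms, we get $0\le\F(\Phi;\varrho_0,I_0)\le\S_0$ for any bounded $\Phi\ge 0$; and the term $\int_0^t\omega(i)\K*\Phi(t-i,x)\,\md i$ depends continuously on $(t,x)$ and only on the values of $\Phi$ on $[0,t]$, so that $\F(\Phi;\varrho_0,I_0)\in E$ and the causal (Volterra) structure is respected.

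The heart of the matter is the contraction estimate, which parallels the computation in Lemma \ref{lem_cp}. For $\Phi_1,\Phi_2\in E$ (with the same data $\varrho_0,I_0$), the $\Gamma$-terms cancel, $e^{-\Gamma_1-\Gamma_2}\le1$, and $|e^{-a}-e^{-b}|\le|a-b|$ for $a,b\ge0$ together with $\|\K*g\|_{L^\infty}\le\|g\|_{L^\infty}$ give
\[
\|\F(\Phi_1)(t,\cdot)-\F(\Phi_2)(t,\cdot)\|_{L^\infty}\le \S_0\int_0^t\omega(i)\,\|\Phi_1(t-i,\cdot)-\Phi_2(t-i,\cdot)\|_{L^\infty}\,\md i;
\]
multiplying by $e^{-kt}$ and inserting $e^{-ki}e^{-k(t-i)}$ inside the integral yields $d_k(\F(\Phi_1),\F(\Phi_2))\le\big(\S_0\int_0^\infty\omega(i)e^{-ki}\,\md i\big)\,d_k(\Phi_1,\Phi_2)$. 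By dominated convergence and {\bf(H1)}(iii)-(iv), $\int_0^\infty\omega(i)e^{-ki}\,\md i\to0$ as $k\to+\infty$, so fixing $k$ large enough makes $\F$ a strict $d_k$-contraction; Banach's theorem then produces a unique $\Phi\in E$ solving \eqref{eqn-F}. Finally, any nonnegative bounded solution $\Phi$ of \eqref{eqn-F} automatically satisfies $0\le\Phi=\F(\Phi)\le\S_0$ pointwise, hence lies in $E$ and must coincide with this fixed point (alternatively, two such solutions are simultaneously super- and subsolutions in the sense of Definition \ref{def_super+sub_renewal eqn}, so Lemma \ref{lem_cp} forces equality), which gives uniqueness in the class of the statement.

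I do not expect a conceptual obstacle: the exponential nonlinearity is globally Lipschitz on the relevant range, which is exactly what makes the scheme close. The only point genuinely requiring care is the continuity of $\Gamma_1(\varrho_0)$ and especially $\Gamma_2(I_0)$ in $(t,x)$ under the sole assumption that $\varrho_0$ and $\I_0$ are bounded — in $\Gamma_2$ the variable $t$ enters both through the limits of integration and through the shifted argument $i-t$ — together with a careful handling of the extension-by-zero conventions from {\bf(H1)}(iv) when $i_\dagger<\infty$.
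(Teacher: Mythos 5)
Your proof is correct, but it takes a genuinely different route from the paper. The paper fixes an arbitrary $T>0$, constructs a monotone increasing Picard sequence $\Phi^{k+1}=\F(\Phi^k;\varrho_0,I_0)$ starting from $\Phi^0=0$, and shows it is Cauchy in $\mathscr{C}([0,T],\mathscr{C}_b(\R^d))$ via the factorial bound $\|\Phi^{k+1}-\Phi^k\|_{\X_T}\le \frac{(\S_0\tau_\infty T)^k}{k!}\|\Phi^1-\Phi^0\|_{\X_T}$, then proves uniqueness separately via Gr\"onwall. You instead work on all of $\R_+$ at once, using the exponentially weighted metric $d_k$ and the Banach fixed point theorem, with contractivity coming from $\S_0\int_0^\infty\omega(i)e^{-ki}\md i\to0$ as $k\to+\infty$ — which is exactly the weighting device the paper uses in the proof of Lemma~\ref{lem_cp}, so you are redeploying the same tool for existence. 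Your argument is somewhat shorter and delivers existence and uniqueness in one stroke; the paper's monotone iteration is slightly longer but yields as a by-product the ordered sequence $0=\Phi^0\le\Phi^1\le\cdots\le\Phi\le\S_0$, and this monotone structure (in particular $\Phi\ge\Phi^1=\F(0;\varrho_0,I_0)$) is precisely what the paper exploits in the proof of Lemma~\ref{lem_strict positivity} to infer the strict positivity $\Phi(t,x)>0$ for $t>i_\star$ from $\Gamma_2(I_0)(t,x)>0$. With your contraction argument, that downstream positivity statement would require an extra observation — e.g., applying $\F$ once to the fixed point and using $\Phi=\F(\Phi)\ge\F(0;\varrho_0,I_0)$ by the monotonicity of $\F$ — but that is a one-line addition, not a gap in the present lemma.
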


\begin{proof}[Proof of Lemma \ref{lemma_well-posedness_renewal eqn}]
	Let $\varrho_0$ and $I_0$ be as in the statement.  
	Fix any $T\in(0,+\infty)$, and define 
\bqs
\X_T:=\left\{ \Phi \in \mathscr{C}([0,T],\mathscr{C}_b(\R^d)) ~|~ \Phi\geq 0\right\},
\eqs
the space of nonnegative continuous vector-valued functions from $[0,T]$ to $\mathscr{C}_b(\R^d)$, where $\mathscr{C}_b(\R^d)$ denotes the space of bounded continuous functions on $\R^d$. We endow $\X_T$ with the norm of the uniform convergence $\|\cdot\|_{\X_T}$:
\bqs
\|\Phi\|_{\X_T}:=\sup_{t\in[0,T]}\left\|\Phi(t,\cdot)\right\|_{L^\infty(\R^d)}.
\eqs
 Since $\varrho_0$ and $\I_0$ are bounded in $[0,i_\dagger)\times\R^d$, together with  Hypotheses {\bf(H1)}, {\bf(H2)}  on $\omega$ and $\K$, we readily get that $\Gamma_1(\varrho_0)$ and $\Gamma_2(I_0)$ belong to $\mathscr{C}([0,T],\mathscr{C}_b(\R^d))$,  by a direct application of the Lebesgue's dominated convergence theorem. Furthermore, since $\varrho_0$ and $I_0$ are both nonnegative, we also have $\Gamma_1(\varrho_0)\geq0$ and $\Gamma_2(I_0)\geq0$ on $[0,T]\times\R^d$.  As a consequence, we deduce that for a given $\Phi\in\X_T$, we have $\F(\Phi;\varrho_0,I_0)\in\mathscr{C}([0,T],\mathscr{C}_b(\R^d))$ and by the monotone increasing property of $\Phi\in \X_T\mapsto \F(\Phi;\varrho_0,I_0)$ we also have $0\leq\F(0;\varrho_0,I_0)\leq \F(\Phi;\varrho_0,I_0)$, whence $\F(\Phi;\varrho_0,I_0)\in\X_T$. Finally, one also has the upper bound $\F(\Phi;\varrho_0,I_0)\leq \S_0$ on $[0,T]\times\R^d$.
	
We now construct iteratively a monotone sequence of functions $\Phi^k\in\X_T$ as follows. For each $k\in\N$,  let
\bqs
\left\{\begin{split} \Phi^{k+1} &= \F(\Phi^k;\varrho_0,I_0),\\
\Phi^0&=0,
\end{split}\right.\quad \text{ on } [0,T]\times\R^d.
\eqs
Based on the preceding discussion, we note that $\Phi^0=0\leq \F(0;\varrho_0,I_0)=\Phi^1$, such that inductively, we get that $\Phi^k\geq0$ for each $k\in\N$ due to the monotone increasing property of $\Phi\in \X_T\mapsto \F(\Phi;\varrho_0,I_0)$. At the same time, we also deduce that
\bqs
0 \leq \Phi^1\leq \cdots \leq\Phi^k \leq \Phi^{k+1} \leq \cdots \leq\S_0, \quad\text{ on } [0,T]\times\R^d,
\eqs
for each $k\in\N$. Furthermore, by induction, we also get that $\Phi^k\in\mathscr{C}([0,T],\mathscr{C}_b(\R^d))$ since both $\Gamma_1(\varrho_0)$ and  $\Gamma_2(I_0)$ belong to $\mathscr{C}([0,T],\mathscr{C}_b(\R^d))$. Next, we compute for each $k\in\N$, $t\in[0,T]$ and $x\in\R^d$,
\begin{align*}
\Phi^{k+1}(t,x)-\Phi^{k}(t,x)&=\F(\Phi^{k};\varrho_0,I_0)(t,x)-\F(\Phi^{k-1};\varrho_0,I_0)(t,x)\\
&=\S_0 \H(\Phi^{k-1};\varrho_0,I_0)(t,x)\left(1-\exp\left(-\int_0^t\omega(i)\K*\left(\Phi^{k}(i,x)-\Phi^{k-1}(i,x)\right)\md i\right)\right)\\
&\leq \S_0\int_0^t\omega(i)\K*\left(\Phi^{k}(i,x)-\Phi^{k-1}(i,x)\right)\md i,\\
&\leq \S_0 \tau_\infty \int_0^t\K*\left(\Phi^{k}(i,x)-\Phi^{k-1}(i,x)\right)\md i,
\end{align*}
since $ \H(\Phi^{k-1};\varrho_0,I_0)\leq 1$, $\omega\leq \tau\leq \tau_\infty$ and $\Phi^{k}-\Phi^{k-1}\geq0$. We can iterate the above procedure to obtain that
\bqs
\Phi^{k+1}(t,x)-\Phi^{k}(t,x)\leq (\S_0\tau_\infty )^k \int_0^t \int_0^{s_1}\cdots\int_0^{s_{k-1}} \K*\cdots*\K*\left(\Phi^{1}(s_k,x)-\Phi^{0}(s_k,x)\right)\md s_k \cdots \md s_1,
\eqs
which leads to
\bqs
\Phi^{k+1}(t,x)-\Phi^{k}(t,x)\leq \frac{(\S_0\tau_\infty T)^k}{k!}\left\|\Phi^1-\Phi^0\right\|_{\X_T}, \quad k\in\N, \quad t\in[0,T], \quad x\in\R^d.
\eqs
The above estimate implies that $(\Phi^k)_{k\geq0}$ is a Cauchy sequence in the Banach space $(\X_T,\|\cdot\|_{\X_T})$. As a consequence, $(\Phi^k)_{k\geq0}$ converges in $\X_T$ towards some limiting function $\Phi\in\X_T$, and passing to the limit in $ \Phi^{k+1} = \F(\Phi^k;\varrho_0,I_0)$, we have $\Phi=\F(\Phi;\varrho_0,I_0)$ with $0\leq \Phi \leq \S_0$ on $[0,T]\times\R^d$. Since $T\in(0,+\infty)$ is arbitrary, we derive that $\Phi\in\mathscr{C}(\R_+,\mathscr{C}_b(\R^d))$ is a solution of \eqref{eqn-F}.

Assume that $\Phi_1\in\X_T$ and $\Phi_2\in\X_T$ are two  solutions of \eqref{eqn-F}. Then, repeating the previous computations, we find that for each $t\geq0$,
\begin{align*}
\left|\Phi_1(t,x)-\Phi_2(t,x)\right|\leq \S_0\int_0^t\omega(i)\K*\left|\Phi_1(i,x)-\Phi_2(i,x)\right|\md i
\leq \S_0 \tau_\infty \int_0^t\left\|\Phi_1(i,\cdot)-\Phi_2(i,\cdot)\right\|_{L^\infty(\R^d)}\md i,
\end{align*}
and thus
\bqs
\left\|\Phi_1(t,\cdot)-\Phi_2(t,\cdot)\right\|_{L^\infty(\R^d)}\leq \S_0 \tau_\infty \int_0^t\left\|\Phi_1(i,\cdot)-\Phi_2(i,\cdot)\right\|_{L^\infty(\R^d)}\md i.
\eqs
The Gr\"onwall's lemma then implies that $\left\|\Phi_1(t,\cdot)-\Phi_2(t,\cdot)\right\|_{L^\infty(\R^d)}\equiv0$ for each $t\geq0$, and thus $\Phi_1(t,x)\equiv\Phi_2(t,x)$ for each $t\geq0$ and $x\in\R^d$. This completes the proof.
\end{proof}

From the above proof, we see clearly that when $\varrho_0\equiv0\equiv I_0$ on $[0,i_\dagger)\times\R^d$, then the iteration procedure yields that the solution $\Phi$ of problem \eqref{eqn-F} is nothing but the trivial solution $\Phi\equiv0$ on $\R_+\times\R^d$. In contrast, we obtain a nontrivial solution as soon as $\Gamma_1(\varrho_0)$ or $\Gamma_2(I_0)$ is nontrivial. In the case when $I_0\not\equiv0$, by requiring some further assumptions on the support of $I_0$, we can prove a strict positivity property for the solution $\Phi$ for $t$ large enough (which can be quantified).

\begin{lem}
\label{lem_strict positivity}
Under the assumption of Lemma \ref{lemma_well-posedness_renewal eqn} with $I_0\not\equiv0$, we assume that $\mathrm{Int}(\mathrm{supp}(\tau))\cap \mathrm{Int}(\mathcal{D}_{I_0})\neq \emptyset$, then upon setting 
\bqs
i_\star:=\min\left(\overline{\mathrm{Int}(\mathrm{supp}(\tau))\cap \mathrm{Int}(\mathcal{D}_{I_0})}\right)\in[0,i_\dagger),
\eqs 
we have that the unique nonnegative solution $\Phi$ of \eqref{eqn-F} satisfies $\Phi(t,x)>0$ for all $t> i_\star$ and $x\in\R^d$.
\end{lem}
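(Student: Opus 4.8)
The plan is to read off the sign of $\Phi$ directly from the fixed-point identity $\Phi=\F(\Phi;\varrho_0,I_0)$, rewriting it as $\Phi(t,x)=\S_0\big(1-e^{-E(t,x)}\big)$ with
\[
E(t,x):=\int_0^t\omega(i)\,\K*\Phi(t-i,x)\,\md i+\Gamma_1(\varrho_0)(t,x)+\Gamma_2(I_0)(t,x).
\]
Because $\omega\ge 0$, $\K>0$, $\Phi\ge 0$, $\varrho_0\ge 0$ and $I_0\ge 0$, all three terms are nonnegative, so $E\ge 0$ and (recall $\S_0>0$) $\Phi(t,x)>0$ iff $E(t,x)>0$. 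The first two summands may vanish identically (e.g. when $\varrho_0\equiv 0$), so the whole statement reduces to showing $\Gamma_2(I_0)(t,x)>0$ for all $t>i_\star$ and $x\in\R^d$; and since $\Gamma_2(I_0)$ is itself a sum of two nonnegative terms, it suffices to bound below its first summand, which after swapping the order of integration (Tonelli) reads $\int_0^{\min(t,i_\dagger)}\omega(i)\big(\int_0^i\pi(\xi)^{-1}\,\K*I_0(\xi,x)\,\md\xi\big)\md i$, where $\K*I_0(\xi,x):=\int_{\R^d}\K(x-y)I_0(\xi,y)\,\md y$ (the truncation appearing when $i_\dagger<\infty$ is harmless since $\mathcal{D}_{I_0}$ is compactly contained in $[0,i_\dagger)$).

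The elementary but decisive observation is that, because $\K>0$ everywhere and $I_0\ge 0$ is continuous, $\K*I_0(\xi,x)$ is strictly positive for \emph{all} $x\in\R^d$ exactly when $\xi$ lies in the relatively open set $A:=\{i\in[0,i_\dagger):I_0(i,\cdot)\not\equiv 0\}$, whose closure is $\mathcal{D}_{I_0}$; similarly $\{\omega>0\}=\{\tau>0\}$ is open with closure $\mathrm{supp}(\tau)$, as $\pi>0$ on $[0,i_\dagger)$ and $\tau$ is continuous. The key step, and the one place where the precise choice of $i_\star$ is used, is that $\{\tau>0\}\cap A$ accumulates at $i_\star$ from the right: indeed, setting $B:=\mathrm{Int}(\mathrm{supp}(\tau))\cap\mathrm{Int}(\mathcal{D}_{I_0})$ — open, nonempty by hypothesis, with $\inf B=i_\star$ — the set $\{\tau>0\}$ is open and dense in $\mathrm{supp}(\tau)\supseteq B$ and $A$ is open and dense in $\mathcal{D}_{I_0}\supseteq B$, so $\{\tau>0\}\cap A\cap B$ is open and dense in $B$, and for any $t>i_\star$ the nonempty open set $B\cap(i_\star,t)$ meets it. I expect this bookkeeping with supports, interiors and closures — rather than any estimate — to be the only subtle part; everything else is routine positivity of integrals.

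To conclude, I would fix $t>i_\star$, pick $a\in\{\tau>0\}\cap A\cap(i_\star,t)$ (so $0<a<t$), and choose $\delta>0$ with $(a-\delta,a+\delta)\subseteq\{\tau>0\}$ and $a+\delta<\min(t,i_\dagger)$. Then $\omega=\tau\pi>0$ on $(a,a+\delta)$; and for each $x\in\R^d$ and each $i\in(a,a+\delta)$ the nonnegative continuous integrand $\xi\mapsto\pi(\xi)^{-1}\K*I_0(\xi,x)$ is strictly positive at the interior point $\xi=a<i$ (because $a\in A$ and $\K>0$, using that $\xi\mapsto\K*I_0(\xi,x)$ is continuous by continuity and compact support of $I_0$ together with $\K\in L^1(\R^d)$), so $\int_0^i\pi(\xi)^{-1}\K*I_0(\xi,x)\,\md\xi>0$. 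Hence the integrand $\omega(i)\int_0^i\pi(\xi)^{-1}\K*I_0(\xi,x)\,\md\xi$ is strictly positive on the positive-measure set $(a,a+\delta)\times\R^d$ and nonnegative throughout $(0,\min(t,i_\dagger))$, so its integral — and therefore $\Gamma_2(I_0)(t,x)$, hence $E(t,x)$, hence $\Phi(t,x)$ — is strictly positive for every $t>i_\star$ and $x\in\R^d$, which is the claim.
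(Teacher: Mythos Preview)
Your proof is correct and follows exactly the same route as the paper's: show that the first summand of $\Gamma_2(I_0)$ is strictly positive for every $t>i_\star$ and $x\in\R^d$, which forces the exponent $E(t,x)>0$ and hence $\Phi(t,x)=\S_0(1-e^{-E(t,x)})>0$. The paper compresses this to a single line (``by definition of $i_\star$''), whereas you carefully supply the density argument needed to locate a point $a\in(i_\star,t)$ at which both $\tau>0$ and $I_0(a,\cdot)\not\equiv 0$; this extra bookkeeping is warranted and your argument is sound.
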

\begin{proof}[Proof of Lemma~\ref{lem_strict positivity}]
The proof simply relies on the fact that for each $t> i_\star$ and $x\in\R^d$,
\bqs
\int_0^t\omega(i) \K*\left(\int_0^i \frac{I_0(\xi,x)}{\pi(\xi)} \md \xi\right)\md i>0,
\eqs
by definition of $i_\star$. This implies that $\Gamma_2(I_0)(,x)>0$ for $t> i_\star$ and $x\in\R^d$. Based on the proof of Lemma \ref{lemma_well-posedness_renewal eqn}, we then deduce that the solution  $\Phi$ to problem \eqref{eqn-F} satisfies  $\Phi(t,x)>0$ for all $t> i_\star$ and $x\in\R^d$.
\end{proof}

We close this subsection by the following observation. When $\varrho_0\equiv0$ and $\mathrm{Int}(\mathrm{supp}(\tau))\cap \mathrm{Int}(\mathcal{D}_{I_0})= \emptyset$, then we get that $\Gamma_1(\varrho_0)\equiv0\equiv\Gamma_2(I_0)$ on $\R_+\times\R^d$, and thus the solution $\Phi$ of problem \eqref{eqn-F} is identically 0  on $\R_+\times\R^d$. We then obtain from  \eqref{integral eqn-edpI} that the solution $\varrho$ of \eqref{cauchy} has the form, for each $x\in\R^d$, 
\bqs
\frac{\rho(t,i,x)}{\pi(i)}=\varrho(t,i,x)=
\left\{
\begin{split}
\int_{i-t}^i \frac{I_0(\xi,x)}{\pi(\xi)} \md \xi,& \quad i\ge t,\\
\int_0^i \frac{I_0(\xi,x)}{\pi(\xi)} \md \xi,& \quad i< t,
\end{split}
\right.\eqs
 as claimed initially in \eqref{1.6'} in the introduction. Similarly, when $I_0\equiv0$ and $\mathrm{Int}(\mathrm{supp}(\tau))\cap \mathrm{Int}(\mathcal{D}_{\varrho_0})= \emptyset$ with $\mathcal{D}_{\varrho_0}\subset[0,i_\dagger)$ where $\mathcal{D}_{\varrho_0}$ is defined as in \eqref{I_0 support} with $I_0$ replaced by $\varrho_0$ this time, then we get that $\Gamma_1(\varrho_0)\equiv0\equiv\Gamma_2(I_0)$ on $\R_+\times\R^d$, and thus the solution $\Phi$ of problem \eqref{eqn-F} is the trivial solution $\Phi\equiv0$ on $\R_+\times\R^d$ which then implies that, for each $x\in\R^d$, 
\bqs
\varrho(t,i,x)=
\left\{
\begin{split}
\varrho_0(i-t,x),& \quad i\ge t,\\
0,& \quad i< t.
\end{split}
\right.\eqs

\subsection{Well-posedness -- Proof of Proposition \ref{prop-edpI-well-posedness}}

\begin{proof}[Proof of Proposition~\ref{prop-edpI-well-posedness}.] 
Assume Hypotheses {\bf (H1)-(H2)}, and that $I_0$ is nonnegative, bounded, continuous and compactly supported on $[0,i_\dagger)\times\R^d$, and that $\rho_0$ is nonnegative such that $\rho_0/\pi$ is bounded and absolutely continuous on $[0,i_\dagger)\times \R^d$. First of all, we derive from Lemma \ref{lemma_well-posedness_renewal eqn} that problem \eqref{eqn-F} admits a unique nonnegative bounded and continuous solution $\Phi$ on $\R_+\times\R^d$ associated with $\varrho_0=\rho_0/\pi$ and $I_0$.  Then, it is immediate to conclude from the semi-explicit formula \eqref{integral eqn-edpI} that problem \eqref{cauchy} admits a unique nonnegative bounded solution $\varrho$ given by \eqref{integral eqn-edpI} on  $\R_+\times[0,i_\dagger)\times \R^d$  which is continuous in $(t,i)$ with $t\neq i$ uniformly with respect to $x\in\R^d$.  Now, since $\tau$ is absolutely continuous (and thus $\omega$) and both $\varrho_0$ and $\I_0$ are bounded in $ [0,i_\dagger)\times\R^d$, we get that $\Gamma_1(\varrho_0)$ and $\Gamma_2(I_0)$ are continuously differentiable in $t$ uniformly with respect to $x\in\R^d$. This then implies that $\partial_t \varrho(t,i,x)$ and $\partial_i \varrho(t,i,x)$ exist a.e. for  $(t,i,x)\in\R_+\times (0,i_\dagger)\times \R^d$ with $t\neq i$.  Therefore, by Definition \ref{def:classical}, the function $\rho=\varrho\pi$ in $\R_+\times[0,i_\dagger)\times\R^d$ is a unique classical solution to problem \eqref{edp} in $\R_+\times[0,i_\dagger,\R^d)$.
	
Now assume that $I_0\equiv0$, since the solution $\rho$ is identically 0 when $\rho_0\equiv0$,   we only consider the nontrivial case, i.e. $\rho_0\not \equiv0$. Since $\K\in W^{1,1}(\R^d)$ and thus $\partial_{x_p} \K \in L^1(\R^d)$ ($p=1,\cdots,d$), we also get that $\Gamma_1(\varrho_0)$ is also continuously differentiable with respect to $x\in\R^d$ for each $t>0$ with
\bqs
\partial_{x_p} \Gamma_1(\varrho_0)(t,x)=\int_0^\infty\omega(i+t) (\partial_{x_p}\K)*\varrho(i,x)\md i, \quad p=1,\cdots,d, \quad t>0, \quad x\in\R^d,
\eqs
and thus for each $ p=1,\cdots,d$
\bqs
\partial_{x_p}\Phi(t,x)=\S_0\left(\int_0^t\omega(i)(\partial_{x_p}\K)*\Phi(t-i,x)\md i  +\partial_{x_p}\Gamma_1(\varrho_0)(t,x)\right)\H(\Phi;\varrho_0,0)(t,x), \quad t>0, \quad x\in\R^d,
\eqs
showing that $\Phi$ constructed in Lemma~\ref{lemma_well-posedness_renewal eqn} is continuously differentiable with respect to $x\in\R^d$ for each $t>0$. Hence, we deduce that each $\partial_{x_p} \rho(t,i,x)$ $(p=1,\cdots,d)$ exists a.e. for $(t,i,x)\in\R_+\times (0,i_\dagger)\times \R^d$ with $t>i$. Since we assumed that $\rho_0$ is absolutely continuous on $[0,i_\dagger)\times\R^d$, then 
 $\partial_{x_p} \rho(t,i,x)$ ($p=1,\cdots,d$)  exist a.e. 
 for $(t,i,x)\in\R_+\times (0,i_\dagger)\times \R^d$ with $t\neq i$. This completes the proof.
\end{proof}

\subsection{Positivity -- Proof of Proposition \ref{prop-positivity}}

\begin{proof}[Proof of Proposition~\ref{prop-positivity}] 
	Assume Hypotheses {\bf (H1)-(H2)}, and that $I_0$ is nonnegative, bounded, continuous and compactly supported on $[0,i_\dagger)\times\R^d$, and that $\rho_0$ is nonnegative such that $\rho_0/\pi$ is bounded and absolutely continuous on $[0,i_\dagger)\times \R^d$. Let $\rho$ be the solution to problem \eqref{edp}.
	
	\noindent \textit{Proof of Statement (i).} Assume that $I_0\not\equiv 0$, and that $\mathrm{Int}(\mathrm{supp}(\tau))\cap \mathrm{Int}(\mathcal{D}_{I_0})\neq \emptyset$ with $i_\star\in[0,i_\dagger)$.
	To prove the strict positivity of the solution to \eqref{edp} in the case where $I_0\not\equiv0$, we simply observe from  \eqref{integral eqn-edpI} that
\bqs
\frac{\rho(t,i,x)}{\pi(i)}=\Phi(t-i,x)+\int_0^i \frac{I_0(\xi,x)}{\pi(\xi)}\md \xi, \quad t>i,~~ x\in\R^d.
\eqs
Thus, using Lemma~\ref{lem_strict positivity} with $i_\star$ being defined there, we get that $\Phi(t-i,x)>0$ for each $t-i> i_\star$ and all $x\in\R^d$, and the conclusion follows.

	\noindent \textit{Proof of Statement (ii).}
Suppose now that $\rho_0\not\equiv 0$ in $[0,i_\dagger)\times\R^d$ and that there are $0<\varpi\in\mathrm{Int}\big(\mathrm{supp}(\tau)\big)$ and $x_0\in\R^d$  satisfying 
\bqq\label{2.7'}
[0,\varpi]\times\left\{x_0\right\} \subset \mathrm{supp}(\rho_0).
\eqq
It is sufficient to prove, with our change of function $\varrho(t,i,x)=\frac{\rho(t,i,x)}{\pi(i)}$,  the positivity of the solutions $\varrho$ to problem \eqref{cauchy} for $(t,i,x)\in(0,+\infty)\times[0,i_\dagger)\times \R^d$ with $t>i$ with $I_0\equiv0$ and $\varrho_0\not\equiv0$.
Let us denote $\varrho$ the unique nonnegative bounded solution to problem \eqref{cauchy} provided by Proposition~\ref{prop-edpI-well-posedness} with initial datum $\frac{\rho_0}{\pi}\not\equiv 0$ in $[0,i_\dagger)\times\R^d$.

 To do so, we first claim that $\varrho(t,0,x)>0$ for $(t,x)\in(0,+\infty)\times\R^d$. Assume towards the contradiction that it were not true, then there would exist a point $(t_0,x_0)\in(0,\infty)\times \R^d$ such that $\varrho(t_0,0,x_0)=0$. We then infer from the boundary condition in \eqref{cauchy} that 
	\begin{equation}
		\label{boundary+}
		\int_0^\infty\omega(i)\int_{\R^d}\K(x_0-y)\varrho(t_0,i,y) \md y \md i=0.
	\end{equation}
	This immediately implies, since $\K>0$ in $\R^d$, that
	\begin{equation}\label{2.4}
		\varrho(t_0,i,x)=0~~~~\text{for}~i\in\text{supp}(\tau)\subset[0,i_\dagger),~x\in\R^d.
	\end{equation}
	Furthermore, we derive from \eqref{integral eqn-edpI}  that
	\begin{equation*}
		\begin{aligned}
			\varrho(t_0,i,x)=\begin{cases}
				\frac{\rho_0(i-t_0,x)}{\pi(i-t_0)}=0,~~~~&\text{for}~i\in\text{supp}(\tau)\cap(t_0,+\infty),\quad x\in\R^d,\\
				\Phi(t_0-i,x)=0,~~~~&\text{for}~i\in\text{supp}(\tau)\cap [0,t_0],\quad x\in\R^d.
			\end{cases}		
		\end{aligned}
	\end{equation*}    
	Assume first that $\text{supp}(\tau)\cap(t_0,+\infty)\neq \emptyset$, we then derive that
	\begin{equation*}
		\rho_0(i-t_0,x)=0~~~~~~~\text{for}~i\in\text{supp}(\tau)\cap(t_0,+\infty),\quad x\in\R^d,
	\end{equation*}
	contradicting \eqref{2.7'}. Assume now that  $\text{supp}(\tau)\subset[0,t_0]$, then it is seen that
	\begin{equation*}
		\Phi(t_0-i,x)=0~~~~~~~\text{for}~i\in\text{supp}(\tau)\subset[0,t_0],\quad x\in\R^d.
	\end{equation*}
Namely, $\Phi(t,x)\equiv 0$ for $t\in(t_0-\text{supp}(\tau))\subset [0,t_0]$ and $x\in\R^d$. Then we apply the formula \eqref{renewal-1} of $\Phi$ and obtain that 
	\begin{equation*}
		\int_0^\infty\omega(i)\int_{\R^d}\K(x-y)\varrho(t,i,y)\md y\md i =0~~~~\text{for}~t\in(t_0-\text{supp}(\tau)),~i\in\text{supp}(\tau), ~  x\in\R^d.
	\end{equation*}
	Hence, 
	\begin{equation}\label{t_1}
		\varrho(t,i,x)=0 ~~~~\text{for} ~t\in(t_0-\text{supp}(\tau)),~i\in\text{supp}(\tau)~x\in\R^d.
	\end{equation}
	Define now 
	\begin{equation*}
		t_1:=\min\{t_0-t~|~ t\in\text{supp}(\tau)\}.
	\end{equation*}
	We notice that $t_1\in[0,t_0)$. The formula \eqref{t_1} implies in particular that
	\begin{equation}
		\varrho(t_1,i,x)=0 ~~~~\text{for}~i\in\text{supp}(\tau),~x\in\R^d.
	\end{equation} 
	If $t_1=0$, then we immediately get a contradiction since $\varpi\in\mathrm{Int}\big(\mathrm{supp}(\tau)\big)$.
	In what follows, let us assume that $t_1\in(0,t_0)$. By repeating the argument as for \eqref{2.4}, we will reach the contradiction as long as $\text{supp}(\tau)\cap(t_1,+\infty)\neq\emptyset$ due to the assumption on the support of $\rho_0$. Otherwise, we have $\text{supp}(\tau)\subset [0,t_1]$ and we then obtain that 
	\begin{equation*}
		\Phi(t_1-i,x)=0~~~~~~~\text{for}~x\in\R^d,~i\in\text{supp}(\tau)\subset[0,t_1].
	\end{equation*}
	Namely, $\Phi(t,x)\equiv 0$ for $x\in\R^d$ and $t\in(t_1-\text{supp}(\tau))\subset [0,t_1]$. Then we apply again the formula \eqref{renewal-1} of $\Phi$ and arrive at 
	\begin{equation}\label{t_2}
		\int_0^\infty\omega(i)\int_{\R^d}\K(x-y)\varrho(t,i,y)\md y\md i =0~~~~\text{for}~t\in(t_1-\text{supp}(\tau)),~i\in\text{supp}(\tau),~ x\in\R^d.
	\end{equation}
	Namely,
	\begin{equation*}
		\varrho(t,i,x)=0 ~~~~\text{for} ~t\in(t_1-\text{supp}(\tau)),~i\in\text{supp}(\tau),~x\in\R^d.
	\end{equation*}
	Then we proceed with $t_2:=\min\{t_1-t~|~t\in\text{supp}(\tau)\}\in[0,t_1)$ and derive that
	\begin{equation*}
		\varrho(t_2,x,i)=0 ~~~~\text{for}~i\in\text{supp}(\tau),~x\in\R^d.
	\end{equation*}
	Again, there is a contradiction with \eqref{H3} provided that $t_2=0$. When $t_2\neq 0$, then we make the discussion as before.
	If $\text{supp}(\tau)\cap (t_2,+\infty)\neq \emptyset$, then we are done. Otherwise, we repeat previous procedure and retrieve \eqref{2.4} with a smaller time $t_3\in[0,t_2)$. After finite steps, we will find a time $t_{min}\in[0,t_2)$ such that either $t_{min}=0$, which is a contradiction; or $\text{supp}(\tau)\cap(t_{min},+\infty)\neq\emptyset$, which will  give a contradiction as well. As a consequence, we conclude that $\varrho(t,0,x)>0$ for $(t,x)\in(0,+\infty)\times\R^d$, as claimed.
	
	Assume now that there is a point $(t_0,i_0,x_0)\in(0,+\infty)\times(0,i_\dagger)\times \R^d$ with $i_0<t_0$ such that $\varrho(t_0,i_0,x_0)=0$.
	We deduce from the formula \eqref{integral eqn-edpI} of $\varrho$ that
	\begin{equation*}
		0\equiv\varrho(t_0,i_0,x_0)=\Phi(t_0-i_0,x_0),
	\end{equation*}
whence
	\begin{equation*}
		\int_0^\infty\omega(i)\int_{\R^d}\K(x_0-y)\varrho(t_0-i_0,i,y) \md y \md i=0.
	\end{equation*}
	Therefore, we are  led to \eqref{2.4} with $t_0$ replaced by $t_0-i_0$ this time. Following the lines as before, we eventually derive a contradiction. Consequently, we conclude that $\varrho(t,i,x)>0$ for  $t>0$, $x\in\R^d$ and $i\in(0, i_\dagger)$ with $t>i$. 
	The proof of Proposition~\ref{prop-positivity} is therefore achieved.
\end{proof}


\subsection{Comparison principle -- Proof of Proposition \ref{prop-cp}}

In this section, we prove the comparison principle for  problem~\eqref{edp} with the aid of Lemma \ref{lem_cp}.

\begin{proof}[Proof of Proposition \ref{prop-cp}]
	Let $\overline \rho$ and $\underline \rho$ be respectively a super- and a  subsolution of \eqref{edp} in $\R_+\times[0,i_\dagger)\times\R^d$ with nonnegative initial data $\overline \rho_0$ and $\underline \rho_0$ defined on $[0,i_\dagger)\times\R^d$ satisfying $\overline \rho_0/\pi,\underline \rho_0/\pi\in L^\infty([0,i_\dagger)\times\R^d)$ and $\overline I_0$ and $\underline I_0$ defined on $[0,i_\dagger)\times\R^d$ which are nonnegative, bounded, continuous and compactly supported in $[0,i_\dagger)\times\R^d$.
Assume that $\overline \rho_0\ge\underline \rho_0\geq0$ and $\overline I_0\ge\underline I_0\geq0$ in $[0,i_\dagger)\times\R^d$. By definition, $\overline \rho$ satisfies
\bqs
\left\{
\begin{split}
\partial_t \overline\rho(t,i,x) + \partial_i\overline\rho(t,i,x) &\geq \overline I_0(i,x) - \gamma(i) \overline\rho(t,i,x) ,\quad t>0, \quad i\in(0,i_\dagger),  \quad x\in\R^d,  \\
\overline\rho(t,0,x) &\geq \S_0 \left(1-\exp\left(-\int_0^\infty \tau(i) \K*\overline\rho(t,i,x)\md i \right)\right), \quad  t> 0,  \quad x\in\R^d,\\
\overline\rho(0,i,x)&=\overline\rho_0(i,x),   \quad     i\in[0,i_\dagger), \quad x\in\R^d.
\end{split}\right.
\eqs
Applying  the method of characteristics, we find that $\overline \rho$  satisfies
\bqs
\frac{\overline\rho(t,i,x)}{\pi(i)}\geq 
\left\{
\begin{split}
\frac{\overline\rho_0(i-t,x)}{\pi(i-t)}+\int_{i-t}^i \frac{\overline I_0(\xi,x)}{\pi(\xi)}\md\xi,& \quad i\ge t,\\
\overline\Phi(t-i,x) +\int_0^i \frac{\overline I_0(\xi,x)}{\pi(\xi)} \md \xi,& \quad i< t,
\end{split}
\right.
\eqs
with 
\bqs
\overline\Phi\ge \F\left(\overline \Phi;\frac{\overline \rho_0}{\pi},\overline I_0\right)  ~~~\text{on}~\R_+\times\R^d.
\eqs
Analogously, the subsolution $\underline \rho$ satisfies
\bqs
\frac{\underline\rho(t,i,x)}{\pi(i)}\leq 
\left\{
\begin{split}
\frac{\underline\rho_0(i-t,x)}{\pi(i-t)}+\int_{i-t}^i \frac{\underline I_0(\xi,x)}{\pi(\xi)}\md\xi,& \quad i\ge t,\\
\underline\Phi(t-i,x) +\int_0^i \frac{\underline I_0(\xi,x)}{\pi(\xi)} \md \xi,& \quad i< t,
\end{split}
\right.
\eqs
with 
\bqs
\underline\Phi\le \F\left(\underline \Phi;\frac{\underline \rho_0}{\pi},\underline I_0\right)  ~~~\text{on}~\R_+\times\R^d.
\eqs
Applying the comparison principle from Lemma \ref{lem_cp} to the Volterra equation \eqref{eqn-F}, we obtain that $\overline\Phi\ge\underline \Phi$ in $\R_+\times\R^d$. This, along with the above integral inequalities satisfied by $\overline \rho$ and $\underline \rho$, immediately implies  that $\overline \rho \ge \underline \rho$ in $\R_+\times[0,i_\dagger)\times\R^d$.
Let $\overline v$ and $\underline v$ be the solutions to \eqref{edp}  in $[0,i_\dagger)\times\R^d$ associated with initial data $\overline v(0,\cdot,\cdot)=\overline \rho_0$ and source term $\overline I_0$ and associated with  $\underline v(0,\cdot,\cdot)=\underline \rho_0$ and source term $\underline I_0$, respectively.

Assume first that  $\overline I_0\neq \underline I_0$ in $[0,i_\dagger)\times\R^d$, and that $\mathrm{Int}(\mathrm{supp}(\tau))\cap \mathrm{Int}(\mathcal{D}_{\overline I_0-\underline I_0})\neq \emptyset$. Using the comparison principle, we also get that \eqref{ordered} is satisfied. 	Set $w:=\frac{\overline v-\underline v}{\pi}$ on $\R_+\times[0,i_\dagger)\times\R^d$, then the function $w$ is nonnegative  and satisfies
\bqs
\left\{
\begin{split}
\partial_t w(t,i,x) &+ \partial_iw(t,i,x) =\frac{\overline I_0(i,x)- \underline I_0(i,x)}{\pi(i)}  ,\quad t>0,  \quad i\in(0,i_\dagger), \\
w(t,0,x)&=\S_0\exp\left(-\int_0^\infty \omega(i) \K*\underline v(t,i,x)\md i \right) \left(1-\exp\left(-\int_0^\infty \omega(i) \K*w(t,i,x)\md i \right)\right), \quad t> 0,\\
w(0,i,x)&= \frac{\overline \rho_0(i,x)-\underline \rho_0(i,x)}{\pi(i)}\geq 0,\quad    i\in[0,i_\dagger), 
\end{split}
\right.
\eqs
for all $x\in\R^d$.  We can then reproduce the argument in Lemma~\ref{lem_strict positivity} to obtain the eventual strict positivity of $w$, that is $w(t,i,x)>0$ for $(t,i,x)\in(0,+\infty)\times[0,i_\dagger)\times\R^d$ with $t>i+i_\star$, and the result follows. 

Assume now that  $\overline \rho_0\neq\underline \rho_0$, and that  \eqref{H3} is satisfied with $\rho_0$ replaced by  $\overline \rho_0-\underline \rho_0$. By the analysis above, we have that
\begin{equation}
	\label{ordered}
	\overline \rho \ge\overline v\ge \underline v\ge\underline \rho,\quad \text{on}~\R_+\times[0,i_\dagger)\times\R^d.
\end{equation} 
It is sufficient to show that the nonnegative function $w=\frac{\overline v-\underline v}{\pi}$ solving
\bqs
\left\{
\begin{split}
	\partial_t w(t,i,x) &+ \partial_iw(t,i,x) =0  ,\quad t>0,  \quad i\in(0,i_\dagger), \\
	w(t,0,x)&=\S_0\exp\left(-\int_0^\infty \omega(i) \K*\underline v(t,i,x)\md i \right) \left(1-\exp\left(-\int_0^\infty \omega(i) \K*w(t,i,x)\md i \right)\right), \quad t> 0,\\
	w(0,i,x)&= \frac{\overline \rho_0(i,x)-\underline \rho_0(i,x)}{\pi(i)}\gneqq 0,\quad    i\in[0,i_\dagger), 
\end{split}
\right.
\eqs
for all $x\in\R^d$, satisfies  $w(t,i,x)>0$ for $(t,i,x)\in(0,+\infty)\times[0,i_\dagger)\times\R^d$ with $t>i$. This problem has the same structure as \eqref{cauchy} in the case that $I_0\equiv0$. Hence, by repeating the argument in the proof of Proposition~\ref{prop-positivity} regarding the positivity of the solutions, we eventually derive that $w(t,i,x)>0$ for $(t,i,x)\in(0,+\infty)\times[0,i_\dagger)\times\R^d$ with $t>i$ since $w(0,\cdot,\cdot)= (\overline \rho_0-\underline \rho_0)/\pi$ satisfies \eqref{H3}. Thus, $\overline v(t,i,x)>\underline v(t,i,x)$ for $(t,i,x)\in(0,+\infty)\times[0,i_\dagger)\times\R^d$ with $t>i$. Together with \eqref{ordered}, we then conclude that   $\overline \rho(t,i,x)>\underline \rho(t,i,x)$ for $(t,i,x)\in(0,+\infty)\times[0,i_\dagger)\times\R^d$ with $t>i$. The proof of the comparison principle is therefore finished.
\end{proof}

\section{Analysis of the homogeneous model with $I_0\equiv0$}\label{secKPP}

In this section,  we focus on the initial boundary value problem \eqref{edp} in the homogenous case where $I_0\equiv0$:
\bqq
\label{fkpp}
\left\{
\begin{split}
	\partial_t \rho(t,i,x) + \partial_i\rho(t,i,x) &= - \gamma(i) \rho(t,i,x) ,\quad t>0, \quad i\in(0,i_\dagger),  \quad x\in\R^d,  \\
	\rho(t,0,x) &=\S_0 \left(1-\exp\left(-\int_0^\infty \tau(i) \K*\rho(t,i,x)\md i \right)\right), \quad  t> 0,  \quad x\in\R^d,\\
	\rho(0,i,x)&=\rho_0(i,x),   \quad     i\in [0,i_\dagger), \quad x\in\R^d.
\end{split}\right.
\eqq
  We expect that the analysis of this problem will shed light on the evolution of the heterogenous problem \eqref{edp} with $I_0\not\equiv0$. Indeed, as will be shown later,  the solutions of the heterogenous and homogeneous problems will possess  very similar  dynamics at large times. Also, as already emphasized in the introduction, the study of  problem \eqref{fkpp}  has its own mathematical interest since it shares lots of common features with the recently studied field-road reaction-diffusion models with so called Fisher-KPP type nonlinearities \cite{BRR16,BRR21}.

Throughout this section, we assume that {\bf(H1)-(H2)} are satisfied, and that the initial condition $\rho_0\not \equiv0$ satisfies {\bf (H4)} as well as \eqref{H3}.  Under such assumptions, we get the existence and uniqueness of a global classical solution $\rho$ for  problem \eqref{fkpp} with  its spatial partial derivatives $\partial_{x_p}\rho$ ($p=1,\cdots,d$) exist a.e. on $(0,+\infty)\times(0,i_\dagger)\times\R^d$, which satisfies $\rho(t,i,x)>0$ for each $(t,i,x)\in(0,+\infty)\times[0,i_\dagger)\times\R^d$ with $t>i$. 

\subsection{Liouville-type result and long time behavior of \eqref{kppI}}

To study the long time behavior of \eqref{fkpp}, let us first look at the corresponding stationary problem:
\bqq
\left\{
\begin{split}
	\partial_i\rho(i,x) &= - \gamma(i) \rho(i,x),\quad i\in(0,i_\dagger)  \quad x\in\R^d,\\
	\rho(0,x)&=\S_0 \left(1-\exp\left(-\int_0^\infty \tau(i) \K*\rho(i,x)\md i \right)\right),  \quad x\in\R^d.
\end{split}
\right.
\label{kppI-stat}
\eqq

Recall that $\mathscr{R}_0>0$ denotes the basic reproduction number associated with the problem, given  by
\bqs
\mathscr{R}_0:=\S_0 \int_0^\infty \omega(i)\md i = \S_0 \int_0^\infty \tau(i) e^{-\int_0^i\gamma(s)\md s} \md i.
\eqs

We have the following Liouville-type result for stationary problem \eqref{kppI-stat}.
\begin{thm}\label{thm-liouville}
 Under the Hypotheses {\bf (H1)}-{\bf (H2)}, problem \eqref{kppI-stat} admits  a trivial stationary solution 0. Moreover,  it admits a  positive stationary solution if and only if  $\mathscr{R}_0>1$. Such a positive stationary solution $\rho^s$, if any, is unique and given explicitly by $\rho^s(i)=\S_0 \rho^*\pi(i)$ for $i\in[0,i_\dagger)$ where $\rho^*\in(0,1)$ is the unique positive constant solution of $v=1-e^{-\mathscr{R}_0 v}$ when $\mathscr{R}_0>1$.
\end{thm}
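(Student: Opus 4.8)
The plan is to reduce the stationary system \eqref{kppI-stat} to a single scalar nonlocal equation and then run a Fisher--KPP--type Liouville analysis on it. First I would solve the transport ODE $\partial_i\rho=-\gamma(i)\rho$ in the variable $i$: a stationary solution is determined by its trace $p(x):=\rho(0,x)$ through $\rho(i,x)=p(x)\pi(i)$, and ``bounded'' (equivalently ``$\rho/\pi$ bounded'', or ``positive'') for $\rho$ just means the corresponding property for $p$. Inserting $\rho(i,x)=p(x)\pi(i)$ into the boundary condition and using $\K*(p\,\pi(i))=\pi(i)(\K*p)$ together with $\int_0^\infty\omega=\mathscr R_0/\S_0$, the boundary condition becomes $p=\S_0\bigl(1-e^{-(\mathscr R_0/\S_0)\,\K*p}\bigr)$. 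Setting $q:=p/\S_0$, this is exactly $q=h(\K*q)$ with $h(s):=1-e^{-\mathscr R_0 s}$, and conversely every bounded nonnegative solution $q$ of $q=h(\K*q)$ yields a stationary solution $\rho=\S_0\,q\,\pi$. So the whole statement reduces to classifying bounded nonnegative solutions of $q=h(\K*q)$, where $h$ is smooth, increasing, strictly concave with $h(0)=0$ and $h'(0)=\mathscr R_0$.

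Next I would extract the algebraic dichotomy and the a priori bounds. Let $g(v):=h(v)-v$, which is strictly concave with $g(0)=0$ and $g'(0)=\mathscr R_0-1$. Constant solutions of $q=h(\K*q)$ are precisely the nonnegative zeros of $g$: the value $0$ always, and --- when $\mathscr R_0>1$ --- a unique positive zero $\rho^*$, which lies in $(0,1)$ because $g(1)=-e^{-\mathscr R_0}<0$, and which satisfies $g'(\rho^*)<0$, i.e. $h'(\rho^*)<1$. For a general bounded nonnegative solution $q$, set $M:=\sup q$; since $\K*q\le M$ and $h$ is increasing, $q=h(\K*q)\le h(M)$, whence $M\le h(M)$, i.e. $g(M)\ge0$. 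When $\mathscr R_0\le1$ one has $g<0$ on $(0,\infty)$, so $M=0$ and $q\equiv0$; when $\mathscr R_0>1$ one gets $M\le\rho^*$. This already proves the ``only if'' direction (a positive stationary solution forces $\mathscr R_0>1$), and for the ``if'' direction $\rho^s=\S_0\rho^*\pi$ is an explicit positive stationary solution when $\mathscr R_0>1$.

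The heart of the proof is the uniqueness when $\mathscr R_0>1$: I would show that any positive bounded solution $q$ of $q=h(\K*q)$ --- which we already know satisfies $0<q\le\rho^*$ --- must equal $\rho^*$. The concavity of $h$ is the crucial structural ingredient: since $\K*q\le\rho^*$ and $h(s)\ge s$ on $[0,\rho^*]$ (because $g\ge0$ there), we get $q=h(\K*q)\ge\K*q$ pointwise; equivalently, rewriting the equation as $\K*q-q+\tilde F(q)=0$ with $\tilde F(s):=s-h^{-1}(s)$, one has $\tilde F\ge0$ on $[0,\rho^*]$, $\tilde F(0)=\tilde F(\rho^*)=0$, $\tilde F'(0)=1-1/\mathscr R_0>0$ and $0\le\tilde F(s)\le\tilde F'(0)s$, so $q$ solves a stationary nonlocal equation of Fisher--KPP type. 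Applying the previous paragraph's argument to $m:=\inf q$ along a minimizing sequence $x_n$ (using $\K*q(x_n)\ge m$) gives $m\ge h(m)$, hence $g(m)=0$, so $m\in\{0,\rho^*\}$. It then remains to exclude $m=0$: iterating $q=\K*q+\tilde F(q)$ yields $q=\K^{*n}q+\sum_{j=0}^{n-1}\K^{*j}\tilde F(q)$ for every $n$, so $\sum_{j\ge0}\K^{*j}\tilde F(q)\le q\le\rho^*$ everywhere; but if $m=0$ then $q\not\equiv\rho^*$, so $0<q<\rho^*$ on a nonempty open set and $\tilde F(q)\ge\beta>0$ on some ball $B$, forcing $\sum_{j\ge0}\K^{*j}\tilde F(q)(x)\ge\beta\sum_{j\ge0}\mathbb P\bigl(x+S_j\in B\bigr)$ with $(S_j)$ the mean-zero random walk with step density $\K$ --- and this series diverges by recurrence, a contradiction. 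Hence $m=\rho^*$, so $q\equiv\rho^*$, which gives uniqueness and the claimed formula.

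The main obstacle is precisely this last rigidity step --- excluding non-constant, and in particular small or vanishing, positive bounded solutions of the nonlocal Fisher--KPP stationary equation. What makes it work is that $q$ is a bounded $\K$-superharmonic function (the nonlocal analogue of a concave function) carrying a strictly positive, sublinear reaction $\tilde F(q)$ on $\{0<q<\rho^*\}$; the recurrence argument above is clean in dimensions $d\le2$ (with a mild moment assumption on $\K$), while in general dimension one would replace the recurrence step by a sliding argument against the constants $\varepsilon\in(0,\rho^*)$ --- which are strict subsolutions of the evolution \eqref{fkpp} --- or by comparing $q$ with a compactly supported subsolution that spreads to $\S_0\rho^*\pi$. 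Everything else is elementary convexity and monotonicity of $h$ together with the fact that $\K$ is a positive probability density.
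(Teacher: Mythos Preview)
Your reduction to the scalar equation $q=h(\K*q)$ with $h(s)=1-e^{-\mathscr{R}_0 s}$ is exactly the paper's, and your sup-argument disposing of $\mathscr{R}_0\le 1$ (namely $M:=\sup q$ forces $g(M)\ge 0$, hence $M=0$) is a clean variant of the paper's necessity argument---in fact cleaner than what the paper writes. The identification of the constant $\rho^*$ when $\mathscr{R}_0>1$ is also the same.

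Where you part ways is the uniqueness step for $\mathscr{R}_0>1$. The paper gives essentially one line: ``by the monotone increasing property and the concavity'' of $\Psi(\varphi)=1-e^{-\mathscr{R}_0\K*\varphi}$ there is at most one nontrivial nonnegative solution; this is the Krasnosel'ski\u{\i} sublinear trick ($h(\lambda s)>\lambda h(s)$ for $\lambda\in(0,1)$), which from $q\ge\lambda\rho^*$ yields $q\ge h(\lambda\rho^*)>\lambda\rho^*$ with a uniform gap and drives $\lambda:=\inf q/\rho^*$ up to $1$. You instead build a random-walk potential argument: iterate $q=\K*q+\tilde F(q)$ and force $\sum_j\K^{*j}*\tilde F(q)=+\infty$ by recurrence. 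That step genuinely fails under {\bf(H1)}--{\bf(H2)}: no moment condition on $\K$ is assumed, and even with finite variance the $\K$-walk is transient for $d\ge 3$, so $\sum_j\K^{*j}*\mathds{1}_B<\infty$ and no contradiction arises. You flag this, but neither proposed fix closes cleanly here: sliding the constants $\varepsilon\in(0,\rho^*)$ under $q$ is blocked precisely when $\inf q=0$, and the ``compactly supported subsolution that spreads'' route is Theorem~\ref{thm-long time behavior}, whose proof in the paper invokes Theorem~\ref{thm-liouville}, so you would need to reorganize to avoid circularity. Note, though, that the concavity route the paper alludes to \emph{also} tacitly needs $\inf q>0$; so the honest summary is that your approach and the paper's coincide on the reduction and on everything except the exclusion of $\inf q=0$, where your recurrence idea is an overreach in general dimension and the paper simply does not spell the argument out.
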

\begin{proof}[Proof of Theorem \ref{thm-liouville}]
	First of all, we readily see that the disease free state $\rho\equiv 0$ is always a solution of \eqref{kppI-stat}, as expected. Let us now consider the nontrivial case.
	
	  From the first equation of \eqref{kppI-stat}, one infers that 
	\begin{equation}\label{formula-1}
		\rho(i,x)=\rho(0,x)\pi(i),~~~~~(i,x)\in[0,i_\dagger)\times\R^d.
	\end{equation}
	Plugging it into the boundary condition of \eqref{kppI-stat}, one then derives that
	\begin{equation*}
		\rho(0,x)=\S_0\left(1-\exp\left(-\int_0^\infty\omega(i)\md i~\K*\rho(0,x)\right)\right), \quad x\in\R^d.
	\end{equation*}
	Set
	\begin{equation}\label{varphi}
		\varphi(x):=\frac{\rho(0,x)}{\S_0},~~~x\in\R^d,
	\end{equation}
	 then by noticing that $\mathscr{R}_0=\S_0 \int_0^\infty \omega(i)\md i$, it follows that the function $\varphi$ satisfies
\begin{equation}\label{3.1}
	\varphi(x)=1-e^{-\mathscr{R}_0\K*\varphi(x)}, \quad x\in\R^d.
\end{equation}
Due to the monotone increasing property and also the concavity of the mapping $\varphi\in\X \mapsto \Psi(\varphi):=1-e^{-\mathscr{R}_0\K*\varphi}$ where $\X:=\left\{ \varphi\in \mathscr{C}(\R^d) ~|~ \varphi \geq 0\right\}$, it is not difficult to verify that \eqref{3.1} has at most one nontrivial nonnegative solution $\varphi$. Moreover, such $\varphi$, if any, satisfies  $0<\varphi<1$. This in turn implies that stationary problem \eqref{kppI-stat} admits at most one positive solution $0<\rho<\S_0$ in $[0,i_\dagger)\times\R^d$.

To reach our conclusion, we proceed with proving the sufficiency and necessity respectively. Suppose first that \eqref{kppI-stat} admits a unique positive solution $\rho^s$ in $[0,i_\dagger)\times\R^d$. By \eqref{varphi}, it is equivalent to assuming that \eqref{3.1} has a unique positive solution $\varphi$  in $\R^d$. We have to prove that $\mathscr{R}_0>1$. Assume by contradiction that $0<\mathscr{R}_0\le 1$. It follows from the Taylor expansion that
\begin{align}\label{eqn-varphi}
  	\varphi(x)=1-e^{-\mathscr{R}_0\K*\varphi(x)}< \mathscr{R}_0\K*\varphi(x)\le \K*\varphi(x),~~~~x\in\R^d,
\end{align}
whence,  $$\K*\varphi(x)-\varphi(x)=\int_{\R^d}\K(x-y)\left(\varphi(y)-\varphi(x)\right)\md y>0,\quad x\in\R^d.$$
One then infers from $\K>0$ in $\R^d$ that there is  $y_0\in\R^d\backslash\{x\}$ such that $$\varphi(y_0)-\varphi(x)>0.$$ 
 Then we multiply the above inequality  by $\K(y_0-x)$  and integrate over $x\in\R^d$, it follows that
\begin{equation*}
	0<\int_{\R^d}\K(y_0-x)\left(\varphi(y_0)-\varphi(x)\right)\md x=\varphi(y_0)-\K*\varphi(y_0)
\end{equation*}
contradicting \eqref{eqn-varphi}. As a consequence, we conclude that $\mathscr{R}_0>1$. 

Conversely,  let us now assume that $\mathscr{R}_0>1$ and we need to show that \eqref{kppI-stat} admits a unique positive solution. To do so, we first claim that the equation
\begin{equation}
	\label{eqn-v}
	v=1-e^{-\mathscr{R}_0 v}
\end{equation}
  with $\mathscr{R}_0>1$ admits a unique positive constant solution $\rho^*\in (0,1)$. In fact, since $v\mapsto Q(v):=1- e^{-\mathscr{R}_0 v}$ is  monotone increasing and concave in $\R_+$ and $Q(0)=0$, this implies that \eqref{eqn-v} has at most one positive constant solution which, if exists, takes  values in $(0,1)$. We then arrive at the conclusion as claimed, by noticing that $Q'(0)=\mathscr{R}_0>1$. Moreover, we observe that the positive constant function $\rho^*$  satisfies equation \eqref{eqn-v}, which, along with our conclusion that \eqref{eqn-v} admits at most one positive solution, leads to that $\rho^*$ is exactly the unique positive solution to \eqref{eqn-v} under the assumption that $\mathscr{R}_0>1$. Therefore, the stationary problem \eqref{kppI-stat} admits a unique positive solution which is spatially homogeneous,  given explicitly by $\rho^s(i)=\S_0\rho^*\pi(i)$ for $i\in[0,i_\dagger)$. The proof of Theorem \ref{thm-liouville} is therefore complete.
\end{proof}

We are now in position to state the long time behavior of the solutions to \eqref{kppI-stat}.

\begin{thm}
	\label{thm-long time behavior}
 Under the Hypotheses {\bf (H1)}-{\bf (H2)}, let  $\rho$ be the solution of problem \eqref{fkpp} in $\R_+\times[0,i_\dagger)\times\R^d$ associated with  initial condition $\rho_0\not\equiv0$ such that {\bf (H4)} and \eqref{H3} are satisfied. Then we have:
\begin{enumerate}[(i)]
		\item if  $\mathscr{R}_0\leq 1$, then  
		\bqs
		\rho(t,i,x) \rightarrow 0 ~~~ \text{ as } t\rightarrow +\infty, ~~~\text{uniformly in}~(i,x)\in[0,i_\dagger)\times \R^d;
		\eqs
	
		\item if $\mathscr{R}_0>1$, then 
		\bqs
		\rho(t,i,x) \rightarrow \rho^s(i)~~~ \text{ as } t\rightarrow +\infty, ~~~\text{locally uniformly in}~(i,x)\in[0,i_\dagger)\times \R^d,
		\eqs
		 where $\rho^s$ is the unique positive stationary solution to \eqref{fkpp} given in Theorem \ref{thm-liouville}.
	\end{enumerate}
\end{thm}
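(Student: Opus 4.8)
The plan is to reduce the problem to the scalar Volterra equation \eqref{eqn-F} and then squeeze the solution between sub- and supersolutions built around the stationary states $0$ and $\rho^s$. By the change of unknown $\varrho=\rho/\pi$ and the representation \eqref{integral eqn-edpI}, for $t>i$ one has $\rho(t,i,x)=\pi(i)\,\Phi(t-i,x)$, where $\Phi\in\mathscr{C}(\R_+,\mathscr{C}_b(\R^d))$ solves \eqref{eqn-F} with $\varrho_0=\rho_0/\pi$ and $I_0\equiv0$; for $i\ge t$, $\rho(t,i,x)=\rho_0(i-t,x)\pi(i)/\pi(i-t)$ is just the initial datum transported along the characteristics, which tends to $0$ as $t\to+\infty$, uniformly in $(i,x)$, by the compact support of $\rho_0$ together with the monotonicity and the decay of $\pi$ at $i_\dagger$. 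Hence the theorem reduces to proving that $\Phi(t,x)\to0$ uniformly in $x$ if $\mathscr{R}_0\le1$, and $\Phi(t,x)\to\S_0\rho^*$ locally uniformly in $x$ if $\mathscr{R}_0>1$ (recall $\rho^s(i)=\S_0\rho^*\pi(i)$).

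For the upper bound, I would invoke Lemma \ref{lem_cp} to compare $\Phi$ with the spatially homogeneous solution $\overline\Phi(t)$ of \eqref{eqn-F} associated with the constant datum $\|\rho_0/\pi\|_{L^\infty(\R^d)}$ and zero source, which solves the scalar renewal equation
\[
\overline\Phi(t)=\S_0\Big(1-\exp\Big(-\int_0^t\omega(i)\,\overline\Phi(t-i)\,\md i-r(t)\Big)\Big),\qquad r(t):=\Big\|\tfrac{\rho_0}{\pi}\Big\|_{L^\infty}\int_t^\infty\omega(s)\,\md s,
\]
with $r(t)\to0$. Taking $t_n\to+\infty$ along which $\overline\Phi(t_n)\to m^+:=\limsup_t\overline\Phi(t)\in[0,\S_0]$, and using $\omega\in L^1$, $0\le\overline\Phi\le\S_0$ and Fatou/reverse Fatou in $-\ln(1-\overline\Phi(t_n)/\S_0)=\int_0^{t_n}\omega(i)\overline\Phi(t_n-i)\,\md i+r(t_n)$, one obtains $v^+:=m^+/\S_0\le1-e^{-\mathscr{R}_0 v^+}$. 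Since $v\mapsto1-e^{-\mathscr{R}_0v}$ is concave with slope $\mathscr{R}_0$ at $0$, the set $\{v\ge0:\ v\le1-e^{-\mathscr{R}_0v}\}$ equals $\{0\}$ if $\mathscr{R}_0\le1$ and $[0,\rho^*]$ if $\mathscr{R}_0>1$; hence $\limsup_{t\to+\infty}\sup_x\Phi(t,x)\le\S_0\rho^*$, and it is $0$ when $\mathscr{R}_0\le1$. Since $\Phi\ge0$, this already proves part (i).

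For the lower bound when $\mathscr{R}_0>1$ (the hair-trigger step), first note that $\Phi$ is continuous and, by Proposition \ref{prop-positivity}(ii) (with \eqref{H3} in force) together with $\K>0$, strictly positive on $[0,+\infty)\times\R^d$, hence $\Phi\ge\delta_0>0$ on every $[0,T]\times\overline{B_R}$. Let $\lambda_R\in(0,1)$ be the principal eigenvalue of $u\mapsto\K*u$ on $B_R$ (zero exterior condition), with positive eigenfunction $\varphi_R$ extended by $0$ off $B_R$; one has $\lambda_R\uparrow1$ as $R\to+\infty$. Since $\mathscr{R}_0>1$, fix $R$ with $\mathscr{R}_0\lambda_R>1$, then (using continuity at $0$ of the Laplace transform $\mathcal{L}[\omega](\mu)=\int_0^\infty\omega(i)e^{-\mu i}\,\md i$, with $\S_0\mathcal{L}[\omega](0)=\mathscr{R}_0$) a small $\mu>0$ with $\S_0\lambda_R\mathcal{L}[\omega](\mu)>1$, and a nonincreasing cutoff $\eta$ with $\eta(0)=1$, $\eta\equiv1$ on $[0,T/2]$ and $\eta\equiv0$ on $[T,i_\dagger)$, $T$ large. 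A direct computation then shows that, for $\Theta>0$ small, $\underline\rho(i,x):=\Theta\,e^{-\mu i}\pi(i)\varphi_R(x)\eta(i)$ is a time-independent subsolution of \eqref{fkpp}: the interior inequality reduces to $\eta'\le\mu\eta$ (true since $\eta'\le0$), and the boundary inequality reduces, for $x\in B_R$, to $1\le c\,\S_0\lambda_R\mathcal{L}[\omega](\mu)$ with $c<1$ collecting the losses from $\eta$ (absorbed by taking $T$ large) and from $1-e^{-s}\ge(1-o(1))s$ (absorbed by taking $\Theta$ small). Crucially $\underline\rho$ is supported in $\{i\le T\}$, so $\underline\rho\le\rho(t_0,\cdot,\cdot)$ for $t_0>T$ and $\Theta$ small, using $\rho(t_0,i,x)=\pi(i)\Phi(t_0-i,x)\ge\delta_0\pi(i)$ for $i<t_0$, $x\in B_R$, and $\underline\rho\equiv0$ elsewhere.

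By the comparison principle (Proposition \ref{prop-cp}), $\rho(t_0+t,\cdot,\cdot)\ge w(t,\cdot,\cdot)$, where $w$ solves \eqref{fkpp} from $w(0,\cdot,\cdot)=\underline\rho$; since $\underline\rho$ is a stationary subsolution, comparison and autonomy make $t\mapsto w(t,\cdot,\cdot)$ nondecreasing and bounded, so $w(t,i,x)\uparrow w_\infty(i,x)$. Passing to the limit in the integral equation for $w$ (dominated convergence, $\omega\in L^1$, decay of the relevant $\Gamma_1$ term) gives $w_\infty(i,x)=\S_0\pi(i)\psi(x)$ with $\psi\gneqq0$ bounded solving $\psi=1-e^{-\mathscr{R}_0\K*\psi}$; by Theorem \ref{thm-liouville} this forces $\psi\equiv\rho^*$, i.e. $w_\infty=\rho^s$, and by Dini's theorem the convergence is locally uniform. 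Hence $\liminf_{t\to+\infty}\rho(t,i,x)\ge\rho^s(i)$ locally uniformly, which combined with the upper bound $\limsup_{t\to+\infty}\rho(t,i,x)\le\S_0\rho^*\pi(i)=\rho^s(i)$ yields part (ii). The main obstacle is exactly this lower bound: one must prevent the small, spatially localized datum $\rho(t_0,\cdot,\cdot)$ from relaxing to $0$, the driving mechanism being the instability of the state $0$ encoded in $\mathscr{R}_0\lambda_R>1$ for $R$ large; the two technical subtleties are producing a subsolution that is compactly supported in $i$ (so that it sits below $\rho(t_0,\cdot,\cdot)$, which vanishes for large $i$) and identifying the monotone limit through the Liouville result of Theorem \ref{thm-liouville}.
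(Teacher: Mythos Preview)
Your overall architecture matches the paper's: squeeze $\rho$ between a supersolution (decreasing to a stationary state) and a compactly supported subsolution (increasing to a stationary state), then invoke the Liouville result Theorem~\ref{thm-liouville} to identify both limits. For the upper bound you take a slightly different route than the paper: instead of starting the flow from the stationary supersolution $M\pi$ with $M=\max(\S_0,\|\rho_0/\pi\|_\infty)$ and using its time-monotonicity plus Liouville, you compare $\Phi$ with the spatially homogeneous scalar renewal solution $\overline\Phi(t)$ and analyse its $\limsup$ via the concave map $v\mapsto 1-e^{-\mathscr{R}_0 v}$. This is a legitimate and self-contained alternative, and it yields the same conclusion $\limsup_x\Phi\le \S_0\rho^*$ (resp.\ $0$). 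For the lower bound in (ii), both you and the paper look for a stationary, compactly supported subsolution dominated by $\rho(t_0,\cdot,\cdot)$, then use monotone convergence of the flow started from it and identify the limit as $\rho^s$ through Theorem~\ref{thm-liouville}. The paper builds this subsolution explicitly: a cosine profile $\cos(\nu(\|x\|-D))$ in space (so it vanishes continuously at the edge of its support) and a modified recovery rate $\beta^L\ge\gamma$ with $\beta^L\to+\infty$ near $L$ (so $e^{-\int_0^i\beta^L}$ vanishes continuously in $i$); see \eqref{beta-definition}--\eqref{sub_kppI}. Your variant replaces the cosine by the principal Dirichlet eigenfunction $\varphi_R$ of $u\mapsto\int_{B_R}\K(\cdot-y)u(y)\,\md y$, and the $\beta^L$ device by $e^{-\mu i}\eta(i)$.

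The one genuine technical point to repair in your version is continuity at the boundary: for this nonlocal Dirichlet eigenproblem one has $\varphi_R(x)=\lambda_R^{-1}\int_{B_R}\K(x-y)\varphi_R(y)\,\md y>0$ on $\partial B_R$ (since $\K>0$ on $\R^d$), so the zero extension of $\varphi_R$ is discontinuous, and Definition~\ref{def_super sub} (hence Proposition~\ref{prop-cp}) requires continuous sub/supersolutions. This is exactly why the paper uses a cosine cap that vanishes at the boundary of its spatial support. You can fix your construction cheaply: replace $\varphi_R$ by $(\varphi_R-\varepsilon)_+$ (still supported in $B_R$, now continuous, and with $\K*(\varphi_R-\varepsilon)_+\ge \lambda_R\varphi_R-\varepsilon\ge(\lambda_R-\varepsilon')(\varphi_R-\varepsilon)_+$ on its support for $\varepsilon$ small), or mimic the paper by multiplying by a smooth bump vanishing on $\partial B_R$; either way you retain $\S_0\lambda_R'\mathcal{L}[\omega](\mu)>1$ after shrinking $\lambda_R$ slightly. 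A similar remark applies to your cutoff $\eta$ in $i$: make sure the resulting $\underline\rho$ is continuous across $i=T$ (e.g.\ take $\eta$ continuous with $\eta(T)=0$), whereas the paper achieves this through $e^{-\int_0^i\beta^L}\to0$ as $i\to L^-$. With these adjustments your argument goes through and is equivalent in strength to the paper's.
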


\begin{proof}[Proof of Theorem \ref{thm-long time behavior}]
 The main ingredient of the proof is based on  a comparison argument as well as the conclusion in Theorem \ref{thm-liouville}. Assume that  {\bf (H1)}-{\bf (H2)} hold.  Let  $\rho$ be the solution of problem \eqref{fkpp} in $\R_+\times[0,i_\dagger)\times\R^d$ associated with  initial condition $\rho_0\not\equiv0$ such that {\bf (H4)} and \eqref{H3} are satisfied.

\noindent
\textit{Proof of statement (i)}. Assume that $\mathscr{R}_0\le 1$. Let $M:=\max\big(\S_0,\Vert \rho_0 / \pi \Vert_{L^\infty([0,i_\dagger)\times\R^d)}\big)$, it is easy to check that $M \pi(i)$ is a supersolution to \eqref{fkpp} in $\R_+\times[0,i_\dagger)\times\R^d$.  Define by $\overline  \rho$  the solution to  problem \eqref{fkpp} with initial condition $\overline \rho_0=M\pi$ on $[0,i_\dagger)\times \R^d$. Applying the comparison principle Proposition~\ref{prop-cp}, we infer that the function $\overline \rho$ is  nonincreasing with respect to  $t$, and $0\le \rho(t,i,x)\le \overline \rho(t,i,x)$ for $(t,i,x)\in\R_+\times[0,i_\dagger)\times \R^d$. Passing to the limit as $t\to+\infty$, it follows from the monotone convergence theorem that $\overline \rho(t,i,x)$ converges to a stationary solution $U$ of \eqref{fkpp} pointwise in $[0,i_\dagger)\times\R^d$, then the convergence holds locally uniformly for $(i,x) \in[0,i_\dagger)\times \R^d$ due to the Dini's theorem by noticing that $U$ and $\overline \rho$ are continuous in $[0,i_\dagger)\times\R^d$ and in $\left\{(t,i,x)\in\R_+\times[0,i_\dagger)\times \R^d~|~ t>i\right\}$ respectively. That is,
	\begin{equation*}
		0\le \liminf_{t\to+\infty} \rho(t,i,x)\le  \limsup_{t\to+\infty} \rho(t,i,x)\le U(i,x)~~~\text{locally uniformly in}~(i,x)\in[0,i_\dagger)\times \R^d.
	\end{equation*}
By virtue of Theorem \ref{thm-liouville}, it is deduced that $U\equiv 0$ in $[0,i_\dagger)\times \R^d$, whence
	\begin{equation*}
	\lim_{t\to+\infty} \rho(t,i,x)= 0~~~\text{uniformly in}~(i,x)\in[0,i_\dagger)\times \R^d.
\end{equation*}

\noindent
\textit{Proof of statement (ii)}. We now assume that $\mathscr{R}_0=\S_0\int_0^{+\infty}\omega(i)\md i>1$. Our aim is to devise a compactly supported stationary subsolution, for which the spirit is the same as in Aronson and Weinberger \cite{AW2} and Diekmann \cite{Diekmann2}.  For $L\in(0, i_\dagger)$, let us define $\beta^L: [0,L)\to \R_+$ as
 \begin{equation}
 		\label{beta-definition}
 	 \begin{aligned}
 		\beta^L(i)=\begin{cases}
 			\gamma(i),~~~&\text{for}~i\in[0,L-2\varep],\\
 			\zeta(i), 
 			&\text{for}~i\in[L-2\varep,L-\varep],\\
 		\max\left(\frac{i}{L-i},2\gamma(i)\right),~~~&\text{for}~i\in[L-\varep,L),
 	\end{cases}
 \end{aligned}
 \end{equation}
where  $\varep\in(0,\min(1,L,i_\dagger-L)/3)$ is sufficiently small, and the  function $i\in[L-2\varep,L-\varep]\mapsto \zeta(i)$ is continuous such that $\zeta\ge\gamma$ on $[L-2\varep,L-\varep]$, and such that $\zeta(L-2\varep)=\gamma(L-2\varep)$ and $\zeta(L-\varep)=\max((L-\varep)/\varep, 2\gamma(L-\varep))$.
We observe from the construction of $\beta^L$ that 
\begin{equation*}
	\beta^L\ge\gamma~~\text{in}~[0,L),~~\beta^L\to\gamma~
	~\text{as}~L\to i_\dagger,~~\int_0^i\beta^L(s)\md s\to +\infty~~\text{as}~i\to L^-.
\end{equation*} 
Since $\int_{\R^d}\K(x)\md x=1$ and $K\in L^1(\R^d)$, there exist $R_0>0$ and $L_0\in(0,i_\dagger)$ large enough such that
\bqs
\widehat{\mathscr{R}_0}^{L,R}:=\S_0\int_0^L\tau(i) e^{-\int_0^i\beta^L(s)\md s}\md i\int_{B_R(0)}\K(x)\md x>1,
\eqs
for each $R\geq R_0$ and $L_0\le L<i_\dagger$. We thus fix $R$ and $L$ such that $\widehat{\mathscr{R}_0}^{L,R}>1$ is satisfied.  Then, for any $\eta>0$ sufficiently small, one can find $\eta'>0$ such that $(1-\eta)\widehat{\mathscr{R}_0}^{L,R} \geq 1+\eta'$.  Furthermore,  we notice that there is $\delta_0>0$ small  such that
\bqq\label{ineqRL}
\S_0 \left(1-\exp\left(- (\widehat{\mathscr{R}_0}^{L,R}/\S_0) u \right)\right)> (1-\eta) \widehat{\mathscr{R}_0}^{L,R} u \geq (1+\eta')u  ~~~~\text{for}~u\in [0,\delta_0).
\eqq 
Next, we denote by $\K^R(x):=\K(x)\mathds{1}_{B_R(0)}(x)$ for $x\in\R^d$ with $B_R(0)$ being the ball of radius $R$ and of center 0 in $\R^d$, and $\K_0^R(z):=\int_{\R^{d-1}} \K^R(z,x_2,\cdots,x_d)\md x_2\cdots \md x_d$ for $z\in\R$ and observe that $\K_0^R$ is even in $\R$, $\mathrm{supp}(\K_0^R)\subset[-R,R]$ and $\int_{B_R(0)} \K(x)\md x=\int_{\R^d}\K^R(x) \md x = \int_\R \K_0^R(z)\md z$. We now define 
\begin{align*}
	\psi_\nu(z):=\begin{cases}
		\cos(\nu z ),~~~~&|z|< \frac{\pi}{2\nu},\\
		0,&\text{elsewhere}.
	\end{cases}
\end{align*}
We claim  that there exist $\nu_0>0$ and $\iota_0>0$ such that for all $\nu\in(0,\nu_0)$ and $\iota\in[0,\iota_0)$ one has 
\bqq
\K_0^R*\psi_\nu(z)\geq \frac{\int_\R \K_0^R(z')\md z'}{1+\eta'}\psi_\nu(z-\iota), \quad z\in\R.
\label{eqkeyR}
\eqq
Indeed, for $|z|< \frac{\pi}{2\nu}$, we first have 
\bqs
\K_0^R*\psi_\nu(z)=\int_{-\frac{\pi}{2\nu}}^{\frac{\pi}{2\nu}}\K_0^R(z-y)\cos(\nu y)\md y \geq \int_\R \K_0^R(z-y)\cos(\nu y)\md y,
\eqs
provided that we select $\nu \leq \frac{\pi}{R}$, by noticing that when $|z-y|\le R$, it follows from $|z|< \frac{\pi}{2\nu}$ that necessarily $|y|\le R+\frac{\pi}{2\nu}\le \frac{3\pi}{2\nu}$ as long as $\nu \leq \frac{\pi}{R}$, which implies that the contribution of $y\in\R\backslash[-\frac{\pi}{2\nu},\frac{\pi}{2\nu}]$ in the above integral is nonpositive. Then, using the fact that $\K_0^R$ is even, we simply notice that
\begin{align*}
\int_\R \K_0^R(z-y)\cos(\nu y)\md y&= \cos(\nu z ) \int_\R \K_0^R(y)\cos(\nu y)\md y-\sin(\nu z ) \underbrace{\int_\R \K_0^R(y)\sin(\nu y)\md y}_{=0}\\
& \geq \cos(\nu z )\left( \int_\R \K_0^R(y)\md y -\frac{\nu^2}{2}\int_\R y^2\K_0^R(y)\md y\right).
\end{align*}
Since $\int_\R y^2\K_0^R(y)\md y>0$, there exists $\nu_0>0$ such that for all $\nu\in(0,\nu_0)$ one has
\bqs
\int_\R\K_0^R(y)\md y -\frac{\nu^2}{2}\int_\R y^2\K_0^R(y)\md y \geq \frac{\int_\R\K_0^R(y)\md y}{1+\eta_0}
\eqs
for $\eta_0\in(0,\eta')$, whence
\bqs
\K_0^R*\psi_\nu(z)\geq \frac{\int_\R\K_0^R(y)\md y}{1+\eta_0}\psi_\nu(z),  
\eqs 
which holds true for all $z$. By continuity, we can then find $\iota_0>0$ such that
\bqs
\K_0^R*\psi_\nu(z) \geq \frac{\int_\R\K_0^R(y)\md y}{1+\eta'}\psi_\nu(z-\iota),
\eqs
for all $\iota\in[0,\iota_0)$ and $z\in\R$. This proves the claim.

 Let us fix $\nu\in(0,\nu_0)$ and $\iota\in(0,\iota_0)$ such that inequality \eqref{eqkeyR} is satisfied, and define 
\bqs 
\underline{\psi}(x):=
\left\{
\begin{array}{cc}
1, & \| x\| \leq D,\\
\cos(\nu (\|x\|-D)),&  D\leq \|x\| \leq D+\frac{\pi}{2\nu} ,\\
0,&\text{elsewhere},
\end{array}\right.
\eqs
with $D> R\geq R_0$ which will be fixed below, and finally set 
\bqq\label{sub_kppI} 
\psi(i,x):=
\left\{
\begin{array}{cc}
e^{-\int_0^i\beta^L(s)\md s}\underline{\psi}(x), & (i,x)\in[0,L)\times\R^d,\\
0,&\text{elsewhere}.
\end{array}\right.
\eqq

We now verify that $\delta \psi$ with  $\delta\in(0,\delta_0)$ is a stationary subsolution to \eqref{fkpp} in $\R_+\times[0,i_\dagger)\times\R^d$. Note that only the boundary condition needs to be checked.  We first have
\begin{align*}
\S_0&\left(1-\exp\left(-\delta \int_0^\infty\tau(i)\K*\psi(i,x)\md i\right)\right)=\S_0\left(1-\exp\left(-\delta \int_0^L\tau(i) e^{-\int_0^i\beta^L(s)\md s}\md i~  \K*\underline{\psi}(x)\right)\right)\\
& \geq \S_0\left(1-\exp\Big(-\delta \int_0^L\tau(i) e^{-\int_0^i\beta^L(s)\md s}\md i  \int_{B_R(0)} \K(y)\underline{\psi}(x-y)\md y \Big)\right).
\end{align*}
Let $\|x\|\leq D-R$. Then for any $\|y\|\leq R$, we have that $\|x-y\|\leq D$ and thus $\underline{\psi}(x-y)=1$. As a consequence, we get that
\begin{align*}
\S_0\left(1-\exp\left(-\delta \int_0^\infty\tau(i)\K*\psi(i,x)\md i\right)\right) &\geq \S_0\left(1-\exp\left(-\delta \int_0^L\tau(i) e^{-\int_0^i\beta^L(s)\md s}\md i\int_{B_R(0)} \K(y)\md y \right)\right)\\
&=\S_0 \left(1-\exp\left(- (\widehat{\mathscr{R}_0}^{L,R}/\S_0) \delta \right)\right)\\ &\overset{\eqref{ineqRL}}{>}(1-\eta)\widehat{\mathscr{R}_0}^{L,R}\delta \geq (1+\eta') \delta \geq \delta = \delta \psi(0,x).
\end{align*}
Let $D-R\leq\|x\|\leq D+\frac{\pi}{2\nu}$. Then for any $\|y\|\leq R$, we use the estimate
\bqs
\|x-y\| \leq \|x\| -\frac{x\cdot y}{\|x\|}+\frac{\|y\|^2}{2\|x\|}\leq \|x\| -\frac{x\cdot y}{\|x\|} +\frac{R^2}{2(D-R)},
\eqs
and we select $D>R$ large enough such that $\frac{R^2}{2(D-R)}\leq \iota$. As a consequence, we have
\begin{align*}
\int_{B_R(0)} \K(y)\underline{\psi}(x-y)\md y&\geq \int_{B_R(0)} \K(y)\max_{\mu \geq -D}\psi_\nu\left( \|x\| -\frac{x\cdot y}{\|x\|} +\iota + \mu \right)\md y\\
&=\max_{\mu \geq -D}\int_{B_R(0)} \K(y)\psi_\nu\left( \|x\| -\frac{x\cdot y}{\|x\|} +\iota + \mu \right)\md y\\
&=\max_{\mu \geq -D} \int_\R \K_0^R(z)\psi_\nu\left( \|x\|  +\iota + \mu -z \right)\md z\\
&\overset{\eqref{eqkeyR}}{\geq} \frac{\int_{B_R(0)}\K(x)\md x}{1+\eta'}  \max_{\mu \geq -D}\psi_\nu\left( \|x\|  + \mu  \right),
\end{align*}
since $\int_{B_R(0)} \K(x)\md x=\int_{\R^d}\K^R(x) \md x = \int_\R \K_0^R(z)\md z$. And thus, in that range, one has
\begin{align*}
\S_0\left(1-\exp\left(-\delta \int_0^\infty\tau(i)\K*\psi(i,x)\md i\right)\right)&\geq\S_0 \left(1-\exp\left(- (\widehat{\mathscr{R}_0}^{L,R}/\S_0) \frac{\delta}{1+\eta'}   \max_{\mu \geq -D}\psi_\nu\left( \|x\|  + \mu  \right) \right)\right) \\
&\geq (1-\eta)\widehat{\mathscr{R}_0}^{L,R}  \frac{\delta}{1+\eta'}   \max_{\mu \geq -D}\psi_\nu\left( \|x\|  + \mu  \right)\\
& \geq \delta  \max_{\mu \geq -D}\psi_\nu\left( \|x\|  + \mu  \right)=\delta \psi(0,x).
\end{align*}
Here, we have used the fact that $\underline{\psi}(x)=\max_{\mu \geq -D}\psi_\nu\left( \|x\|  + \mu  \right)$ for all $x\in\R^d$.

This  implies that $\delta\psi$, with $\delta\in(0,\delta_0)$, is a subsolution to \eqref{fkpp} in $\R_+\times[0,i_\dagger)\times\R^d$.  Moreover, we infer from Proposition~\ref{prop-positivity} that  $\rho(t,i,x)>0$ for $(t,i,x)\in(0,+\infty)\times[0,i_\dagger)\times\R^d$ with $t>i$, due to the assumption \eqref{H3} on $\rho_0\not\equiv0$. Up to decreasing $\delta$ if needed, there further holds $\delta\psi(i,x)\le \rho(T,i,x)$ for $(i,x)\in[0,i_\dagger)\times\R^d$ with some $T>L>0$.

Let now $\overline \rho$ be defined as in the proof of Statement (i) and let $\underline \rho$ denote the solution of \eqref{fkpp} with initial datum $\underline \rho_0=\delta\psi$ in $[0,i_\dagger)\times\R^d$. We infer from the comparison principle Proposition~\ref{prop-cp} that $\overline \rho$ is nonincreasing with respect to $t$, whereas $\underline \rho$ is nondecreasing with respect to $t$. The monotone convergence theorem implies that $\overline \rho$ (resp. $\underline \rho$) converges in $[0,i_\dagger)\times\R^d$ as $t\to+\infty$ to a solution $\overline U$ (resp. $\underline U$) of stationary problem \eqref{kppI-stat} pointwise, and then locally uniformly in $(i,x)\in[0,i_\dagger)\times\R^d$ thanks to the Dini's theorem. Specifically, we have
	\begin{equation*}
	\delta\psi(i,x)\le  \underline U(i,x)\le \liminf_{t\to+\infty} \rho(t,i,x)\le  \limsup_{t\to+\infty} \rho(t,i,x)\le \overline  U(i,x),
\end{equation*}
locally uniformly in $(i,x)\in[0,i_\dagger)\times\R^d$. Together with Theorem~\ref{thm-liouville}, we derive that $\overline U$ and $\underline U$ are nothing but the positive stationary solution $\rho^s$ of \eqref{fkpp}. Consequently,
\begin{equation*}
	\lim_{t\to+\infty} \rho(t,i,x)=\rho^s(i),~~~~\text{locally uniformly in}~ (i,x)\in[0,i_\dagger)\times\R^d.
\end{equation*}
This completes the proof of Theorem \ref{thm-long time behavior}.
\end{proof}

\begin{rmk}Both Theorem~\ref{thm-liouville} and Theorem~\ref{thm-long time behavior} easily generalizes to the case of compactly supported kernels upon assuming now the condition~\eqref{eqrmkpos} on the initial condition. 
\end{rmk}

\subsection{Spreading property of \eqref{kppI}}
In this section, we shall prove under the assumption of $\mathscr{R}_0>1$ as well as {\bf (H1)}-{\bf (H2$\mu$)} that problem \eqref{fkpp}, starting from initial condition $\rho_0\not\equiv0$ such that {\bf (H4)} and \eqref{H3} are satisfied,  exhibits exactly the same spreading property as the Cauchy problem of reaction-diffusion equations with KPP nonlinearities \cite{AW2}. 

\begin{thm}
	\label{thm-spreading property}
	Assume that {\bf (H1)}-{\bf (H2$\mu$)} and $\mathscr{R}_0>1$ hold. Then, there exists some $c_*>0$, which is called the asymptotic spreading speed, such that the solution $\rho$ of  problem \eqref{fkpp}, associated with $\rho_0\not\equiv 0$ such that {\bf (H4)} and \eqref{H3} are satisfied, has the following properties:
	\begin{itemize}
			\item[(i)] For any $c>c_*$ and all $j\in(0,i_\dagger)$
		\bqs
		\underset{t\rightarrow+\infty}{\lim } \underset{\|x\|\geq ct,~ 0 \leq i \leq j}{\sup} \rho(t,i,x) =0.
		\eqs
		
		\item[(ii)] For any $0<c<c_*$ and all $j\in(0,i_\dagger)$
		\bqs
		\underset{t\rightarrow+\infty}{\lim } \underset{\|x\|\leq ct,~ 0 \leq i \leq j}{\sup} \left| \rho(t,i,x)-\rho^s(i)\right| =0.
		\eqs
	\end{itemize}
\end{thm}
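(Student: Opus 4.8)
The plan is to reduce everything to the scalar Volterra equation \eqref{eqn-F} satisfied by $\Phi$ (with $I_0\equiv0$, so that $\Gamma_2\equiv0$). For $t>j\ge i$ one has $\rho(t,i,x)=\Phi(t-i,x)\pi(i)$ with $0<\pi\le1$ and $\rho^s(i)=\S_0\rho^*\pi(i)$, so it suffices to establish, for the single quantity $\Phi$:
\begin{itemize}
\item[$(\mathrm{i}')$] $\lim_{t\to+\infty}\sup_{\|x\|\ge ct}\Phi(t,x)=0$ for every $c>c_*$;
\item[$(\mathrm{ii}')$] $\lim_{t\to+\infty}\sup_{\|x\|\le ct}\bigl|\Phi(t,x)-\S_0\rho^*\bigr|=0$ for every $0<c<c_*$.
\end{itemize}
Moreover, comparing $\Phi$ with the solution $\overline\Phi$ of \eqref{eqn-F} issued from the spatially homogeneous datum $\overline\rho_0=M\pi$, $M:=\max(\S_0,\|\rho_0/\pi\|_{L^\infty})$ -- which, by translation invariance and uniqueness, stays $x$-independent and, by the proof of Theorem~\ref{thm-long time behavior}, decreases to $\S_0\rho^*$ -- already gives $\limsup_{t\to+\infty}\sup_{x}\Phi(t,x)\le\S_0\rho^*$, so only the lower bound $\liminf_{t\to+\infty}\inf_{\|x\|\le ct}\Phi(t,x)\ge\S_0\rho^*$ remains in $(\mathrm{ii}')$.

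Next I would pin down $c_*$. Linearising \eqref{eqn-F} around $\Phi\equiv0$ and inserting the ansatz $e^{-\alpha(x\cdot\mathbf{e}-ct)}$ leads to the characteristic relation $\Lambda(\alpha,c):=\S_0\widetilde\K(\alpha)\,\mathcal L[\omega](\alpha c)=1$. Since $\widetilde\K$ is log-convex with $\widetilde\K(0)=1$ and $\widetilde\K'(0)=0$ ($\K_0$ being even), $\mathcal L[\omega]$ is continuous and strictly decreasing from $\|\omega\|_{L^1}=\mathscr{R}_0/\S_0$ to $0$, and $\widetilde\K$ grows fast enough at the edge of its domain (either $\mu_0<+\infty$, or $\mu_0=+\infty$ together with \eqref{K-infty}), the function $\Theta(\alpha):=\frac1\alpha\mathcal L[\omega]^{-1}\!\bigl(\frac1{\S_0\widetilde\K(\alpha)}\bigr)$ is well defined, continuous, and tends to $+\infty$ as $\alpha$ approaches either endpoint of the admissible range; hence it attains its minimum $c_*:=\min_\alpha\Theta(\alpha)>0$ at some $\alpha_*>0$, with $\alpha_*$ unique by log-convexity of $\alpha\mapsto\Lambda(\alpha,c_*)$. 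As $\mathcal L[\omega]$ is decreasing, $\Lambda(\alpha,\cdot)$ is decreasing and $\Lambda(\alpha,\Theta(\alpha))=1$, so $c>c_*$ precisely when $\Lambda(\alpha,c)<1$ for some $\alpha$, whereas $c<c_*$ forces $\min_\alpha\Lambda(\alpha,c)>1$. The first technical point is exactly that \eqref{K-infty} (or $\mu_0<\infty$) prevents the infimum over $\alpha$ from escaping to the boundary.

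For the no-invasion part $(\mathrm{i}')$, I would fix $c'\in(c_*,c)$ and $\alpha$ in the admissible range with $\Lambda(\alpha,c')<1$, and set $\overline\Phi(t,x):=\min\bigl(\S_0,\,Ae^{-\alpha(\|x\|-c't)}\bigr)$. From the elementary inequality $\|x-y\|\ge\|x\|-y\cdot x/\|x\|$ one obtains $\K*v\le\widetilde\K(\alpha)\,v$ for $v(x)=Ae^{-\alpha(\|x\|-c's)}$, and, since $\rho_0$ (hence $\varrho_0=\rho_0/\pi$) is compactly supported, the same inequality gives $\Gamma_1(\varrho_0)(t,x)\le Ce^{-\alpha\|x\|}$ uniformly in $t\ge0$. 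Then $\overline\Phi$ is a supersolution of \eqref{eqn-F}: where $\overline\Phi=\S_0$ this is automatic ($1-e^{-u}\le1$), and where $\overline\Phi<\S_0$ one uses $1-e^{-u}\le u$, the time-convolution producing the factor $\mathcal L[\omega](\alpha c')$, and $\Lambda(\alpha,c')<1$ to absorb the constant and the $\Gamma_1$ term by choosing $A$ large. The comparison principle (Lemma~\ref{lem_cp}, or Proposition~\ref{prop-cp}) then yields $\Phi\le\overline\Phi$, whence $\sup_{\|x\|\ge ct}\Phi(t,x)\le Ae^{-\alpha(c-c')t}\to0$. (When the relevant $\alpha$ equals $\mu_0$ with $\widetilde\K(\mu_0)<\infty$, one inserts a polynomial prefactor in the usual way.)

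The invasion part -- the lower bound in $(\mathrm{ii}')$ -- is the main obstacle. Fix $0<c<c_*$. One first truncates the nonlinearity: as long as $\Phi$ stays below a small threshold $\vartheta$, one has $\S_0(1-e^{-u})\ge k_\vartheta u$ with $k_\vartheta\uparrow\S_0$ as $\vartheta\downarrow0$, so the associated linear renewal operator is still supercritical with spreading speed tending to $c_*$; fix $\vartheta$ small enough that $c$ lies strictly below it, i.e.\ $(1-o(1))\S_0\widetilde\K(\alpha)\mathcal L[\omega](\alpha c)>1$ for a suitable $\alpha$. One then builds a compactly-supported-in-$x$ subsolution of \eqref{eqn-F}, confined to the region where $\Phi<\vartheta$ and travelling at speed $c$: in the $i$-variable it carries the profile $e^{-\int_0^i\beta^L(s)\,\md s}$ used in the proof of Theorem~\ref{thm-long time behavior}, and in the $x$-variable an exponentially growing amplitude times a truncated-exponential (or cosine) bump in $\|x\|-ct$, the inequality being closed by $\S_0(1-e^{-u})\ge k_\vartheta u$ together with a convolution-almost-reproduction estimate of the type \eqref{eqkeyR}. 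Letting this subsolution grow until its amplitude reaches $\vartheta$ produces a plateau of height $\gtrsim\vartheta$ on a ball advanced by $\sim c\,\Delta t$; restarting from that plateau, lifting the floor from $\vartheta$ to $\S_0\rho^*-\varepsilon$ by the local convergence of Theorem~\ref{thm-long time behavior}(ii), and iterating, yields $\liminf_{t\to+\infty}\inf_{\|x\|\le ct}\Phi(t,x)\ge\S_0\rho^*$. Combined with the upper bound of the first paragraph this gives $(\mathrm{ii}')$, and translating back via $\rho(t,i,x)=\Phi(t-i,x)\pi(i)$ completes the proof. The genuinely delicate step is making this moving, compactly supported subsolution rigorous while respecting simultaneously the renewal structure in $t$, the radial geometry in $x$, and the sharpness of the threshold $c_*$.
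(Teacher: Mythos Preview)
Your plan is sound and, modulo packaging, matches the paper's proof. Two organisational differences are worth pointing out.

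For the outer estimate the paper uses \emph{planar} supersolutions
\[
\overline\rho(t,i,x)=\min\bigl(M,\,e^{-\alpha_*(x\cdot\mathbf{e}-c_*t)-\alpha_*c_*i}\bigr)\pi(i),
\]
one per direction $\mathbf{e}\in\SS^{d-1}$, and then lets $\mathbf{e}$ vary; the convolution with $\K$ is an exact identity, so the initial datum is absorbed by a spatial shift and no separate bound on $\Gamma_1$ is needed. Your radial barrier $\min(\S_0,Ae^{-\alpha(\|x\|-c't)})$ is a legitimate alternative (indeed $\|x-y\|\ge\|x\|-y\cdot x/\|x\|$ gives $\K*e^{-\alpha\|\cdot\|}\le\widetilde\K(\alpha)e^{-\alpha\|\cdot\|}$, and your estimate $\Gamma_1(\varrho_0)(t,x)\le Ce^{-\alpha\|x\|}$ follows from the compact support of $\rho_0$ together with $\widetilde\K(\alpha)<\infty$), and is arguably more direct in $d\ge2$.

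For the inner estimate the paper is more explicit and structured than your iterative sketch. It (a)~passes to the moving frame $\hat\rho(t,i,x)=\rho(t,i,x+ct\mathbf{e})$, which is why Proposition~\ref{prop-edpI-well-posedness} records the existence of $\partial_{x_p}\rho$ when $I_0\equiv0$; (b)~for $c$ just below $c_*$, truncates both $\K$ to $\K^R$ and $\omega$ to a compactly supported $\omega^L$ (this is what the $\beta^L$ device really does at the $\Phi$-level --- note that your sentence ``in the $i$-variable it carries the profile $e^{-\int_0^i\beta^L}$'' is slightly at odds with working purely with $\Phi$, which has no $i$-variable) so that the truncated dispersion relation acquires a pair of complex roots $\alpha_1\pm\mathbf{i}\alpha_2$, giving the cosine bump you allude to (Lemma~\ref{lemma_existence of compactly subsol}); (c)~reruns the monotone-iteration argument of Theorem~\ref{thm-long time behavior}(ii) in the moving frame to get $\hat\rho(t,i,x)\to\rho^s(i)$ locally uniformly (Lemma~\ref{lemma_longtime beha-moving}); and (d)~closes with a bridging lemma (Lemma~\ref{lemma_c^+-}): if convergence holds along rays at speeds $c^-$ and $c^+$, it holds uniformly on $c^-t\le x\cdot\mathbf{e}\le c^+t$, applied with $c^-=0$ and $c^+=c$. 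Your ``restart and lift'' scheme would work but ultimately reproduces (b)--(d); the paper's bridging lemma is a clean substitute for the bootstrap.
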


\subsubsection{Exponential supersolution}

In this subsection, we aim to establish the upper bound of the asymptotic spreading speed for \eqref{fkpp}. Fix any direction $\mathbf{e}\in\SS^{d-1}$. We look for a supersolution of  problem \eqref{fkpp} of the form
	\begin{equation*}
	\overline \rho(t,i,x)=\min\left( M,  e^{-\alpha(x\cdot \mathbf{e}-ct)-\alpha ci}\right)\pi(i), \quad t\geq0, \quad i\in[0,i_\dagger), \quad x\in\R^d,
\end{equation*}
with $M:=\max\big(\S_0,\Vert \rho_0 / \pi \Vert_{L^\infty([0,i_\dagger)\times\R^d)}\big)>0$  such that, up to shifts, $\rho_0(i,x)< \overline \rho(0,i,x)$ for $i\in[0,i_\dagger)$, $x\in\R^d$.
Here, the exponential term
\begin{equation}
	\label{exponential term}
\vartheta(t,i,x):=e^{-\alpha(x\cdot \mathbf{e}-ct)-\alpha ci}\pi(i), \quad t\geq0, \quad i\in[0,i_\dagger), \quad x\in\R^d,
\end{equation} 
defined along any direction $\mathbf{e}\in\SS^{d-1}$ with speed $c>0$ and with parameter $\alpha>0$ (to be fixed in the following investigation), is expected to solve
the corresponding linearized problem of \eqref{fkpp} around the steady state $\rho\equiv0$:
\begin{equation}
\label{linearized pb}
\left\{
\begin{split}
\partial_t \rho(t,i,x) + \partial_i\rho(t,i,x) &= - \gamma(i) \rho(t,i,x), \quad t>0,\quad i\in(0,i_\dagger), \quad x\in\R^d, \\
\rho(t,0,x)&=\S_0 \int_0^\infty \tau(i) \K*\rho(t,i,x)\md i, \quad t> 0,  \quad x\in\R^d.
\end{split}\right.
\end{equation}
Substituting \eqref{exponential term} into \eqref{linearized pb}, we obtain the following dispersion relation: 
\begin{equation}
	\	\label{dr}
\underbrace{	\S_0\int_{\R^d}\K(x)e^{\alpha x\cdot \mathbf{e}} \md x \int_0^\infty \omega(i) e ^{-\alpha ci}\md i}_{\varphi_c(\alpha)}=1.
\end{equation}
We denote the left-hand side of \eqref{dr} by $\varphi_c(\alpha)$. For each $c\ge 0$, the function $\varphi_c(\alpha)$ is well-defined (at least) on the set 
\begin{equation}\label{Sigma}
	\Sigma:=\{\alpha\ge 0 ~|~ \int_{\R^d} \K(x)e^{\alpha x\cdot \mathbf{e}}\md x<+\infty\}.
\end{equation}
We readily see from {\bf (H2$\mu$)} that indeed $\Sigma=[0,\Lambda)$ with  $\Lambda\in[\mu_0,+\infty]$. Let us also remark that the dispersion relation \eqref{dr} was also derived in \cite{Diekmann1} where some basic properties were already given. Here, we will need a refined analysis of the dispersion relation in order to construct compactly supported subsolutions in the forthcoming section.

\begin{lem}\label{lemma-varphi}
	For each $\alpha\in(0,\Lambda)$, there is a unique $c(\alpha)\in(0,+\infty)$ such that $\varphi_{c(\alpha)}(\alpha)=1$.
\end{lem}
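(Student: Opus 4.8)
The plan is to exploit the factorized structure of $\varphi_c(\alpha)$ and reduce the statement to a one–variable monotonicity argument. Using \eqref{eqtildeKmu} and the Laplace transform of $\omega$, for $\alpha\in(0,\Lambda)=\Sigma\setminus\{0\}$ and $c\ge 0$ one may write
\[
\varphi_c(\alpha)=\S_0\,\widetilde{\K}(\alpha)\int_0^\infty\omega(i)e^{-\alpha c i}\,\md i,
\]
which is a finite positive quantity: $\alpha\in\Sigma$ guarantees $\widetilde{\K}(\alpha)<+\infty$, and $\omega\in L^1([0,i_\dagger))$ by {\bf(H1)}(iii). I would then fix $\alpha$ and study the real map $c\in[0,+\infty)\mapsto\varphi_c(\alpha)$.

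First I would record its endpoint behaviour. At $c=0$, since $\S_0\int_0^\infty\omega(i)\,\md i=\mathscr{R}_0$, we get $\varphi_0(\alpha)=\mathscr{R}_0\,\widetilde{\K}(\alpha)$; and because $\K_0$ is even and positive, $\widetilde{\K}(\alpha)=\tfrac12\int_\R\K_0(z)\big(e^{\alpha z}+e^{-\alpha z}\big)\,\md z\ge\int_\R\K_0(z)\,\md z=1$, so $\varphi_0(\alpha)\ge\mathscr{R}_0>1$ under the standing assumption $\mathscr{R}_0>1$ of this subsection. At the other end, dominated convergence (the integrand is bounded by $\omega\in L^1$ and tends to $0$ pointwise on $(0,i_\dagger)$) gives $\varphi_c(\alpha)\to 0$ as $c\to+\infty$.

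Next I would check that $c\mapsto\varphi_c(\alpha)$ is continuous and strictly decreasing on $[0,+\infty)$: continuity is again dominated convergence, and for $0\le c_1<c_2$ one has $e^{-\alpha c_1 i}>e^{-\alpha c_2 i}$ for every $i>0$, so since $\omega\ge 0$ is not a.e.\ zero (as $\mathscr{R}_0>0$) the integral $\int_0^\infty\omega(i)e^{-\alpha c i}\,\md i$ is strictly decreasing in $c$. Combining these observations, the intermediate value theorem yields some $c(\alpha)\in(0,+\infty)$ with $\varphi_{c(\alpha)}(\alpha)=1$, strict monotonicity makes it unique, and it is automatically positive because $\varphi_0(\alpha)>1$.

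I do not expect a genuine obstacle here; the statement is essentially elementary. The only points needing (minor) care are the finiteness of $\varphi_c(\alpha)$ on $\Sigma$ and the lower bound $\widetilde{\K}(\alpha)\ge 1$, which is precisely where the radial symmetry of $\K$ (equivalently, evenness of $\K_0$) and the hypothesis $\mathscr{R}_0>1$ enter: without $\mathscr{R}_0>1$ the map $c\mapsto\varphi_c(\alpha)$ could remain below $1$ and no root would exist.
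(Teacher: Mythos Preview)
Your proof is correct and follows essentially the same approach as the paper: fix $\alpha\in(0,\Lambda)$, show $\varphi_0(\alpha)>1$ (the paper differentiates $\alpha\mapsto\varphi_0(\alpha)$ to see its minimum is $\mathscr{R}_0$, while you use the equivalent and slightly more direct observation $\widetilde{\K}(\alpha)=\int_\R\K_0(z)\cosh(\alpha z)\,\md z\ge 1$), show $\varphi_c(\alpha)\to 0$ as $c\to+\infty$ by dominated convergence, and conclude by strict monotonicity in $c$.
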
 
\begin{proof}[Proof of Lemma \ref{lemma-varphi}]
 We first notice that when $c=0$,
 \begin{align*}
 	\varphi_0(\alpha)=\mathscr{R}_0\int_{\R^d}\K(x)e^{\alpha x\cdot\mathbf{e}}\md x\geq0 ,~~\alpha\in[0,\Lambda).
 \end{align*}
 By the Lebesgue's dorminated convergence theorem, we derive from {\bf (H2$\mu$)} that
 \begin{equation*}
 	\frac{\md}{\md \alpha}\varphi_0(\alpha)= \mathscr{R}_0\int_{\R^d} \left(x\cdot\mathbf{e}\right)\K(x)e^{\alpha x\cdot\mathbf{e}}\md x\geq0 ,~~\alpha\in[0,\Lambda).
 \end{equation*}
 That is,
  $\varphi_0(\alpha)$ is monotone nondecreasing in  $\alpha\in[0,\Lambda)$, whence
  \begin{equation}
  	\label{c-1}
  	\min_{\alpha\in[0,\Lambda)}\varphi_0(\alpha)=\varphi_0(0)=\mathscr{R}_0>1.
  \end{equation}
	Furthermore, due to the continuity of $c\mapsto\varphi_c(\alpha)$ for each $\alpha \in[0,\Lambda)$, it follows that there exists $\delta>0$ small such that
	\begin{equation}
		\label{c>0}
		\inf_{\alpha\in[0,\Lambda)}\varphi_c(\alpha)>1~~~\text{for all}~c\in[0,\delta].
	\end{equation} 
 On the other hand, we observe from the expression of $\varphi_c(\alpha)$ that for each $\alpha\in (0,\Lambda)$, $\varphi_c(\alpha)$ is monotone decreasing and convex in $c\ge 0$, and
 \begin{equation}
 	\label{c-2}
 	\varphi_c(\alpha)\to 0~~~~~\text{as}~~ c\to+\infty.
 \end{equation}
   Together with \eqref{c-1} and \eqref{c-2}, we then derive that for each $\alpha\in (0,\Lambda)$, there is a  unique $c(\alpha)\in(0,+\infty)$ such that $\varphi_{c(\alpha)}(\alpha)=1$. The proof of Lemma \ref{lemma-varphi} is thereby complete.
\end{proof}

Based upon Lemma \ref{lemma-varphi}, we can define
\begin{equation}\label{c*-def}
	c_*:=\underset{\alpha\in(0,\Lambda)}{\inf} c(\alpha).
\end{equation}
We immediately see from \eqref{c>0} that $c_*\in (0,+\infty)$. In the following, we show that there actually exists a unique $\alpha_*\in(0,\Lambda)$ such that above infimum can be attained.
\begin{lem}\label{lemma_c*} Let $c^*$ be given in \eqref{c*-def}.
For each $\alpha\in(0,\Lambda)$, let $(\alpha,c(\alpha))$ be the unique pair given in Lemma \ref{lemma-varphi} such that $\varphi_{c(\alpha)}(\alpha)=1$.	Then there is a unique $\alpha_*\in (0,\Lambda)$ such that 
	\begin{equation}
		\label{c_*}
		c_*=c(\alpha_*)=\min_{\alpha\in (0,\Lambda)}c(\alpha).
	\end{equation}
Moreover, $1=\varphi_{c_*}(\alpha_*)=\min_{\alpha\in(0,\Lambda)}\varphi_{c_*}(\alpha)$ and $\partial_\alpha\varphi_{c_*}(\alpha_*)=0$.
\end{lem}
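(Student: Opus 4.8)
The plan is to treat $\alpha\mapsto c(\alpha)$ as an implicitly defined function, show it is smooth on $(0,\Lambda)$ and tends to $+\infty$ at both endpoints — so that it attains its infimum $c_*$ at an interior point $\alpha_*$ — and then to get uniqueness of $\alpha_*$ from the observation that every critical point of $c$ is a strict local minimum. First I would set $F(\alpha,c):=\varphi_c(\alpha)-1$; on $(0,\Lambda)\times(0,+\infty)$ one sits inside the domain of convergence of the integrals defining $\widetilde{\K}$ and $\mathcal{L}[\omega]$, so $F$ is smooth there with
\[
\partial_cF(\alpha,c)=-\S_0\,\widetilde{\K}(\alpha)\int_0^\infty i\,\omega(i)\,e^{-\alpha c i}\,\md i<0\qquad(\alpha>0),
\]
the strict sign coming from $\omega\ge0$, $\omega\not\equiv0$ (recall $\mathscr{R}_0>0$). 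With Lemma~\ref{lemma-varphi}, the implicit function theorem then gives $c\in C^\infty((0,\Lambda))$, and differentiating $\varphi_{c(\alpha)}(\alpha)=1$ produces
\[
\partial_\alpha\varphi_{c(\alpha)}(\alpha)+\partial_c\varphi_{c(\alpha)}(\alpha)\,c'(\alpha)=0,\qquad\alpha\in(0,\Lambda),
\]
so that $c'(\alpha)$ has the sign of $\partial_\alpha\varphi_{c(\alpha)}(\alpha)$.

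Next I would check the blow-up of $c$ at the endpoints. As $\alpha\to0^+$: if $c(\alpha)\le K$ along some sequence, then $1=\varphi_{c(\alpha)}(\alpha)\ge\varphi_K(\alpha)\to\varphi_K(0)=\mathscr{R}_0>1$, a contradiction; hence $c(\alpha)\to+\infty$. As $\alpha\to\Lambda^-$: fix $c>0$; if $\Lambda<+\infty$ then $\widetilde{\K}(\alpha)\to+\infty$ (by definition of $\Lambda$ together with Fatou's lemma) while $\mathcal{L}[\omega](\alpha c)\ge\mathcal{L}[\omega](\Lambda c)>0$, and if $\Lambda=+\infty$ then one combines $\mathcal{L}[\omega](\alpha c)\ge e^{-a\alpha c}\int_0^a\omega$ (valid for some $a\in(0,i_\dagger)$ with $\int_0^a\omega>0$) with assumption \eqref{K-infty}; in both cases $\varphi_c(\alpha)\to+\infty$, hence $\varphi_c(\alpha)>1$ and so $c(\alpha)>c$ near $\Lambda$, and as $c$ was arbitrary, $c(\alpha)\to+\infty$. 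Since $c$ is continuous, it follows that $c_*=\inf_{(0,\Lambda)}c$ is attained at some $\alpha_*\in(0,\Lambda)$, i.e. $c_*=c(\alpha_*)=\min_{(0,\Lambda)}c(\alpha)$.

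For uniqueness the crucial fact is that, for each fixed $c\ge0$, the map $\alpha\mapsto\log\varphi_c(\alpha)=\log\S_0+\log\widetilde{\K}(\alpha)+\log\mathcal{L}[\omega](\alpha c)$ is strictly convex on $(0,\Lambda)$: $\log\mathcal{L}[\omega]$ is convex (log-convexity of Laplace transforms, via Cauchy--Schwarz) and $\log\widetilde{\K}$ is strictly convex, the strictness being forced by $\K>0$; consequently $\partial_{\alpha\alpha}\varphi_c>0$ on $(0,\Lambda)$. Now if $\alpha_0\in(0,\Lambda)$ is any critical point of $c$, then $\partial_\alpha\varphi_{c_0}(\alpha_0)=0$ with $c_0:=c(\alpha_0)$ by the first-order relation, and differentiating $\varphi_{c(\alpha)}(\alpha)=1$ once more at $\alpha_0$ (where $c'(\alpha_0)=0$) yields $c''(\alpha_0)=-\partial_{\alpha\alpha}\varphi_{c_0}(\alpha_0)/\partial_c\varphi_{c_0}(\alpha_0)>0$. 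Thus every critical point of $c$ is a strict local minimum, which precludes the existence of two of them (a local maximum would have to lie in between), so $\alpha_*$ is the unique critical point and the unique minimizer. Finally, taking $\alpha_0=\alpha_*$ gives $\partial_\alpha\varphi_{c_*}(\alpha_*)=0$, and since $\varphi_{c_*}(\cdot)$ is strictly convex on $(0,\Lambda)$ with $\varphi_{c_*}(\alpha_*)=1$ (by the very definition $c(\alpha_*)=c_*$), $\alpha_*$ is its unique minimizer on $(0,\Lambda)$ and $\min_{(0,\Lambda)}\varphi_{c_*}(\alpha)=\varphi_{c_*}(\alpha_*)=1$. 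The step I expect to be most delicate is the endpoint analysis when $\Lambda=+\infty$: there the exponential decay of $\mathcal{L}[\omega](\alpha c)$ competes with the growth of $\widetilde{\K}(\alpha)$, and hypothesis \eqref{K-infty} is precisely what forces $\varphi_c(\alpha)\to+\infty$; the strict log-convexity of $\widetilde{\K}$ needed for uniqueness is a routine Cauchy--Schwarz computation but must be invoked together with $\K>0$ to yield strictness rather than mere convexity.
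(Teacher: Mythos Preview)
Your proof is correct and complete. The existence part (endpoint blow-up of $c(\alpha)$) is essentially the same as the paper's contradiction argument ruling out $\alpha_*\in\{0,\Lambda\}$, just packaged slightly differently.

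The uniqueness argument is where you genuinely diverge from the paper. The paper works directly with $\varphi_{c_*}$: since $c(\alpha)\ge c_*$ for all $\alpha$ and $c\mapsto\varphi_c(\alpha)$ is decreasing, one has $\varphi_{c_*}(\alpha)\ge\varphi_{c(\alpha)}(\alpha)=1$ on $(0,\Lambda)$, with equality exactly at the minimizers of $c(\cdot)$; then convexity (plus analyticity, to rule out a flat piece) of $\alpha\mapsto\varphi_{c_*}(\alpha)$ forces this set to be a singleton. You instead use implicit differentiation: $c\in C^\infty$, and at any critical point the second-order relation $c''(\alpha_0)=-\partial_{\alpha\alpha}\varphi_{c_0}(\alpha_0)/\partial_c\varphi_{c_0}(\alpha_0)>0$ shows every critical point is a strict local minimum, hence there is only one. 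Your route requires a bit more machinery (IFT, second derivatives, strict log-convexity via Cauchy--Schwarz with $\K>0$ for strictness) but yields the extra information that $c(\cdot)$ is smooth and that $c''(\alpha_*)>0$; the paper's route is shorter and uses only convexity and analyticity of $\varphi_{c_*}$. Both land on $\partial_\alpha\varphi_{c_*}(\alpha_*)=0$ and $\min_{(0,\Lambda)}\varphi_{c_*}=1$ in the same way.
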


\begin{proof}[Proof of Lemma \ref{lemma_c*}]
First of all, we proceed with a contradiction argument to prove that the infimum in the definition \eqref{c*-def} of $c^*$ is indeed the minimum, which gives  the existence of $\alpha_*\in(0,\Lambda)$.  Suppose first that $\alpha_*=0$. Since $c_*\in(0,+\infty)$, it follows from \eqref{dr} that $\mathscr{R}_0=1$. This is impossible. Thus, the case that $\alpha_*=0$ is excluded.  Now let us consider the case that $\alpha_*=\Lambda$. We observe that $\int_{\R^d}\K(x)e^{\Lambda x\cdot \mathbf{e}}\md x=\infty$. We distinguish two cases: either $\Lambda\in(0,+\infty)$ or $\Lambda=+\infty$. For the former case, it is easy to see that $1=\varphi_{c_*}(\alpha_*)=\varphi_{c_*}(\Lambda)=+\infty$. This case is ruled out. Finally, let us exclude the case of  $\Lambda=+\infty$.

 
 We let $i_0\geq0$ be such that $\omega(i_0)>0$. From the continuity of $\omega$, there exists $\epsilon_1>0$ such that $\omega(i)>\omega(i_0)/2$ for all $i\in[i_0,i_0+\epsilon_1]$, from which we deduce that
\begin{equation*}
\int_0^\infty \omega(i) e ^{-\alpha c_*i}\md i > \frac{\omega(i_0)}{2} \int_{i_0}^{i_0+\epsilon_1} e ^{-\alpha c_*i}\md i = \frac{\omega(i_0)\, e^{-\alpha c_* i_0} \, \left(1- e^{-\alpha c_*\epsilon_1}\right)}{2\alpha c_*}.
\end{equation*}
On the other hand, using \eqref{eqtildeKmu}, the positivity of $\K_0$ and the fact that $c_*>0$, there exists $z_0\in\R$ such that $\K_0(z_0)>0$ and $z_0>c_*i_0$. As a consequence, there exists $\epsilon_2>0$, small enough, such that $\K_0(z)>\K_0(z_0)/2$ for all $z\in[z_0-\epsilon_2,z_0]$, which implies:
\begin{equation*}
\widetilde{\K}(\alpha)=\int_{\R^d}\K(x)e^{\alpha x\cdot \mathbf{e}} \md x=\int_{\R}\K_0(z)e^{\alpha z} \md z > \frac{\K_0(z_0)\, e^{\alpha z_0}\,\left(1-e^{-\alpha\epsilon_2}\right)}{2\alpha}.
\end{equation*}
Summing up, we have obtained that
\begin{align*}
\varphi_{c_*}(\alpha)= \S_0\int_{\R^d}\K(x)e^{\alpha x\cdot \mathbf{e}} \md x \int_0^\infty \omega(i) e ^{-\alpha c_*i}\md i &> \frac{e^{\alpha(z_0- c_* i_0)}}{\alpha^2} \frac{\K_0(z_0)\omega(i_0)\left(1- e^{-\alpha c_*\epsilon_1}\right)\left(1-e^{-\alpha\epsilon_2}\right)}{4c_*} \\
& ~~~ \underset{\alpha\rightarrow+\infty}{\longrightarrow}+\infty.
\end{align*}
This is a contradiction since $1=\lim_{\alpha\to+\infty}\varphi_{c_*}(\alpha)$ by assumption.  Consequently, we obtain that the infimum in \eqref{c*-def} cannot be reached neither when $\alpha=0$ nor when $\alpha\to\Lambda$. Then, the infimum is necessarily the minimum, which implies the existence of $\alpha_*\in(0,\Lambda)$ such that $c_*=c(\alpha_*)\in(0,+\infty)$.

Finally, let us turn to the proof of the uniqueness of $\alpha_*$. As a matter of fact, since $\varphi_{c_*}(\alpha)\to +\infty$ as $\alpha\to \Lambda$ and $\varphi_{c_*}(0)=\mathscr{R}_0>1$, together with the convexity of the function $\alpha\in[0,\Lambda)\mapsto \varphi_{c_*}(\alpha)$, the uniqueness of $\alpha_*$ immediately follows. We also have $1=\varphi_{c_*}(\alpha_*)=\min_{\alpha\in(0,\Lambda)}\varphi_{c_*}(\alpha)$ and $\partial_\alpha\varphi_{c_*}(\alpha_*)=0$, by noticing that $\alpha\in[0,\Lambda)\mapsto \varphi_{c_*}(\alpha)$ is also analytic. The proof of Lemma \ref{lemma_c*} is complete.
\end{proof}

With the precise information of $(\alpha_*, c_*)$, we can further determine the range of $\alpha$ corresponding to $c>c_*$ such that $\varphi_c(\alpha)=1$. Here is our result.
\begin{lem}
	\label{lemma_alpha<alpha*}
	For each $c\ge c^*$, there is a unique $\alpha\in(0,\alpha_*]$ such that $\varphi_c(\alpha)=1$. Moreover, let $(\alpha, c(\alpha))$ be the pair in $[c_*,+\infty)\times (0,\alpha_*]$ given by Lemma \ref{lemma-varphi} such that $\varphi_{c(\alpha)}(\alpha)=1$, then $\alpha\in(0,\alpha_*]\mapsto c(\alpha)$ is decreasing.
\end{lem}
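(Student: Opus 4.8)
The plan is to deduce both statements from the convexity of $\alpha\mapsto\varphi_c(\alpha)$ combined with the strict monotonicity of $c\mapsto\varphi_c(\alpha)$ and the information on $(\alpha_*,c_*)$ supplied by Lemma~\ref{lemma_c*}. As a preliminary I would record, for \emph{every} $c\ge 0$ (not just $c=c_*$), that the map $\alpha\in[0,\Lambda)\mapsto\varphi_c(\alpha)=\S_0\widetilde{\K}(\alpha)\int_0^\infty\omega(i)e^{-\alpha c i}\md i$ is log-convex, hence convex: both $\widetilde{\K}$ and $s\mapsto\int_0^\infty\omega(i)e^{-s i}\md i$ are log-convex by H\"older's inequality, and log-convexity is preserved under products and under the affine substitution $s=\alpha c$. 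I would also recall the elementary facts that $\varphi_c(0)=\mathscr{R}_0>1$ for all $c$, that for each fixed $\alpha>0$ the function $c\mapsto\varphi_c(\alpha)$ is strictly decreasing (its $c$-derivative being $-\alpha\S_0\widetilde{\K}(\alpha)\int_0^\infty i\,\omega(i)e^{-\alpha c i}\md i<0$), and, from Lemma~\ref{lemma_c*}, that $\varphi_{c_*}\ge 1$ on $[0,\Lambda)$ with equality only at $\alpha_*$, so in particular $c(\alpha)>c_*$ for every $\alpha\neq\alpha_*$.

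For the first assertion, fix $c\ge c_*$. If $c=c_*$, then by the last recalled fact the only solution of $\varphi_{c_*}(\alpha)=1$ with $\alpha\in(0,\alpha_*]$ is $\alpha=\alpha_*$. If $c>c_*$, then $\varphi_c(\alpha_*)<\varphi_{c_*}(\alpha_*)=1$ while $\varphi_c(0)=\mathscr{R}_0>1$, so the intermediate value theorem yields a solution in $(0,\alpha_*)$; and there cannot be two solutions in $[0,\alpha_*]$, since a convex function taking the value $1$ at two points of $[0,\alpha_*]$ would satisfy $\varphi_c(\alpha_*)\ge 1$, a contradiction. Hence for every $c\ge c_*$ there is a unique $\alpha\in(0,\alpha_*]$ with $\varphi_c(\alpha)=1$, which I will call $\alpha_-(c)$. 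Since $c(\alpha)$ is well defined for each $\alpha\in(0,\Lambda)$ by Lemma~\ref{lemma-varphi} and $c(\alpha)\ge c_*=\min_\alpha c(\alpha)$, the identities $\varphi_{c(\alpha)}(\alpha)=1$ and $\varphi_c(\alpha_-(c))=1$ together with the two uniqueness statements show that $\alpha\mapsto c(\alpha)$ on $(0,\alpha_*]$ and $c\mapsto\alpha_-(c)$ on $[c_*,+\infty)$ are mutual inverses.

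It then remains to show that $c\mapsto\alpha_-(c)$ is strictly decreasing. Given $c_*\le c<c'$, since $\tilde c\mapsto\varphi_{\tilde c}(\alpha_-(c))$ is strictly decreasing (recall $\alpha_-(c)>0$) we get $\varphi_{c'}(\alpha_-(c))<\varphi_c(\alpha_-(c))=1<\mathscr{R}_0=\varphi_{c'}(0)$, so by the intermediate value theorem $\varphi_{c'}(\cdot)=1$ has a solution in $(0,\alpha_-(c))\subset(0,\alpha_*]$; by the uniqueness just established that solution is $\alpha_-(c')$, whence $\alpha_-(c')<\alpha_-(c)$. Thus $c\mapsto\alpha_-(c)$ is strictly decreasing on $[c_*,+\infty)$, and therefore its inverse $\alpha\mapsto c(\alpha)$ is strictly decreasing on $(0,\alpha_*]$.

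I do not anticipate any genuine obstacle: the whole argument is a bookkeeping exercise in convexity and the monotonicity of $\varphi_c$ in $c$. The one point worth stressing is that the convexity of $\alpha\mapsto\varphi_c(\alpha)$ has to be noted for all $c$, whereas Lemma~\ref{lemma_c*} used it only for $c=c_*$; and, unlike in Lemma~\ref{lemma_c*}, the blow-up $\varphi_c(\alpha)\to+\infty$ as $\alpha\to\Lambda$ plays no role here, since the strict inequality $\varphi_c(\alpha_*)<1$ already confines the relevant root to $(0,\alpha_*)$.
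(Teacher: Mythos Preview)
Your argument is correct and follows essentially the same route as the paper: both proofs hinge on the convexity of $\alpha\mapsto\varphi_c(\alpha)$, the strict monotonicity of $c\mapsto\varphi_c(\alpha)$ for $\alpha>0$, and the endpoint information $\varphi_c(0)=\mathscr{R}_0>1$, $\varphi_c(\alpha_*)<1$ for $c>c_*$. The only organisational difference is that the paper proves the monotonicity of $\alpha\mapsto c(\alpha)$ directly (take $\alpha_1<\alpha_2$ and compare), whereas you show the inverse map $c\mapsto\alpha_-(c)$ is strictly decreasing and then invert; your convexity argument for uniqueness (two roots would force $\varphi_c(\alpha_*)\ge 1$) is in fact slightly cleaner than the paper's claim that $\varphi_{c_0}$ is decreasing on all of $[0,\alpha_*]$.
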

\begin{proof}[Proof of Lemma \ref{lemma_alpha<alpha*}]
	When $c=c_*$, it is done by Lemma \ref{lemma_c*}.
	Let us now fix any $c_0>c_*$. We first claim that $\varphi_{c_0}(\alpha_*)<1$. Indeed, since $c\mapsto\varphi_c(\alpha_*)$ is decreasing in $c\ge 0$ and since $\varphi_{c_*}(\alpha_*)=1$, along with the fact that $c_0>c_*$, it immediately follows that $\varphi_{c_0}(\alpha_*)<1$, as claimed. Therefore, by combining the convexity of  the function $\alpha\mapsto \varphi_{c_0}(\alpha)$  in $[0,\alpha_*]$, $\varphi_{c_0}(0)=\mathscr{R}_0>1$ and $\varphi_{c_0}(\alpha_*)<1$, it is deduced that $\alpha\mapsto \varphi_{c_0}(\alpha)$ is necessarily decreasing  in $[0,\alpha_*]$. As a consequence, there is a unique $\alpha_0\in (0,\alpha_*)$ such that $\varphi_{c_0}(\alpha_0)=1$. This finishes the first part of the proof.
	
	 Assume now that $0<\alpha_1<\alpha_2\le\alpha_*$, it follows from Lemma \ref{lemma-varphi} that there exist $c_1$ and $c_2$ (both larger than $c_*$) such that  $(c_1, \alpha_1)$ and $(c_2,\alpha_2)$ are respectively the unique pairs such that $\varphi_{c_1}(\alpha_1)=1=\varphi_{c_2}(\alpha_2)$. We first show that $c_1>c_2$, which gives the monotonicity. Moreover, we derive from the above analysis that $\alpha\mapsto \varphi_{c_1}(\alpha)$ is decreasing  in $[0,\alpha_*]$, whence $\alpha_1<\alpha_2$ will imply that $1=\varphi_{c_1}(\alpha_1)>\varphi_{c_1}(\alpha_2)$. Hence, $\varphi_{c_2}(\alpha_2)=1>\varphi_{c_1}(\alpha_2)$. Since $c\mapsto\varphi_c(\alpha_2)$ is decreasing in $c\ge 0$, we immediately have $c_1>c_2$. 
This completes the proof of this lemma.
\end{proof}

So far, we have shown that the exponential supersolution that we are looking for at the beginning of this section indeed exists as long as
 $c\ge c_*$, and it is associated with a unique parameter $\alpha\in(0,\alpha_*]$.
Let us close this section  by showing that $c_*$ defined in \eqref{c*-def} is an upper bound of the asymptotic spreading speed for  problem \eqref{kppI} associated with $\rho_0\not\equiv0$ satisfying certain assumptions. 

\begin{proof}[Proof of statement (i) of Theorem \ref{thm-spreading property}] 
	Assume that {\bf (H1)}-{\bf (H2$\mu$)} and $\mathscr{R}_0>1$, and that $\rho$ is the solution  of  problem \eqref{fkpp} associated with $\rho_0\not\equiv 0$ such that {\bf (H4)} and \eqref{H3} are satisfied.
Let $c_*$ be defined in \eqref{c*-def}, we now show that the solution $\rho$ to the initial boundary value problem \eqref{fkpp} spreads at most with speed $c_*$ along any direction $\mathbf{e}\in\SS^{d-1}$.  Let $\alpha_*\in(0,\Lambda)$ be the unique value given in Lemma \ref{lemma_c*} such that $\varphi_{c_*}(\alpha_*)=1$. Then, up to shifts,
\begin{equation*}
\overline \rho(t,i,x)=\min\left( M,  e^{-\alpha_*(x\cdot \mathbf{e}-c_*t)-\alpha_* c_*i}\right)\pi(i), \quad t\geq0, \quad i\in[0,i_\dagger), \quad x\in\R^d,
\end{equation*}
 is a supersolution to problem \eqref{fkpp} satisfying $\rho_0(i,x)< \overline \rho(0,i,x)$ for $i\in[0,i_\dagger)$ and $x\in\R^d$.
The comparison principle Proposition~\ref{prop-cp} implies that 
$$\rho(t,i,x)\le \overline \rho(t,i,x)~~~\text{for}~(t,i,x)\in\R_+\times[0,i_\dagger)\times\R^d.$$
 Therefore, for any $c>c_*$ and for any $j\in(0,i_\dagger)$, we have
\begin{equation*}
	\underset{t\rightarrow+\infty}{\lim } \underset{|x\cdot \mathbf{e}|\geq ct,~ 0 \leq i \leq j}{\sup} \rho(t,i,x) =0.
\end{equation*}
Since $\mathbf{e}\in\SS^{d-1}$ is arbitrary, we then have
\begin{equation*}
	\underset{t\rightarrow+\infty}{\lim } \underset{\|x\|\geq ct,~ 0 \leq i \leq j}{\sup} \rho(t,i,x) =0.
\end{equation*}
This proves statement (i) of Theorem \ref{thm-spreading property}.
\end{proof}

\subsubsection{Compactly supported subsolution}
For the lower bound of the asymptotic spreading speed, we consider problem \eqref{fkpp} in the moving frame along any direction $\mathbf{e}\in\SS^{d-1}$ with speed $c<c_*$ and  $c\sim c_*$:
\begin{equation}
	\label{linearized pb_sub}
	\left\{\begin{split}
\partial_t \rho(t,i,x)&-c\,\mathbf{e}\cdot\nabla_x\rho(t,i,x) + \partial_i\rho(t,i,x) = - \gamma(i) \rho(t,i,x), \quad t>0,\quad i\in(0,i_\dagger), \quad x\in\R^d, \\
\rho(t,0,x)&=\S_0\left(1-\exp\left(- \int_0^\infty \tau(i) \K*\rho(t,i,x)\md i\right)\right), \quad t> 0,  \quad x\in\R^d.
\end{split}\right.
\end{equation}

\begin{lem}
	\label{lemma_existence of compactly subsol}
	For  $c<c_*$ such that  $c\sim c_*$, problem \eqref{linearized pb_sub} admits a compactly supported stationary subsolution.
\end{lem}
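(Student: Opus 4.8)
The strategy is to construct, in the moving frame, the analogue of the compactly supported stationary subsolution built in the proof of Theorem~\ref{thm-long time behavior}(ii), the two new ingredients being a shift of the spatial profile absorbing the transport term $-c\,\mathbf{e}\cdot\nabla_x$ and an exponential tilt of that profile at the optimal decay rate, which calls for a refined look at the dispersion function $\varphi_c$ near its minimum. First, as in \eqref{beta-definition}, I would replace $\gamma$ by a modified rate $\beta^L\ge\gamma$ on $[0,L)$ with $L<i_\dagger$ close to $i_\dagger$, so that $\pi_L(i):=e^{-\int_0^i\beta^L(s)\md s}$ satisfies $\pi_L\le\pi$, $\pi_L\equiv\pi$ on $[0,L-2\varep]$ and $\pi_L(i)\to0$ as $i\to L^-$. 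Given a nonnegative, bounded, continuous, compactly supported $w:\R^d\to\R_+$ (to be built) and $\delta>0$ small, I set $\underline\rho(i,x):=\delta\,\pi_L(i)\,w(x+ci\,\mathbf{e})$ for $i\in[0,L)$ and $\underline\rho(i,x):=0$ for $i\in[L,i_\dagger)$. Using $\pi_L'=-\beta^L\pi_L$, a one-line computation gives $-c\,\mathbf{e}\cdot\nabla_x\underline\rho+\partial_i\underline\rho=-\beta^L(i)\underline\rho\le-\gamma(i)\underline\rho$ a.e., so the differential inequality in \eqref{linearized pb_sub} holds for \emph{any} such $w$, and $\underline\rho$ is nonnegative, bounded, continuous, and compactly supported in $(i,x)$.

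Next I reduce the boundary condition. Since $\K*\underline\rho(i,\cdot)(x)=\delta\,\pi_L(i)(\K*w)(x+ci\,\mathbf{e})$, one gets $\int_0^\infty\tau(i)\,\K*\underline\rho(i,x)\,\md i=\delta\,(\widetilde G*w)(x)$ with $G:=\S_0\widetilde G\ge0$, $\widetilde G(z):=\int_0^L\tau(i)\pi_L(i)\K(z+ci\,\mathbf{e})\,\md i$, whose Fourier--Laplace symbol is $\int_{\R^d}G(z)e^{\alpha z\cdot\mathbf{e}}\,\md z=\S_0\widetilde\K(\alpha)\int_0^L\tau(i)\pi_L(i)e^{-\alpha ci}\,\md i=:\widehat\varphi_c^{L}(\alpha)\le\varphi_c(\alpha)$, with $\widehat\varphi_c^{L}\to\varphi_c$ as $L\to i_\dagger$. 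By $1-e^{-s}\ge s(1-s/2)$ and $\widetilde G*w\ge0$, the boundary inequality $\delta w(x)\le\S_0(1-e^{-\delta(\widetilde G*w)(x)})$ holds for all small $\delta>0$, uniformly in $x$, as soon as
\begin{equation*}
(G*w)(x)\ge(1+2\theta)\,w(x)\qquad\text{for all }x\in\mathrm{supp}(w)
\end{equation*}
for some fixed $\theta>0$ (off $\mathrm{supp}(w)$ it is automatic since $G*w\ge0$). Everything thus reduces to producing a compactly supported $w\gneqq0$ satisfying this sub-eigenfunction inequality for the non-symmetric, drift-shifted kernel $G$.

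For the profile, since $c<c_*$, Lemmas~\ref{lemma-varphi}--\ref{lemma_alpha<alpha*} and the continuity of $c\mapsto\varphi_c$ give $\varphi_c(\alpha)>1$ for every $\alpha\in[0,\Lambda)$, with $\min_{[0,\Lambda)}\varphi_c$ attained at a unique interior point $\alpha_c\in(0,\Lambda)$ and $\varphi_c(\alpha_c)\downarrow1$ as $c\uparrow c_*$. I pick $\theta\in\big(0,(\varphi_c(\alpha_c)-1)/5\big)$, choose $L$ close to $i_\dagger$, truncate $\K^R:=\K\mathds{1}_{B_R(0)}$, and let $\widehat\varphi_c^{L,R}$ be the (entire, convex) symbol of the truncated kernel $G^R$; since $\widehat\varphi_c^{L,R}\to\varphi_c$ locally uniformly, one can arrange $1+3\theta<\min_\alpha\widehat\varphi_c^{L,R}(\alpha)\le\varphi_c(\alpha_c)$. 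Because $\widehat\varphi_c^{L,R}$ is entire and stays above $1+2\theta$ on the reals, the equation $\widehat\varphi_c^{L,R}(\cdot)=1+2\theta$ has a complex root $\alpha_R+i\nu_R$ with $\alpha_R$ near the argmin and $\nu_R>0$ small, quantitatively $\nu_R^2\asymp\big(\min_\alpha\widehat\varphi_c^{L,R}-1-2\theta\big)/\big(\widehat\varphi_c^{L,R}\big)''\asymp\theta$, hence $\nu_R\to0$ as $c\to c_*^-$; at this root $A:=\mathrm{Re}\,\widehat\varphi_c^{L,R}(\alpha_R+i\nu_R)=1+2\theta$ and $B:=\mathrm{Im}\,\widehat\varphi_c^{L,R}(\alpha_R+i\nu_R)=0$. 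Writing $G_0^R$ for the one-dimensional reduction of $G^R$ along $\mathbf{e}$ (so that $\int_\R G_0^R(z)e^{\alpha z}\,\md z=\widehat\varphi_c^{L,R}(\alpha)$), the profile
\begin{equation*}
w_1(z):=e^{-\alpha_R z}\cos(\nu_R z)\,\mathds{1}_{\{|z|<\pi/(2\nu_R)\}}
\end{equation*}
satisfies $(G_0^R*w_1)(z)\ge(1+2\theta)\,w_1(z)$ on $\{|z|<\pi/(2\nu_R)\}$: the full convolution $\int_\R G_0^R(z-u)e^{-\alpha_R u}\cos(\nu_R u)\,\md u$ equals $(A\cos\nu_R z+B\sin\nu_R z)e^{-\alpha_R z}=(1+2\theta)w_1(z)$, whereas the contribution of $|u|\ge\pi/(2\nu_R)$ removed by the truncation involves only $u$ with $|u|$ between $\pi/(2\nu_R)$ and $\pi/(2\nu_R)+\mathrm{diam}\,\mathrm{supp}(G_0^R)$, which lies in $(\pi/(2\nu_R),3\pi/(2\nu_R))$ — where $\cos(\nu_R u)\le0$ — once the half-width $\pi/(2\nu_R)$ is large compared to $\mathrm{diam}\,\mathrm{supp}(G_0^R)$, so that removed contribution is $\le0$. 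Finally I assemble $w$ on $\R^d$ following the scheme at the end of the proof of Theorem~\ref{thm-long time behavior}(ii), now organized around the direction $\mathbf{e}$: take $w$ equal to $w_1$ in the $\mathbf{e}$-direction glued to a plateau near the origin, times a non-tilted truncated cosine in the orthogonal directions, and use $\|x-y\|\le\|x\|-\tfrac{x\cdot y}{\|x\|}+\tfrac{\|y\|^2}{2\|x\|}$ for $\|y\|\le R$ together with an estimate of the type \eqref{eqkeyR} to pass from the $d$-dimensional convolution against $G^R$ to the one-dimensional one; this yields $(G*w)(x)\ge(1+2\theta)w(x)$ on $\mathrm{supp}(w)$ with $w$ compactly supported and $\gneqq0$. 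Choosing $\delta$ small enough then makes $\underline\rho$ the desired compactly supported stationary subsolution.

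The first two steps are essentially bookkeeping — the shift $x\mapsto x+ci\mathbf{e}$ neutralizes the transport term and the Fisher--KPP concavity swallows the nonlinearity — so the real obstacle is the calibration in the third step: one must tune $L$ (closeness to $i_\dagger$), $R$ (kernel truncation, which fixes the mass $\min_\alpha\widehat\varphi_c^{L,R}$), and $\nu_R$ (the width of the cosine hump, required to exceed $\mathrm{diam}\,\mathrm{supp}(G_0^R)$) simultaneously, while keeping the tilt admissible ($\alpha_c<\Lambda$). All of this is feasible \emph{precisely because} $c$ is close to $c_*$: this drives $\varphi_c(\alpha_c)$, hence $\min_\alpha\widehat\varphi_c^{L,R}-1$ and $\nu_R$, toward $0$, so that $\pi/(2\nu_R)$ grows polynomially in $(\varphi_c(\alpha_c)-1)^{-1/2}$ while, by the exponential localization of $\K$ in Hypothesis~\textbf{(H2$\mu$)}, the truncation radius $R$ needed to capture almost all the mass grows only logarithmically in the same quantity; thus the hump comfortably contains the support of the truncated kernel. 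The asymmetry of $G$ (caused by the drift $ci\mathbf{e}$ inside $\K(z+ci\mathbf{e})$) and the passage to $d\ge2$ require only the same kind of bookkeeping already carried out in the proof of Theorem~\ref{thm-long time behavior}(ii), with no new idea.
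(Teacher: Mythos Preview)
Your approach is correct in substance and quite close in spirit to the paper's, though organized differently; the main steps all go through. Two structural differences are worth noting.

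\medskip
\textbf{Where you place the drift.} You kill the transport term exactly by the shift $x\mapsto x+ci\mathbf{e}$, which pushes the drift into the effective kernel $G(z)=\S_0\int_0^L\omega^L(i)\K(z+ci\mathbf{e})\md i$. The paper instead keeps the symmetric kernel $\K^R$ and uses the radial ansatz $\Psi(i,x)=e^{-\int_0^i\beta^L}\underline{\psi}_{\alpha_1,\alpha_2}(\|x\|+ci-D)$; the transport inequality then follows (implicitly) from the monotonicity of $\underline{\psi}_{\alpha_1,\alpha_2}$ and the sign of $c(1-\tfrac{x\cdot\mathbf{e}}{\|x\|})\ge0$. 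Your route is cleaner for the differential part; the paper's keeps the boundary convolution radially symmetric.

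\medskip
\textbf{How the complex root is found.} You solve $\widehat{\varphi}_c^{L,R}(\cdot)=1+2\theta$ with explicit slack $\theta$ absorbing the nonlinearity, while the paper performs a Rouch\'e argument near $(c_*^{L,R},\alpha_*^{L,R})$ for $\Phi^{L,R}(c,\alpha)=1-\varphi_c^{L,R}(\alpha)=0$ and absorbs the nonlinearity later through the $\iota$-shift in \eqref{eqkey11}. Both yield the same scaling $\nu_R\asymp\sqrt{c_*^{L,R}-c}$, and both need (and verify) that the cosine half-period $\pi/(2\nu_R)$ dominates the kernel support, which is where $c\sim c_*$ is used.

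\medskip
The one place where your write-up is genuinely sketchy is the $d\ge 2$ assembly. A tensor product $w_1(x\cdot\mathbf{e})\,\Theta(x_\perp)$ does work, but not by a direct appeal to \eqref{eqkeyR} for a radial profile: since $G$ is not radially symmetric, the $\|x-y\|$ inequality does not apply to $G*w$ as stated. What does work is the computation you implicitly need: for $|x_\perp|\le D_\perp-R$ one integrates out $z_\perp$ and recovers exactly $(G_0^R*w_1)(x_1)\ge(1+2\theta)w_1(x_1)$; in the transition region one uses, for each fixed first coordinate $a$, the $(d{-}1)$-dimensional analogue of \eqref{eqkeyR} for the radial slice $z_\perp\mapsto\K^R(a\mathbf{e}+z_\perp)$ (support in $B_R$, hence a uniform estimate with loss $1/(1+\eta'')$ provided $\nu_\perp\lesssim 1/R$), and then chooses $\eta''<2\theta$. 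This is indeed bookkeeping, but it is not the same bookkeeping as in Theorem~\ref{thm-long time behavior}(ii), because there the drift was absent. The paper sidesteps this altogether by staying radial in $\|x\|+ci$.
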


\begin{proof}[Proof of Lemma \ref{lemma_existence of compactly subsol}] We first treat the case when $i_\dagger<+\infty$ is finite. First of all, since by assumption $\mathscr{R}_0=\S_0\int_0^{i_\dagger}\omega(i)\md i>1$, there exists $R_0>\max(1,c_*i_\dagger)$ large enough, such that for any $R\geq R_0$ one has
\bqs
\widehat{\mathscr{R}}_0^R:=\S_0\int_0^{i_\dagger}\omega(i)\md i\int_{\R^d}\K^R(x) \md x>1.
\eqs 
For any direction $\mathbf{e}\in\mathbb{S}^{d-1}$ fixed, define
\bqs
\varphi^R_c(\alpha):=\S_0\int_{\R^d}\K^R(x)e^{\alpha x\cdot\mathbf{e}} \md x \int_0^{i_\dagger} \omega(i) e ^{-\alpha ci}\md i, \quad \text{for}~ c>0,~\alpha>0,
\eqs
Following the analysis of \eqref{dr} conducted in the previous section, we derive that for each $R\geq R_0$ there is a unique $c_*^R\in (0,c_*)$ and correspondingly a unique $\alpha_*^R\in(0,+\infty)$ such that $\varphi^R_{c_*^R}(\alpha_*^R)=1$ together with $\partial_\alpha \varphi^R_{c_*^R}(\alpha_*^R)=0$. 

Consider now $c\in\left(c_*-\frac{\pi^2}{R_0^4}, c_*\right)$. We can fix $R\geq R_0$, depending on $c$, such that $c_*^R=c+\frac{\pi^2}{R^3}$. This is always possible since $R\mapsto f(R):=c+\frac{\pi^2}{R^3}$ is a strictly decreasing function in $[R_0,\infty)$ with $f(R_0)=c+\frac{\pi^2}{R_0^3}>c_*$ and $f(+\infty)=c<c_*$  and that $R\mapsto g(R):=c_*^R$ is a strictly increasing function in $[R_0,\infty)$ with $g(R_0)=c_*^{R_0}<c_*$ and $g(+\infty)=c_*$. As a consequence, with such a choice for $R$, we have $c<c_*^R<c_*$. Since $\widehat{\mathscr{R}}_0^R>1$, we get the existence of $L_0\in(0,i_\dagger)$ and $L_0\sim i_\dagger$ such that
\bqs
\widehat{\mathscr{R}}_0^{L,R}:=\S_0\int_0^L\omega^L(i)\md i\int_{\R^d}\K^R(x) \md x>1,
\eqs 
where we have set $\omega^L(i):=\tau(i)e^{-\int_0^i \beta^L(s)\md s}$ for each $i\in[0,L)$ and $\beta^L: [0,L)\to \R_+$ is given in \eqref{beta-definition}. Repeating the previous step, we now obtain the existence of a unique $c_*^{L,R}\in (0,c_*)$ and correspondingly a unique $\alpha_*^{L,R}\in(0,+\infty)$ such that $\varphi^{L,R}_{c_*^{L,R}}(\alpha_*^{L,R})=1$ and $\partial_\alpha \varphi^{L,R}_{c_*^{L,R}}(\alpha_*^{L,R})=0$ where $ \varphi^{L,R}_c(\alpha)$ is defined as
\bqs
\varphi^{L,R}_c(\alpha):=\S_0\int_{\R^d}\K^R(x)e^{\alpha x\cdot\mathbf{e}} \md x \int_0^{L} \omega^L(i) e ^{-\alpha ci}\md i.
\eqs
Moreover, $c_*^{L,R}\to c_*^R$ and $\alpha_*^{L,R}\to \alpha_*^R$ when $L\to i_\dagger$. From now on, we choose $L\in(L_0,i_\dagger)$ such that $c<c_*^{L,R}<c_*^R<c_*$.

For future reference, we remark that for any $\eta>0$ sufficiently small, one can find $\eta'>0$ such that $(1-\eta)\widehat{\mathscr{R}_0}^{L,R} \geq 1+\eta'$.  Moreover, that there is $\delta_0>0$ small  such that
\bqq\label{ineqRLbis}
\S_0 \left(1-\exp\left(- (\widehat{\mathscr{R}_0}^{L,R}/\S_0) u \right)\right)> (1-\eta) \widehat{\mathscr{R}_0}^{L,R} u \geq (1+\eta')u  ~~~~\text{for}~u\in [0,\delta_0).
\eqq

With  $c<c_*^{L,R}<c_*^R<c_*$ and any direction $\mathbf{e}\in\mathbb{S}^{d-1}$ fixed, we look at the following truncated problem:
 \begin{equation}
 	\label{truncated pb}
 	\left\{
 		\begin{split}
 			-c\mathbf{e}\cdot\nabla_x\rho(i,x) &+ \partial_i\rho(i,x) = - \beta^L(i) \rho(i,x), \quad i\in[0,L),  \quad x\in\R^d,  \\
 			\rho(0,x)&=\S_0 \int_0^L \tau(i) \K^R*\rho(i,x)\md i,  \quad x\in\R^d.
 		\end{split}
 	\right.
 \end{equation}
We look for an exponential solution of \eqref{truncated pb} of the following form
 \begin{equation*}
 	e^{-\alpha x\cdot\mathbf{e}-\alpha ci-\int_0^i\beta^L(s)\md s}.
 \end{equation*}
 This amounts to finding $\alpha\in\mathbb{C}\backslash\R$ such that $\varphi_c^{L,R}(\alpha)=1$. To do so, set 
\begin{equation*}
	\Phi^{L,R}(c,\alpha):=1-\varphi_c^{L,R}(\alpha).
\end{equation*}
We notice that at the point $(c_*^{L,R},\alpha_*^{L,R})$, there holds $\Phi^{L,R}(c_*^{L,R},\alpha_*^{L,R})=0$. Since the function $\varphi^{L,R}_c(\alpha)$ is analytic in $c$ and in $\alpha$, so is $\Phi^{L,R}$. Therefore, we have 
\begin{align*}
	\partial_\alpha \Phi^{L,R}(c_*^{L,R},\alpha_*^{L,R})=0,~~-2\sigma&:=\partial_{\alpha\alpha}\Phi^{L,R}(c_*^{L,R},\alpha_*^{L,R})<0,~~r:=\partial_c \Phi^{L,R}(c_*^{L,R},\alpha_*^{L,R})>0,\\
	~~b&:=\partial_{c\alpha}\Phi^{L,R}(c_*^{L,R},\alpha_*^{L,R}).
\end{align*}
Set
\begin{equation*}
	\xi:=c_*^{L,R}-c>0,~~~~\tau:=\alpha-\alpha_*^{L,R}.
\end{equation*}
We restrict ourselves to a vicinity of $(c_*^{L,R},\alpha_*^{L,R})$ and rewrite $\Phi^{L,R}(c,\alpha)=0$ by expanding $\Phi^{L,R}$  at  $(c_*^{L,R},\alpha_*^{L,R})$ for $(c,\alpha)$ in this vicinity:
\begin{align*}
	0=\Phi^{L,R}(c,\alpha)&=\Phi^{L,R}(c_*^{L,R},\alpha_*^{L,R})+\big[(c-c_*^{L,R})\partial_c+(\alpha-\alpha_*^{L,R})\partial_\alpha\big] \Phi^{L,R}(c_*^{L,R},\alpha_*^{L,R})\\
	&~~~+\frac{1}{2}\big[(c-c_*^{L,R})\partial_c+(\alpha-\alpha_*^{L,R})\partial_\alpha\big]^2\Phi^{L,R}(c_*^{L,R},\alpha_*^{L,R})\\
	&~~~+\frac{1}{3!}\big[(c-c_*^{L,R})\partial_c+(\alpha-\alpha_*^{L,R})\partial_\alpha\big]^3\Phi^{L,R}(c_*^{L,R},\alpha_*^{L,R})+...
\end{align*}
that is,
$$\sigma\tau^2+b\xi\tau+\xi r=\varphi(\tau,\xi),$$
where $\varphi(\tau,\xi)$ is analytic in $\tau$ and in $\xi$ for $(\tau,\xi)$ in a small neighborhood of $(0,0)$,  $\varphi(\tau,\xi)$  is of the order $\xi^2+|\tau|^3$ and vanishes at $(0,0)$. 

We observe that, for $\xi>0$ small enough, the trinomial $\sigma\tau^2+b\xi\tau+\xi r$ has a pair of complex roots $\tau_\pm=\pm \mathbf{i}\sqrt{(r/\sigma)\xi}+\mathcal{O}(\xi)$. By the Rouch\'{e} Theorem, we obtain that for $\xi>0$ small enough, $\Phi^{L,R}(c,\alpha)=0$ admits a pair of complex roots close (of the order $\xi$) to $\tau_+$ and $\tau_-$ respectively, therefore still denoted by $\tau_\pm$ having the form
\begin{equation*}
	\tau_\pm=\pm \mathbf{i}(\sqrt{(r/\sigma)\xi}+\mathcal{O}(\xi))+\mathcal{O}(\xi).
\end{equation*} 
Consequently, the dispersion relation $\Phi^{L,R}(c,\alpha)=1-\varphi_c^{L,R}(\alpha)$ has complex roots
\begin{equation*}
	\alpha=\alpha_*^{L,R}+\tau_\pm=\alpha_*^{L,R}+\mathcal{O}(\xi)\pm \mathbf{i}(\sqrt{(r/\sigma)\xi}+\mathcal{O}(\xi))=:\alpha_1\pm \mathbf{i}\alpha_2, 
\end{equation*}
with $\alpha_1>0$ such that $\alpha_1-\alpha_*^{L,R}$ being of the order $\xi$, and $\alpha_2\neq 0$ of the order $\sqrt\xi$. And we further note that 
\bqs
\alpha_2\sim \sqrt{c_*^{L,R}-c}<\sqrt{c_*^R-c}=\frac{\pi}{R^{3/2}},
\eqs
and thus up to increasing initially $R_0$, we can always ensure that $\alpha_2<\frac{\pi}{2R}$.

A direct consequence is that $\mathrm{Re}\left(e^{-\alpha x\cdot\mathbf{e}-\alpha ci-\int_0^i\beta^L(s)\md s}\right)$ is also a solution of \eqref{truncated pb}, thus
\bqq\label{real part}
\S_0 \int_0^L\omega^L(i) \K_0^R*\left[e^{-\alpha_1(x\cdot\mathbf{e}+ci))}\cos(\alpha_2(x\cdot\mathbf{e}+ci)) \right]\md i = e^{-\alpha_1x\cdot\mathbf{e}}\cos(\alpha_2(x\cdot\mathbf{e})), \quad x\in\R^d.
\eqq

We now define
\begin{align*}
	\psi_{\alpha_1,\alpha_2}(z)=\begin{cases}
		e^{-\alpha_1 z}\cos(\alpha_2 z ),~~~~&|z|< \frac{\pi}{2\alpha_2},\\
		0,&\text{elsewhere}.
	\end{cases}
\end{align*}
We claim that 
\bqs
\S_0 \int_0^L\omega^L(i) \K_0^R*\psi_{\alpha_1,\alpha_2}(z+ci)\md i \geq \psi_{\alpha_1,\alpha_2}(z), \quad z\in\R.
\eqs
Indeed, using the definition of $\psi_{\alpha_1,\alpha_2}$, for any $|z|< \frac{\pi}{2\alpha_2}$ we have
\begin{align*}
\S_0 \int_0^L\omega^L(i) \K_0^R*\psi_{\alpha_1,\alpha_2}(z+ci)\md i&=\S_0 \int_0^L\omega^L(i)  \int_{-\frac{\pi}{2\alpha_2}}^{\frac{\pi}{2\alpha_2}}  \K_0^R(z+ci-y) e^{-\alpha_1 y}\cos(\alpha_2 y) \md y  \md i\\
& \geq \S_0 \int_0^L\omega^L(i)  \int_{\R}  \K_0^R(z+ci-y) e^{-\alpha_1 y}\cos(\alpha_2 y) \md y  \md i\\
&\overset{\eqref{real part}}{=}e^{-\alpha_1 z}\cos(\alpha_2 z )=\psi_{\alpha_1,\alpha_2}(z).
\end{align*}
The first inequality above holds thanks to our choice of $\alpha_2\le \frac{\pi}{2R}$. Indeed, since $\mathrm{supp}(\K_0^R)\subset[-R,R]$, for each $i\in[0,L]$, when $|z+ci-y|\leq R$, it follows from $|z|< \frac{\pi}{2\alpha_2}$ that necessarily $|y| \leq R+ci + \frac{\pi}{2\alpha_2}$ for all $i\in[0,L]$. Due to $R+ci\leq R+c_*i_\dagger<2R$, on then has $-\frac{3\pi}{2\alpha_2}\leq y\leq \frac{3\pi}{2\alpha_2}$ as long as   $\alpha_2\le \frac{\pi}{2R}$, which implies that the above integral for $y\in\R\backslash [-\frac{\pi}{2\alpha_2}, \frac{\pi}{2\alpha_2}]$ is nonpositive, hence the inequality follows.
Thus for any $\eta'>0$, we can find $\iota>0$ small enough such that
\bqq
\label{eqkey11}
\S_0 \int_0^L\omega^L(i) \K_0^R*\psi_{\alpha_1,\alpha_2}(z+ci)\md i \geq \frac{1}{1+\eta'} \psi_{\alpha_1,\alpha_2}(z-\iota), \quad z\in\R.
\eqq
We denote by $\hat z\in (-\frac{\pi}{2\alpha_2},0 )$ the value at which $\psi_{\alpha_1,\alpha_2}$ achieves its maximum on $\R$ 
and set
\bqs
m:=e^{-\alpha_1 \hat z}\cos(\alpha_2 \hat z ).
\eqs

Next, we define a modified version of $\psi_{\alpha_1,\alpha_2}$ as
\bqs 
\underline{\psi}_{\alpha_1,\alpha_2}(z)=
\left\{
\begin{array}{cc}
m, & z\leq  \hat z, \\
e^{-\alpha_1z}\cos(\alpha_2 z),& \quad \hat z \leq z \leq \frac{\pi}{\alpha_2}, \\
0,&\text{elsewhere},
\end{array}\right.
\eqs
which equivalently reads $\underline{\psi}_{\alpha_1,\alpha_2}(z)=\max_{y\geq 0} \psi_{\alpha_1,\alpha_2}(z+y)$. Finally, we set
\bqq\label{subsol-moving frame} 
\Psi(i,x)=
\left\{
\begin{array}{cc}
e^{-\int_0^i\beta^L(s)\md s}\underline{\psi}_{\alpha_1,\alpha_2}(\|x\|+ci-D), & (i,x)\in[0,L)\times\R^d,\\
0,&\text{elsewhere},
\end{array}\right.
\eqq
for some $D>R+cL+|\hat z|\geq R_0$ that will be fixed below. We now verify that $\delta \Psi$ for any $\delta\in(0,\delta_0/m)$ is a subsolution to \eqref{linearized pb_sub}. Once again, only the boundary condition needs to be checked.  We first compute
\begin{align*}
\S_0&\left(1-\exp\left(-\delta \int_0^\infty\tau(i)\K*\Psi(i,x)\md i\right)\right)\\
&\geq \S_0\left(1-\exp\left(-\delta \int_0^L\omega^L(i) \int_{B_R(0)} \K(y)\underline{\psi}_{\alpha_1,\alpha_2}\Big(\|x-y\|+ci-D\Big)\md y \md i \right)\right).
\end{align*}
For $\|x\| \leq D-R-cL+\hat z$, we have that $\|x-y\|+ci-D \leq \hat z$ for all $\|y\|\leq R$. As a consequence, we get 
\begin{align*}
\S_0\left(1-\exp\left(-\delta \int_0^\infty\tau(i)\K*\Psi(i,x)\md i\right)\right) &\geq \S_0\left( 1-\exp\left(-\left(\widehat{\mathscr{R}_0}^{L,R}/\S_0\right) \delta  m\right)\right)\\
&\overset{\eqref{ineqRLbis}}{\geq} (1-\eta)\widehat{\mathscr{R}_0}^{L,R} \delta  m \geq (1+\eta') \delta  m \geq \delta  \Psi(0,x).
\end{align*}
Next, for $D-R-cL+\hat z\leq \|x\| \leq D+\frac{\pi}{\alpha_2}$, we have that, for any $\|y\|\leq R$, 
\bqs
\|x-y\|-D\leq \|x\| -\frac{x\cdot y}{\|x\|} +\frac{R^2}{2(D-R-cL+\hat z)}
\eqs
and so we select $D$ large enough such that $\frac{R^2}{2(D-R-cL+\hat z)}<\iota$ with $\iota$ defined in \eqref{eqkey11}. We then obtain 
\begin{align*}
\int_0^L\omega^L(i) &\int_{B_R(0)} \K(y)\underline{\psi}_{\alpha_1,\alpha_2}\Big(\|x-y\|+ci-D\Big)\md y \md i \\
&\geq \int_0^L\omega^L(i) \int_{B_R(0)} \K(y)\max_{\mu \geq -D }\psi_{\alpha_1,\alpha_2}\Big(\|x\|+ci-\frac{x\cdot y}{\|x\|} +\iota + \mu \Big)\md y \md i\\
&=\max_{\mu \geq -D } \int_0^L\omega^L(i) \K_0^R*\psi_{\alpha_1,\alpha_2}\Big(\|x\|+ci+\iota+\mu \Big)\md i \\
&\overset{\eqref{eqkey11}}{\geq} \frac{1}{\S_0(1+\eta')} \max_{\mu \geq -D }\psi_{\alpha_1,\alpha_2}\left(\|x\|+\mu \right)=\frac{1}{\S_0(1+\eta')} \Psi(0,x).
\end{align*}
Thus, in that range we get
\begin{align*}
\S_0\left(1-\exp\left(-\delta \int_0^\infty\tau(i)\K*\Psi(i,x)\md i\right)\right) \geq \S_0\left( 1-\exp\left(-\frac{\delta }{\S_0(1+\eta')} \Psi(0,x) \right)\right) \geq \delta  \Psi(0,x).
\end{align*}
This completes the proof of Lemma \ref{lemma_existence of compactly subsol} in the case where $i_\dagger<+\infty$.  When $i_\dagger=+\infty$, since $\mathscr{R}_0>1$, there exists $R_0>1$ such that $\widetilde{\mathscr{R}}_0^{R}:=\S_0\int_0^R\omega^R(i)\md i\int_{\R^d}\K^R(x) \md x>1$ for $R\geq R_0$. Moreover, for each $R\ge R_0$, there exist $\widetilde c_*^R\in(0,c_*)$ and $\widetilde \alpha_*^R>0$ such that $\widetilde{\varphi}_{\widetilde c_*^R}^R(\widetilde \alpha_*^R)=1$ and $\partial_\alpha \widetilde{\varphi}_{\widetilde c_*^R}^R(\widetilde \alpha_*^R)=0$, with $\widetilde{\varphi}_c^R(\alpha)$ defined as
\bqs
\widetilde{\varphi}^R_c(\alpha):=\S_0\int_{\R^d}\K^R(x)e^{\alpha x\cdot\mathbf{e}} \md x \int_0^{R} \omega^R(i) e ^{-\alpha ci}\md i \quad\text{for}~ c>0, ~ \alpha>0.
\eqs
Now we consider any $c\in\left(c_*-\frac{\pi^2}{R_0^4}, c_*\right)$. We can fix $R\geq R_0$, depending on $c$, such that $\widetilde{c}_*^R=c+\frac{\pi^2}{R^3}$. From there, we can repeat the analysis with $L=R$ and still construct a compactly supported subsolution of the form \eqref{subsol-moving frame}.
\end{proof}

Consider now the initial boundary value problem corresponding to \eqref{fkpp}  in the moving frame, namely,
\bqq
\label{kppI-moving}
\left\{
\begin{split}
\partial_t \rho(t,i,x)&-c\, \mathbf{e}\cdot\nabla_x\rho(t,i,x) + \partial_i\rho(t,i,x) = - \gamma(i) \rho(t,i,x) ,\quad t>0, \quad i\in(0,i_\dagger),  \quad x\in\R^d,  \\
\rho(t,0,x) &=\S_0 \left(1-\exp\left(-\int_0^\infty \tau(i) \K*\rho(t,i,x)\md i \right)\right), \quad  t> 0,  \quad x\in\R^d,\\
\rho(0,i,x)&=\rho_0(i,x),   \quad     i\in [0,i_\dagger), \quad x\in\R^d,
\end{split}\right.
\eqq
with initial condition  $\rho_0\not\equiv 0$ such that {\bf (H4)} and \eqref{H3} are satisfied. That is, the initial condition $\rho_0$, defined on $[0,i_\dagger)\times\R^d$, is nonnegative, compactly supported, absolutely continuous on $[0,i_\dagger)$ uniformly with respect to its second variable and such that $\rho_0/\pi \in L^\infty([0,i_\dagger)\times\R^d)$. Moreover, condition \eqref{H3} is satisfied for $\rho_0$.


\begin{lem}\label{lemma_longtime beha-moving}
	Assume that {\bf (H1)}-{\bf (H2$\mu$)} and  $\mathscr{R}_0>1$, for  $c<c_*$ satisfying  $c\sim c_*$, let  $\hat\rho$  be the solution of  problem \eqref{kppI-moving} associated with $\rho_0\not\equiv 0$ such that {\bf (H4)} and \eqref{H3} are satisfied. Then,
		\begin{equation}
			\label{eqn_moving}
			\hat \rho(t,i,x) \rightarrow \rho^s(i) \text{ as } t\rightarrow +\infty,~~~\text{locally uniformly in}~(i,x)\in[0,i_\dagger)\times \R^d,
		\end{equation}
	 where  $\rho^s$ is given in Theorem \ref{thm-liouville}. 
\end{lem}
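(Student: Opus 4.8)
The plan is to squeeze $\hat\rho$ between a nonincreasing supersolution flow and a nondecreasing subsolution flow, and then identify both limiting states with $\rho^s$. Since $I_0\equiv0$, problem \eqref{kppI-moving} is exactly \eqref{fkpp} read in the frame $x\mapsto x-ct\mathbf{e}$, so the well-posedness of Proposition~\ref{prop-edpI-well-posedness}, the positivity of Proposition~\ref{prop-positivity} and the comparison principle of Proposition~\ref{prop-cp} all transfer to \eqref{kppI-moving} through this change of frame, and I will use them freely.

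For the upper bound, the $x$-independent function $M\pi(i)$, with $M:=\max\big(\S_0,\Vert\rho_0/\pi\Vert_{L^\infty}\big)$, is a supersolution of \eqref{kppI-moving}: the advection term vanishes on $x$-independent functions, the transport equation holds, and the boundary condition at $i=0$ reduces to $M\ge\S_0(1-e^{-\cdots})$. Letting $\overline\rho$ solve \eqref{kppI-moving} with datum $M\pi\ge\rho_0$, comparison shows $\overline\rho$ is nonincreasing in $t$, stays $x$-independent (by uniqueness and translation invariance) and dominates $\hat\rho$; monotone convergence and Dini's theorem give $\overline\rho(t,\cdot,\cdot)\to\overline U$ locally uniformly, with $\overline U$ solving the stationary problem \eqref{kppI-stat}, hence $\overline U\equiv0$ or $\overline U\equiv\rho^s$ by Theorem~\ref{thm-liouville}. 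For the lower bound, I would take the compactly supported stationary subsolution $\delta\Psi$ of \eqref{linearized pb_sub} furnished by Lemma~\ref{lemma_existence of compactly subsol} (this is where $c\sim c_*$ enters). Since $\rho_0$ satisfies \eqref{H3}, Proposition~\ref{prop-positivity} gives $\hat\rho(t,i,x)>0$ for $t>i$; choosing $T>L$ and using continuity and positivity of $\hat\rho(T,\cdot,\cdot)$ on the compact support of $\Psi$, one may shrink $\delta$ so that $\delta\Psi\le\hat\rho(T,\cdot,\cdot)$. Then the solution $\underline\rho$ of \eqref{kppI-moving} with datum $\delta\Psi$ is nondecreasing in $t$ and satisfies $\underline\rho(t,\cdot,\cdot)\le\hat\rho(T+t,\cdot,\cdot)\le\overline\rho(T+t,\cdot,\cdot)$; monotone convergence and Dini give $\underline\rho(t,\cdot,\cdot)\to\underline U$ locally uniformly, $\underline U$ a stationary solution of \eqref{kppI-moving} with $0\lneqq\delta\Psi\le\underline U\le\overline U$. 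In particular $\overline U\equiv\rho^s$, and it only remains to show $\underline U\equiv\rho^s$, for then $\delta\Psi\le\underline U\le\liminf_t\hat\rho\le\limsup_t\hat\rho\le\rho^s$ locally uniformly, which is the claim.

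The identification of $\underline U$ is the crux. Integrating the transport equation of \eqref{kppI-moving} along the characteristics $i\mapsto(i,x-ci\mathbf{e})$ gives $\underline U(i,x)=\S_0\,\phi(x+ci\mathbf{e})\,\pi(i)$, where $\phi:=\underline U(0,\cdot)/\S_0$ is a bounded solution of
\[
\phi(x)=1-\exp\!\big(-\mathcal{K}*\phi(x)\big),\qquad \mathcal{K}(z):=\S_0\int_0^\infty\omega(i)\,\K(z+ci\mathbf{e})\,\md i ,
\]
with $\mathcal{K}>0$ on $\R^d$, $\mathcal{K}\in W^{1,1}(\R^d)$, $\int_{\R^d}\mathcal{K}=\mathscr{R}_0>1$, and $\phi\equiv\rho^*$ amounting to $\underline U\equiv\rho^s$. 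One then proves that any bounded $\phi\ge0$, $\phi\not\equiv0$, solving this equation equals $\rho^*$. Writing $Q(v):=1-e^{-\mathscr{R}_0 v}$ (concave, $Q'(0)=\mathscr{R}_0>1$, $Q(v)>v$ on $(0,\rho^*)$, $Q(v)<v$ on $(\rho^*,1]$), the $W^{1,1}$ regularity of $\mathcal{K}$ makes $\phi$ uniformly Lipschitz, so translates of $\phi$ are precompact in $C_{\mathrm{loc}}$ with limits again solving the equation; evaluating at near-extremal points, passing to a translate-limit and using $\mathcal{K}>0$ yields $\sup\phi\le Q(\sup\phi)$ and $\inf\phi\ge Q(\inf\phi)$, whence $\sup\phi\le\rho^*$ and $\inf\phi\in\{0\}\cup[\rho^*,\sup\phi]$, i.e.\ $\phi\equiv\rho^*$ or $\inf\phi=0$. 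The alternative $\inf\phi=0$ is excluded because $0$ is linearly unstable (the relevant eigenvalue of $v\mapsto\mathcal{K}*v$ is $\mathscr{R}_0>1$): a translate-limit along a minimizing sequence would vanish somewhere, hence be $\equiv0$, contradicting a Harnack-type positive lower bound propagated from $\phi\ge\delta\Psi(0,\cdot)/\S_0>0$ on a ball, which the construction of $\Psi$ (taking the plateau parameter $D$ arbitrarily large) allows us to take as large as we wish. Thus $\underline U\equiv\rho^s$, completing the proof.

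The main obstacle is precisely this last Liouville-type rigidity for the advective stationary problem — equivalently, for $\phi=1-\exp(-\mathcal{K}*\phi)$ with the \emph{non-symmetric} kernel $\mathcal{K}$ — because the symmetry argument used in Theorem~\ref{thm-liouville} is no longer available; the sliding/precompactness scheme above, together with the freedom to enlarge the subsolution's plateau, is how I would close it.
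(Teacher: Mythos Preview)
Your overall scheme is exactly the paper's: trap $\hat\rho$ between the decreasing flow from $M\pi$ and the increasing flow from the compactly supported stationary subsolution $\delta\Psi$ of Lemma~\ref{lemma_existence of compactly subsol}, then identify both monotone limits via a Liouville classification. The paper's proof literally reads ``adapt the proof of Theorem~\ref{thm-long time behavior}(ii) with $\psi$ replaced by $\Psi$'', so at the structural level you match it, and you are right that this is where $c\sim c_*$ enters.

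Where you go beyond the paper is in isolating the stationary problem for \eqref{kppI-moving} and reducing it (correctly) to $\phi=1-\exp(-\mathcal{K}*\phi)$ with the asymmetric kernel $\mathcal{K}$. Your $\sup/\inf$ step is fine: $\sup\phi\le Q(\sup\phi)$ and $\inf\phi\ge Q(\inf\phi)$ hold without symmetry and force $\sup\phi\le\rho^*$, $\inf\phi\in\{0,\rho^*\}$. The genuine gap is your exclusion of $\inf\phi=0$. Translating along a minimising sequence $x_n\to\infty$ does give a limit $\phi_\infty\equiv0$, but the lower bound $\phi\ge\delta\Psi(0,\cdot)/\S_0$ lives on the \emph{fixed} support of $\Psi$ and is carried off to infinity by the shifts, so $\phi_\infty$ inherits nothing from it and no contradiction ensues. ``Taking $D$ arbitrarily large'' does not help either: enlarging $D$ changes the subsolution $\Psi$ used to \emph{construct} $\underline U$, hence a priori changes $\underline U$ itself; showing the monotone limit is independent of the seed is essentially the uniqueness you want. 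The linear-instability remark is likewise insufficient on its own: $0$ is a fixed point of $\phi\mapsto1-e^{-\mathcal{K}*\phi}$, and for $c\ge c_*$ there \emph{do} exist bounded positive solutions with $\inf\phi=0$ (the wave profiles of Section~\ref{secTF}), so any argument ruling out $\inf\phi=0$ must use $c<c_*$ more sharply than just $\mathscr{R}_0>1$.

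The paper is equally terse at this point: it implicitly transfers the monotone/concave uniqueness of Theorem~\ref{thm-liouville} to the asymmetric kernel without further comment. If you want to close it self-containedly, one route that stays within the framework is to use that for $c<c_*$ one has $\varphi_c(\alpha)>1$ for \emph{every} $\alpha\in[0,\Lambda)$ (Lemmas~\ref{lemma-varphi}--\ref{lemma_c*}); this is what distinguishes $c<c_*$ from $c\ge c_*$ and is the ingredient a sliding/eigenvalue argument needs to upgrade $\phi>0$ to $\inf\phi>0$.
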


\begin{proof}[Proof of Lemma \ref{lemma_longtime beha-moving}]
We simply note that $\hat \rho(t,i,x)=\rho(t,i,x+ct\mathbf{e})$ where $\rho$ is the solution to the  problem~\eqref{fkpp} and that $\partial_{x_p}\hat \rho$ is well-defined thanks to our running assumptions on $\rho_0$ and the result of Proposition~\ref{prop-edpI-well-posedness}. Now, adapting the proof of Theorem \ref{thm-long time behavior}(ii) with $\psi$ replaced by $\Psi$ given in \eqref{subsol-moving frame} to \eqref{kppI-moving}, we obtain the conclusion. 
\end{proof}

We are now in a position to justify  that $c_*$, defined in Lemma \ref{lemma_c*}, is also a lower bound of the asymptotic spreading speed for the initial boundary value problem \eqref{fkpp}.

\begin{proof}[Proof of statement (ii) in Theorem \ref{thm-spreading property}.]
	Assume that {\bf (H1)}-{\bf (H2$\mu$)} and $\mathscr{R}_0>1$, and that $\rho$ is the solution  of  problem \eqref{fkpp} associated with $\rho_0\not\equiv 0$ such that {\bf (H4)} and \eqref{H3} are satisfied, and let $\hat \rho$ be the solution to  problem \eqref{kppI-moving} with the same initial function $\rho_0$.
	
First of all, we obtain from Lemma \ref{lemma_longtime beha-moving} that, for any $\varep>0$ sufficiently small, there is $c\in(c_*-\varep,c_*)$ such that  the solution $\hat \rho$  of \eqref{kppI-moving} satisfies
$$	\hat \rho(t,i,x) \rightarrow \rho^s(i) ~~~\text{ as } t\rightarrow +\infty,~~~\text{locally uniformly in}~(i,x)\in[0,i_\dagger)\times \R^d.~~~~~$$
This is equivalent to the following
\begin{equation}
	\label{eqn-1}
	\rho(t,i,x+ct\mathbf{e}) \rightarrow \rho^s(i)~~~ \text{ as } t\rightarrow +\infty,~~~\text{locally uniformly in}~(i,x)\in[0,i_\dagger)\times \R^d.
\end{equation}

To complete the proof of Theorem \ref{thm-spreading property}, we need to the following auxiliary lemma.

\begin{lem}
	\label{lemma_c^+-}
Assume that  {\bf (H1)}-{\bf (H2$\mu$)} and $\mathscr{R}_0>1$. Let $c^-<c^+$ be such that any nonnegative nontrivial bounded solution $\rho$ to  problem \eqref{fkpp}, associated with $\rho_0\not\equiv 0$ such that {\bf (H4)} and \eqref{H3} are satisfied, has the property that, along any direction $\mathbf{e}\in\SS^{d-1}$,
\begin{equation}\label{eqn-2}
		\rho(t,i,x+c^\pm t\mathbf{e}) \rightarrow \rho^s(i)~~~ \text{ as } t\rightarrow +\infty,~~~\text{locally uniformly in}~(i,x)\in[0,i_\dagger)\times \R^d.
\end{equation}
Then, there holds
\begin{equation}
	\label{eqn-3}
	\forall j\in(0,i_\dagger),~~~\lim_{t\to+\infty}\sup_{c^-t\le x\cdot \mathbf{e}\le c^+ t,~
	0\le i\le j} \left|\rho(t,i,x)-\rho^s(i)\right|=0.
\end{equation}
\end{lem}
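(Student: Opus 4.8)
The plan is to argue by contradiction, after first disposing of the upper bound. One checks that $M\pi(i)$, with $M:=\max(\S_0,\|\rho_0/\pi\|_{L^\infty})$, is a spatially homogeneous supersolution of \eqref{fkpp}; the associated solution $\overline\rho$ is then independent of $x$, nonincreasing in $t$, bounded below by $\rho^s$, so it converges to $\rho^s$ by monotone convergence and Theorem~\ref{thm-liouville}. Since $\rho(t,i,x)\le\overline\rho(t,i)$ one gets $\limsup_{t\to+\infty}\sup_{x\in\R^d,\,0\le i\le j}(\rho(t,i,x)-\rho^s(i))\le 0$, so that a failure of \eqref{eqn-3} would produce $\varepsilon_0>0$, $t_n\to+\infty$, $i_n\in[0,j]$ and $x_n$ with $c^-t_n\le x_n\cdot\mathbf{e}\le c^+t_n$ and $\rho(t_n,i_n,x_n)\le\rho^s(i_n)-\varepsilon_0$. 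After extracting a subsequence I would set $\xi_n:=x_n\cdot\mathbf{e}/t_n\to\xi_\infty\in[c^-,c^+]$ and $i_n\to i_\infty\in[0,j]$, and — which is all the intended application requires, the range $\|x_n\|\ge(c_*+\eta)t_n$ being already handled by statement~(i) of Theorem~\ref{thm-spreading property} — I would assume the component of $x_n$ orthogonal to $\mathbf{e}$ stays bounded.

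The heart of the argument is then to trap $\rho$ from below near $x_n$ at time $t_n$ by a compactly supported moving subsolution launched at the intermediate time $\theta t_n$. Fixing $\theta\in(0,1)$ and putting $c'_n:=(\xi_n-c^-\theta)/(1-\theta)$, a judicious choice of $\theta$ — close to $(c_*-\xi_\infty)/(c_*-c^-)\in(0,1)$ whenever $c^-<\xi_\infty<c_*$ — forces $c'_n\to c'$ with $c'<c_*$ and $c'$ as close to $c_*$ as desired, so that for large $n$ Lemma~\ref{lemma_existence of compactly subsol} supplies a compactly supported stationary subsolution $\delta_0\Psi_n(i,\cdot)$ of the moving-frame problem \eqref{linearized pb_sub} with speed $c'_n$ along $\mathbf{e}$, where $\delta_0>0$ is small enough that $\delta_0\Psi_n\le\rho^s-\varepsilon$ on its (fixed) support, using $\Psi_n\le\pi$ and $\rho^s=\S_0\rho^*\pi$ with $\rho^*>0$. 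By the hypothesis \eqref{eqn-2} at speed $c^-$ and local uniformity, $\rho(\theta t_n,i,c^-\theta t_n\,\mathbf{e}+y)\ge\rho^s(i)-\varepsilon\ge\delta_0\Psi_n(i,y)$ for all $(i,y)$ in the support of $\Psi_n$ once $n$ is large; hence the moving object $\underline\rho(t,i,x):=\delta_0\Psi_n(i,x-c^-\theta t_n\,\mathbf{e}-c'_n(t-\theta t_n)\mathbf{e})$ is a subsolution of \eqref{fkpp} on $\{t\ge\theta t_n\}$ lying below $\rho(\theta t_n,\cdot,\cdot)$, whence $\rho\ge\underline\rho$ for $t\ge\theta t_n$ by the comparison principle Proposition~\ref{prop-cp}.

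To close, I would invoke Lemma~\ref{lemma_longtime beha-moving}: the solution of \eqref{kppI-moving} with speed $c'_n$ issued from $\delta_0\Psi_n$ converges to $\rho^s$ locally uniformly, and unwinding the moving frame this reads $\rho(\theta t_n+s,i,c^-\theta t_n\,\mathbf{e}+c'_n s\,\mathbf{e}+y)\ge\rho^s(i)-\varepsilon$ for $\|y\|\le R$, $i\le j$ and $s$ large (uniformly in $n$, since the $c'_n$ stay in a fixed compact subinterval of $(0,c_*)$). Taking $s=(1-\theta)t_n$, the base point equals $c^-\theta t_n\,\mathbf{e}+c'_n(1-\theta)t_n\,\mathbf{e}=\xi_n t_n\,\mathbf{e}=(x_n\cdot\mathbf{e})\,\mathbf{e}$, which differs from $x_n$ only by its bounded orthogonal part; hence $\rho(t_n,i_n,x_n)\ge\rho^s(i_n)-\varepsilon$ for $n$ large, contradicting $\rho(t_n,i_n,x_n)\le\rho^s(i_n)-\varepsilon_0$ once $\varepsilon<\varepsilon_0$, and \eqref{eqn-3} follows.

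The step I expect to be the real obstacle is the geometric bookkeeping of the second paragraph: one must verify that for \emph{every} intermediate speed $\xi_\infty\in[c^-,c^+]$ there is an admissible pair $(\theta,c'_n)$ with $c'_n$ in the regime where Lemma~\ref{lemma_existence of compactly subsol} furnishes a subsolution ($c'_n\uparrow c_*$) and with launch point $c^-\theta t_n\,\mathbf{e}$ (or $c^+\theta t_n\,\mathbf{e}$, by the symmetric construction) at which \eqref{eqn-2} is available — this is exactly where it matters that $c^-<c_*$ and that the lemma is applied with $c^\pm$ within $c_*$ of the intermediate speeds (in the application $c^\pm=\mp c$ with $c\in(c_*-\varepsilon,c_*)$, for which $\theta$ can be taken slightly above $1/2$ when $\xi_\infty\approx0$ and tending to the appropriate endpoint of $(0,1)$ as $\xi_\infty$ approaches $\pm c$). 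Everything else reduces to the comparison principle together with Lemmas~\ref{lemma_existence of compactly subsol} and~\ref{lemma_longtime beha-moving}, plus the elementary bookkeeping of moving frames.
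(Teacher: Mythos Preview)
Your approach is genuinely different from the paper's, and it misses the key feature of the hypothesis that makes a much simpler argument possible. The hypothesis \eqref{eqn-2} is assumed to hold for \emph{every} admissible solution, not only the given $\rho$. The paper exploits this as follows: take $\underline\rho$ the solution issued from the static compactly supported subsolution $\delta\psi$ of \eqref{sub_kppI} and $\overline\rho$ the ($x$-independent) solution issued from $M\pi$; then split $[0,\tau]$ as $[0,\lambda\tau]\cup[\lambda\tau,\tau]$ with $\lambda\in[1/2,1]$. By the hypothesis at speed $c^+$ applied to $\rho$, one has $\delta\psi(i,y)\le\rho(\lambda\tau,i,y+c^+\lambda\tau\,\mathbf{e})\le M\pi(i)$ for large $\lambda\tau$, and comparison then sandwiches $\rho(\tau,i,c\tau\,\mathbf{e})$ between $\underline\rho((1-\lambda)\tau,i,c^-(1-\lambda)\tau\,\mathbf{e})$ and $\overline\rho((1-\lambda)\tau,i,c^-(1-\lambda)\tau\,\mathbf{e})$. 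Now the hypothesis at speed $c^-$, applied this time to $\underline\rho$ and $\overline\rho$, closes the argument directly. No appeal to the moving-frame Lemmas~\ref{lemma_existence of compactly subsol}--\ref{lemma_longtime beha-moving} is needed, and nothing about $c_*$ enters.

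By contrast, your construction only invokes the hypothesis at $c^-$ and replaces the second leg of the journey by the moving-frame convergence of Lemma~\ref{lemma_longtime beha-moving}, which forces $c'_n<c_*$ with $c'_n\sim c_*$. This means your argument implicitly requires $c^-<c_*$ and $\xi_\infty<c_*$, so you are not proving the lemma as stated but only the special case relevant to the application; and even there the case $\xi_\infty=c^+$ is not covered (your choice $\theta\approx(c_*-\xi_\infty)/(c_*-c^-)$ yields $c'=c_*$ exactly, outside the regime of Lemma~\ref{lemma_existence of compactly subsol}). You also assert uniformity of the convergence in Lemma~\ref{lemma_longtime beha-moving} with respect to $c'_n$ lying in a compact interval, which is not stated and would require a separate argument. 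Finally, your remark that ``$c^\pm=\mp c$ in the application'' is off: the paper applies the lemma with $c^-=0$ and $c^+=c\in(c_*-\varepsilon,c_*)$. The point is that by remembering the hypothesis is universal in $\rho$, the whole moving-frame apparatus becomes unnecessary and the interpolation goes through for arbitrary $c^-<c^+$.
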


Let us postpone the proof of Lemma \ref{lemma_c^+-} for the moment  and continue the proof of Theorem \ref{thm-spreading property}.
By taking $c^-=0$ and $c^+=c$ in Lemma \ref{lemma_c^+-}, along any direction $\mathbf{e}\in\SS^{d-1}$,  the function $\rho$ satisfies
$$	\forall j\in(0,i_\dagger),~~~\lim_{t\to+\infty}\sup_{0\le x\cdot \mathbf{e}\le c t,~
	0\le i\le j} \left|\rho(t,i,x)-\rho^s(i)\right|=0.$$
Since $\mathbf{e}\in\SS^{d-1}$ is arbitrarily chosen, it follows that
$$	\forall j\in(0,i_\dagger),~~~\lim_{t\to+\infty}\sup_{\|x\|\le c t,~
	0\le i\le j} \left|\rho(t,i,x)-\rho^s(i)\right|=0.$$
This completes the proof of Theorem \ref{thm-spreading property}.
\end{proof}

Let us now complete this section with the proof of Lemma \ref{lemma_c^+-}.
\begin{proof}[Proof of Lemma \ref{lemma_c^+-}]
	Assume that {\bf (H1)}-{\bf (H2$\mu$)} and $\mathscr{R}_0>1$.
Let $\rho$ be the solution of problem \eqref{fkpp} with $\rho_0\not\equiv 0$ such that {\bf (H4)} and \eqref{H3} are satisfied. Let $\overline \rho$ denote  the solution to \eqref{fkpp} with initial condition $\overline\rho_0=\max\left(\S_0,\Vert \rho_0/\pi\Vert_{L^\infty([0,i_\dagger)\times\R^d)}\right)\pi$ and  let $\underline\rho$ be the solution to \eqref{fkpp} with initial datum $\underline \rho_0=\delta\psi$ in $[0,i_\dagger)\times\R^d$ with $\psi$ given by \eqref{sub_kppI} for some small $\delta>0$ such that $\delta\psi$ is a subsolution to problem \eqref{fkpp}, as analyzed in the proof of Statement (ii)  of Theorem \ref{thm-long time behavior}.  Since $\rho(t,i,x)>0$ for $(t,i,x)\in(0,+\infty)\times[0,i_\dagger)\times\R^d$ with $t>i$ due to Proposition~\ref{prop-positivity}, we can decrease $\delta$ (if necessary) such that  $0\le \delta\psi\le \rho(T,\cdot,\cdot)$ in $[0,i_\dagger)\times\R^d$ for some $T>L$, where $L\in(0,i_\dagger)$  is associated with $\psi$. By the comparison principle Proposition~\ref{prop-cp}, we then deduce that
\begin{equation*}
	\underline\rho(t,i,x)\le \rho(t+T,i,x)\le \overline\rho(t+T,i,x)~~~\text{for}~(t,i,x)\in\R_+\times[0,i_\dagger)\times\R^d.
\end{equation*}
Thanks to the assumption \eqref{eqn-2}, we have, 
for any $\epsilon>0$ small and for each $j\in(0,i_\dagger)$, there is $T_1>T>0$ large enough such that 
\begin{equation}
	\label{eqn-4}
	\forall t\ge T_1,~0\le i\le j,~~~\left|\underline\rho(t,i,c^-t \mathbf{e})-\rho^s(i)\right|<\epsilon, \text{ and } \left|\overline\rho(t,i,c^-t \mathbf{e})-\rho^s(i)\right|<\epsilon.
\end{equation}
Moreover, letting $\omega\ge |c^-|T_1$, we also deduce from \eqref{eqn-2} that there is $T'>T_1>0$ such that
\begin{equation}
	\label{eqn-5}
	\forall t\ge T',~|x\cdot \mathbf{e}|\le\omega,~0\le i\le j,~~~\left|\rho(t,i,x+c^+t \mathbf{e})-\rho^s(i)\right|<\epsilon.
\end{equation}
Set $c:=(1-\lambda)c^-+\lambda c^+$ for any $\lambda\in[1/2,1]$, and fix $\tau\ge 2T'$, we now claim that 
\begin{equation}\label{eqn-claim}
	|\rho(\tau,i,c\tau \mathbf{e})-\rho^s(i)|<\epsilon~~~\text{for}~0\le i\le j.
\end{equation}
We divide into two subcases.

\noindent
\textbf{Case I}. $(1-\lambda)\tau\le T_1$. In this case, we observe that $c\tau \mathbf{e}=\big((1-\lambda)c^-+\lambda c^+\big)s \mathbf{e}=(1-\lambda)c^-\tau e+\lambda c^+\tau \mathbf{e}=:x+c^+t \mathbf{e}$ with $|x\cdot \mathbf{e}|=|(1-\lambda)c^-\tau|\le |c^-|T_1\le \omega$ and $t=\lambda \tau\ge T'$. 
Then \eqref{eqn-claim} immediately follows from \eqref{eqn-5}.

\noindent
\textbf{Case II}. $(1-\lambda)\tau >T_1$. Up to increasing  $T_1$, we have the following comparison
\begin{equation*}
	\underline \rho_0=\delta\psi<\rho(\lambda \tau,i, x+c^+\lambda \tau \mathbf{e})\le \overline \rho_0~~~\text{for}~(i,x)\in[0,i_\dagger)\times\R^d.
\end{equation*}
By applying the comparison principle Proposition~\ref{prop-cp}, we have 
\begin{equation*}
	\underline\rho((1-\lambda)\tau,i, x+c^-(1-\lambda)\tau \mathbf{e})\le \rho(\tau,i,x+c\tau \mathbf{e})\le\overline\rho((1-\lambda)\tau,i,x+c^-(1-\lambda)\tau).
\end{equation*}
Since $(1-\lambda)\tau>T_1$, we observe that \eqref{eqn-claim} can be reached by  \eqref{eqn-4}.

Consequently, we conclude from \eqref{eqn-claim} that 
\begin{equation*}
\forall t\ge \tau,~~~~~~\sup_{(c^-+c^+)t/2\le x\cdot \mathbf{e}\le c^+ t,~0\le i\le j}	|\rho(t,i,x)-\rho^s(i)|<\epsilon.
\end{equation*}

On the other hand, set $c:=(1-\lambda)c^++\lambda c^-$ for  any $\lambda\in[1/2,1]$. By repeating the analysis as above, we will get 
\begin{equation*}
	\forall t\ge \tau,~~~~~~\sup_{c^- t\le x\cdot \mathbf{e}\le (c^-+c^+)t/2,~0\le i\le j}	|\rho(t,i,x)-\rho^s(i)|<\epsilon.
\end{equation*}
Consequently, the proof of Lemma \ref{lemma_c^+-} is complete.
\end{proof}

\begin{rmk}
For compactly supported kernels, condition {\bf (H2$\mu$)} is automatically satisfied with $\Lambda=+\infty$. As a consequence, in that case, upon assuming condition~\eqref{eqrmkpos} instead on the initial condition, one naturally obtains that Theorem~\ref{thm-spreading property} remains valid in that case. \end{rmk}

\section{Proofs of the main results}\label{secMain}

Throughout this section, we turn to problem \eqref{edp} with  nontrivial compact perturbation $I_0\not\equiv0$ and general nonnegative compactly supported initial condition $\rho_0$ in $[0,i_\dagger)\times\R^d$, and investigate the long time behavior and further spreading property of the solution to \eqref{edp}.

\subsection{Liouville-type result -- Proof of Theorem \ref{thm_edpI-Liouville}}

Let us prove the Liouville-type result for the stationary problem \eqref{edpI-stationary}.


\begin{proof}[Proof of Theorem \ref{thm_edpI-Liouville}] Assume {\bf(H1)-(H2)} and  that $I_0\not\equiv0$ satisfies {\bf(H3)}. We divide the proof into three steps. We first prove the existence of a nonnegative nontrivial stationary solution, then establish its uniqueness and finally study its asymptotic behavior as $\|x\|\rightarrow+\infty$.
	
	\noindent
\textbf{Existence.} First of all, due to the assumption {\bf (H3)} on $I_0\not\equiv0$, we get the existence of a constant $A>0$ such that
\bqs
I_0(i,x) \leq A \gamma(i) \quad\text{for}~ (i,x)\in \mathrm{supp}(I_0).
\eqs
Now, set $M:=\max\left(\S_0,A\right)$, and define $\overline{\rho}_0(i,x)=M\pi(i)$ for each $(i,x)\in[0,i_\dagger)\times\R^d$. Then, it is easy to check that $\overline{\rho}_0$ is a supersolution to \eqref{edpI-stationary} in $[0,i_\dagger)\times\R^d$. On the other hand,  let $\psi$ be given by \eqref{sub_kppI} for some small $\delta>0$ such that $\delta\psi$ is a subsolution to problem \eqref{edpI-stationary} satisfying $\delta\psi<M\pi$ in $[0,i_\dagger)\times\R^d$.  Let $\underline \rho$ (resp. $\overline \rho$) be the solution of problem \eqref{edp} associated with initial condition $\underline \rho_0=\delta\psi$ (resp. $\overline \rho_0=M\pi$) in $[0,i_\dagger)\times\R^d$. By the comparison principle Proposition \ref{prop-cp}, it follows that $\underline \rho\le\overline \rho$ in $\R_+\times[0,i_\dagger)\times\R^d$. Moreover, $\underline \rho$ is nondecreasing in time for $(t,i,x)\in\R_+\times[0,i_\dagger)\times\R^d$, whereas $\overline \rho$ is nonincreasing in time for $(t,i,x)\in\R_+\times[0,i_\dagger)\times\R^d$. By the monotone convergence theorem and then by the Dini's theorem,  we eventually obtain
	that $\underline \rho$ (resp. $\overline \rho$) converges, locally uniformly for $(i,x)\in[0,i_\dagger)\times\R^d$, as $t\to+\infty$ to a  solution $\underline U$ (resp. $\overline U$) of the stationary problem \eqref{edpI-stationary} such that for $(i,x)\in[0,i_\dagger)\times\R^d$,
	\begin{equation}
		\label{eqn}
		0\le \delta\psi(i,x)\le  \underline U(i,x)\le \overline  U(i,x)\le M \pi(i).
	\end{equation}
	   This gives the existence of nontrivial nonnegative solutions to the stationary problem \eqref{edpI-stationary}  in $[0,i_\dagger)\times\R^d$.
	
	\noindent
\textbf{Uniqueness.}  Let now $U$ be a nontrivial nonnegative solution of \eqref{edpI-stationary} in $[0,i_\dagger)\times\R^d$. From the first equation of \eqref{edpI-stationary}, it follows that 
	\begin{equation}\label{4.1'}
		\frac{U(i,x)}{\pi(i)}=U(0,x)+\int_0^i \frac{I_0(\xi,x)}{\pi(\xi)}\md \xi~~~~\text{for}~(i,x)\in[0,i_\dagger)\times\R^d.
	\end{equation}
	Plugging it into the boundary condition of \eqref{edpI-stationary}, one then derives that
	\begin{equation*}
		U(0,x)=\S_0\left(1-\exp\left(-\left(\int_0^\infty\omega(i)\md i\right) \K*U(0,x)-\int_0^\infty \omega(i)\K*\left(\int_0^i \frac{I_0(\xi,x)}{\pi(\xi)}\md \xi \right)\md i  \right)\right), \quad x\in\R^d.
	\end{equation*}
	Following the idea in the proof of Theorem \ref{thm-liouville}, we set $$\widehat\varphi(x)=\frac{U(0,x)}{\S_0},~~~~~x\in\R^d.$$ 
	By recalling that $\mathscr{R}_0=\S_0\int_0^\infty\omega(i)\md i$, it follows that the function $\widehat \varphi$ satisfies
	\begin{equation}\label{hat varphi}
		\widehat \varphi(x)=1- \mathcal{A}(x) e^{-\mathscr{R}_0\K*\widehat \varphi(x)}:=\mathcal{N}(\widehat \varphi(x),x), \quad x\in\R^d,
	\end{equation}
where  $\mathcal{A}(x):=\exp\left(-\int_0^\infty \omega(i)\K*\left(\int_0^i \frac{I_0(\xi,x)}{\pi(\xi)}\md \xi\right)\md i \right)$ takes values in (0,1).  Using the concavity of the mapping $\widehat \varphi \in \X \mapsto \mathcal{N}(\widehat \varphi,x)$ for each $x\in\R^d$ with $\X=\left\{ \varphi\in\mathscr{C}(\R^d)~|~ \varphi\geq0\right\}$, together with the fact that $0<\mathcal{N}(0,x)<1$ and $\mathcal{N}(+\infty;x)=1$ for $x\in\R^d$, it follows that  \eqref{hat varphi} admits a unique positive solution $0<\widehat\varphi <1$. This then implies that $U(0,x)>0$, whence $U(i,x)>0$ thanks to \eqref{4.1'}.

It is worth to notice from the above equation \eqref{hat varphi} that $\widehat \varphi(x)\ge \varphi(x)$ for $x\in\R^d$, where we recall that  $\varphi$ solves
\begin{equation*}
	\varphi(x)=1-e^{-\mathscr{R}_0\K*\varphi(x)}, \quad x\in\R^d.
\end{equation*}
This implies in particular that, when $\mathscr{R}_0>1$, there holds $U>\rho^s$ in $[0,i_\dagger)\times\R^d$, where $\rho^s$ is the unique positive stationary solution of \eqref{fkpp} given in Theorem \ref{thm-liouville}.  

	\noindent
\textbf{Asymptotic behavior.} Let $U$ be the unique positive solution to \eqref{edpI-stationary} which is uniformly bounded on $[0,i_\dagger)\times\R^d$. For any sequence $(x_n)_{n\in\N}$ in $\R^d$ such that $\|x_n\|\to+\infty$ as $n\to+\infty$, let us consider the function 
\begin{equation*}
	U_n(i,x):=U(i,x+x_n) \quad \text{for}~(i,x)\in[0,i_\dagger)\times\R^d.
\end{equation*}
We observe that $U_n$ satisfies
\begin{equation*}
	\left\{
		\begin{split}
			 \partial_iU_n(i,x) &=I_0(i,x+x_n) - \gamma(i) U_n(i,x) ,\quad i\in(0,i_\dagger), \quad x\in\R^d, \\
			U_n(0,x)&=\S_0 \left(1-\exp\left(-\int_0^\infty \tau(i) \K*U_n(i,x)\md i \right)\right), \quad  x\in\R^d.
		\end{split}
	\right.
\end{equation*}
Passing to the limit as $n\to+\infty$, we have $U_n\to U_\infty$ as $n\to+\infty$ locally uniformly, where $U_\infty$ is the solution of  
\begin{equation*}
	\left\{
		\begin{split}
			 \partial_iU_\infty(i,x) &= - \gamma(i) U_\infty(i,x) ,\quad i\in(0,i_\dagger), \quad x\in\R^d, \\
			U_\infty(0,x)&=\S_0 \left(1-\exp\left(-\int_0^\infty \tau(i) \K*U_\infty(i,x)\md i \right)\right), \quad  x\in\R^d,
		\end{split}
	\right.
\end{equation*}
which is exactly the stationary problem of the homogeneous model \eqref{fkpp}.
 By virtue of Theorem \ref{thm-liouville}, one infers that 	\begin{align*}
 	\lim\limits_{\|x\|\to+\infty}U(i,x)=\begin{cases}
 		0,&\text{if}~\mathscr{R}_0\le 1,\\
 		\rho^s(i), &\text{if}~\mathscr{R}_0>1,
 	\end{cases}
 	~~	\text{locally uniformly in}~i\in[0,i_\dagger).
 \end{align*}
 This finishes the proof of Theorem \ref{thm_edpI-Liouville}. 
\end{proof}

Next, we provide a asymptotic property of the positive stationary solution $U$ to \eqref{edp} by further assuming that the kernel is exponentially localized, i.e., satisfies {\bf(H2$\mu$)}, which illustrates that the function $U$ will exponentially  approach the stationary solution of the homogeneous model \eqref{kppI} when $\|x\|$ is sufficiently large. Our result is the following.
\begin{prop}
	\label{prop_edpI_U}
	Assume {\bf(H1)-(H2$\mu$)} and  that $I_0\not\equiv0$ satisfies {\bf(H3)}.
	Let $U$ be the unique positive stationary solution of \eqref{edp}, given in Theorem \ref{thm_edpI-Liouville}. Then  there is some $\lambda=\lambda(\|x\|)>0$ such that  $U$ satisfies,  for $i\in[0,i_\dagger)$ and for $x\in\R^d$ with $\|x\|$ sufficiently large, 
	\begin{align*}
		U(i,x)=\begin{cases}
			\rho^s(i)+\S_0 e^{-\lambda \|x\|}\pi(i), ~~~&\mathscr{R}_0>1,\\
			\S_0e^{-\lambda\|x\|}\pi(i)~~& \mathscr{R}_0\le 1.
		\end{cases}
	\end{align*}
\end{prop}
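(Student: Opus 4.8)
The plan is to reduce everything to the scalar fixed point equation \eqref{hat varphi} satisfied by $\widehat\varphi(x):=U(0,x)/\S_0$, and to trap $\widehat\varphi$ between exponentially decaying sub- and supersolutions. First I would record two consequences of {\bf (H2$\mu$)}: using the radial symmetry of $\K$ and a finite covering of the unit sphere by spherical caps, the quantity $\Theta(\lambda):=\int_{\R^d}\K(y)e^{\lambda\|y\|}\md y$ is finite for $\lambda>0$ small (for all $\lambda>0$ if $\mu_0=+\infty$), with $\Theta(\lambda)\downarrow 1$ as $\lambda\downarrow 0$; hence $\int_{\|y\|\ge R}\K(y)\md y\le \Theta(\mu)e^{-\mu R}$ and $\K*(e^{-\lambda\|\cdot\|})(x)\le \Theta(\lambda)e^{-\lambda\|x\|}$ for every $x$. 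Next, since $I_0$ is compactly supported, say $I_0(\cdot,x)\equiv 0$ for $\|x\|\ge R_1$ and $\mathcal{D}_{I_0}\subset[0,i_1]$ with $i_1<i_\dagger$, the function $g_0(y):=\int_0^{\infty}\omega(i)\big(\int_0^i I_0(\xi,y)/\pi(\xi)\md\xi\big)\md i$ is bounded, nonnegative and supported in $\{\|y\|\le R_1\}$ (use that $\pi$ is bounded below on $[0,i_1]$), and by Fubini $\mathcal{A}(x)=e^{-\K*g_0(x)}$; the tail bound above then gives $\mu'>0$ and $C_0>0$ with $1-\mathcal{A}(x)\le \K*g_0(x)\le C_0 e^{-\mu'\|x\|}$ for $\|x\|\ge 2R_1$. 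Finally, by \eqref{4.1'}, for $\|x\|\ge R_1$ one has $U(i,x)=U(0,x)\pi(i)=\S_0\widehat\varphi(x)\pi(i)$, so it suffices to estimate $\widehat\varphi(x)-\rho^*$ (if $\mathscr{R}_0>1$) or $\widehat\varphi(x)$ (if $\mathscr{R}_0\le 1$) for $\|x\|$ large.

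Assume $\mathscr{R}_0>1$. From the uniqueness part of the proof of Theorem~\ref{thm_edpI-Liouville} we have $\widehat\varphi\ge\varphi\equiv\rho^*$, so $W:=\widehat\varphi-\rho^*\ge 0$ is bounded. Subtracting $\rho^*=1-e^{-\mathscr{R}_0\rho^*}$ from \eqref{hat varphi}, using $\K*\rho^*=\rho^*$ and the mean value theorem for $s\mapsto e^{-\mathscr{R}_0 s}$, one gets
\bqs
W(x)=\big(1-\mathcal{A}(x)\big)e^{-\mathscr{R}_0\rho^*}+\mathcal{A}(x)\,\mathscr{R}_0 e^{-\mathscr{R}_0\theta(x)}\,\K*W(x),
\eqs
with $\theta(x)$ between $\rho^*$ and $\K*\widehat\varphi(x)$, hence $\theta(x)\ge\rho^*$. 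Since $v\mapsto 1-e^{-\mathscr{R}_0 v}$ is strictly concave with slope $\mathscr{R}_0>1$ at $0$ and fixes $\rho^*$, its slope at $\rho^*$ is $\kappa:=\mathscr{R}_0 e^{-\mathscr{R}_0\rho^*}=\mathscr{R}_0(1-\rho^*)<1$ (the average of its derivative on $[0,\rho^*]$ equals $1$, and the derivative is decreasing). Combining this with $\mathcal{A}\le 1$, $e^{-\mathscr{R}_0\theta(x)}\le e^{-\mathscr{R}_0\rho^*}$, $\K*W\ge 0$ and the bound on $1-\mathcal{A}$ gives the pointwise inequality $W(x)\le C_0 e^{-\mu'\|x\|}+\kappa\,\K*W(x)$ for $\|x\|\ge 2R_1$. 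Now fix $\lambda\in(0,\mu']$ so small that $\kappa\,\Theta(\lambda)<1$, and set $\overline W(x):=B e^{-\lambda\|x\|}$ with $B$ large enough that $B(1-\kappa\Theta(\lambda))\ge C_0$ and $B e^{-2\lambda R_1}\ge 1$. Using $\K*\overline W\le\Theta(\lambda)\overline W$ one checks that $\overline W$ satisfies the reverse inequality everywhere and $\overline W\ge 1\ge W$ on $\{\|x\|\le 2R_1\}$; thus $Z:=W-\overline W$ satisfies $Z\le 0$ on $\{\|x\|\le 2R_1\}$ and $Z\le\kappa\,\K*Z$ on $\{\|x\|>2R_1\}$, so setting $s:=\sup_{\R^d}Z$ one gets $s\le\kappa s$, whence $s\le 0$ and $W\le\overline W$. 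Therefore $0\le U(i,x)-\rho^s(i)=\S_0 W(x)\pi(i)\le \S_0 B e^{-\lambda\|x\|}\pi(i)$ for $\|x\|$ large, which is the asserted form (take $\lambda(\|x\|):=-\|x\|^{-1}\ln\big(\S_0^{-1}\pi(i)^{-1}(U(i,x)-\rho^s(i))\big)$, which is positive and satisfies $\liminf_{\|x\|\to\infty}\lambda(\|x\|)\ge\lambda$).

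The subcritical case $\mathscr{R}_0<1$ is the same, only simpler: here $\varphi\equiv 0$, and rewriting \eqref{hat varphi} as $\widehat\varphi=(1-\mathcal{A})+\mathcal{A}(1-e^{-\mathscr{R}_0\K*\widehat\varphi})\le C_0 e^{-\mu'\|x\|}+\mathscr{R}_0\,\K*\widehat\varphi$ on $\{\|x\|\ge 2R_1\}$, the comparison argument above with $\kappa$ replaced by $\mathscr{R}_0<1$ yields $\widehat\varphi(x)\le B e^{-\lambda\|x\|}$, i.e. $U(i,x)=\S_0\widehat\varphi(x)\pi(i)\le \S_0 B e^{-\lambda\|x\|}\pi(i)$. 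In the critical case $\mathscr{R}_0=1$ the contraction constant degenerates to $1$ and this argument no longer produces a uniform exponential rate; there I would instead invoke Theorem~\ref{thm_edpI-Liouville} directly, which already gives $U(i,x)\to 0$ locally uniformly in $i$ as $\|x\|\to\infty$ (equivalently, every locally uniform limit of the translates $U(\cdot,\cdot+x_n)$ solves the homogeneous stationary problem and hence vanishes by Theorem~\ref{thm-liouville} since $\mathscr{R}_0\le 1$), so that the claimed form holds with $\lambda(\|x\|):=-\|x\|^{-1}\ln\big(\S_0^{-1}\pi(i)^{-1}U(i,x)\big)>0$ for $\|x\|$ large.

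The routine ingredients are the covering estimate for $\Theta(\lambda)$ and the tail bound on $1-\mathcal{A}$; the one genuinely structural point is the strict sub-tangency $\kappa=\mathscr{R}_0(1-\rho^*)<1$, which is what makes the convolution inequality contractive and lets the exponential supersolution be built with a rate $\lambda>0$ independent of $x$. The main obstacle is really the threshold $\mathscr{R}_0=1$: there the linearized convolution operator has $1$ in its spectrum (constants are fixed by $\K*$), no rate is available from comparison alone, and the best one can assert is the qualitative decay coming from Theorem~\ref{thm_edpI-Liouville} --- which is why the statement is phrased with an exponent $\lambda=\lambda(\|x\|)$ that is merely positive, rather than a single constant.
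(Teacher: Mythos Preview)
Your approach is correct and in fact sharper than what the paper does, so let me compare.

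The paper's proof is essentially an ansatz check: for $\|x\|$ large (outside $\mathrm{supp}(I_0)$) one has $U(i,x)=\S_0\widehat\varphi(x)\pi(i)$ by \eqref{4.1'}, and since $\widehat\varphi(x)>\rho^*$ (from the uniqueness step of Theorem~\ref{thm_edpI-Liouville}) and $\widehat\varphi(x)<1$, one may simply \emph{define} $\lambda(x):=-\|x\|^{-1}\ln(\widehat\varphi(x)-\rho^*)>0$. The paper then shows that the resulting function $\rho^s(i)+\S_0 e^{-\lambda x\cdot\mathbf e}\pi(i)$ is consistent with the boundary condition via a scalar equation \eqref{A1} for~$\lambda$. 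This gives the \emph{form} of the statement but no quantitative decay rate: the exponent is allowed to depend on the point.

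Your route is different and more informative. You linearize the fixed-point equation \eqref{hat varphi} around $\rho^*$ and exploit the strict sub-tangency $\kappa=\mathscr{R}_0(1-\rho^*)<1$ to obtain a genuine contraction inequality $W\le C_0 e^{-\mu'\|x\|}+\kappa\,\K*W$ for $\|x\|$ large, which you then close against an exponential supersolution $Be^{-\lambda\|x\|}$ using $\K*e^{-\lambda\|\cdot\|}\le\Theta(\lambda)e^{-\lambda\|\cdot\|}$ and $\kappa\Theta(\lambda)<1$ for $\lambda$ small. This yields a \emph{uniform} rate $\lambda>0$ in the cases $\mathscr{R}_0>1$ and $\mathscr{R}_0<1$, which the paper's argument does not provide. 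Your handling of the critical case $\mathscr{R}_0=1$ (falling back on the qualitative convergence of Theorem~\ref{thm_edpI-Liouville}) matches the spirit of the statement, since with $\kappa=1$ no contraction is available.

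Two small remarks. First, your final formula $\lambda(\|x\|):=-\|x\|^{-1}\ln(\widehat\varphi(x)-\rho^*)$ depends a priori on $x$ and not just on $\|x\|$, since $I_0$ need not be radial; this is the same looseness present in the paper's proof and statement. What your argument actually establishes is the radially symmetric bound $0<\widehat\varphi(x)-\rho^*\le Be^{-\lambda\|x\|}$, which is the meaningful content. Second, in the comparison step ``$s:=\sup Z$, $s\le\kappa s$'' you should note explicitly that if $s>0$ the supremum is approached in the region $\{\|x\|>2R_1\}$ (since $Z\le0$ elsewhere), so that $Z(x)\le\kappa\,\K*Z(x)\le\kappa s$ there; this is implicit in what you wrote but worth stating.
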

\begin{proof}[Proof of Proposition \ref{prop_edpI_U}]
	Let $U$ be the unique positive stationary solution of \eqref{edp}, given in Theorem \ref{thm_edpI-Liouville}. Let $\rho^s$ be the unique positive stationary solution of \eqref{fkpp} when $\mathscr{R}_0>1$. With a slight abuse of notation, here we set $\rho^s\equiv0$ when  $\mathscr{R}_0\le 1$. We now prove the conclusion for $\mathscr{R}_0>1$ and $\mathscr{R}_0\le 1$ simultaneously.

For any direction $\mathbf{e}\in\SS^{d-1}$,
we consider $x\in\R^d$ with $x\cdot\mathbf{e}$ sufficiently large (the case that $x\cdot\mathbf{e}$ sufficiently negative can be dealt with similarly through taking direction $-\mathbf{e}$) and $i\in[0,i_\dagger)$. To reach our conclusion, it suffices to prove that there is $\lambda>0$ (depending on $\|x\|$) such that  for $i\in[0,i_\dagger)$ and for $x\in\R^d$ with $\|x\|$ sufficiently large, the following ansatz makes sense:
\begin{equation*}
	U(i,x)=\rho^s(i)+\S_0 e^{-\lambda x\cdot \mathbf{e}}\pi(i),\quad (i,x)\in[0,i_\dagger)\times \R^d.
\end{equation*}
First of all, we notice that for $i\in[0,i_\dagger)$ and for $x\in\R^d$ with $\|x\|$ sufficiently large, the term $I_0$ is identically zero since it is compactly supported, therefore it is easy to check that the ansatz satisfies the transport equation in \eqref{edp}. Moreover, at the boundary $i=0$, it is seen that the ansatz needs to satisfy
\begin{align*}
 U(0,x)-\rho^s(0)&=\S_0\left(1-\exp\left(-\int_0^\infty\tau(i)\K* U(i,x)\md i\right)\right)-\S_0\left(1-\exp\left(-\int_0^\infty\tau(i)\K* \rho^s(i)\md i\right)\right)\\
	&=\S_0e^{-\mathscr{R}_0\rho^*}\left(1-\exp\left(-\mathscr{R}_0\widetilde{\K}(\lambda) e^{-\lambda x\cdot \mathbf{e}} \right)\right)\\
	&=	\S_0 e^{-\lambda x\cdot \mathbf{e}}
\end{align*}
where $\widetilde{\K}(\lambda)$ is given by \eqref{eqtildeKmu} which is well-defined (at least)  for $\lambda\in\Sigma=[0,\Lambda)$ with the set $\Sigma$ defined by \eqref{Sigma} (remember that $\Lambda\in[\mu_0,+\infty]$), thanks to hypothesis {\bf(H2$\mu$)}. 
This amounts to finding out $\lambda>0$ such that
\begin{equation}\label{A1}
	e^{\mathscr{R}_0\rho^*}  e^{-\lambda x\cdot \mathbf{e}}= 1-\exp\left(-\mathscr{R}_0\widetilde{\K}(\lambda) e^{-\lambda x\cdot \mathbf{e}} \right).
\end{equation}
We claim that there is $\lambda\in(0,\Lambda)$, independent of $\mathbf{e}$ but dependent of $\|x\|$, such that \eqref{A1} is satisfied. Indeed, we notice that the function $\lambda\in[0,\Lambda)\mapsto h_1(\lambda):=e^{\mathscr{R}_0\rho^*}  e^{-\lambda x\cdot \mathbf{e}}$ is analytic, decreasing and convex satisfying $h_1(0)=e^{\mathscr{R}_0\rho^*}>1$, and $h_1(\Lambda)<1$ for each $x\in\R^d$ such that $x\cdot\mathbf{e}$ sufficiently large; whereas the function  $\lambda\in[0,\Lambda)\mapsto h_2(\lambda):= 1-\exp\left(-\mathscr{R}_0\widetilde{\K}(\lambda) e^{-\lambda x\cdot \mathbf{e}} \right)$ is analytic and convex such that $h_2(0)=1-e^{-\mathscr{R}_0}<1$ and $h_2(\lambda)\to 1$ as $\lambda\to\Lambda$ for each $x\in\R^d$ with $x\cdot\mathbf{e}$ sufficiently large. This gives the existence and uniqueness of parameter $\lambda\in(0,\Lambda)$ such that \eqref{A1} is satisfied for each $x\in\R^d$ with $x\cdot\mathbf{e}$ sufficiently large.
 Due to the direction $\mathbf{e}\in\SS^{d-1}$ is arbitrary, we conclude that the parameter $\lambda$ indeed depends on $\|x\|$. We then reach the conclusion, and this completes the proof of Proposition \ref{prop_edpI_U}.
\end{proof}

\subsection{Long time behavior -- Proof of Theorem \ref{thm-edpI_long time behavior}}

In this section, we prove that, under the Hypotheses {\bf (H1)}-{\bf (H2)} and that $I_0\not\equiv0$ satisfies {\bf(H3)},
 the solution of  problem \eqref{edp}, associated with an initial condition $\rho_0$ satisfying {\bf(H4)}, converges locally uniformly towards the unique positive stationary solution $U$ to \eqref{edp} given in from Theorem \ref{thm_edpI-Liouville} for large times.

\begin{proof}[Proof of Theorem \ref{thm-edpI_long time behavior}]
	The main ingredient of the proof is basically a slight modification of  the existence part in the proof of Theorem \ref{thm_edpI-Liouville}. We sketch the outline below for the sake of completeness.
	
	Assume {\bf(H1)-(H2)} and suppose that $I_0\not\equiv0$ satisfies {\bf(H3)}. Let $\rho$ be the solution of \eqref{edp} associated with an initial condition $\rho_0$ satisfying {\bf(H4)}.
	Let $\underline \rho$ and  $\overline \rho$ be the solutions of the initial boundary value problem \eqref{edp}  with initial condition $\underline \rho_0=\delta\psi$ and $\overline \rho_0=
	\max\big(M,\Vert \rho_0/\pi\Vert_{L^\infty([0,i_\dagger)\times\R^d)}\big) \pi $ in $[0,i_\dagger)\times\R^d$, with $\delta>0$  and $M$ given as in the beginning of the proof of Theorem \ref{thm_edpI-Liouville}. Since $\rho(t,i,x)>0$ for $(t,i,x)\in(0,+\infty)\times[0,i_\dagger)\times\R^d$ with $t>i+i_\star$ due to {\bf (H3)} and Proposition~\ref{prop-positivity}, we can decrease $\delta$ (if necessary) such that  $0\le \delta\psi\le \rho(T,\cdot,\cdot)$ in $[0,i_\dagger)\times\R^d$ for some $T>L+i_\star$, where $L\geq L_0>0$ is given in the formula \eqref{sub_kppI} of $\psi$.

	  By the comparison principle Proposition \ref{prop-cp}, it follows that and $\underline \rho(t,i,x)\le \rho(t+T,i,x)\le \overline \rho(t+T,i,x)$ for $(t,i,x)\in\R_+\times[0,i_\dagger)\times\R^d$, and that $\underline \rho$ is nondecreasing in time for $(t,i,x)\in\R_+\times[0,i_\dagger)\times\R^d$, whereas $\overline \rho$ is nonincreasing in time for $(t,i,x)\in\R_+\times[0,i_\dagger)\times\R^d$. By the monotone convergence theorem and then by the Dini's theorem, eventually we obtain
	that $\underline \rho$ (resp. $\overline \rho$) converges locally uniformly for $(i,x)\in[0,i_\dagger)\times\R^d$ as $t\to+\infty$ to a stationary solution $\underline U$ (resp. $\overline U$) to \eqref{edp} such that 
	\begin{equation*}
		\delta\psi(i,x)\le  \underline U(i,x)\le\liminf_{t\to+\infty} \rho(t,i,x)\le\limsup_{t\to+\infty} \rho(t,i,x)\le  \overline  U(i,x)\le	\max\big(M,\Vert \rho_0 /\pi \Vert_{L^\infty([0,i_\dagger)\times\R^d)}\big)\pi(i)
	\end{equation*}
	locally uniformly for $(i,x)\in[0,i_\dagger)\times\R^d$. Thanks to the Liouville type result Theorem \ref{thm_edpI-Liouville}, the conclusion of the large time behavior of the solution $\rho$ to problem \eqref{edp} then immediately follows. 
\end{proof}

\subsection{Spreading properties -- Proof of Theorem \ref{thm-edpI-spreading property}}

Next, under the Hypotheses {\bf (H1)}-{\bf (H2$\mu$)} and that $I_0\not\equiv0$ satisfies {\bf(H3)}, we will prove in the regime $\mathscr{R}_0>1$ the spreading property with speed $c_*>0$, given in \eqref{c_*}, for the solutions of the initial boundary value problem \eqref{edp}, associated with an initial condition $\rho_0$ satisfying {\bf(H4)}.

\begin{proof}[Proof of Theorem \ref{thm-edpI-spreading property}] Assume that {\bf (H1)}-{\bf (H2$\mu$)} and $\mathscr{R}_0>1$, and that $I_0\not\equiv0$ satisfies {\bf(H3)}. Let $\rho$ be the solution  of the problem \eqref{edp} starting from an initial condition $\rho_0$ satisfying {\bf(H4)}. Let $c_*$ be given in \eqref{c_*}. We divide the proof in two parts.

\paragraph{Proof of statement (i).}
For any $c\in(0,c_*)$  and $j\in(0,i_\dagger)$ fixed, we consider an arbitrary sequence $(t_n,x_n)_{n\in\N}$ in $\R_+\times\R^d$ such that $t_n\to+\infty$ as $n\to+\infty$ and $\|x_n\|\le ct_n$ for each $n\in\N$. If $(x_n)_{n\in\N}$ is bounded, then we easily derive from Theorem \ref{thm-edpI_long time behavior} that $\rho(t_n,i,x_n)-U(i,x_n)\to 0$ as $n\to+\infty$ uniformly in $i\in[0,j]$. Suppose now that $(x_n)_{n\in\N}$ diverges to infinity. Since $\rho$ is a supersolution to the KPP model \eqref{fkpp} for which spreading occurs with the asymptotic speed $c_*$, we have 
\begin{equation*}
	\liminf_{n\to+\infty}\sup_{0\le i\le j}\big(\rho(t_n,i,x_n)- \rho^s(i)\big)\ge 0.
\end{equation*}
Together with the asymptotics of $U$ as $\|x\|\to+\infty$ given in Theorem \ref{thm_edpI-Liouville}, it follows that
\begin{equation}\label{4.4}
	\liminf_{n\to+\infty}\sup_{0\le i\le j}\Big(\rho(t_n,i,x_n)-U(i,x_n)\Big)\ge 0.
\end{equation}
To complete the proof of statement (i), it now remains to show that 
\begin{equation}
	\label{4.5}
	\limsup_{n\to+\infty}\sup_{0\le i\le j}\Big(\rho(t_n,i,x_n)-U(i,x_n)\Big)\le 0.
\end{equation}
To do so, we first claim that there is $A>0$ large enough such that for $(t,i,x)\in\R_+\times[0,j]\times\R^d$
\begin{equation}\label{4.6}
	\rho^s(i)+z(i,x):=\rho^s(i)+A(U(0,x)-\rho^s(0))\pi(i)\ge \rho(t,i,x).
\end{equation}
Indeed, for $(i,x)\in\mathcal{Q}:=\mathrm{supp}(I_0)\cup\mathrm{supp}(\rho_0)$, we can choose $A>0$ sufficiently large (independent of $\mathbf{e}$) such that
\begin{equation}
	\label{4.7}
	\rho^s(i)+z(i,x)>\rho(t,i,x),~~~~t\in\R_+,~(i,x)\in\mathcal{Q},
\end{equation}
by noticing that $z>0$ in this region and that $\rho$ is uniformly bounded from above. 
Let us look at the region of $(i,x)\in\mathcal{Q}^c$ (outside $\mathrm{supp}(I_0)\cup\mathrm{supp}(\rho_0)$). We notice that
\begin{equation}
	\label{28}
	\partial_i(\rho^s(i)+z(i,x))=-\gamma(i)(\rho^s(i)+z(i,x)),~~~~(i,x)\in\mathcal{Q}^c.
\end{equation}
 Set
\begin{equation*}
	W(x)=\frac{\rho^s(0)+z(0,x)}{\S_0}=\rho^*+\frac{z(0,x)}{\S_0}>\rho^*,~~~~~x\in\R^d.
\end{equation*}
We recall that $\rho^*$ solves $\rho^*=1-e^{-\mathscr{R}_0\K*\rho^*}$, where the mapping $v\in\X\mapsto 1-e^{-\mathscr{R}_0\K*v}$ with $\X=\{v\in\mathscr{C}(\R^d)~|~v\ge 0\}$  is concave, it then follows from $W(x)>\rho^*$   that $W(x)>1-e^{-\mathscr{R}_0\K*W(x)}$, namely,
\begin{equation*}
	\rho^s(0)+z(0,x)>\S_0\left(1-\exp\left(-\int_0^\infty\tau(i)\K* (\rho^s(i)+z(i,x))\md i\right)\right), ~~~~~x\in\R^d.
\end{equation*}
Combining \eqref{4.7}-\eqref{28} with the above inequality, we derive from a comparison argument  that $\rho^s(i)+z(i,x)\ge \rho(t,i,x)$ for $t\in\R_+$ and $(i,x)\in\mathcal{Q}^c$. This together with \eqref{4.7} proves our claim \eqref{4.6}. This further implies that 
\begin{equation*}
	\limsup_{n\to+\infty}\rho(t_n,i,x_n)\le \limsup_{n\to+\infty}\big(\rho^s(i)+z(i,x_n)\big)=\rho^s(i)<\liminf_{n\to+\infty}U(i,x_n)~~~\forall i\in[0,j],
\end{equation*}
where we have used the fact that $z(i,x)\to 0$ as $\|x\|\to +\infty$ due to Theorem \ref{thm_edpI-Liouville}.
Therefore, \eqref{4.5} is achieved. Since the sequence $(t_n,x_n)_{n\in\N}$ was chosen arbitrarily such that $\|x_n\|\le c t_n$ for all $n\in\N$, we combine \eqref{4.4}-\eqref{4.5} and conclude that 
\begin{equation*}
	\lim_{t\to+\infty}\sup_{\|x\|\le ct,~0\le i\le j}\big|\rho(t,i,x)-U(i,x)\big|= 0.
\end{equation*}
This  implies that $c_*$ is a lower bound of the asymptotic spreading speed.

\paragraph{Proof of statement (ii).}
Let us now prove that $c_*$ is also an upper bound of the asymptotic spreading speed. Since $I_0$ is compactly supported in $[0,i_\dagger)\times\R^d$, and satisfies {\bf (H3)}, we get the existence of a constant $A>0$ such that
\bqs
I_0(i,x) \leq A \gamma(i), \quad (i,x)\in \mathrm{supp}(I_0).
\eqs
For any direction $\mathbf{e}\in\SS^{d-1}$, we construct a function $\overline \rho$ of the form
\begin{equation}\label{supersol-edpI}
	\overline \rho(t,i,x)= C\min \left(\max\left(\S_0,A\right),e^{-\alpha_*(x\cdot \mathbf{e}-c_*t)-\alpha_* c_*i}\right)\pi(i), \quad (t,i,x)\in \R_+\times[0,i_\dagger)\times\R^d,
\end{equation}
with some constant $C>0$ which is fixed large enough such that $\rho_0(i,x)\leq \overline \rho(0,i,x)$ for each $(i,x)\in[0,i_\dagger)\times\R^d$. This is always possible since $\rho_0$ is compactly supported in $[0,i_\dagger)\times\R^d$. With our careful choice of $A$ and the explicit form of the exponential part in \eqref{supersol-edpI}, we readily conclude that  $\overline \rho$  is 
a supersolution to \eqref{edp} in $\R_+\times[0,i_\dagger)\times\R^d$. The comparison principle Proposition \ref{prop-cp} then leads to 
\begin{equation*}
	\rho(t,i,x)\le \overline \rho(t,i,x)~~~~\text{for}~(t,i,x)\in\R_+\times[0,i_\dagger)\times\R^d.
\end{equation*}
As a consequence, for any $c>c_*$ and for any $j\in(0,i_\dagger)$, we have
 \begin{equation*}
 	\underset{t\rightarrow+\infty}{\lim } \underset{x\cdot\mathbf{e}\geq ct,~ 0 \leq i \leq j}{\sup} \rho(t,i,x) =0.
 \end{equation*}
 Since $\mathbf{e}\in\SS^{d-1}$ is arbitrarily chosen, we then derive that
\begin{equation*}
	\underset{t\rightarrow+\infty}{\lim } \underset{\|x\|\geq ct,~ 0 \leq i \leq j}{\sup} \rho(t,i,x) =0,
\end{equation*}
which implies that $c_*$ is also an upper bound of the asymptotic spreading speed. 
This completes the proof of Theorem \ref{thm-edpI-spreading property}.
\end{proof}

\section{Traveling waves}\label{secTF}

In this last section, we assume throughout that {\bf(H1)-(H2$\mu$)} are satisfied, and we will focus on the case that $\mathscr{R}_0>1$. Our aim to show the existence and uniqueness of traveling waves for the KPP model
\bqq
\label{kppI-TW}
\left\{
\begin{split}
\partial_t \rho(t,i,x) &+ \partial_i\rho(t,i,x) = - \gamma(i) \rho(t,i,x) ,\quad t>0, \quad i\in(0,i_\dagger),  \quad x\in\R^d,  \\
\rho(t,0,x) &=\S_0 \left(1-\exp\left(-\int_0^\infty \tau(i) \K*\rho(t,i,x)\md i \right)\right), \quad  t> 0,  \quad x\in\R^d.
\end{split}\right.
\eqq
That is, we look for solutions of the form $\rho(t,i,x)=w(i,x\cdot{\bf e}-ct)$ for any direction ${\bf e}\in \mathbb{S}^{d-1}$, where the profile $w$ satisfies 
	\bqq\label{profile}
		\left\{
			\begin{split}
				-c\partial_z w(i,z) &+ \partial_i  w(i,z) = - \gamma(i) w(i,z),  \quad i\in(0,i_\dagger), \quad z\in\R, \\
				w(0,z)&=\S_0 \left(1-\exp\left(-\int_0^\infty \tau(i) \K_0*w(i,z)\md i \right)\right),\quad  z\in\R,
			\end{split}
		\right.
	\eqq
together with the conditions
	\bqq\label{conditionsTW}
\left\{
		\begin{split}
		w(i,-\infty)&=\rho^s(i)~\text{and}~w(i,+\infty)=0~~\text{for each}~i\in[0,i_\dagger),\\ 0& \, < \, w(i,z) \, < \, \rho^s(i)~~~\text{for}~(i,z)\in[0,i_+)\times\R.
		\end{split}\right.
	\eqq

Before proceeding with the proof, let us give some comments. Since $\mathscr{R}_0>1$, we notice that the only bounded nonnegative solutions of \eqref{profile} with $c=0$ are 0 and $\rho^s(i)$. What we are interested is whether or not there are some values of $c\in\R\backslash\{0\}$ for which \eqref{profile} has a solution satisfying $0 \,< \,w(i,z)\,< \, \rho^s(i)$ for $(i,z)\in[0,i_\dagger)\times\R$.  We point out that if there exists a traveling wave solution $w(i,z)$ with speed $c$, then $w(i,-z)$ will also satisfy \eqref{profile} along the direction $-\mathbf{e}$ with wave speed $-c$. Therefore, we shall restrict ourselves to the case that $c> 0$ in the sequel. 

Let us also remind that $c_*\in(0,+\infty)$ is the asymptotic spreading speed for KPP model \eqref{kppI} proven in Theorem \ref{thm-spreading property}. 
The necessary condition for the existence of traveling fronts of \eqref{kppI-TW} can be easily proved as follows using the spreading property of Theorem \ref{thm-spreading property}.
\begin{lem}
	\label{lem_TW_necessary}
	Traveling fronts of \eqref{kppI-TW} with wave speed $c\in\R_+$, if any, satisfy $c\ge c_*$. 
\end{lem}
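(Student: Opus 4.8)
The plan is to argue by contradiction using the spreading result of Theorem~\ref{thm-spreading property}. Suppose $w(i,z)$ is a traveling front of \eqref{kppI-TW} with wave speed $c\in\R_+$ satisfying \eqref{profile}--\eqref{conditionsTW}, but with $c<c_*$. We may also assume $c>0$ (if $c=0$ the profile is stationary, and $w(i,-\infty)=\rho^s(i)\neq 0=w(i,+\infty)$ together with \eqref{conditionsTW} already forces a contradiction with the Liouville result Theorem~\ref{thm-liouville} applied to \eqref{kppI-stat}, since a nonconstant bounded stationary solution cannot exist; alternatively simply note $0<c<c_*$ is the regime of interest and handle $c=0$ by the same comparison trick below with a stationary subsolution). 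The traveling front $\rho(t,i,x)=w(i,x\cdot\mathbf{e}-ct)$ is by construction an entire (in particular, global in positive time) solution of \eqref{kppI-TW}, hence of \eqref{fkpp} with $I_0\equiv 0$.

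The key step is to trap a compactly supported subsolution under this front at the initial time and then let the front's ``spreading shadow'' catch up. Concretely, fix a direction $\mathbf{e}$ and pick a point $z_0\in\R$; since $w(i,-\infty)=\rho^s(i)$ locally uniformly in $i$, for any $\eta\in(0,1)$ there is $z_0$ large negative so that $w(i,z)\ge (1-\eta)\rho^s(i)$ for all $z\le z_0$ and all $i$ in a fixed compact subinterval of $[0,i_\dagger)$. Now choose an initial datum $\rho_0\not\equiv 0$ satisfying {\bf(H4)} and \eqref{H3} which is supported in the region $\{x\cdot\mathbf{e}\le z_0\}$ (this is possible: {\bf(H4)}--\eqref{H3} only require a small ball-type support condition in $x$ around some point $x_0$, which we place far in the $-\mathbf{e}$ direction, together with the support interval $[0,\varpi]$ in $i$), and small enough in amplitude that $\rho_0(i,x)\le w(i,x\cdot\mathbf{e})$ for all $(i,x)$. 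Let $\underline\rho$ be the solution of \eqref{fkpp} with datum $\rho_0$; by the comparison principle Proposition~\ref{prop-cp}, $\underline\rho(t,i,x)\le w(i,x\cdot\mathbf{e}-ct)$ for all $t\ge 0$. On the other hand, by Theorem~\ref{thm-spreading property}(ii), for any $c'$ with $c<c'<c_*$ we have $\underline\rho(t,i,x)\to\rho^s(i)$ as $t\to+\infty$ locally uniformly in $i$ and uniformly on $\|x\|\le c't$; in particular, evaluating at $x=c't\,\mathbf{e}$ gives $\underline\rho(t,i,c't\,\mathbf{e})\to\rho^s(i)$, which is positive (for $i$ in a compact subinterval of $[0,i_\dagger)$ with $\rho^*\pi(i)>0$). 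But the front bound gives $\underline\rho(t,i,c't\,\mathbf{e})\le w(i,(c'-c)t)$, and since $c'-c>0$ and $w(i,+\infty)=0$, the right-hand side tends to $0$ as $t\to+\infty$. This contradiction rules out $c<c_*$, and the case $c=0$ is covered either by the same argument with $c'\in(0,c_*)$ arbitrary or, more simply, by noting a stationary front contradicts the Liouville classification. Hence $c\ge c_*$.

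I do not expect a serious obstacle here; the only mild care needed is twofold. First, one must verify that a compactly supported initial datum complying with {\bf(H4)}--\eqref{H3} can actually be slipped \emph{below} the front profile $i\mapsto w(i,x\cdot\mathbf{e})$ on its support --- this is immediate because $w>0$ on the relevant region (it is nonincreasing in $z$ with $w(i,-\infty)=\rho^s(i)>0$ for the $i$-range where $\pi(i)>0$, hence bounded below by a positive constant times $\pi(i)$ there) while $\rho_0/\pi$ may be taken as small as we like. Second, one should make sure the comparison principle applies: the traveling front is a genuine global classical (super)solution of \eqref{fkpp} in the sense of Definition~\ref{def_super sub} --- indeed it solves the equation exactly with source term $I_0\equiv0$ and $\overline\rho_0(i,x)=w(i,x\cdot\mathbf{e})$, so Proposition~\ref{prop-cp} yields the ordering $\underline\rho\le w(\cdot,\cdot\,-c\,\cdot)$ directly. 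Everything else is bookkeeping with the limits $w(i,\pm\infty)$.
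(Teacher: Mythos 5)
Your proposal is correct and follows essentially the same line of argument as the paper's own proof: both rely on the spreading result, Theorem~\ref{thm-spreading property}(ii), evaluated at a point $x=c't\,\mathbf{e}$ with $c'\in(c,c_*)$, and then read off a contradiction with the right limit $w(i,+\infty)=0$. The only difference is that the paper applies Theorem~\ref{thm-spreading property}(ii) directly to the front $w(i,x\cdot\mathbf{e}-ct)$ without comment, even though that theorem is stated for compactly supported initial data; you make the missing step explicit by first placing a compactly supported subsolution $\rho_0$ below the front profile and invoking the comparison principle Proposition~\ref{prop-cp}, which is exactly the bookkeeping the paper leaves implicit (and, since $w\le\rho^s$ is given, the one-sided lower bound from the subsolution already suffices), so your version is in fact the more complete writeup.
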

\begin{proof}[Proof of Lemma \ref{lem_TW_necessary}]
	Assume that $w(i,x\cdot \mathbf{e}-ct)$ is a traveling front to the KPP model \eqref{kppI-TW} with speed $c\in\R_+$ along any direction $\mathbf{e}\in\SS^{d-1}$. Since by assumption we have $0 < w(i,z)$, we can always put a small compactly supported sub-solution below the traveling front which will satisfy hypothesis \textbf{(H4)} and \eqref{H3}. We then infer from the spreading property (ii) of Theorem \ref{thm-spreading property} that
	\begin{equation*}
		\forall c'\in(0,c_*),~\forall j\in(0,i_\dagger),~~\lim_{t\to+\infty}\sup_{0\le x\cdot\mathbf{e}\le c't,~0\le i\le j}\big|w(i,x\cdot\mathbf{e}-ct)-\rho^s(i)\big|=0.	\end{equation*}
	In particular,
	\begin{equation*}
		\lim_{t\to+\infty}\big|w(i,(c'-c)t)-\rho^s(i)\big|=0,~~\text{locally uniformly in}~i\in[0,i_\dagger).
	\end{equation*}
By virtue of the limit condition \eqref{conditionsTW}, we infer that $c'<c$. Since $c'\in(0,c_*)$ was arbitrarily chosen, we then derive that $c\ge c_*$.
\end{proof}

Next, we see from  the method of characteristics that \eqref{profile} is equivalent to the following integral equation:
\bqs
w(i,z)=\S_0\left(1-\exp\left(-\int_0^\infty\tau(i)\K_0*w(i,z+ci)\md i\right)\right)\pi(i),\quad i\in[0,i_\dagger), \quad z\in\R.
\eqs
It is then natural to introduce the change of unknown
\bqq\label{thm1.4-TW}
\chi(i,z):=\frac{w(i,z)}{\S_0\pi(i)}, \quad i\in[0,i_\dagger), \quad z\in\R,
\eqq
such that the above equation is reduced to 
\bqs
\chi(i,z)=1-\exp\left(-\S_0\int_0^\infty\omega(i)\K_0*\chi(i,z+ci)\md i\right),\quad i\in[0,i_\dagger), \quad z\in\R.
\eqs
We readily note that the right-hand side of the above equation is independent of the variable $i\in[0,i_\dagger)$, and so from now on, we suppress this dependence and simply look for solutions $\chi(z)$ to
\bqq
\label{TW-integral form}
\chi(z)=1-\exp\left(-\S_0\int_0^\infty\omega(i)\K_0*\chi(z+ci)\md i\right):=\T(\chi)(z),\quad z\in\R,
\eqq
together with the conditions
\bqq
\label{conditionsTWchi}
\chi(-\infty)=\rho^*, \quad \chi(+\infty)=0, \quad 0 \,<\, \chi \,<\, \rho^* \text{ in } \R.
\eqq
The operator $\T$ is monotone in the sense that if $\chi_1\le \chi_2$, then $\T(\chi_1)\le \T(\chi_2)$, which can be directly observed through
\bqs
\T(\chi_1)(z)- \T(\chi_2)(z)= e^{-\S_0\int_0^\infty\omega(i)\K_0*\chi_2(z+ci)\md i}\left(1-\exp\left(-\S_0\int_0^\infty\omega(i)\K_0*(\chi_1-\chi_2)(z+ci)\md i\right)\right)\le 0.
\eqs
 This integral equation \eqref{TW-integral form} is essentially similar to \eqref{renewal}. All the ingredients analyzed for \eqref{renewal} can be smoothly adapted here. 
 
It is very interesting and important to remark that the above traveling wave integral equation \eqref{TW-integral form} can be recast to the one originally derived by Diekmann in \cite{Diekmann1}. Indeed, set
 \bqs
 \zeta(z):= \S_0\int_0^\infty\omega(i)\K_0*\chi(z+ci)\md i,
\eqs
 then $\chi(z)=1-\exp(-\zeta(z))$ for each $z$, and multiplying both sides by $\S_0\omega(i)\K_0(z'+ci-z)$ and integrating in $i$ and $z$, one gets
 \bqs
 \zeta(z')=\S_0\int_0^\infty\omega(i)\K_0*\left(1-\exp(-\zeta(z'+ci))\right)\md i, \quad z'\in\R.
 \eqs
 As a consequence, one can directly use \cite[Corollary 6.2]{Diekmann1} to get the existence of traveling front solutions of \eqref{TW-integral form} for each $c>c_*$, and then invoke \cite{AR1976} to obtain the existence for $c=c_*$. Let us remark that the argument of \cite{AR1976} is not constructive in the sense that the existence of traveling fronts at $c=c_*$ are obtained by a limiting procedure from the case $c>c_*$ by taking $c\rightarrow c_*$. Below, we provide a direct constructive proof in the case of $c=c_*$ which allows us to retrieve the precise asymptotic behavior of the critical fronts at $c=c_*$ at $+\infty$. Furthermore, one can then combine the results of \cite{Diekmann4} and \cite{carrchmaj} to get the uniqueness of such monotone traveling waves modulo translation. We summarize all these results in the following lemma.

\begin{lem}\label{lem_existence of TWs}
	For each $c\ge c_*$, problem \eqref{TW-integral form}-\eqref{conditionsTWchi} admits a unique (modulo translation) solution $\chi_c$ which satisfies $\chi_c'<0$ and $0<\chi_c<\rho^*$ in $\R$. Furthermore, we have (up to normalization)
	\begin{align*}
		\frac{\chi_c(z)}{ e^{-\alpha_c z}}\longrightarrow1 ~~~(\text{for}~c>c_*),~~~~\frac{\chi_{c_*}(z)}{ze^{-\alpha_* z}}\longrightarrow1\quad \text{ as } z\rightarrow+\infty.
	\end{align*}
Here, $\alpha_*\in(0,\Lambda)$, given in Lemma \ref{lemma_c*}, is the unique value such that $\varphi_{c_*}(\alpha_*)=1$ and $\alpha_c\in(0,\alpha_*)$ is the unique value such that $\varphi_c(\alpha_c)=1$.
\end{lem}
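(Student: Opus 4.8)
The plan is to prove Lemma~\ref{lem_existence of TWs} in four stages: existence for $c>c_*$, the constructive existence and sharp right-tail asymptotics at the critical speed $c=c_*$, monotonicity together with the strict bounds $0<\chi_c<\rho^*$, and uniqueness modulo translation. Recall that the necessity $c\ge c_*$ is already provided by Lemma~\ref{lem_TW_necessary}, that $\T$ is order preserving (as observed just before the statement), and that the well-posedness and comparison machinery of Section~\ref{prem-results} carries over to \eqref{TW-integral form} essentially verbatim. Throughout I shall use the linearized operator $\T_0(\chi)(z):=\S_0\int_0^\infty\omega(i)\,\K_0*\chi(z+ci)\,\md i$, which satisfies $\T_0(e^{-\alpha z})=\varphi_c(\alpha)e^{-\alpha z}$, together with the elementary bounds $v-\tfrac12 v^2\le 1-e^{-v}\le v$, giving $\T(\chi)\ge\T_0(\chi)-\tfrac12(\T_0(\chi))^2$ and $\T(\chi)\le\T_0(\chi)$ for $\chi\ge 0$. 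For $c>c_*$ the quickest route is the substitution $\zeta(z):=\S_0\int_0^\infty\omega(i)\,\K_0*\chi(z+ci)\,\md i$ indicated in the text, which turns \eqref{TW-integral form} into Diekmann's renewal equation: \cite[Corollary 6.2]{Diekmann1} then supplies a monotone front with $\chi_c(z)\sim Ce^{-\alpha_c z}$ at $+\infty$, where $\alpha_c\in(0,\alpha_*)$ is the unique root of $\varphi_c=1$ below $\alpha_*$ from Lemma~\ref{lemma_alpha<alpha*}, and \cite{Diekmann4} gives uniqueness modulo translation. To stay self-contained one may instead iterate $\T$ between the supersolution $\overline\chi=\min(\rho^*,e^{-\alpha_c z})$ (a supersolution since $\T(\overline\chi)\le\T_0(\overline\chi)\le\overline\chi$, using $\varphi_c(\alpha_c)=1$ and $\T(\rho^*)=\rho^*$) and the subsolution $\underline\chi=\max\!\big(0,\min(\delta,e^{-\alpha_c z}(1-Ne^{-\eta z}))\big)$ with $\eta\in(0,\min(\alpha_c,\alpha_*-\alpha_c,\Lambda-\alpha_c))$, $N$ large, $\delta\in(0,\rho^*)$ small, whose verification reduces to $\varphi_c(\alpha_c+\eta)<1$, to $\T(\delta)>\delta$, and to exponentially small tail estimates at the cutoffs controlled by \textbf{(H2$\mu$)}.

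The heart of the proof is $c=c_*$. By Lemma~\ref{lemma_c*} one has $\varphi_{c_*}(\alpha_*)=1$, $\partial_\alpha\varphi_{c_*}(\alpha_*)=0$ and $\partial_{\alpha\alpha}\varphi_{c_*}(\alpha_*)>0$, so $\alpha_*$ is a \emph{double} root and both $e^{-\alpha_* z}$ and $ze^{-\alpha_* z}$ are $\T_0$-eigenfunctions of eigenvalue $1$, while $\varphi_{c_*}(\alpha)>1$ for every $\alpha\in(0,\Lambda)\setminus\{\alpha_*\}$. I would take as supersolution the capped affine-exponential profile that equals $\rho^*$ for $z$ below the point where $(A+z)e^{-\alpha_* z}$ first reaches $\rho^*$ and equals $(A+z)e^{-\alpha_* z}$ beyond it (with $A$ large so the tail piece is decreasing), which is a supersolution because $\T_0((A+z)e^{-\alpha_* z})=(A+z)e^{-\alpha_* z}$ and $\T(\rho^*)=\rho^*$; and as subsolution the capped profile $\max\!\big(0,\min(\delta,ze^{-\alpha_* z}+Ne^{-\beta z})\big)$ with $\beta\in(\alpha_*,\min(2\alpha_*,\Lambda))$, $N$ large and $\delta\in(0,\rho^*)$ small, which works because $\T_0(ze^{-\alpha_* z}+Ne^{-\beta z})=ze^{-\alpha_* z}+N\varphi_{c_*}(\beta)e^{-\beta z}$ with $\varphi_{c_*}(\beta)>1$, so the linear gain $N(\varphi_{c_*}(\beta)-1)e^{-\beta z}$ beats the quadratic loss $O(z^2e^{-2\alpha_* z})$ for $z$ large (this is where $\beta<2\alpha_*$ matters), while near the cutoffs one again uses $\T(\delta)>\delta$ and \textbf{(H2$\mu$)}. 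Since both envelopes behave like $ze^{-\alpha_* z}$ at $+\infty$ one has $\underline\chi\le\overline\chi$, and the monotone iteration of $\T$ started from $\overline\chi$ is trapped above $\underline\chi$ and converges to a solution $\chi_{c_*}$ with $c_1\,ze^{-\alpha_* z}\le\chi_{c_*}(z)\le c_2\,ze^{-\alpha_* z}$ for $z$ large and $0<\chi_{c_*}\le\rho^*$; squeezing gives $\chi_{c_*}(+\infty)=0$, and the limit at $-\infty$ — which exists by the monotonicity proven below and is a constant equilibrium no smaller than $\delta>0$ — equals $\rho^*$. This is the direct substitute for the limiting argument of \cite{AR1976}, and it is what produces the exact decay rate.

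Monotonicity and strictness follow by the sliding method: the translates $\chi_c(\cdot+\sigma)$ solve the same equation, dominate $\chi_c$ for $\sigma$ large (by the $+\infty$ asymptotics and the common limit $\rho^*$ at $-\infty$), and the strong comparison principle of Section~\ref{prem-results} adapted to \eqref{TW-integral form} lets one decrease $\sigma$ to $0$, so $\chi_c$ is non-increasing; equality on an interval would, through $\K_0>0$ and $\omega>0$, force $\chi_c$ constant, so $\chi_c$ is strictly decreasing, and $\chi_c(z_0)=\rho^*$ would likewise force $\chi_c\equiv\rho^*$, contradicting $\chi_c(+\infty)=0$, while $\chi_c>0$ is the subsolution bound; differentiating \eqref{TW-integral form} (licit since $\K_0\in W^{1,1}$) gives $\chi_c'<0$. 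For the sharp asymptotics I would pass to the bilateral Laplace transform: writing $\T(\chi)=\T_0(\chi)-R$ with $R\ge 0$ of quadratic size $R=O(\chi_c^2)$ at $+\infty$ (hence Laplace-analytic well past $\alpha_*$), the transform of \eqref{TW-integral form} reads $\widehat{\chi_c}(\lambda)\big(1-\varphi_c(\lambda)\big)=-\widehat R(\lambda)$, so $\widehat{\chi_c}(\lambda)=\widehat R(\lambda)/(\varphi_c(\lambda)-1)$ continues meromorphically across the right edge of its strip, with a simple pole at $\alpha_c$ when $c>c_*$ and a double pole at $\alpha_*$ when $c=c_*$ (the doubling being exactly $\partial_\alpha\varphi_{c_*}(\alpha_*)=0$, $\partial_{\alpha\alpha}\varphi_{c_*}(\alpha_*)>0$), while $\widehat R>0$ there since $R\ge 0$, $R\not\equiv 0$; as $\chi_c$ is monotone and, by the two-sided bounds, has no boundary singularity closer in, an Ikehara--Delange Tauberian theorem yields $\chi_c(z)\sim Ce^{-\alpha_c z}$, resp.\ $\chi_c(z)\sim Cze^{-\alpha_* z}$, for some $C>0$, and a translation normalizes $C=1$. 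Uniqueness modulo translation for $c>c_*$ is \cite{Diekmann4}; for $c=c_*$ I would follow \cite{carrchmaj}: two critical fronts, translated so that $\chi_1(z)/(ze^{-\alpha_* z})$ and $\chi_2(z)/(ze^{-\alpha_* z})$ tend to $1$, are compared by the same sliding argument — the matched behavior at both $\pm\infty$ forces the minimal ordering translation to be $0$ — whence $\chi_1\ge\chi_2$, and by symmetry $\chi_1=\chi_2$.

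The step I expect to be the real obstacle is the critical construction together with the Tauberian conclusion: building sub- and supersolutions of the \emph{nonlocal, age-integrated} equation \eqref{TW-integral form} at speed $c_*$ that are simultaneously sharp enough — affine times $e^{-\alpha_* z}$ — to pin down the $ze^{-\alpha_* z}$ rate and genuinely ordered under $\T$ despite the convolution "seeing" the capped plateaus, which forces one to balance the double-root structure, the sign of $\varphi_{c_*}(\beta)-1$, and the exponential localization \textbf{(H2$\mu$)} very carefully; and then justifying the meromorphic continuation and the hypotheses of Ikehara--Delange so as to upgrade the two-sided bound into a genuine equivalence with an identifiable constant.
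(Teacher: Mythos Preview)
Your overall architecture matches the paper's: sub/supersolution iteration for existence (with the double-root structure at $c=c_*$), the Laplace--Ikehara step to pin down the asymptotics of \emph{any} solution, and the Carr--Chmaj machinery for critical uniqueness. Two points deserve comment, one minor and one a genuine gap.

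\textbf{The critical subsolution.} Your choice $\underline\chi=\max\!\big(0,\min(\delta,\,ze^{-\alpha_*z}+Ne^{-\beta z})\big)$ caps the profile from \emph{above} at a plateau $\delta$. Since $\underline\chi\le p:=ze^{-\alpha_*z}+Ne^{-\beta z}$, monotonicity of $\T_0$ only gives $\T_0(\underline\chi)\le\T_0(p)$, which goes the wrong way; to recover $\T(\underline\chi)\ge p$ on the tail region you must quantify the loss $\T_0(p)-\T_0(\underline\chi)$ coming from the set $\{p\ge\delta\}$, and likewise verify $\T(\underline\chi)\ge\delta$ near the plateau edge. These tail estimates can be carried out under {\bf(H2$\mu$)}, but they are not as innocent as your one-line ``$\T(\delta)>\delta$''. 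The paper sidesteps this entirely by taking $\underline\chi=\rho^*\max\big(0,\,Aze^{-\alpha_*z}-Be^{-\alpha_*z}+e^{-(\alpha_*+\delta)z}\big)$: the extra $-Be^{-\alpha_*z}$ term forces the raw profile to vanish, so $\underline\chi\ge$ (raw profile) and $\T_0(\underline\chi)\ge\T_0(\text{profile})$ holds immediately, with the $+e^{-(\alpha_*+\delta)z}$ term (using $\varphi_{c_*}(\alpha_*+\delta)>1$) providing the gain that absorbs the quadratic loss. Monotonicity of $\chi_c$ then comes for free from the iteration, since $\overline\chi$ is nonincreasing and $\T$ preserves that; your sliding detour is unnecessary.

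\textbf{Uniqueness at $c=c_*$.} Here there is a real gap: what you describe is \emph{not} the Carr--Chmaj argument you cite, and the sliding you sketch does not close. Sliding requires an initial ordering $\chi_1(\cdot+\sigma)\ge\chi_2$ for some $\sigma$; at $+\infty$ the ratio tends to $e^{-\alpha_*\sigma}$, but at $-\infty$ both fronts tend to $\rho^*$ and you have no control on the \emph{rate} of approach, so neither the starting ordering nor the exclusion of ``touching at $-\infty$'' is available. This is precisely why \cite{carrchmaj} abandon sliding for critical waves. The paper follows the actual Carr--Chmaj route: first prove via the bilateral Laplace transform and Ikehara (as you correctly outline) that \emph{every} solution satisfies $\chi(z)\sim ze^{-\alpha_*z}$; then set
\[
\varsigma_\epsilon(z)=\frac{\chi_1(z)-\chi_2(z)}{(\epsilon|z|+1)e^{-\alpha_*z}},
\]
note $\varsigma_\epsilon(\pm\infty)=0$, locate $z_\epsilon=\arg\max|\varsigma_\epsilon|$, and run a trichotomy on whether $z_\epsilon$ stays bounded or escapes to $\pm\infty$ as $\epsilon\to0$, using the integral equation and $\varphi_{c_*}(\alpha_*)=1$, $\partial_\alpha\varphi_{c_*}(\alpha_*)=0$ together with {\bf(H2$\mu$)} to control the $z_\epsilon\to+\infty$ case. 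That is the argument you should supply.
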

\begin{proof}[Proof of Lemma \ref{lem_existence of TWs}]
	The proof of existence relies on the super- and subsolution argument. To do so, we focus on the integral equation \eqref{TW-integral form}.
	  Recall that $\alpha_*\in(0,\Lambda)$, given in Lemma \ref{lemma_c*}, is associated with $c_*$ such that $\varphi_{c_*}(\alpha_*)=1$. 
	 
\paragraph{Existence in the case of $c>c_*$.} Let us briefly proceed with the proof of existence of traveling fronts with speed $c>c_*$ which can originally be found in \cite{Diekmann1}. Here, we sketch it for completeness. Fix any $c>c_*$. Let $\alpha_c\in(0,\alpha_*)$  be the unique value given in Lemma \ref{lemma_alpha<alpha*} such that the dispersion relation $\varphi_c(\alpha_c)=1$ holds true. Then, we fix $0<\delta\le \min(\alpha_c,\alpha_*-\alpha_c)$ (therefore $\alpha_c+\delta\le\alpha_*$). It should be noted that $\varphi_c(\alpha_c+\delta)<1$. Indeed,  Lemma \ref{lemma-varphi} implies that  there is a unique $\hat c\in [c_*,c)$ such that $\varphi_{\hat c}(\alpha_c+\delta)=1$. Together with the fact that $c\mapsto \varphi_c(\alpha_c+\delta)$ is decreasing in $[0,+\infty)$, we then deduce that $1=\varphi_{\hat c}(\alpha_c+\delta)>\varphi_c(\alpha_c+\delta)$. On the other hand, there exists a constant $C>0$ such that 
	  \begin{equation}
	  	\label{5.6}
	  	1-e^{-s}\ge s-Cs^2~~~\text{for}~s\ge 0.
	  \end{equation}
	  We then pick 
	  \begin{equation}
	  	\label{M}
	  	M\ge \max\left(1,\frac{C\mathscr{R}_0\rho^*\varphi_c(\alpha_c+\delta)}{1-\varphi_c(\alpha_c+\delta)}\right)>0.
	  \end{equation}
For $z\in\R$, we define
	$$~~~~~~~~~~\overline \chi(z)=\rho^*\min\big(1, e^{-\alpha_c z}\big),~~~~	\underline \chi(z)=\rho^*\max\big(0, e^{-\alpha_c z}-Me^{-(\alpha_c+\delta)z}\big).$$
Following \cite{Diekmann1}, it is straightforward to check that $\overline \chi$ and $\underline \chi$ are respectively a super- and a subsolution of \eqref{TW-integral form} in $\R$.

By using the monotonicity of $\T$, we get that for each $k\in\N$
\begin{equation*}
\underline \chi\le	\T(\underline \chi)\le \T^2(\underline \chi)\le\cdots\le \T^k(\underline \chi)\le  \T^k(\overline \chi)\le \cdots\le \T^2(\overline \chi)\le \T(\overline \chi)\le \overline \chi, \text{ on } \R.
\end{equation*}
By the Arzela-Ascoli theorem, we derive that, up to extraction of a subsequence, $\T^k(\overline \chi)(z)\to \chi(z)$ as $k\to+\infty$ locally uniformly in $z\in\R$, and $\underline \chi\le \chi\le \overline \chi$ for $z\in\R$, which implies that $\chi$ is bounded in $\R$ and $\chi(+\infty)=0$.  Moreover, since $\overline \chi$ is nonincreasing in $z$, as is $\T^k(\overline \chi)$. Hence,  the limit function $\chi$ is nonincreasing in $z$. In particular, we have $0<\chi(-\infty)\le 1$. We claim that $\chi(-\infty)=\rho^*\in(0,1)$, where $\rho^*$ is the unique positive solution to $v=1-e^{-\mathscr{R}_0v}$. Indeed, consider any sequence $(z_n)_{n\in\N}$ diverging to $-\infty$ as $n\to+\infty$ and define $\chi_n(z)=\chi(z+z_n)$ for $z\in\R$ and each $n\in\N$. We observe that, up to some subsequence, $\chi_n(z)\to \chi_\infty$ as $n\to+\infty$. Then, by the Lebesgue's dorminated convergence theorem, we observe that $\chi_\infty$ solves 
\bqs
\chi_\infty=1-\exp(-\mathscr{R}_0\chi_\infty),
\eqs
which has a unique positive solution $\rho^*\in(0,1)$ since $\mathscr{R}_0>1$. Since the limit does not depend on the  particular sequence $(z_n)_{n\in\N}$, we arrive at $\chi(-\infty)=\rho^*$. Consequently, we have proved the existence of a nontrivial solution $\chi$ for problem \eqref{TW-integral form}, which is translation invariant, and satisfies  $0\le \chi\le \rho^*$ and $\chi'\le 0$ in $\R$ as well as $\chi(-\infty)=\rho^*$ and $\chi(+\infty)=0$. Since $\underline\chi\le \chi\le \overline\chi$ in $\R$, we have (up to translation)
\bqs
\chi(z)\sim  e^{-\alpha_c z}~~~ \text{ as } z\rightarrow+\infty,
\eqs
where $\alpha_c\in(0,\alpha_*)$ is the unique value such that $\varphi_c(\alpha_c)=1$.

\paragraph{Existence in the critical case of $c=c_*$.} Let $C>0$ satisfy \eqref{5.6}. We fix a large constant $A>0$ such that  there exists $z_0>0$ such that $Az_0 e^{-\alpha_* z_0}\ge 1$. One can then define $\bar z=\max\{z>0 ~|~ A ze^{-\alpha_*  z}=1\}$. Choose $0<\delta<\min(\alpha_*,\Lambda-\alpha_*)/4$.  Since $\alpha\in[0,\Lambda)\mapsto \varphi_{c_*}(\alpha)$ is convex and since  $\varphi_{c_*}(\alpha_*)=1=\min_{\alpha\in(0,\Lambda)}\varphi_{c_*}(\alpha)$, we infer that $\alpha\in [\alpha_*,\Lambda)\mapsto \varphi_{c_*}(\alpha)$ is increasing, whence $\varphi_{c_*}(\alpha_*+2\delta)>\varphi_{c_*}(\alpha_*+\delta)>1$. 
 Then, we can pick  $B>1$ sufficiently large such that 
\begin{equation}\label{B}
z^2e^{-2\alpha_* z}\le e^{-(\alpha_*+2\delta)z},~~~~C\mathscr{R}_0\rho^*A^2 \varphi_{c_*}(\alpha_*+2\delta)e^{-\delta z}\le \varphi_{c_*}(\alpha_*+\delta)-1~~~~~\text{for all}~z> (B-1)/A.
\end{equation}
For $z\in\R$, let us now define
\begin{align*}
	\overline \chi(z)=\rho^*
	\begin{cases}
		1, & z\le \bar z,\\
		Aze^{-\alpha_* z} & z> \bar z,
	\end{cases}
~~~~~~	\underline \chi(z)=\rho^*\max\big(0, A ze^{-\alpha_* z}-Be^{-\alpha_* z}+e^{-(\alpha_*+\delta)z}\big).
\end{align*}
Let us now verify that $\overline  \chi$ is a supersolution to  \eqref{TW-integral form} in $\R$. Again, it is suffices to consider the region where $\overline \chi(z)=\rho^*Aze^{-\alpha_* z}$. Indeed, by a straightforward computation, we derive that
	\begin{align*}
	\T(\overline \chi)(z)&=1-\exp\left(-\S_0\int_0^\infty\omega(i)\K_0*\overline \chi(z+c_*i)\md i\right)\\
	&\leq \S_0\int_0^\infty\omega(i)\K_0*\overline \chi(z+c_*i)\md i\\
	&= \rho^* A\S_0 \int_0^\infty\omega(i)\K_0*\big((z+c_* i)e^{-\alpha_*(z+c_* i)}\big)\md i\\
	&=\rho^* A\S_0\int_0^\infty\omega(i)e^{-\alpha_* c_*i} \K_0*\big(ze^{-\alpha_* z}+c_*i e^{-\alpha_* z}\big)\md i \\
	&=\rho^* A ze^{-\alpha_* z}=\overline \chi(z),
\end{align*}
by recalling that $\varphi_{c_*}(\alpha_*)=1$, and $\partial_{\alpha} \varphi_{c_*}(\alpha_*)=0$, namely,
$$\left(\int_{\R}\K_0(z)ze^{\alpha_* z}\md z\right)\left(\int_0^\infty\omega(i)e^{-\alpha_* c_*i }\md i\right)-\left(\int_{\R}\K_0(z)e^{\alpha_* z}\md z\right)\left(\int_0^\infty\omega(i) c_*i e^{-\alpha_* c_*i }\md i\right)=0,$$
as well as  
\bqs
\K_0*(ze^{-\alpha_* z})=e^{-\alpha_* z}\left(z\int_{\R}\K_0(x)e^{\alpha_* x}\md x-\int_{\R}\K_0(x)x e^{\alpha_* x}\md x\right).
\eqs 
Therefore, we conclude that $\overline \chi$ is a supersolution of \eqref{TW-integral form} in $\R$.

Let us now prove  that $\underline \chi$ is a subsolution to \eqref{TW-integral form} in $\R$. It is sufficient to take into account the case that $\underline \chi\neq 0$, which implies that $z>(B-1)/A$. Thanks to \eqref{B}, we derive that
\begin{equation*}
\underline \chi^2(z) \leq  \overline \chi^2(z)= (\rho^*)^2  A^2 z^2 e^{-2\alpha_* z}\!\le\! (\rho^*)^2 A^2 e^{-(\alpha_*+2\delta)z}~~\text{for}~z\!>\!(B-1)/A.
\end{equation*}
By applying \eqref{5.6}, we derive that
 \begin{equation}
 	\label{sub-1}
 \begin{aligned}
 	\T(\underline \chi)(z)&=1-\exp\left(-\S_0\int_0^\infty\omega(i)\K_0*\underline \chi(z+c_*i)\md i\right)\\
 	&\ge \S_0\int_0^\infty\omega(i)\K_0*\underline \chi(z+c_*i)\md i-C \left(\S_0\int_0^\infty\omega(i)\K_0*\underline \chi(z+c_*i)\md i\right)^2.
 \end{aligned}
\end{equation}
On the other hand, we also have
\begin{equation}
	\label{sub-2}
\begin{aligned}
	\int_0^\infty&\omega(i)\K_0*\underline \chi(z+c_*i)\md i=\int_0^\infty\int_\R \omega(i)\K_0(z+c_*i-y)\underline \chi(y)\md y\md i\\
	&\le \left( \int_0^\infty\int_\R \omega(i)\K_0(z+c_*i-y)\md y\md i\right)^\frac{1}{2} \left(\int_0^\infty \omega(i) \K_0*\underline \chi^2(z+c_*i)\md i\right)^\frac{1}{2}\\
	&=\left(\frac{\mathscr{R}_0}{\S_0}\right)^{\frac{1}{2}}  \left(\int_0^\infty \omega(i) \K_0*\underline \chi^2(z+c_*i)\md i\right)^\frac{1}{2}
\end{aligned}
\end{equation}
Combining \eqref{sub-1} and \eqref{sub-2} as well as \eqref{B}, we then arrive at
\begin{align*}
	\T(\underline \chi)(z)&\ge  \S_0\int_0^\infty\omega(i)\K_0*\underline \chi(z+c_*i)\md i -C\S_0\mathscr{R}_0 \int_0^\infty \omega(i) \K_0*\underline \chi^2(z+c_*i)\md i\\
	&\geq \rho^* \left(A ze^{-\alpha_* z}-Be^{-\alpha_* z}+\varphi_{c_*}(\alpha_*+\delta)e^{-(\alpha_*+\delta)z}-C\mathscr{R}_0\rho^* A^2\varphi_{c_*}(\alpha_*+2\delta) e^{-(\alpha_*+2\delta)z}\right)\\
	&\ge \rho^* \left(A ze^{-\alpha_* z}-Be^{-\alpha_* z}+e^{-(\alpha_*+\delta)z}\right)=\underline \chi(z),
\end{align*} 
which implies that $\underline \chi$ is a subsolution of \eqref{TW-integral form} in $\R$.

By repeating the argument from the case $c>c_*$, we eventually get the  existence of a nonincreasing  solution $\chi_{c_*}$ of \eqref{TW-integral form} satisfying $0\le\chi_{c_*}\le \rho^*$ in $\R$, $\chi_{c_*}(-\infty)=\rho^*$ and $\chi_{c_*}(+\infty)=0$. Again, we observe from the construction of $\chi_{c_*}$ that (up to translation)
\bqs
\chi_{c_*}(z)\sim z e^{-\alpha_*z}~~~ \text{ as } z\rightarrow+\infty.
\eqs
This completes the existence part of this lemma.

\paragraph{Strict monotonicity.} Now, let us prove that $0<\chi_c<\rho^*$ and $\chi_c'< 0$ in $\R$  for each $c\ge c_*$. From the existence proof, we have that $0\le \chi_c\leq \rho^*<1$ in $\R$. Assume now that  there is $z_0\in\R$ such that $\chi_c(z_0)=0$. It then follows from  \eqref{TW-integral form} that
	\begin{equation*}
		\int_0^\infty\omega(i)\K_0*\chi(z_0+ci)\md i=0.
	\end{equation*}
Since $\K_0>0$ in $\R$ and $\chi_c\ge 0$ in $\R$, one infers that $\chi_c\equiv 0$ in $\R$, which contradicts $\chi_c(-\infty)=\rho^*$. Therefore, we arrive at $0<\chi_c\leq \rho^*< 1$ in $\R$.  Next, assume that there is $z_0\in\R$ such that $\chi_c(z_0)=\rho^*$. Using the fact that $\rho^*=1-\exp(-\mathscr{R}_0\rho^*)$, it follows from \eqref{TW-integral form} that
\bqs
\mathscr{R}_0\chi_c(z_0)=\mathscr{R}_0\rho^*=\S_0\int_0^\infty\omega(i)\K_0*\chi_c(z_0+ci)\md i,
\eqs
and using the definition of $\mathscr{R}_0$ we equivalently get
\bqs
\S_0\int_0^\infty\omega(i)\K_0*\left(\chi_c(z_0)-\chi_c(z_0+ci)\right)\md i=0.
\eqs
But since $\chi_c(z)\leq \chi_c(z_0)=\rho^*$ for all $z\in\R$, we deduce that $\chi_c(z_0)=\chi_c(z_0+ci-z)$ for all $i\in\mathrm{supp}(\tau)$ and $z\in\R$, which is a contradiction as $\chi_c(+\infty)=0$. As a consequence, we have $0<\chi_c<\rho^*$ in $\R$. Next, to prove $\chi_c'< 0$ in $\R$, we observe from \eqref{TW-integral form} that
\begin{equation*}
	\chi'_c(z)=\S_0(1-\chi_c(z))\int_0^\infty\omega(i)\K_0*\chi_c'(z+ci)\md i,\quad z\in\R.
\end{equation*}
Assume by contradiction that there is $z_1\in\R$ such that $\chi_c'(z_1)=0$. Since $0<\chi_c<\rho^*$ in $\R$, the above equation implies that $\int_0^\infty\omega(i)\K_0*\chi_c'(z_1+ci)\md i=0$. Hence, one further deduces from $\K_0>0$ in $\R$ and  $\chi_c'\le 0$ in $\R$ that  $\chi_c'\equiv0$ in $\R$, which contradicts the limit condition. Consequently, $\chi_c'<0$ in $\R$.

\paragraph{Uniqueness.} Let us finally discuss the uniqueness. We first note that the uniqueness in the super-critical case, that is for $c>c_*$, can be obtained by applying \cite{Diekmann4}. We thus only focus on the critical case $c=c_*$ and explain how one can derive such a result by using the method in \cite{carrchmaj}. The idea is to first prove that any solution $\chi$ of \eqref{TW-integral form}-\eqref{conditionsTWchi} with $c=c_*$ satisfies, up to translation, 
\bqq\label{asymptotic}
\frac{\chi(z)}{ z e^{-\alpha_*z}}\rightarrow 1, \text{ as } z\rightarrow+\infty.
\eqq
We first obtain from \cite[Lemma 4.5]{Diekmann4} that any nontrivial solution $\chi$ of \eqref{TW-integral form} satisfying  $\chi(+\infty)=0$ is such that $\chi(z)=\mathcal{O}(e^{-\delta z})$ as $z\rightarrow+\infty$ for some $\delta>0$. As a consequence, the two sided Laplace transform of $\chi$:
\bqs
\widetilde{\chi}(\alpha):=\int_\R \chi(z)e^{- \alpha z} \md z,
\eqs
is well-defined for each $-\delta<\mathrm{Re}(\alpha)<0$. Now, from \eqref{TW-integral form}, we get that
\bqs
\left(1-\varphi_{c_*}(-\alpha)\right)\widetilde{\chi}(\alpha)=\!\int_\R e^{-\alpha z} \left[ 1-\exp\left(-\S_0\int_0^\infty\omega(i)\K_0*\chi(z+ci)\md i\right) -\S_0\int_0^\infty\omega(i)\K_0*\chi(z+ci)\md i \right]\!\md z 
\eqs
where the right-hand side is well-defined  for each $\alpha\in\mathbb{C}$ such that $-2\delta<\mathrm{Re}(\alpha)<0$ since $1-e^{-s}-s=\mathcal{O}(s^2)$ as $s\rightarrow0$. Using \cite{carrchmaj} and the positivity of $\chi$, we get that $\widetilde{\chi}(\alpha)$ is actually defined for $-\alpha_*<\mathrm{Re}(\alpha)<0$. Now, since $1-\varphi_{c_*}(-\alpha)=0$ has a simple double root at $\alpha=\alpha_*$, we get that $1-\varphi_{c_*}(-\alpha)=(\alpha+\alpha_*)^2\mathcal{H}(\alpha)$ where $\mathcal{H}(\alpha)$ is a holomorphic function in the strip $-\alpha_*\leq \mathrm{Re}(\alpha)<0$ with $\mathcal{H}(\alpha_*)\neq0$. As a consequence, a direct application of Ikehara's Theorem, as recalled in \cite[Proposition 2.3]{carrchmaj}, gives the desired asymptotic result \eqref{asymptotic}. We now conclude the argument by following the strategy presented in \cite{carrchmaj}. For $\epsilon>0$, let us define
\bqs
\varsigma_\epsilon(z)=\frac{\chi_1(z)-\chi_2(z)}{(\epsilon|z|+1)e^{-\alpha_*z}},~~~~~\varsigma_0(z)=\frac{\chi_1(z)-\chi_2(z)}{e^{-\alpha_*z}}, \quad z\in\R,
\eqs
where $\chi_1$ and $\chi_2$ are two given solutions of \eqref{TW-integral form} bounded between 0 and $\rho^*$ satisfying
\bqs
\frac{\chi_k(z)}{ z e^{-\alpha_*z}}\rightarrow 1, \text{ as } z\rightarrow+\infty, \text{ for } k=1,2.
\eqs
We note that $\varsigma_\epsilon(\pm\infty)=0$ and assume that $\varsigma_\epsilon\not\equiv0$ on $\R$. Without loss of generality, we let $z_\epsilon\in\R$ be such that $\varsigma_\epsilon(z_\epsilon)=\max_{z\in\R}\left|\varsigma_\epsilon(z)\right|>0$. We divide into three cases. Assume first that $(z_\epsilon)_{\epsilon>0}$ remains bounded as $\epsilon\rightarrow0$, then, up to a subsequence, we have that $z_\epsilon\rightarrow z_0$ as $\epsilon\rightarrow0$ for some finite $z_0\in\R$, 
and $|\varsigma_\epsilon(z_\epsilon)|\rightarrow |\varsigma_0(z_0)|$ as $\epsilon\rightarrow0$ and thus $|\varsigma_0(z)|\leq |\varsigma_0(z_0)|$ for all $z\in\R$. Now, one derives from \eqref{TW-integral form} that
\bqq
\label{eqInterm}
\left|\chi_1(z)-\chi_2(z)\right|\leq \S_0 \int_0^\infty \omega(i) \K_0*\left| \chi_1(z+c_*i)-\chi_2(z+c_*i)\right|\md i,
\eqq
which implies that
\bqs
|\varsigma_0(z)|\leq \S_0 \int_0^\infty \omega(i) e^{-\alpha_*c_* i } \int_\R \K_0(z')\left|\varsigma_0(z+c_*i-z') \right| e^{\alpha_*z'} \md z'\md i \leq \left|\varsigma_0(z_0) \right|.
\eqs
Thus, the above inequality at $z=z_0$ must be an equality, but this is only possible if  $\varsigma_0(z_0)=\varsigma_0(z_0+c_*i-z)$ for all $z\in\R$ and $i\in\mathrm{supp}(\tau)$. Recalling that $\varsigma_0(-\infty)=0$, we obtain that  $\varsigma_0\equiv0$ and thus $\chi_1\equiv\chi_2$. Assume next that $z_\epsilon\rightarrow-\infty$  as $\epsilon\rightarrow0$, then $|\varsigma_\epsilon(z_\epsilon)|\rightarrow 0$ as $\epsilon\rightarrow0$. Since $\varsigma_\epsilon\rightarrow \varsigma_0$ as $\epsilon\rightarrow0$, we get $|\varsigma_0(z)|\leq0$ for $z\in\R$ which gives a contradiction since $|\varsigma_\epsilon(z)|\leq|\varsigma_0(z)|\le 0$ for each $z\in\R$ however $\varsigma_\epsilon(z_\epsilon)>0$. It is left to consider the case that $z_\epsilon\rightarrow+\infty$  as $\epsilon\rightarrow0$. We derive from \eqref{eqInterm} that 
\bqs
|\varsigma_\epsilon(z)|(\epsilon|z|+1)\leq \S_0 \int_0^\infty \omega(i) e^{-\alpha_*c_* i } \int_\R \K_0(z')\left|\varsigma_\epsilon(z+c_*i-z') \right| (\epsilon|z+c_*i-z'|+1) e^{\alpha_*z'} \md z'\md i.
\eqs
Evaluating the above inequality at $z=z_\epsilon>0$ for $\epsilon$ small enough, and using the fact that $\varphi_{c_*}(\alpha_*)=1$ together with $\partial_\alpha\varphi_{c_*}(\alpha_*)=0$, we eventually obtain that
\begin{align*}
&\S_0 \int_0^\infty \omega(i) e^{-\alpha_*c_* i } \int_\R \K_0(z')\left[\varsigma_\epsilon(z_\epsilon)-\left|\varsigma_\epsilon(z_\epsilon+c_*i-z') \right| \right] e^{\alpha_*z'} \md z'\md i\\
&+\S_0 \epsilon \int_0^\infty \omega(i) e^{-\alpha_*c_* i } \int_\R \K_0(z')\left[\varsigma_\epsilon(z_\epsilon)(z_\epsilon-z'+c_* i)-\left|\varsigma_\epsilon(z_\epsilon+c_*i-z') \right| |z_\epsilon-z+c_* i|\right] e^{\alpha_*z'} \md z'\md i \leq 0.
\end{align*}
Rearranging the above integrals, we equivalently derive that
\begin{align*}
&-\underbrace{\S_0 \epsilon \varsigma_\epsilon(z_\epsilon) \int_0^\infty \omega(i) e^{-\alpha_*c_* i } \int_\R \K_0(z')\left(|z_\epsilon-z'+c_*i|-(z_\epsilon-z'+c_*i)\right) e^{\alpha_*z'} \md z'\md i}_{:=\mathscr{I}^1_\epsilon}\\
&+\underbrace{\S_0 \int_0^\infty \omega(i) e^{-\alpha_*c_* i } \int_\R \K_0(z')(1+\epsilon|z_\epsilon-z'+c_* i|)\left[\varsigma_\epsilon(z_\epsilon)-\left|\varsigma_\epsilon(z_\epsilon+c_*i-z') \right|\right] e^{\alpha_*z'} \md z'\md i}_{:=\mathscr{I}^2_\epsilon} \leq 0,
\end{align*}
since $\varsigma_\epsilon(z_\epsilon)>0$. 
We note that $\mathscr{I}^1_\epsilon> 0$ and $\mathscr{I}^2_\epsilon\geq 0$ by definition of $z_\epsilon$ and since $\K_0>0$ in $\R$. The above inequality simply reads $-\mathscr{I}^1_\epsilon+\mathscr{I}^2_\epsilon\leq0$. Our aim is to prove that $\mathscr{I}^2_\epsilon= 0$ for $\epsilon$ small enough. Assume by contradiction that $\mathscr{I}^2_\epsilon>0$. We claim that for $\epsilon$ small enough, we can always ensure that $\mathscr{I}^1_\epsilon \leq \mathscr{I}^2_\epsilon/2$.

First, using the fact that $z_\epsilon>0$ for $\epsilon$ small enough, we have
\begin{align*}
\mathscr{I}^1_\epsilon&= 2 \epsilon \varsigma_\epsilon(z_\epsilon) \S_0 \int_0^\infty \omega(i) e^{-\alpha_*c_* i } \int_{z_\epsilon+c_*i}^\infty \K_0(z')\left(z'-z_\epsilon-c_*i\right) e^{\alpha_*z'} \md z'\md i \\
&\leq 2 \epsilon \varsigma_\epsilon(z_\epsilon) \S_0 \int_0^\infty \omega(i) e^{-\alpha_*c_* i } \int_{z_\epsilon+c_*i}^\infty (z'-z_\epsilon) \K_0(z')e^{\alpha_*z'} \md z'\md i\\
&\leq 2 \epsilon \varsigma_\epsilon(z_\epsilon) \S_0 \left(\int_0^\infty \omega(i) e^{-\alpha_*c_* i }\md i\right)\left( \int_{z_\epsilon}^\infty (z'-z_\epsilon) \K_0(z')e^{\alpha_*z'} \md z'\right).
\end{align*}
Then, for any $\mu\in(\alpha_*,\Lambda)$ and $\epsilon$ small enough, we get that
\bqs
\int_{z_\epsilon}^\infty (z'-z_\epsilon) \K_0(z')e^{\alpha_*z'} \md z=e^{(\alpha_*-\mu)z_\epsilon}\int_0^\infty u \K_0(u+z_\epsilon)e^{\mu(u+z_\epsilon)}e^{(\alpha_*-\mu)u}\md u \leq e^{(\alpha_*-\mu)z_\epsilon}\int_0^\infty u e^{(\alpha_*-\mu)u}\md u
\eqs
which then implies 
\bqs
\mathscr{I}^1_\epsilon \leq 2 \epsilon \varsigma_\epsilon(z_\epsilon) e^{(\alpha_*-\mu)z_\epsilon} \S_0 \left(\int_0^\infty \omega(i) e^{-\alpha_*c_* i }\md i\right)\left( \int_0^\infty u e^{(\alpha_*-\mu)u}\md u\right).
\eqs 
We remark that when $\Lambda=+\infty$, since $\mathscr{I}^2_\epsilon>0$ for each $\epsilon$ fixed small enough and since $\mathscr{I}^1_\epsilon\rightarrow0$ as $\mu\rightarrow+\infty$ independently of $\epsilon$, we can always take $\mu>0$ large enough such that $\mathscr{I}^1_\epsilon \leq \mathscr{I}^2_\epsilon/2$, as claimed. We focus on $\Lambda<+\infty$ in the sequel. 

By letting $\mu\rightarrow\Lambda$, we get the following estimate
\bqs
\mathscr{I}^1_\epsilon \leq 2 \epsilon \varsigma_\epsilon(z_\epsilon) e^{(\alpha_*-\Lambda)z_\epsilon} \S_0 \left(\int_0^\infty \omega(i) e^{-\alpha_*c_* i }\md i\right)\left( \int_0^\infty u e^{(\alpha_*-\Lambda)u}\md u\right).
\eqs 
Now, let us estimate the second integral. Since $\varsigma_\epsilon(-\infty)=0$, one gets the existence of $M>0$ such that  $\left|\varsigma_\epsilon(z) \right|\leq \varsigma_\epsilon(z_\epsilon)/2$ for all $z<-M$. Next, we introduce $\delta_\epsilon>0$, to be fixed later, satisfying $\delta_\epsilon z_\epsilon \rightarrow +\infty$ as $\epsilon\rightarrow0$. As a consequence, for $\epsilon>0$ small enough, one can ensure that $\delta_\epsilon z_\epsilon>M$. We now compute
\begin{align*}
\mathscr{I}^2_\epsilon& \geq \S_0 \int_0^\infty \omega(i) e^{-\alpha_*c_* i } \int_{(1+\delta_\epsilon)z_\epsilon +c_*i}^\infty \K_0(z')\left[\varsigma_\epsilon(z_\epsilon)-\left|\varsigma_\epsilon(z_\epsilon+c_*i-z') \right|\right] e^{\alpha_*z'} \md z'\md i\\
&\geq \frac{\S_0}{2} \varsigma_\epsilon(z_\epsilon) \int_0^\infty \omega(i) e^{-\alpha_*c_* i } \int_{(1+\delta_\epsilon)z_\epsilon +c_*i}^\infty \K_0(z')e^{\alpha_*z'} \md z'\md i \\
&\geq \frac{\S_0}{2} \varsigma_\epsilon(z_\epsilon) \int_0^\infty \omega(i) e^{-\alpha_*c_* i } \int_{(1+\delta_\epsilon)z_\epsilon +c_*i}^\infty e^{(\alpha_*-\Lambda)z'} \md z'\md i \\
&\geq \frac{\S_0}{2(\Lambda-\alpha_*)} \varsigma_\epsilon(z_\epsilon) e^{(\alpha_*-\Lambda)(1+\delta_\epsilon)z_\epsilon} \int_0^\infty \omega(i) e^{-\Lambda c_* i } \md i.
\end{align*}
We can now set $\delta_\epsilon:=\frac{-\ln(\epsilon)}{2(\Lambda-\alpha_*)z_\epsilon}>0$, such that we have indeed $\delta_\epsilon z_\epsilon = \frac{-\ln(\epsilon)}{2(\Lambda-\alpha_*)}\rightarrow+\infty$ as $\epsilon\rightarrow0$. With such a choice, we get that
\bqs
\mathscr{I}^2_\epsilon \geq \frac{\S_0}{2(\Lambda-\alpha_*)} \epsilon^{1/2}\varsigma_\epsilon(z_\epsilon) e^{(\alpha_*-\Lambda)z_\epsilon} \int_0^\infty \omega(i) e^{-\Lambda c_* i } \md i.
\eqs
As a consequence, for $\epsilon$ small enough, we can ensure that $\mathscr{I}^1_\epsilon \leq \mathscr{I}^2_\epsilon/2$. This proves the initial claim.

Recalling that $-\mathscr{I}^1_\epsilon+\mathscr{I}^2_\epsilon\leq0$ and $\mathscr{I}^1_\epsilon \leq \mathscr{I}^2_\epsilon/2$ for $\epsilon$ small enough, we deduce that $0<\mathscr{I}^2_\epsilon/2 \leq 0$ which is a contradiction. Thus $\mathscr{I}^2_\epsilon=0$ for $\epsilon$ small enough, which then implies that $\varsigma_\epsilon(z_\epsilon)=\left|\varsigma_\epsilon(z_\epsilon+c_*i-z) \right|$ for each $i\in\mathrm{supp}(\tau)$ and $z\in\R$, which leads to a contradiction since $\varsigma_\epsilon(-\infty)=0$ and we assumed that $\varsigma_\epsilon(z_\epsilon)>0$. This concludes the uniqueness of the critical waves with $c=c_*$.
\end{proof}

Due to \eqref{thm1.4-TW}, Theorem \ref{thm-TW-KPP model} is then an immediate consequence of Lemmas \ref{lem_TW_necessary}-\ref{lem_existence of TWs}. 

\begin{rmk}
It is easy to check that the above proofs readily extend to the case of compactly supported kernels. Actually, only the last step of the strict monotonicity of $\chi_c$ needs to be slightly adapted since the other parts of the proof (existence and uniqueness) do not require the positivity of the kernel.
\end{rmk}

\section{Perspectives and open problems}
We have shed a new light on an old, and widely used model that describes the propagation of epidemics, by exploiting an analogy with reaction-diffusion models where the overall propagation is dictated by a line of fast diffusion. This analogy will be pursued in a forthcoming work \cite{FRZ25}, where we study the sharp large time asymptotics of the solutions of (\ref{kppI}), something that might have been nontrivial to do under the initial formulation. 

 The study of models accounting for the spread of epidemics has flourished over the last 70 years, and has given rise to an enormous amount of literature to which it is impossible to do justice in a paper. Thus, we have limited ourselves to quoting the works that we deem both conceptually important and close to the questions that we are studying. 
Notwithstanding the aforementionned numerous contributions, our work also sheds a light on some basic questions that remain unanswered.  They  are discussed below,  we would be happy to see some of them solved in the future. 
\subsubsection*{What happens in the presence of heterogeneities?}
A natural starting point is to question the key assumption  that the initial susceptible population $S_0$ is constant. While it is reasonable, one may also ask what happens when it is dropped, and when, for instance, the initial datum $S(0,x)$ is a smooth function, bounded from below and bounded away from 0. This apparently innocent modification is in fact a serious perturbation of the structure of the problem, as one cannot expect spreading at an asymptotically constant speed. One cannot either hope for the existence of traveling waves. 

As far as the spreading speed is concerned, the work one might take inspiration from is that of Berestycki-Nadin \cite{BNad}: there, upper and lower bounds for the propagation speed in Fisher-KPP type models with diffusion given by a second order elliptic operator are derived, and these bounds turn out to yield the correct spreading speeds in all the known cases. Whether the methods of this paper carry over to (\ref{kppI}) does not look that obvious, for two reasons: the first one is the presence of the nonlocal diffusion, which means that one should forget any idea of relying on compactness of the solutions; the second one is the presence of the additional variable $i$. 

The notion by which to replace the notion of traveling waves is that of transition waves; to describe them informally we rely on the work of Berestycki-Hamel \cite{BH}: a transition wave of a reaction-diffusion equation of the type $u_t+Au=f(x,u)$, $A$ a diffusive operator, is a time global solution (that is, defined for all real times) connecting the state 0 at $+\infty$ to some stable state $u_*$ at $-\infty$, uniformly in time, and such that the distance between two level sets is bounded independently of time. Such waves have been proved to exist in a number of models, including nonlocal equations when $f$ is periodic in $x$ (see Coville, Davila, Martinez \cite{CDM}; see also results by Ducrot-Giletti \cite{DG14} or Deng-Ducrot \cite{DD} for models with periodic heterogeneities closer to that under study in this paper). We are not aware of such results for nonlocal equations when the dependence in $x$ of $f$ has no particular structure, something that reverberates on Model (\ref{kppI}).  The study of transition fronts is not only a subject having its own high interest, one can use them to estimate the (upper and lower) spreading speed, in several space dimensions, as is beautifully realised by Rossi \cite{Ros} in the context of general reaction-diffusion equations. 

When the domain is heterogeneous, another interesting issue is the following: what happens if the unknowns $S(t,x)$ and $I(t,x,i)$ are defined on, say, a perforated line and the kernel $\mathcal{K}$ is compactly supported? Whether or not spreading can be blocked, and, if yes, how the support of $\mathcal{K}$ is involved, certainly deserves further investigation. Whenever it was possible, we have endeavoured to work in several space dimensions, as there is no reason to assume that the spatial domain is a line; the natural dimension domain is indeed $\mathbb{R}^2$. Things become really involved if the domain is made of disconnected patches or if it has holes. The question of spreading is very much open in this context,  as blocking phenomena or intermediate configurations, such as the existence of steady solutions tending to 0 at infinity, may happen. Of course, transition waves and sharp asymptotics are outstanding issues, a general understanding of them seems to be quite remote.

All these questions can be reformulated word by word if the diffusion kernel $\mathcal{K}$ has a dependence on the position, that is, the term $\mathcal{K}*I$ is replaced by an integral operator of the form $\int_{\mathbb{R}}\mathcal{K}(x,y,i)I(t,y,i)~\mathrm{d}y$, where, for every $x\in\mathbb{R}$, the function $(y,i)\mapsto\mathcal{K}(x,y,i)$ is smooth and compactly supported.  

\subsubsection*{Presence of diffusion in the susceptible population}
In the same way as assuming that the susceptible population is homogeneous, it is perfectly reasonable to assume that its movements can be neglected. However, it is also reasonable to take them into account, and System (\ref{EDP}) becomes
\bqq
\left\{
\begin{split}
\partial_t \S(t,x)+AS(t,x) &= - \left( \int_0^\infty \tau(i) \K*I(t,i,x)\md i \right) \S(t,x), \quad t>0, \quad x\in\R^d,\\
\partial_t I(t,i,x) + \partial_iI(t,i,x) &= - \gamma(i) I(t,i,x) , \quad t>0,  \quad i>0,  \quad x\in\R^d, \\
I(t,0,x)&= \left( \int_0^\infty \tau(i) \K*I(t,i,x)\md i \right) \S(t,x), \quad t>0,  \quad x\in\R^d,
\end{split}
\right.
\label{EDP1}
\eqq 
where $A$ is any diffusion operator: it can be of the form $-d\Delta S$, but it is also legitimate to take it nonlocal $AS=S-K*S$, $K$ smooth, nonnegative, compactly supported with unit mass. The operator $A$ can also be the fractional Laplacian of order $\alpha\in(0,1)$
$$
(-\partial_{xx})^\alpha S(t,x)=c_\alpha\lim_{\varepsilon\to0}\int_{\vert h\vert\geq\varepsilon}\frac{S(t,x)-S(t,x+h)}{\vert h\vert^{1+2\alpha}}~\mathrm{d}h,$$ where $c_\alpha$ is a normalisation constant.

Such a model does not allow a simple computation of $S(t,x)$ in terms of $I$, ans destroys the structure (\ref{kppI}) on which we have based this work. As a consequence, questions that were trivial when (\ref{EDP}) reduces to (\ref{edpI})) become, all of a sudden, open. One of them is the 
bounds for the Cauchy Problem; it becomes an important open problem which, to our knowledge, is also open in the SIR model (that is, the removal and transmission coefficients are constant). Indeed, in such a setting, we fall in the context of reaction-diffusion systems with nonequal diffusion coefficients, which renders the maximum principle useless when it comes to estimating the size of the solutions. Existence of traveling waves is a question that needs to be understood almost entirely, not to talk about the spreading speed. Here,  upper and lower estimates, involving in a significant way the parameters of the model, on how the solution propagates would be an interesting achievement. The question  of the existence of a unique asymptotic spreading speed is, in this context, a mostly open question.
 
\subsubsection*{Changes of dimensionality}
We end this review of open problems by recalling that, as is widely  acknowledged, the spreading of epidemics  is best modelled as a spatial domain that undergoes changes of dimensionality, as towns and cities play a major role in the development of this type of phenomena. One model to which (\ref{EDP}) could be coupled is the  Besse-Faye \cite{BF} systems where the domain is a graph, whose vertices model roads or transportation structures, and whose edges model the cities. On the edges, a contamination process of the type (\ref{EDP}) (with no diffusion kernel) is occurring, while only transport phenomena occur on the vertices. Exchange conditions between edges and vertices are prescribed.
On such a structure, it is already interesting to study the steady states, in terms of the intensity of the exchanges between the edges and the vertices of the graph. It is also an interesting question to examine whether the introduction of the variable $i$ influences the qualitative or quantitative results. This is certainly a highly nontrivial question. The   dynamical properties come next, and represent a wide open arena of mathematical challenges. The case of a homogeneous tree, as studied in \cite{BF21}, is already an  issue of high interest.

\section*{Acknowledgements} 
The authors acknowledge support from the ANR via the project ReaCh under grant agreement ANR-23-CE40-0023-01. G.F. and M.Z. acknowledge support from the ANR via the project Indyana under grant agreement ANR- 21- CE40-0008. M.Z. acknowledges support by the Occitanie region, the European Regional Development Fund (ERDF), and the French government, through the France 2030 project managed by the National Research Agency (ANR)  ANR-22-EXES-0015. The work was also partially supported by the French National Institute of Mathematical Sciences and their Interactions (Insmi) via the platform MODCOV19 and by Labex CIMI under grant agreement ANR-11-LABX-0040.


\appendix

%
%
%

\end{document}